\documentclass[10pt,letterpaper,twoside]{article}

\usepackage[letterpaper,left=1in,right=1in,top=1.5in,bottom=1.5in]{geometry}

\title{Derived plethystic algebra}\author{Benjamin Antieau, Edith
H\"ubner, Dmitry Kubrak, and Joost Nuiten}\date{\today}

\usepackage[pdfstartview=FitH,
            pdfauthor={},
            pdftitle={},
            colorlinks,
            linkcolor=reference,
            citecolor=citation,
            urlcolor=e-mail,
            backref]{hyperref}

\usepackage{amsmath}
\usepackage{amscd}
\usepackage{amsbsy}
\usepackage{amssymb}
\usepackage{verbatim}
\usepackage{eufrak}
\usepackage{eucal}
\usepackage{microtype}
\usepackage{hyperref}
\usepackage{mathrsfs}
\usepackage{amsthm}
\usepackage{stmaryrd}
\usepackage{wasysym}
\usepackage{xy}
\input xy
\xyoption{all}
\usepackage{tikz}
\usetikzlibrary{matrix,arrows}
\usepackage{tikz-cd}
\usepackage{dsfont}
\usepackage[T1]{fontenc}
\usepackage{spectralsequences}
\usepackage[shortlabels]{enumitem}

\usepackage{fancyhdr}
\usepackage{makeidx}
\makeindex

\newcommand{\stackspace}{2.5}
\newcommand{\stack}[2][1cm]{\;\tikz[baseline, yshift=.65ex]%
    {\foreach \k [evaluate=\k as \r using (.5*#2+.5-\k)*\stackspace] in {1,...,#2}{%
    \ifodd\k{\draw[->](0,\r pt)--(#1,\r pt);}%
    \else{\draw[<-](0,\r pt)--(#1,\r pt);}\fi
    }}\;}

\usepackage[bbgreekl]{mathbbol}
\usepackage{amsfonts}
\DeclareSymbolFontAlphabet{\mathbb}{AMSb} 
\DeclareSymbolFontAlphabet{\mathbbl}{bbold}

\usepackage{color}
\definecolor{todo}{rgb}{1,0,0}
\definecolor{conditional}{rgb}{0,1,0}
\definecolor{e-mail}{rgb}{0,.40,.80}
\definecolor{reference}{rgb}{.20,.60,.22}
\definecolor{mrnumber}{rgb}{.80,.40,0}
\definecolor{citation}{rgb}{0,.40,.80}

\let\oldmarginpar\marginpar
\renewcommand\marginpar[1]{\-\oldmarginpar[\raggedleft\footnotesize #1]%
{\raggedright\footnotesize #1}}

\newcommand{\DK}[1]{\textcolor{brown}{#1}}

\newcommand{\fs}{\star}

\newcommand{\afrak}{\mathfrak{a}}

\newcommand{\mfrak}{\mathfrak{m}}

\newcommand{\Ascr}{\mathcal{A}}
\newcommand{\Bscr}{\mathcal{B}}
\newcommand{\Cscr}{\mathcal{C}}
\newcommand{\Dscr}{\mathcal{D}}
\newcommand{\Escr}{\mathcal{E}}
\newcommand{\Fscr}{\mathcal{F}}

\newcommand{\Iscr}{\mathcal{I}}
\newcommand{\Kscr}{\mathcal{K}}
\newcommand{\Lscr}{\mathcal{L}}
\newcommand{\Mscr}{\mathcal{M}}
\newcommand{\Nscr}{\mathcal{N}}
\newcommand{\Oscr}{\mathcal{O}}
\newcommand{\Pscr}{\mathcal{P}}
\newcommand{\Qscr}{\mathcal{Q}}

\newcommand{\Sscr}{\mathcal{S}}

\newcommand{\B}{\mathrm{B}}

\renewcommand{\c}{\mathrm{c}}
\renewcommand{\d}{\mathrm{d}}

\newcommand{\F}{\mathrm{F}}

\renewcommand{\H}{\mathrm{H}}
\newcommand{\h}{\mathrm{h}}

\newcommand{\K}{\mathrm{K}}
\renewcommand{\L}{\mathrm{L}}
\newcommand{\M}{\mathrm{M}}

\newcommand{\R}{\mathrm{R}}
\renewcommand{\r}{\mathrm{r}}

\newcommand{\s}{\mathrm{s}}

\newcommand{\bB}{\mathbf{B}}

\newcommand{\bE}{\mathbf{E}}
\newcommand{\bF}{\mathbf{F}}
\newcommand{\bG}{\mathbf{G}}

\newcommand{\bL}{\mathbf{L}}
\newcommand{\bM}{\mathbf{M}}
\newcommand{\bN}{\mathbf{N}}

\newcommand{\bQ}{\mathbf{Q}}

\newcommand{\bS}{\mathbf{S}}

\newcommand{\bZ}{\mathbf{Z}}

\renewcommand{\mathds}{\mathbbl}

\newcommand{\NN}{\mathds{N}}

\newcommand{\Cat}{\Cscr\mathrm{at}}

\newcommand{\iCat}{\Cat_\infty}
\newcommand{\Fin}{\Fscr\mathrm{in}}

\newcommand{\Spc}{\Sscr}

\newcommand{\Ab}{\Ascr\mathrm{b}}

\newcommand{\PrL}{\Pscr\mathrm{r}^{\mathrm{L}}}
\newcommand{\PrR}{\Pscr\mathrm{r}^{\mathrm{R}}}

\newcommand{\op}{\mathrm{op}}

\newcommand{\fib}{\mathrm{fib}}

\newcommand{\lex}{\mathrm{lex}}
\newcommand{\cn}{\mathrm{cn}}

\newcommand{\BMod}{\mathrm{BMod}}
\newcommand{\BM}{\mathbf{BM}}
\newcommand{\Mod}{\mathrm{Mod}}
\newcommand{\Mon}{\mathrm{Mon}}
\newcommand{\LMod}{\mathrm{LMod}}
\newcommand{\FMod}{\mathrm{FMod}}

\newcommand{\RMod}{\mathrm{RMod}}

\newcommand{\Perf}{\mathrm{Perf}}
\newcommand{\Ind}{\mathrm{Ind}}
\newcommand{\Pro}{\mathrm{Pro}}
\newcommand{\Alg}{\mathrm{Alg}}
\newcommand{\cAlg}{\mathrm{cAlg}}
\newcommand{\CAlg}{\mathrm{CAlg}}

\newcommand{\DAlg}{\mathrm{DAlg}}

\newcommand{\ev}{\mathrm{ev}}

\newcommand{\LSym}{\mathrm{LSym}}
\newcommand{\Sym}{\mathrm{Sym}}

\newcommand{\Ani}{\mathrm{Ani}}
\newcommand{\ani}{\mathrm{ani}}

\newcommand{\mon}{\mathrm{mon}}

\newcommand{\Poly}{\mathrm{Poly}}
\newcommand{\Proj}{\mathrm{Proj}}
\newcommand{\Flat}{\mathrm{Flat}}
\newcommand{\Fr}{\mathrm{Fr}}
\newcommand{\Assoc}{\mathrm{Assoc}}
\renewcommand{\part}{\mathrm{part}}
\newcommand{\rex}{\mathrm{rex}}

\newcommand{\Pleth}{\mathrm{Pleth}}
\renewcommand{\flat}{\mathrm{flat}}
\newcommand{\RE}{\mathrm{RE}}

\newcommand{\fin}{\mathrm{fin}}
\newcommand{\Bool}{\mathrm{Bool}}

\renewcommand{\Bar}{\mathrm{Bar}}
\newcommand{\Bin}{\mathrm{Bin}}

\newcommand{\can}{\mathrm{can}}

\newcommand{\perf}{\mathrm{perf}}

\newcommand{\heart}{\heartsuit}

\newcommand{\oblv}{\mathrm{oblv}}

\newcommand{\id}{\mathrm{id}}

\newcommand{\coker}{\mathrm{coker}}

\renewcommand{\geq}{\geqslant}
\renewcommand{\leq}{\leqslant}

\DeclareMathOperator{\LBin}{LBin}

\newcommand{\Mul}{\mathrm{Mul}}
\newcommand{\Map}{\mathrm{Map}}

\newcommand{\Hom}{\mathrm{Hom}}
\newcommand{\Fun}{\mathrm{Fun}}
\newcommand{\End}{\mathrm{End}}

\newcommand{\Ga}{\bG_{a}}

\DeclareMathOperator*{\colim}{colim}

\DeclareMathOperator*{\Tot}{Tot}
\newcommand{\LEq}{\mathrm{LEq}}

\DeclareMathOperator{\Spec}{Spec}

\newcommand{\we}{\simeq}
\newcommand{\iso}{\cong}

\theoremstyle{plain}
\newtheorem{theorem}{Theorem}[section]
\newtheorem*{theorem*}{Theorem}
\newtheorem{lemma}[theorem]{Lemma}

\newtheorem{proposition}[theorem]{Proposition}

\newtheorem{corollary}[theorem]{Corollary}
\newtheorem*{corollary*}{Corollary}

\theoremstyle{plain}

\theoremstyle{definition}

\newtheoremstyle{named}{}{}{\itshape}{}{\bfseries}{.}{.5em}{#1 \thmnote{#3}}
\theoremstyle{named}

\theoremstyle{definition}

\newtheorem{definition}[theorem]{Definition}
\newtheorem{warning}[theorem]{Warning}

\newtheorem{notation}[theorem]{Notation}

\newtheorem{example}[theorem]{Example}
\newtheorem*{example*}{Example}

\newtheorem*{question*}{Question}
\newtheorem{construction}[theorem]{Construction}

\newtheorem{remark}[theorem]{Remark}

\begin{document}

\maketitle

\begin{abstract}
    \noindent
    We prove comparison results about
    $\infty$-categories of derived commutative rings with extra algebraic structure.
    For example, we show that several notions of derived $\delta$-rings agree and obtain similar
    results for derived $\lambda$-rings, derived binomial rings, and derived Boolean rings. Our comparison results apply more generally to modules over a specific kind of monad on derived rings that we call \textit{derived plethory}. 
\end{abstract}

\tableofcontents

\section{Introduction}

At present, there are two basic approaches to the theory of derived commutative rings.
The first is to study simplicial cosimplicial commutative rings and invert a class of weak
equivalences. The second is to extend the animated commutative ring monad from a monad on
connective $\bZ$-modules to a monad on all of $\Mod_\bZ$ as has been done by Raksit~\cite{raksit} based on results of Brantner--Mathew~\cite{brantner-mathew}.
The category of derived commutative rings is then defined as the category of modules for the derived symmetric powers monad acting on $\Mod_\bZ$.
Comparing these approaches is somewhat subtle, but has been done by
Brantner--Campos--Nuiten~\cite{brantner-campos-nuiten} in general and in the
coconnective case in forthcoming work of Mathew--Mondal~\cite{mathew-mondal}.

This comparison completed, it is natural to study derived commutative
rings with extra algebraic structure. A natural example of such is given by derived $\delta$-rings, which can be
defined in either of the following ways:
\begin{enumerate}
    \item[(i)] by using simplicial cosimplicial $\delta$-rings, leading to
        an $\infty$-category $\s\c\CAlg^\delta[W^{-1}]$;
    \item[(ii)] by deriving the free $\delta$-ring monad, leading to $\DAlg^\delta$, the
        $\infty$-category of derived $\delta$-rings~\cite{holeman-derived};
    \item[(iii)] by encoding the notion of a derived commutative ring with a
        lift of Frobenius as a pullback of $\infty$-categories, leading to $\DAlg^\varphi$, the
        $\infty$-category of derived $\varphi$-algebras.
\end{enumerate}
A typical result of this paper is then the following.

\begin{theorem}[$\delta$-comparison]\label{thm:delta intro}
    There are equivalences $\s\c\CAlg^\delta[W^{-1}]\we\DAlg^\delta\we\DAlg^\varphi$ which are compatible with the natural forgetful functors to $\DAlg_{\bZ}$.
\end{theorem}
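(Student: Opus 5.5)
We prove both equivalences by monadicity over $\DAlg_\bZ$: each of the three $\infty$-categories is monadic over $\DAlg_\bZ$, and we identify the three monads. The common input is that the free $\delta$-ring on a polynomial ring is again polynomial. Explicitly, $W(\bZ[x_1,\dots,x_n])$ is the free $\delta$-ring $\bZ[x_i,\delta x_i,\delta^2x_i,\dots]$, a filtered colimit of polynomial rings. So the free $\delta$-ring functor $F^\delta\colon\CAlg_\bZ\to\CAlg^\delta_\bZ$ preserves polynomial rings, and its composite with the forgetful functor is a filtered colimit of polynomial-ring-valued functors.

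The first step is to construct the monad on $\DAlg_\bZ$ whose algebras form $\DAlg^\delta$. We right-left extend the endofunctor $\bZ[x_1,\dots,x_n]\mapsto F^\delta(\bZ[x_1,\dots,x_n])$ along the inclusion of polynomial rings into $\DAlg_\bZ$: first take the left Kan extension to animated rings, then the right-left extension in the sense of Brantner--Mathew/Raksit to all derived rings. The monad structure must extend as well. For this we check that the extended functor still preserves sifted colimits and the relevant totalizations of coconnective objects. The input here is that $F^\delta$, viewed on underlying modules, is a filtered colimit of polynomial functors of bounded degree, so it enjoys the same finite-coconnective continuity as $\LSym$. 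With this in hand, $\DAlg^\delta$ is the category of modules over this monad and is monadic over $\DAlg_\bZ$ by construction.

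For $\DAlg^\delta\we\DAlg^\varphi$, we define $\DAlg^\varphi$ as the pullback of $\Fun(B\NN,\DAlg_\bZ)$ and $\DAlg_{\bF_p}$-type data. Concretely, an object is a derived ring $A$ with an endomorphism $\varphi$ and a homotopy between $\varphi\otimes\bF_p$ and the derived Frobenius of $A\otimes\bF_p$. There is a forgetful functor from $\DAlg^\delta$ to this pullback, given by $\varphi(x)=x^p+p\delta(x)$. The category $\DAlg^\varphi$ is monadic over $\DAlg_\bZ$ by Barr--Beck--Lurie, since pullbacks of monadic functors preserving sifted colimits and with left adjoints stay monadic. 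It then suffices to check that the induced map of monads is an equivalence on free objects. Since both monads preserve sifted colimits, and the right-left extension is controlled by polynomial rings, we may reduce to polynomial rings $P$. There we must show that the free derived $\varphi$-ring on $P$ is discrete and equals $F^\delta(P)$. The free $\varphi$-ring on $P$ is computed as a pushout of $\bigotimes_{\NN}P$ (the free endomorphism) along the relation imposing the Frobenius lift mod $p$. This relation is a derived quotient by the elements $\varphi(x)-x^p$ modulo $p$. The main obstacle is showing that this derived quotient is discrete and $p$-torsion free, i.e. that the elements $\varphi(x_i^{(k)})-(x_i^{(k)})^p$ form a regular sequence, so that dividing by $p$ is legitimate. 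For this we use the classical description $\bZ[x,\delta x,\dots]$ together with a filtration by $\varphi$-degree that reduces the question to a Koszul computation in a polynomial ring.

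For $\s\c\CAlg^\delta[W^{-1}]\we\DAlg^\delta$, we follow the strategy of Brantner--Campos--Nuiten as it is used for $\DAlg_\bZ$. Simplicial cosimplicial $\delta$-rings carry a model structure transferred from simplicial cosimplicial commutative rings, because $\delta$-rings form a category monadic and comonadic over $\CAlg$: the forgetful functor has both adjoints, the right adjoint being Witt vectors. We then show that the underlying $\infty$-category is monadic over $\s\c\CAlg[W^{-1}]\we\DAlg_\bZ$. Finally, we identify the monad on polynomial generators, where it is $F^\delta$ by the comparison above, and check that it preserves the same colimits and totalizations. The delicate point in this step is verifying that the derived free $\delta$-ring functor on simplicial cosimplicial objects computes the right-left extension. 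This again follows from the fact that $F^\delta$ is a filtered colimit of finite-degree polynomial functors, so that it commutes with finite totalizations. Compatibility with the forgetful functors to $\DAlg_\bZ$ holds by construction, since every comparison is made over $\DAlg_\bZ$.
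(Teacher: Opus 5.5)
Your treatment of the simplicial cosimplicial model is essentially the paper's (Theorem~\ref{thm:cosimplicial}). The input you use there, that $\LSym^\delta_\bZ$ commutes with coskeletal totalizations of free modules, is the right one.

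The genuine gap is in the comparison $\DAlg^\delta\simeq\DAlg^\varphi$. You argue that since both monads preserve sifted colimits, it suffices to compare them on polynomial rings. But sifted colimits of polynomial rings are connective, so this only identifies the monads on $\Ani_\bZ$. An object such as $\LSym_\bZ(\bZ[-n])$ is reached from polynomial rings only through a coskeletal totalization. $\LSym^\delta_\bZ$ commutes with such totalizations by construction. $T^\varphi$, however, is defined abstractly as the monad of the pullback $\DAlg^\varphi_\bZ\to\DAlg_\bZ$. Nothing in your argument shows that $T^\varphi\LSym_\bZ(\bZ[-n])$ is the totalization of $T^\varphi$ applied to a cosimplicial polynomial resolution. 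This is exactly condition (c) of Definition~\ref{def:derived_plethory}. It is also why Theorem~\ref{thm:deriving-plethories} asserts full faithfulness of $T\mapsto\pi_0T$ only on derived flat plethories: sifted-colimit preservation plus agreement on animated rings does not by itself pin down a monad on all of $\DAlg_\bZ$.

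Supplying this is the main work in the paper. It rewrites $\DAlg^\varphi_\bZ$ as coalgebras for the co-pointed endofunctor $w_0\colon W_2\to\id$ built from a derived $W_2$. Then $T^\varphi$ is the free monad on the left adjoint $F_2^\varphi$, and it acquires an exhaustive filtration by functors $F_i^\varphi$. Successive stages are related by pushouts adjoint to fiber sequences $U\to UW_i\to UW_{i-1}$ (Proposition~\ref{prop:filtration on Tphi}). One inducts along this filtration, starting from the Mathew--Mondal result that $\LSym_\bZ(\bZ[-n])\to\bZ$ is ccff and using stability of ccff maps under base change, composition and filtered colimits. This shows $T^\varphi\LSym_\bZ(\bZ[-n])\to\bZ$ is ccff, which gives condition (c) (Lemma~\ref{lem: Tphi and ccff}). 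Only then does the problem reduce to polynomial rings. There the paper uses Bhatt--Lurie's description of animated $\delta$-rings via Frobenius lifts (Lemma~\ref{lem:restriction to animated delta}) rather than a direct computation.

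Two smaller points:
\begin{itemize}
\item Your description of the free $\varphi$-algebra on $P$ as a single pushout of $\bigotimes_\NN P$ is not correct. After adjoining the quotients $(\varphi(x)-x^p)/p$, the new elements again fail the Frobenius condition and must be treated in turn. That iteration is precisely the filtration $F_i^\varphi$.
\item The functor $\DAlg^\delta\to\DAlg^\varphi$ cannot be defined by the formula $\varphi(x)=x^p+p\delta(x)$ on non-discrete objects. In the paper it comes for free from full faithfulness, once both monads are known to be derived flat plethories.
\end{itemize}
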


The three ways of defining derived $\delta$-rings admit natural counterparts for $\lambda$-rings, binomial rings and Boolean rings. 
For example, algebras over the derived $\lambda$-ring monad and the derived binomial ring monad have been studied in~\cite{antieau-spherical} and~\cite{kubrak-shuklin-zakharov}, respectively.
The analogue of (iii) for $\lambda$-rings has been introduced in~\cite{hubner_animated}
in the animated setting. 
An approach to derived binomial rings and Boolean algebras in the style of (iii) appears in~\cite{antieau-spherical}: 
here these algebras are defined in terms of fixed points for the natural action of the Adams operations on derived $\lambda$-rings and of the $p$-Frobenius on derived $\bF_p$-algebras, respectively.

Our other main comparison results are as follows. We refer to the body of the paper for precise notation.

\begin{theorem}[$\lambda$-comparison]\label{thm:lambda intro}
    There are equivalences $\s\c\CAlg^\lambda[W^{-1}]\we\DAlg^\lambda\we\DAlg^\psi$.
\end{theorem}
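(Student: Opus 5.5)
The plan is to deduce both equivalences from the general machinery of derived plethories announced in the abstract, by checking that the $\lambda$-ring monad satisfies its hypotheses. The free $\lambda$-ring functor on polynomial rings, $\bZ[x_1,\dots,x_n]\mapsto \Lambda\otimes\cdots$, is given by a plethory $\Lambda$ (the ring of symmetric functions), which is a polynomial ring over $\bZ$, hence flat. So I will treat $\Lambda$ as a \emph{derived plethory}: a monad on $\DAlg_\bZ$ that preserves sifted colimits and is right-left extended from its values on finitely generated polynomial rings, where it lands in polynomial-type rings.

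\textbf{First equivalence.} I will identify $\s\c\CAlg^\lambda[W^{-1}]$ with modules over a monad on $\s\c\CAlg[W^{-1}]\simeq\DAlg_\bZ$, using the Brantner--Campos--Nuiten comparison for the underlying categories. The forgetful functor $\s\c\CAlg^\lambda\to\s\c\CAlg$ preserves weak equivalences, since these are detected on underlying objects. Its left adjoint, applied levelwise to cofibrant objects, is the levelwise free $\lambda$-ring. Because $\Lambda$ is flat and polynomial, this left adjoint preserves weak equivalences between cofibrant objects, and it commutes with the relevant geometric realizations and totalizations, at least on the generating coconnective and connective pieces. I would then apply the Barr--Beck--Lurie theorem to the localized adjunction. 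The forgetful functor on the localized category is conservative. It preserves $U$-split geometric realizations: these can be computed levelwise using the model structure, because sifted colimits of $\lambda$-rings are computed on underlying rings. The induced monad agrees with $\DAlg^\lambda$'s monad, i.e.\ the right-left extension of $\Lambda\circ-$. This holds because both are sifted-colimit preserving and right-left extended, and they agree on polynomial rings and on their cosimplicial coconnective resolutions.

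\textbf{Second equivalence.} $\DAlg^\psi$ is the pullback encoding commuting Frobenius lifts $\psi^p$ as in (iii). I will construct a functor $\DAlg^\lambda\to\DAlg^\psi$ by sending a $\lambda$-ring to its underlying derived ring with Adams operations. Then I will prove it is an equivalence by showing that both sides are monadic over $\DAlg_\bZ$ and that the comparison of monads is an equivalence. Both monads preserve sifted colimits, and $\DAlg^\psi$ is built from limits and fixed-point-type constructions, which is the delicate part. It therefore suffices to check the comparison on free objects over polynomial rings, and then on coconnective resolutions via the right-left extension property. On polynomial rings the claim reduces to the classical theorem of Wilkerson and Joyal: torsion-free $\lambda$-rings are the same as torsion-free rings with commuting Frobenius lifts. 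Equivalently, $\Lambda$ is the free ring with commuting $\delta_p$'s, and $\Lambda$ is the iterated (over all primes) Witt/$\delta$ construction. I will use the $\delta$-comparison (Theorem~\ref{thm:delta intro}) prime by prime, identifying $\DAlg^\psi$ with commuting derived $\delta_p$-structures. The free such object on a polynomial ring is then the iterated free $\delta_p$-ring, which is $\Lambda\odot P$.

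\textbf{Main obstacle.} The main obstacle is commuting the infinitely many primes. I must show that the free derived object with commuting $\psi^p$-structures for all $p$ is computed by the (filtered colimit over finite sets of primes of the) iterated free constructions, and that this is flat and discrete on polynomial rings. I would try to prove this by first treating finitely many primes by induction using the $\delta$-case, then passing to the filtered colimit, which is compatible with monadicity since all monads involved preserve filtered colimits.
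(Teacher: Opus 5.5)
Your overall architecture (realize both sides as modules over derived flat plethories and compare them through classical or animated data) matches the paper's. Your sketch of the first equivalence is essentially Theorem~\ref{thm:cosimplicial}. That part is fine, because right extendability of $\LSym^\lambda_\bZ$ holds by construction.

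\textbf{The gap is in the second equivalence.} You write that ``it therefore suffices to check the comparison on free objects over polynomial rings, and then on coconnective resolutions via the right-left extension property.'' This assumes that the monad $T^\psi$ of $\DAlg^\psi_\bZ$ has that extension property. Concretely, you need $T^\psi$ to satisfy condition (c) of Definition~\ref{def:derived_plethory}; by Lemma~\ref{lem:ccff condition}, this amounts to $\LSym^{T^\psi}_\bZ(\bZ[-n])\to\bZ$ being coconnectively faithfully flat for $n>0$.

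\begin{itemize}
\item Nothing in the pullback definition of $\DAlg^\psi_\bZ$ gives this, and it is the real content of the theorem.
\item Polynomial rings generate only $\Ani_\bZ$ under sifted colimits. So agreement of $T^\psi$ with the $\lambda$-monad there says nothing about, for example, $T^\psi(\LSym_\bZ(\bZ[-n]))$.
\item Without (c) you cannot use the full faithfulness of $T\mapsto\pi_0T|_{\CAlg^\heart_\bZ}$ from Theorem~\ref{thm:deriving-plethories}. That full faithfulness is also what produces the comparison in the paper.
\item Your proposed functor $\DAlg^\lambda_\bZ\to\DAlg^\psi_\bZ$ ``taking Adams operations'' needs more than endomorphisms. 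It also needs coherent homotopies $\psi^p\otimes\bF_p\simeq\Fr_p$, which you do not construct.
\item Wilkerson's theorem applies to discrete torsion-free rings, so it does not by itself tell you that $T^\psi(\bZ[x])$ is discrete.
\end{itemize}

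\textbf{How the paper fills this.}
\begin{itemize}
\item Connectivity comes from a truncation argument: $\tau_{\geq 0}$ is right adjoint to the inclusion of animated objects (Lemma~\ref{lem:restriction to animated delta}, used in Lemma~\ref{lem: restriction to animated lambda}). Hübner's animated comparison then identifies $T^\psi|_{\Ani_\bZ}$ with the animated $\lambda$-monad, which gives discreteness and flatness of $T^\psi(\bZ[x])$.
\item For (c), the paper proves that the fiber of $\LSym^{T^\psi}_\bZ(\bZ[-n])\to\bZ$ has Tor-amplitude $\le -1$, checking this one prime $p$ at a time.
\item For a $\bZ_{(p)}$-algebra $R$ one has $R\otimes_\bZ\bF_q\simeq 0$ for $q\neq p$. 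Hence $\DAlg^\psi_{\bZ_{(p)}}\simeq\Fun(\B\bZ^{p\nmid}_{>0},\DAlg^{\delta_p}_{\bZ_{(p)}})$.
\item The question therefore becomes the $\delta_p$-statement for $\bigoplus_{\bZ^{p\nmid}_{>0}}\bZ[-n]$, which is Lemma~\ref{lem: Tphi and Tor-amplitude of fib}. That lemma is proved using the filtration of $T^\varphi$ by the left adjoints of the Witt functors $W_i$, together with Mathew--Mondal.
\end{itemize}

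This $p$-localization is what removes your ``main obstacle.'' Your alternative, writing the free object for finitely many primes as a composite of the $T^{\delta_p}$, would require constructing derived distributive laws. Even then, condition (c) for the composite would not be automatic: functors preserving coskeletal totalizations of flat modules are not closed under composition, which is why $\End'_\Sigma$ is only a sub-operad.
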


\begin{theorem}[Binomial comparison]\label{thm:Bin intro}
    There are equivalences $\s\c\CAlg^\Bin[W^{-1}]\we\DAlg^\Bin\we\DAlg^{\psi=1}$.
\end{theorem}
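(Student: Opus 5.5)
We split the binomial comparison into the two equivalences $\s\c\CAlg^\Bin[W^{-1}]\we\DAlg^\Bin$ and $\DAlg^\Bin\we\DAlg^{\psi=1}$, and prove both with the same strategy as the $\lambda$-comparison (Theorem~\ref{thm:lambda intro}), which we treat as the model case.

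\textbf{The first equivalence.} We use the general machinery for derived plethories. The free binomial ring monad on polynomial rings $\bZ[x_1,\dots,x_n]$ is a plethory in the classical sense: the free binomial ring on one generator is the ring of integer-valued polynomials $\mathrm{Int}(\bZ)\subset\bQ[x]$. This ring is a free $\bZ$-module, with the binomial basis $\binom{x}{n}$, and it is compatible with filtered colimits and with the symmetric monoidal structure. The key input is that $\mathrm{Int}(\bZ^n)$ is flat over $\bZ[x_1,\dots,x_n]$ and that its underlying functor is polynomial/filtered (excisively approximable) in the sense needed to right–left extend from $\Mod_\bZ^{\cn}$ to all of $\Mod_\bZ$ via Brantner--Mathew.

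Granting this, the derived binomial monad is a derived plethory, i.e.\ a monad on $\DAlg_\bZ$ preserving sifted colimits and totalizations of suitable coconnective cosimplicial objects. The general comparison theorem for modules over a derived plethory then identifies simplicial cosimplicial algebras modulo weak equivalences with algebras over the derived monad, exactly as for $\delta$- and $\lambda$-rings. The main check is that the underlying functor of the monad, $\mathrm{Int}(\bZ)\otimes_{\bZ[x]}(-)$ in the appropriate sense, commutes with the relevant totalizations. We would get this from the filtration of $\mathrm{Int}(\bZ)$ by degree, whose graded pieces are finite free.

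\textbf{The second equivalence.} Here $\DAlg^{\psi=1}$ is defined as the homotopy fixed points of the Adams operations acting on $\DAlg^\psi\we\DAlg^\lambda$ (Theorem~\ref{thm:lambda intro}). Classically, a binomial ring is exactly a $\lambda$-ring on which all Adams operations are the identity (Elliott/Wilkerson). At the level of plethories this says that the binomial plethory $\mathrm{Int}(\bZ)$ is the quotient of the $\lambda$-plethory $\Lambda$ by the ideal forcing $\psi^n=\mathrm{id}$, i.e.\ the coequalizer of $\Lambda\otimes\bZ[\psi^n]$ acting versus the trivial action.

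We would therefore proceed in three steps:
\begin{enumerate}
\item Construct a functor $\DAlg^\Bin\to\DAlg^{\psi=1}$ by restricting along the map of derived plethories $\Lambda\to\mathrm{Int}(\bZ)$. This yields a derived $\lambda$-ring together with a trivialization of the Adams action, coherently, because the trivialization exists at the plethory level.
\item Show that the functor preserves sifted colimits and is conservative.
\item Compare free objects, i.e.\ show that the free object of $\DAlg^{\psi=1}$ on $\bZ[x]$ is $\mathrm{Int}(\bZ)$ with no higher homotopy.
\end{enumerate}
Barr--Beck--Lurie then gives the equivalence.

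\textbf{Main obstacle.} The hardest step is the free-object computation in step~3. The free $\psi=1$ object is a colimit of the free derived $\lambda$-ring $\Lambda$ along the action of the multiplicative monoid $\NN_{>0}$ by Adams operations. We must show that this colimit, a derived quotient of $\Lambda=\bZ[\lambda^1,\lambda^2,\dots]$ by the relations $\psi^n(x)=x$, is discrete and equals $\mathrm{Int}(\bZ)$. The plan is to work one prime at a time:
\begin{itemize}
\item Rationally, $\psi$-fixed points reduce to $\bQ[x]$.
\item $p$-adically, $\psi^p$ is a Frobenius lift, so the fixed points become derived $\delta$-rings with $\varphi=\mathrm{id}$. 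There we use the $\delta$-comparison (Theorem~\ref{thm:delta intro}) and the known computation that $\bZ_p$-valued continuous functions on $\bZ_p$ arise this way.
\end{itemize}
We then glue the rational and $p$-adic answers via the arithmetic fracture square. Regularity of the relevant sequence $\psi^n(x)-x$ is the delicate point: it is what guarantees vanishing of higher homotopy.
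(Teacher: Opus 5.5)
\textbf{Gap in step~3 of the second equivalence.} To conclude by Barr--Beck--Lurie that a functor $\DAlg^{\Bin}\to\DAlg^{\psi=1}$ over $\DAlg_\bZ$ is an equivalence, you need the comparison map of monads $T^{\lambda,\psi_\star=1}(A)\to T^{\Bin}(A)$ to be an equivalence for \emph{every} derived ring $A$, not only for $A=\bZ[x]$. Both monads preserve colimits, but the closure of the polynomial rings under colimits in $\DAlg_\bZ$ is only $\Ani_\bZ$. A general derived ring is a sifted colimit of algebras $\LSym_\bZ(M)$ with $M\in\Perf_\bZ^{w\leq 0}$, so you must also control inputs such as $\LSym_\bZ(\bZ[-n])$ for $n\geq 1$. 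As it stands, your argument only compares the connective parts.

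What makes the value on $\bZ[x]$ determine the monad on such coconnective inputs is condition~(c) of Definition~\ref{def:derived_plethory}. By Lemma~\ref{lem:ccff condition}, this is equivalent to coconnective faithful flatness of $\LSym^{\psi_\star=1}_\bZ(\bZ[-n])\to\bZ$ for all $n\geq1$. Your plan never addresses this. The ``regularity of $\psi^n(x)-x$'' you flag only bears on discreteness of the value on $\bZ[x]$, i.e.\ condition~(b).

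Supplying this is the real content of Proposition~\ref{prop:binomial plethory}. By To\"en, each $F_{p_k}-\id\colon\mathbf H_{k-1}\to\mathbf H_{k-1}$ is a flat surjection. Hence the affine stack attached to $\LSym^{\psi_{\le k}=1}_\bZ(\bZ[-n])$ is $\B^n\mathbf H_k$, and Mathew--Mondal gives ccff. The same flatness yields $\LSym^{\psi_\star=1}_\bZ(\bZ)\simeq\mathcal O(\mathbf H)\simeq\Bin(\bZ)$ directly, without fracture squares.

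Once both monads are known to be derived flat plethories, Theorem~\ref{thm:deriving-plethories} makes your steps~1--3 unnecessary. By full faithfulness it suffices to identify the two classical categories of algebras over $\CAlg_\bZ^\heart$, which is Elliott's theorem via Remark~\ref{rem:fixed points for 1-categories}. The comparison functor and its coherences (your ``trivialization at the plethory level'') then come for free.

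\textbf{First equivalence.} This follows the paper's route: apply Theorem~\ref{thm:cosimplicial} to the derived binomial monad. However, your stated key input is false and beside the point. $\mathrm{Int}(\bZ)$ is not flat over $\bZ[x]$: since $(x^p-x)/p\in\mathrm{Int}(\bZ)$, the nonzero $\bF_p[x]$-module $\mathrm{Int}(\bZ)/p$ is killed by $x^p-x$. Condition~(b) only asks for flatness over $\bZ$.

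Note also that a derived flat plethory, as required in Theorem~\ref{thm:cosimplicial}, must preserve all colimits of derived rings, not just sifted ones, and must satisfy condition~(c). The paper imports both facts for the derived binomial monad from Kubrak--Shuklin--Zakharov.
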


\begin{theorem}[Boolean comparison]\label{thm:Bool intro}
    There are equivalences
   $\s\c\CAlg_{\bF_p}^{\varphi=1}[W^{-1}]\we\DAlg_{\bF_p}^\Bool\we\DAlg_{\bF_p}^{\varphi=1}$.
\end{theorem}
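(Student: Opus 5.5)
We prove the Boolean comparison by placing $\DAlg_{\bF_p}^\Bool$ in the middle and comparing it separately with each of the other two sides. We use the general machinery the paper sets up for derived plethories. The free Boolean ($\varphi=1$) $\bF_p$-algebra monad on polynomial $\bF_p$-algebras sends $\bF_p[x_1,\dots,x_n]$ to $\bF_p[x_1,\dots,x_n]/(x_i^p-x_i)$. On finite sets this is the ring of $\bF_p$-valued functions on $\bF_p^n$, so it is a quotient plethory of the polynomial one. We regard it as a derived plethory $P$ acting on $\DAlg_{\bF_p}$: a sifted-colimit-preserving monad whose underlying endofunctor is right left Kan extended from polynomial algebras, then extended to coconnective objects via the Brantner--Mathew/Raksit procedure. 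By definition $\DAlg_{\bF_p}^\Bool$ is the category of $P$-modules in $\DAlg_{\bF_p}$.

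For the first equivalence, $\s\c\CAlg_{\bF_p}^{\varphi=1}[W^{-1}]\we\DAlg_{\bF_p}^\Bool$, we follow the Brantner--Campos--Nuiten comparison. First, the forgetful functor from simplicial cosimplicial Boolean rings to simplicial cosimplicial $\bF_p$-algebras creates the model structure. Second, localization followed by the equivalence $\s\c\CAlg_{\bF_p}[W^{-1}]\we\DAlg_{\bF_p}$ yields a functor to $\DAlg_{\bF_p}$. This functor is conservative, preserves sifted colimits (hence geometric realizations), and admits a left adjoint. Third, Barr--Beck--Lurie then identifies the source with modules over a monad $T$ on $\DAlg_{\bF_p}$. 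Fourth, to get $T\we P$ it suffices to check the map on free objects $\LSym(V)$ for $V$ a finite direct sum of shifts $\bF_p[n]$, $n\in\bZ$. Both sides commute with the relevant colimits and totalizations, so this reduces to the underived statement that the free Boolean ring on a simplicial cosimplicial polynomial algebra computes the correct derived value. That statement holds because $\bF_p[x]\to\bF_p[x]/(x^p-x)$ is étale, so the construction is flat and preserves the needed (co)simplicial resolutions.

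For the second equivalence, $\DAlg_{\bF_p}^\Bool\we\DAlg_{\bF_p}^{\varphi=1}$, where the target is the equalizer/fixed points of $\varphi$ on $\DAlg_{\bF_p}$, we construct a functor sending a $P$-module to its underlying derived ring together with the homotopy $\varphi\simeq\id$ furnished by the plethory map $x\mapsto x^p=x$. We then show the forgetful functor $\DAlg_{\bF_p}^{\varphi=1}\to\DAlg_{\bF_p}$ is monadic, with left adjoint given by a "coinvariants" or pushout formula $A\mapsto A\otimes_{A\otimes A}A$, using $\varphi\otimes\id$. Finally we compare monads on free objects: for a polynomial ring the derived coequalizer of $\varphi$ and $\id$ should be $\bF_p[x]/(x^p-x)$, since $x^p-x$ is a nonzerodivisor, so no higher Tor appears.

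The main obstacle will be the non-connective free objects $\LSym(\bF_p[-n])$. Here the coequalizer and the Boolean monad must be computed through the coconnective extension, and both sides have to agree, including the derived Frobenius on cosimplicial-type objects. The first approach to try is to reduce everything to the polynomial case by right Kan extension and totalization. This uses the fact that $P$, being étale over $\LSym$, commutes with finite totalizations, in the manner of the analogous arguments for $\delta$- and $\varphi$-rings in the $\delta$-comparison theorem.
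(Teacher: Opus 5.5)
Your outline has the right overall shape: monadicity of $\DAlg_{\bF_p}^{\varphi=1}$ with the coequalizer of $\varphi$ and $\id$ as monad, discreteness of $T^{\varphi=1}(\bF_p[x])=\bF_p[x]/(x^p-x)$, and the simplicial cosimplicial comparison. But the one non-formal step is missing. You never show that the coinvariants monad $T^{\varphi=1}$ preserves totalizations of coskeletal cosimplicial free modules, i.e.\ that it satisfies condition (c) of Definition~\ref{def:derived_plethory}. Without this you cannot reduce the comparison on $\LSym_{\bF_p}(\bF_p[-n])$ to polynomial algebras. $T^{\varphi=1}$ is defined by a colimit, so there is no a priori reason for it to commute with totalizations.

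You flag this as ``the main obstacle'' and propose to handle it via ``$P$ being \'etale over $\LSym$''. That does not work, for three reasons:
\begin{itemize}
\item $\bF_p[x]\to\bF_p[x]/(x^p-x)$ is a surjection (a closed immersion), not \'etale and not even flat.
\item Flatness of the values on polynomial algebras is condition (b), and it does not by itself imply (c), which is a descent statement.
\item The claim is about $P$, whereas what you need is the corresponding statement for $T^{\varphi=1}$.
\end{itemize}
Similarly, $P$ is not produced by the Brantner--Mathew/Raksit procedure. The Boolean monad sends $\bF_p^{\oplus n}$ to $\bF_p^{\oplus p^n}$, so it is not a filtered colimit of finite-degree functors, and it carries no useful filtration.

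The missing idea is unipotence via Artin--Schreier. Since $\varphi-\id\colon\Ga\to\Ga$ is flat and surjective with kernel $\underline{\bZ/p}$, induction gives $(\B^n\Ga)^{\varphi=1}\simeq\B^n\underline{\bZ/p}$. Passing to affine stacks, the augmentation $\LSym^{\varphi=1}_{\bF_p}(\bF_p[-n])\to\bF_p$ becomes $R\Gamma(\B^n\underline{\bZ/p},\Oscr)\to\bF_p$. This map is coconnectively faithfully flat by Mathew--Mondal, so Lemma~\ref{lem:ccff condition} gives (c) and $T^{\varphi=1}$ is a derived flat plethory. If by ``\'etale'' you meant the Artin--Schreier map, this is where it genuinely enters, through descent.

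The same unipotence of $\underline{\bZ/p}$ is what produces $\Bool$ in the first place, via Proposition~\ref{prop:image restriction plethories}. Given that, your first equivalence is exactly Theorem~\ref{thm:cosimplicial}, whose key step is property (c) for $\Bool$, not \'etaleness.

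Once $T^{\varphi=1}$ is known to be a derived flat plethory, you also do not need to build a comparison functor from a ``homotopy $\varphi\simeq\id$''. As stated, that does not supply the coherent $\h\B\bN$-data. Nor do you need to compare monads on coconnective free objects by hand. By Theorem~\ref{thm:deriving-plethories}, a derived flat plethory is determined by $\pi_0T|_{\CAlg^\heart_{\bF_p}}$. For $T^{\varphi=1}$ this is the monad for $(\CAlg^\heart_{\bF_p})^{\h\B\bN}$, which is the monad for Boolean rings by Remark~\ref{rem:fixed points for 1-categories}.
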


Theorem \ref{thm:Bin intro}, which was our motivation for starting this project, provides a verification that the three models for integral homotopy types studied in~\cite{antieau-spherical,horel,kubrak-shuklin-zakharov} agree.\\

Our proofs of Theorems~\ref{thm:delta intro} to~\ref{thm:Bool intro} rely on identifying the categories obtained from (i), (ii) and (iii) with categories of modules for a special type of monad on derived commutative rings. We call these \emph{derived flat plethories}.\\

Let us fix a discrete commutative ring $k$.
In classical commutative algebra, a \textit{plethory} is defined to be a monad $T\colon \CAlg_k^{\heart}\to\CAlg_k^{\heart}$ on discrete commutative $k$-algebras which admits a right adjoint
$R^T$,~\cite{bergman-hausknecht,borger-wieland,tall-wraith}. 
One can view $R^T\cong \Spec(T(k[x]))$ as an affine ring scheme, and we will say that a plethory $T$ is \textit{flat} if the discrete $k$-algebra $T(k[x])$ is flat. An example of a flat plethory is given by the free $\delta$-ring monad, in which case $\Spec(T(k[x]))$ is the ring scheme of $p$-typical Witt vectors. Plethories admit a natural counterpart in the setting
of derived commutative $k$-algebras:
we say that a colimit-preserving monad $T\colon \DAlg_k\to \DAlg_k$ is a \textit{derived flat
plethory} if its right adjoint $R^T$ defines a flat affine ring scheme over $\Spec(k)$ and $T$ satisfies a further technical condition
which determines its behavior on coconnective objects.
We refer the reader to Definition~\ref{def:derived_plethory} for a precise formulation of the latter.
The engine propelling our work is then the following theorem.
\begin{theorem}[Derived flat plethories are unipotent flat plethories, Theorem~\ref{thm:deriving-plethories}]\label{thm:deriving-plethories-intro}
	Let $k$ be a commutative ring. Let 
    $\Pleth_k^{\heart,\flat}$ denote the category of flat plethories on $\CAlg_k^\heart$ and
    let $\Pleth^{\flat}_k$ denote the $\infty$-category of derived flat plethories on $\DAlg_k$.
    Then there exists a fully faithful functor
    $$\Pleth_k^{\flat}\rightarrow\Pleth_k^{\heart,\flat},\quad T\mapsto \pi_0T\big|_{\CAlg_k^{\heart}}.$$
   If $k$ is regular, then its essential image consists of those flat plethories for which $R^T$ is a flat unipotent ring scheme.
\end{theorem}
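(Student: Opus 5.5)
The basic idea is to pass from monads to their right adjoints. A colimit-preserving monad $T$ on $\DAlg_k$ has a right adjoint $R^T$, and $R^T$ is a comonad. Such a $T$ is determined by its value on the free object $\LSym_k(k)=k[x]$, together with its values on the shifted free objects $\LSym_k(k[n])$ for $n\in\bZ$. Equivalently, it is determined by the co-ring object $T(k[x])$ in $\DAlg_k$, meaning an affine derived ring scheme with its comonad structure, once one adds the extra condition on coconnective behavior from Definition~\ref{def:derived_plethory}. Flatness of $T(k[x])$ means this co-ring object is discrete. It is therefore a classical flat $k$-algebra $A=\pi_0T(k[x])$ carrying a co-ring and co-plethory structure, and the structure maps can be checked on $\pi_0$ because all the relevant tensor powers $A^{\otimes n}$ are discrete and flat. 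This gives the functor $T\mapsto \pi_0T|_{\CAlg_k^\heart}$. On flat discrete inputs, $T$ restricted to $\CAlg_k^\heart$ is the classical plethory with $R^T=\Spec A$.

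For full faithfulness, I would argue as follows. Derived plethories are monads whose underlying endofunctors are left Kan extended from polynomial algebras $k[x_1,\dots,x_n]$ in the connective part. The technical condition in Definition~\ref{def:derived_plethory} should also force them to be determined by their values on the coconnective generators $\LSym(k[-n])$, which are right-left extended. Morphisms of monads are then the same as morphisms of the co-ring-with-composition structures on $A$. Since $A$ and its tensor powers are discrete, the mapping spaces between such structures are discrete, and they agree with the classical morphisms of plethories. In the bootstrap I would first compare the endofunctors, and then compare the monad structures. The second step is again computed by the composites $T\circ T$ on free objects, and these are controlled by $A\otimes A$.

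For essential surjectivity onto the unipotent ones, assume $k$ is regular and start from a classical flat plethory $P$ with $R^P=\Spec A$ unipotent. Define $T$ on $\DAlg_k$ as the left adjoint of the functor $B\mapsto \Map(A,B)$, with its derived ring structure coming from the co-ring structure on $A$. I would first build this on polynomial algebras by $k[x_1,\dots,x_n]\mapsto A^{\otimes n}$, and then extend using the Brantner--Campos--Nuiten right-left extension of the $\LSym$ monad. The key point is that $T$ must preserve the coconnective structure. Concretely, $T(\LSym(k[-n]))$ should be computed by the cosimplicial/totalization formula, and the map $\LSym$-of-totalization to totalization-of-$T$ must be an equivalence. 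This is exactly where unipotence enters. A flat unipotent affine group scheme over a regular ring has a coordinate ring $A$ that is an increasing union of finite filtered pieces with graded pieces $\Ga$-like. Its cohomology, meaning the cobar complex of $A$ and in particular $\Tot$ of the Čech nerve computing $\Map(A,\Sigma^{-n})$, commutes with the relevant limits. So $A$ behaves like a colimit of $k[x]$'s under the filtration, and since $\LSym$ satisfies the condition, $T$ does too. Regularity is used to guarantee that flat unipotent ring schemes admit such filtrations (via Lazard/finite-type approximation), and also for the converse. For the converse, if $T$ satisfies the coconnective condition, evaluating on $k[-1]$ shows that $\Hom(\Ga,R^T)$-type cohomology is as required, which forces unipotence: there are no multiplicative (torus) parts, because those would break the Tot-comparison, as $\mathrm{B}\mathbb{G}_m$ cohomology does not match.

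I expect the main obstacle to be this last step: showing that the coconnective condition of Definition~\ref{def:derived_plethory} is equivalent to unipotence of $R^T$. I would attack it first for $A=k[x]$, the $\LSym$ case, then for finite iterated extensions of $\Ga$ by induction using fiber sequences, then pass to filtered colimits. The converse I would handle by reducing to residue fields of $k$, where unipotence is characterized by the vanishing of invariants of nontrivial characters.
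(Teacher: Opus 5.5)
Your construction of the functor and your full-faithfulness sketch follow the paper's route in outline: by Lemma~\ref{lem:cech descent vs right extended}, a derived flat plethory is right Kan extended from $\Proj_k^\omega$ on $\Perf_k^{w\leq 0}$, so it is determined on flat inputs. Two repairs are needed there. First, $B\mapsto\Map(A,B)$ only recovers an animated ring (the connective cover of $R^T(B)$), so it cannot by itself define $T$ on $\DAlg_k$. Second, the monad-level comparison needs the operadic form of Proposition~\ref{prop:joost}: maps $K\to J$ are computed on flat modules whenever $J$ is right Kan extended, even when $K$ is a composite such as $T\circ T$ that does not satisfy condition (c). Note also that $T\circ T(k[x])=T(A)$, not $A\otimes A$.

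The genuine gap is the essential image. You never isolate what the right--left extension must satisfy. Since $\R\Sym^{T'}_k$ is \emph{defined} by right Kan extension, the totalization formula holds by fiat. What has to be shown is that $\R\Sym^{T'}_k$ sends fiber sequences $M'\to M\to M''$ in $\Perf_k^{w\leq 0}$ to pushouts, $\R\Sym^{T'}_k(M)\otimes_{\R\Sym^{T'}_k(M')}k\simeq\R\Sym^{T'}_k(M'')$. This gives both right extendability and that the extended monad is a left adjoint.

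Your mechanism for getting this from unipotence is a filtration of $A$ with ``$\Ga$-like'' graded pieces, so that $A$ ``behaves like a colimit of $k[x]$'s''. This is unjustified and does not fit the paper's own examples. Even over a field, the composition factors of a unipotent group are in general only (forms of) subgroups of $\Ga$, such as $\alpha_p$ or $\underline{\bZ/p}$. In the Boolean and binomial cases, $R_T$ is $\underline{\bZ/p}=\fib(\varphi-1\colon\Ga\to\Ga)$ or the iterated kernel $\mathbf H$ of the maps $F_p-1$ on $\mathbf W$. These are built from kernels of flat surjections, not colimits of polynomial rings. More fundamentally, the condition is not a property of the algebra $A$ that is inherited along filtrations; it concerns the cohomology of the stacks $\B^nR_T$.

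The missing idea is the identification $\Gamma(R_T\otimes_k M,\Oscr)\simeq\R\Sym^T_k(M^\vee)$ for $M\in\Perf_k^{\cn}$, so that $\R\Sym^T_k(k[-n])\simeq\R\Gamma(\B^nR_T,\Oscr)$. With this:
\begin{enumerate}
\item strong unipotence makes every $R_T\otimes_k M$ an affine stack;
\item a cofiber sequence in $\Perf_k^{\cn}$ gives an fpqc epimorphism, hence a ccff map on functions;
\item To\"en's and Mathew--Mondal's results then give the base-change equivalence.
\end{enumerate}
Regularity, in the form of finite flat dimension of finitely presented modules, is used only to identify unipotent with strongly unipotent. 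It is not used to produce filtrations.

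Your converse has the same problem. Reducing to residue fields and ruling out torus parts via characters would require two things you do not address: that the derived-plethory condition survives base change to residue fields, and that unipotence over $k$ in the relevant sense ($\B R_T$ an affine stack) is detected on fibres. The claim that a multiplicative part ``would break the Tot-comparison'' is asserted, not proved.

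The paper argues directly. Under the finite flat dimension hypothesis, the converse of Lemma~\ref{lem:ccff condition} shows that condition (c) forces $\R\Sym^T_k(k[-n])\to k$ to be ccff. Indeed, its fibre is a totalization of flat modules vanishing in cosimplicial degrees $<n$, hence has Tor-amplitude $\leq -1$. Ccff descent then gives inductively $\Spec\R\Sym^T_k(k[-n])\simeq\B^nR_T$. So every $\B^nR_T$ is affine, and $R_T$ is strongly unipotent.
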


Theorem~\ref{thm:deriving-plethories-intro} relies on two building blocks: The first is a form of Morita theory for monads, the second is a generalization of the approach to extending functors of Brantner--Mathew~\cite{brantner-mathew}
to arbitrary base rings $k$. Together, roughly speaking, they allow us to identify sifted-colimit preserving monads on derived $k$-algebras with algebras in $\Sym_k$-bimodules in endofunctors on $\Mod_{k}^\heart$.
A central result is then the following.
\begin{theorem}\label{thm:extending-monad-intro}
Let $k$ be a commutative ring. Then there exist equivalences of $\infty$-categories
$$\Alg(\End_{\Sigma}'(\DAlg_k)) \simeq \Alg(\End_{\Sigma}'(\Mod_k))_{\LSym_k/} \simeq
    \Alg(\End_\omega^{\RE}(\Mod_k^{\flat}))_{\Sym_k/}.$$
Here, $\End_{\Sigma}'$ denotes the category of sifted-colimit preserving endofunctors which preserve coskeletal totalizations of flat $k$-modules and $\End_\omega^{\RE}$ denotes the category of filtered-colimit preserving endofunctors which satisfy the condition of being right-extendable.
\end{theorem}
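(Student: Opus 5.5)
We prove the two equivalences separately. The first is a Morita-type statement for monads: monads on $\DAlg_k$ should be the same as monads on $\Mod_k$ receiving a map from $\LSym_k$. The second is an extension result in the style of Brantner--Mathew, which lets us restrict sifted-colimit preserving functors on $\Mod_k$ to functors on flat discrete modules.

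\textbf{First equivalence.} The forgetful functor $U\colon\DAlg_k\to\Mod_k$ is monadic with monad $\LSym_k$, and $\LSym_k$ preserves sifted colimits. Let $T$ be a monad on $\DAlg_k$ that is sifted-colimit preserving and lies in $\End'_\Sigma$. Put $T'=U\circ T\circ\LSym_k$. The monad structure on $T$, together with the adjunction unit and counit, makes $T'$ a monad on $\Mod_k$. The unit of $T$ induces a monad map $\LSym_k\to T'$.

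Conversely, suppose $S$ is a monad on $\Mod_k$ equipped with a monad map $\LSym_k\to S$. Then $\LMod_S(\Mod_k)\to\LMod_{\LSym_k}(\Mod_k)=\DAlg_k$ is a right adjoint. Its left adjoint is the relative tensor product $S\otimes_{\LSym_k}(-)$, computed as a geometric realization of a bar construction. Composing the two gives a monad on $\DAlg_k$. This monad preserves sifted colimits because $S$ does, and geometric realizations commute with sifted colimits.

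It remains to check three things.
\begin{itemize}
\item The two constructions are mutually inverse. This is the standard identification of monads over a monadic base: $\LMod_T(\DAlg_k)\simeq\LMod_{T'}(\Mod_k)$, by Barr--Beck--Lurie applied to the composite forgetful functor.
\item The two constructions are functorial. Here I would realize both sides as endomorphism objects of the module category in $\PrL$, or use the Lurie formalism of monads as algebras in $\End$ acting on $\DAlg_k$ versus $\LSym_k$-bimodules.
\item The condition ``preserves coskeletal totalizations of flat modules'' matches on the two sides. This holds because $U$ and $\LSym_k$ both preserve such totalizations; for $\LSym_k$ this is part of the Raksit/Brantner--Mathew setup via coconnective right-left extension.
\end{itemize}

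\textbf{Second equivalence.} This is a generalization of the Brantner--Mathew extension theorem to arbitrary $k$. The plan is to show that restriction along $\Mod_k^{\flat}\subset\Mod_k$ gives an equivalence of monoidal $\infty$-categories
\[\End'_\Sigma(\Mod_k)\simeq\End^{\RE}_\omega(\Mod_k^\flat).\]
On connective objects, a sifted-colimit preserving functor is left Kan extended from finitely generated free modules. The inverse therefore goes in two stages. First left Kan extend along $\Mod_k^{\flat}\to\Mod_{k,\geq0}$, using that flat modules are filtered colimits of finite frees (Lazard). Then right Kan extend along coskeletal totalizations to reach coconnective objects and all of $\Mod_k$. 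Right-extendability is precisely the hypothesis that this right extension is well behaved, meaning that it preserves finite totalizations and commutes with the left extension. With that hypothesis, the extension preserves sifted colimits and coskeletal totalizations. Full faithfulness comes from the fact that every object of $\Mod_k$ is built from flat modules through such colimits and totalizations.

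I would prove monoidality of this equivalence by showing that restriction is strictly monoidal on the subcategory of functors preserving $\Mod_k^\flat$. This requires that functors in $\End'_\Sigma$ send flat discrete modules to flat discrete modules, or at least that composition commutes with extension. I expect this to be the main obstacle, together with verifying that $\LSym_k$ restricts to the ordinary $\Sym_k$. The point is that for flat $M$, $\LSym_k(M)=\Sym_k(M)$ is discrete and flat, so $\LSym_k$ corresponds to $\Sym_k$ under the equivalence and the slices match. For a general $T$, however, $T$ need not preserve discreteness. So I would first try to phrase $\End^{\RE}_\omega(\Mod_k^\flat)$ as functors $\Mod_k^\flat\to\Mod_k$, with composition defined via extension. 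Only then would I check associativity by a Kan-extension uniqueness argument.
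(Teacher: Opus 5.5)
Your first equivalence, via composite forgetful functors and Barr--Beck--Lurie, is a workable alternative to the paper's route. The paper instead uses the monoidal Morita equivalence $\End'(\DAlg_k)\simeq{}_{\LSym}\BMod_{\LSym}(\End'(\Mod_k))$ of Corollary~\ref{cor:bimodules_monoidal}, plus the fact that algebras in $\LSym$-bimodules are algebras under $\LSym$. The functoriality you leave vague can be made precise with Corollary~\ref{cor:monads_adjunctions}. In the paper the condition defining $\End'_\Sigma(\DAlg_k)$ is imposed on $U\circ T\circ\LSym_k$, so your third bullet is a tautology. As an intrinsic condition on $T$ your justification would fail, because $\LSym_k(X^\bullet)$ is no longer coskeletal.

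The genuine gap is the multiplicative part of the second equivalence. You aim for a monoidal equivalence $\End'_\Sigma(\Mod_k)\simeq\End^{\RE}_\omega(\Mod_k^\flat)$, but these are not known to be closed under composition. If $G$ preserves coskeletal totalizations of flat cosimplicial modules, $G(X^\bullet)$ is no longer coskeletal, so nothing forces $F\circ G$ to preserve them. So $\Alg(\End'_\Sigma(\Mod_k))$ must be read as algebras in a full sub-operad of $\End^\flat_\Sigma(\Mod_k)^\otimes$ (Lemma~\ref{lem:suboperad_algebras}). What you must then show is full faithfulness on multimorphisms,
\[
\Map(F_1\circ\cdots\circ F_m,J)\simeq\Map(F_1^\flat\circ\cdots\circ F_m^\flat,J^\flat),
\]
where $K=F_1\circ\cdots\circ F_m$ is only sifted-colimit and flat preserving. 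Your full-faithfulness heuristic covers only maps between objects of $\End'_\Sigma(\Mod_k)$.

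The missing idea, which is the content of Proposition~\ref{prop:joost}, is that only the target needs to be right-extended. By Lemma~\ref{lem:bm}, $\Map(K,J)\simeq\Map(K|_{\Perf_k^{w\leq 0}},J|_{\Perf_k^{w\leq 0}})$. Since $J|_{\Perf_k^{w\leq 0}}$ is right Kan extended from $\Proj_k^\omega$ (Lemma~\ref{lem:universal}(b)), this equals $\Map(K|_{\Proj_k^\omega},J|_{\Proj_k^\omega})$, which is also the mapping space between the restrictions to flats.

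With this in place, your proposed workaround is unnecessary and would change the statement. Functors in $\End'_\Sigma(\Mod_k)$ are required to preserve flat modules; otherwise restriction would not even land in $\End(\Mod_k^\flat)$. So the worry about discreteness does not arise, and, as you observed, restriction $\End^\flat_\Sigma(\Mod_k)\to\End_\omega(\Mod_k^\flat)$ is already strictly monoidal. Redefining the target as functors $\Mod_k^\flat\to\Mod_k$ with composition via extension would prove a different theorem.

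Your inverse construction is also misordered. Right Kan extension along totalizations only reaches coconnective modules of finite Tor-amplitude, never for instance $k[1]$. The correct inverse right Kan extends from $\Proj_k^\omega$ to $\Perf_k^{w\leq 0}$ and then left Kan extends to $\Mod_k$ via Lemma~\ref{lem:bm}. Right-extendability means exactly that the first step lands in $\Fun_\delta(\Perf_k^{w\leq 0},\Mod_k)$, so that the second step restricts back correctly; it does not mean preservation of finite totalizations.

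Preservation of coskeletal totalizations of arbitrary flat cosimplicial modules, not just finite projective ones, then needs one more argument. It follows because $n$-coskeletal flat diagrams are filtered colimits of $n$-coskeletal diagrams in $\Proj_k^\omega$, and $n$-coskeletal totalizations commute with filtered colimits.
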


In particular, these results provide a way to extend classical monads on discrete $k$-modules to $\DAlg_k$ by verifying a condition on their underlying endofunctor.

To prove our comparison results, we start with the observation that the derived $\delta$-ring monad
and similarly the other monads obtained from (ii) naturally define derived
plethories\footnote{In~\cite{holeman-derived,antieau-spherical,kubrak-shuklin-zakharov} the derived
$\delta$-ring, $\lambda$-ring, and binomial ring monads were constructed following the filtered monad approach from~\cite{raksit}.}.
In view of Theorem~\ref{thm:deriving-plethories-intro}, the comparison with the categories defined following (i) and (iii) then comes down to showing that the latter are monadic over derived rings and proving that the associated monads indeed define derived plethories. For (i), this follows from a generalization of the results from~\cite{brantner-campos-nuiten} which we provide in Section~\ref{sec:cosimplicial}. For (iii), we rely on a characterization of derived plethories in terms of coconnective faithful flatness -- a notion that has recently been introduced by Mathew and Mondal~\cite{mathew-mondal}.\\

Let us sketch the ideas which underlie the identification of the monads in Theorem~\ref{thm:delta
intro}. In the case of derived $\varphi$-algebras, monadicity over derived rings is immediate by definition. We then proceed by showing that the
derived $\varphi$-algebra monad $T^\varphi$ restricts to the monad describing animated $\delta$-rings on connective derived rings. The results from~\cite[App.~A]{bhatt-lurie-prismatization} then identify its value on $\bZ[x]$ with the free discrete
$\delta$-ring $\delta\{x\}$.
To verify the coconnective faithful flatness property, we switch perspectives from Frobenius lifts
to $p$-typical Witt vectors. This allows us to identify $T^\varphi$ with the free monad on the
pointed endofunctor on derived rings which is left adjoint
to a derived version of $W_2$. It is then a consequence of the results from Section~\ref{sec:cofree} that the underlying endofunctor of the derived $\varphi$-algebra monad is endowed with a natural exhaustive filtration by derived endofunctors $F_i^\varphi$ on $\DAlg_{\bZ}$. The ccff-condition for the individual $F_i^\varphi$ then reduces to coconnective faithful flatness of the maps $\LSym_{\bZ}(\bZ[-n]) \to \bZ$ as shown in~\cite{mathew-mondal}.\\

\paragraph{Outline.} Section~\ref{sec:morita} contains some monad-theoretic background and a
discussion of functors between categories of modules for monads.
We explain how to compute the free monad on a well-behaved pointed endofunctor in
Section~\ref{sec:cofree}. We need some basic results about categorical actions which we review in
Section~\ref{sec:categorical_actions}. 
Section~\ref{sec:deriving} proves our main result about derived plethories.
In Section~\ref{sec:cosimplicial}, we explain the approach to modeling $\infty$-categories of
derived commutative rings with extra structure using simplicial cosimplicial objects.
Finally, we provide all of the comparison results in
Section~\ref{sec:comparison}.

\paragraph{Notation.}If $k$ is a commutative ring, a derived commutative ring or an
$\bE_\infty$-ring spectrum, we let $\Mod_k$ denote the $\infty$-category of $k$-modules. If
$\pi_ik=0$ for $i<0$, the full
subcategory of those $k$-modules $M$ such that $\pi_iM=0$ for $i<0$ is denoted by $\Mod_k^\cn$; the objects in it are connective $k$-modules.
Under the same assumption, the full subcategory consisting
of those $M$ with $\pi_iM=0$ for $i\neq 0$ is denoted $\Mod_k^\heart$; we call these $k$-modules discrete.
We note that $\Mod_k^\heart \simeq \Mod_{\pi_0 k}(\Ab)$ is equivalent to the $1$-category of $\pi_0k$-modules in abelian groups.
For $k$ a commutative ring, we write $\DAlg_k$, $\Ani_k$ and $\CAlg_k^\heart$ for the categories of derived, animated and discrete commutative $k$-algebras.
We further denote by $\Poly_k$ the full subcategory of $\CAlg_k^\heart$ spanned by the finitely generated free polynomial $k$-algebras. The categories $\Ani_k$ and $\CAlg_k^\heart$ are 
equivalent to the full subcategories of $\DAlg_k$ spanned by those derived commutative $k$-algebras whose underlying $k$-module is connective, resp.~discrete.
We write $\Sym_k$ and $\LSym_k$ for the monads of $\CAlg_k^\heart$ and $\DAlg_k$ over
$\Mod_k^\heart$ and $\Mod_k$, respectively.

\paragraph{Acknowledgments.}
We thank Adam Holeman, Achim Krause, Akhil Mathew, Tasos Moulinos, Thomas Nikolaus, Arpon Raksit, and Maxime Ramzi for
helpful conversations. We also thank Elden Elmanto and Jiaqi Fu for annoying us into finishing this paper.

BA was supported by NSF grant DMS-2152235, by the Simons Foundation and by NSF grant DMS-2424139 while at SLMath during the Fall
2026 semester, by Simons Fellowship 00005925, by the Simons
Collaboration on Perfection, by Northwestern University, and by UC Berkeley.
EH was supported by the Deutsche Forschungsgemeinschaft (DFG, German Research Foundation) under Germany's Excellence Strategy EXC 2044 –390685587, Mathematics Münster: Dynamics–Geometry–Structure. JN was supported by ANR grant ANR-24-CE40-5367 (LieDG). DK thanks IH\'ES and LMO for the hospitality and pleasant work conditions during the time the paper has been written.

\section{Morita theory for monads}\label{sec:morita}

We collect some general facts about operads, monads, and their module categories.

\subsection{Refresher on $\infty$-operads}

We will lightly use the theory of $\infty$-operads from~\cite[Chap.~2]{ha}. We recall some of the main
definitions in this section. We have the
$\bE_\infty^\otimes$-operad which is the category $\bE_\infty^\otimes=\Fin^\part$ of finite sets and partially defined
functions between them. Every object of this category is isomorphic to $\langle
n\rangle=\{1,\ldots,n\}$ for some $n\geq 0$. A morphism
$f\colon\langle m\rangle\rightarrow\langle n\rangle$ defines a
graph $\Gamma_f\subseteq\langle m\rangle\times\langle n\rangle$. It is
active if the induced function $\Gamma_f\rightarrow\langle m\rangle$ is a bijection.
It is inert if the induced function $\Gamma_f\rightarrow\langle n\rangle$ is a bijection.
For $1\leq i\leq m$, let $\rho^i\colon\langle m\rangle\rightarrow\langle 1\rangle$ be the unique
inert morphism with $\rho^i(i)=1$.

\begin{definition}[$\infty$-operads]
    Given a functor $a\colon\Oscr^\otimes\rightarrow\bE_\infty^\otimes$, we let $\Oscr^\otimes_{\langle
    m\rangle}$ be the fiber over $\langle m\rangle$ and fix $\Oscr=\Oscr^\otimes_{\langle 1\rangle}$.
    Similarly, if $f\colon\langle m\rangle\rightarrow\langle n\rangle$ is a morphism in
    $\bE_\infty^\otimes$ and if $M\in\Oscr^\otimes_{\langle m\rangle}$ and $N\in\Oscr^\otimes_{\langle
    n\rangle}$, then we let $\Map_{\Oscr^\otimes}^f(M,N)$ be the fiber over $f$ of
    $\Map_{\Oscr^\otimes}(M,N)\rightarrow\Map_{\bE_\infty^\otimes}(\langle m\rangle,\langle n\rangle)$.
    Such a functor is an $\infty$-operad if
    \begin{enumerate}
        \item[(i)] for every inert morphism $f\colon\langle m\rangle\rightarrow\langle n\rangle$ and
            each $M\in\Oscr^\otimes_{\langle m\rangle}$ there is an $a$-coCartesian morphism $f'\colon
            M\rightarrow N$ lifting $f$;
        \item[(ii)] for every $f\colon\langle m\rangle\rightarrow\langle n\rangle$ and every pair
            $M\in\Oscr^\otimes_{\langle m\rangle}$ and $N\in\Oscr^\otimes_{\langle n\rangle}$ the
            choice of $a$-coCartesian morphisms $N\rightarrow N_i$ over $\rho^i\colon\langle
            n\rangle\rightarrow\langle 1\rangle$ induces equivalences
            $$\Map_{\Oscr^\otimes}^f(M,N)\rightarrow\prod_{i=1}^n\Map_{\Oscr^\otimes}^{\rho^i\circ
            f}(M,N_i);$$
        \item[(iii)] for each $n\geq 0$, the induced pushforward functors
            $\rho^i_!\colon\Oscr^\otimes_{\langle n\rangle}\rightarrow\Oscr$ induce equivalences
            $\Oscr^\otimes_{\langle n\rangle}\we\Oscr^n$.
    \end{enumerate}
    Condition (iii) is slightly different than Lurie's, but see~\cite[Rem.~2.1.1.14]{ha}.
    By (iii), the objects of $\Oscr^\otimes$ can be written as $n$-tuples $(M_1,\ldots,M_n)$, where
    each $M_i\in\Oscr$.
\end{definition}

\begin{definition}[Multilinear maps]
    Let $(M_1,\ldots,M_n)\in\Oscr^\otimes$ and let $N\in\Oscr$. We let
    $\Mul_{\Oscr^\otimes}((M_1,\ldots,M_n),N)$ be the full sub-anima of
    $\Map_{\Oscr^{\otimes}}((M_1,\ldots,M_n),N)$ on the maps whose image in
    $\bE_\infty^\otimes$ is (equivalent to) the unique active morphism $\langle
    n\rangle\rightarrow\langle 1\rangle$.
\end{definition}

\begin{definition}[Inert morphisms]
    Given an $\infty$-operad $a\colon\Oscr^\otimes\rightarrow\bE_\infty^\otimes$ and a morphism
    $f\in\Oscr^\otimes$, we say that $f$ is inert if $a(f)$ is inert and $f$ is $a$-coCartesian.
\end{definition}

\begin{definition}[Morphisms of $\infty$-operads]
    A morphism $F\colon\Oscr^\otimes\rightarrow\Oscr'^{\otimes}$ of $\infty$-operads is the data of a commutative
    triangle
    $$\xymatrix{
        \Oscr^\otimes\ar[rr]^F\ar[dr]_a&&\Oscr'^{\otimes}\ar[ld]^{a'}\\
        &\bE_\infty^\otimes
    }$$
    such that $F$ preserves inert morphisms. We will typically abuse notation and leave the
    $2$-simplex witnessing commutativity implicit.
    We let $\Alg_\Oscr(\Oscr')$ denote the $\infty$-category of morphisms
    $\Oscr^\otimes\rightarrow\Oscr'^{\otimes}$ of $\infty$-operads. It is the full subcategory of
    $\Fun_{\bE_\infty^\otimes}(\Oscr^\otimes,\Oscr'^{\otimes})$ on the functors which preserve
    inert morphisms.
\end{definition}

\begin{definition}[Fibrations of $\infty$-operads]
    A morphism $F\colon\Oscr^\otimes\rightarrow\Oscr'^{\otimes}$ of $\infty$-operads is a fibration
    of $\infty$-operads if it is a categorical fibration.
\end{definition}

\begin{example}
    If $a\colon\Oscr^\otimes\rightarrow\bE_\infty^\otimes$ is an $\infty$-operad, then $a$ itself
    is a morphism of $\infty$-operads and is moreover a fibration of $\infty$-operads;
    see~\cite[Rem.~2.1.1.13]{ha}.
\end{example}

\begin{definition}[Algebras]
    Suppose that $f\colon\Oscr'^{\otimes}\rightarrow\Oscr^\otimes$ is a morphism of $\infty$-operads and
    $a\colon\Cscr^\otimes\rightarrow\Oscr^\otimes$ is a fibration of $\infty$-operads. We let
    $\Alg_{\Oscr'/\Oscr}(\Cscr)$ be the $\infty$-category of $\infty$-operad maps
    $g\colon\Oscr'^{\otimes}\rightarrow\Cscr^\otimes$ equipped with an equivalence $a\circ g\we f$.
    In other words, it is the fiber of $\Alg_{\Oscr'}(\Cscr)\rightarrow\Alg_{\Oscr'}(\Oscr)$ over
    $f$. If $f$ is the identity on $\Oscr^\otimes$, then we write $\Alg_{/\Oscr}(\Cscr)$ for
    $\Alg_{\Oscr/\Oscr}(\Cscr)$.
\end{definition}

\begin{definition}[$\Oscr$-monoidal $\infty$-categories]
    Suppose that $a\colon\Cscr^\otimes\rightarrow\Oscr^\otimes$ is a map of operads which, as a map
    of $\infty$-categories, is a coCartesian fibration. Then, we say
    that $\Cscr^\otimes$ (or $\Cscr$) is an $\Oscr$-monoidal $\infty$-category. In this case, it is
    also a fibration of $\infty$-operads.
\end{definition}

\begin{example}[Algebras and bimodules]
    Let $\Cscr^\otimes\rightarrow\Assoc^\otimes$ be an $\Assoc$-monoidal $\infty$-category (a monoidal
    $\infty$-category, by definition).
    Lurie has shown in~\cite[Sec.~4.1]{ha} that $\Alg_{/\Assoc}(\Cscr)$ captures the correct
    higher categorical notion of associative algebra in $\Cscr$ and in~\cite[Sec.~4.3]{ha} that
    algebras for the operad $\BM^\otimes$ (using the functor $\BM^\otimes\rightarrow\Assoc^\otimes$
    of~\cite[Rem.~4.3.1.8]{ha})
    capture the correct notion of two associative algebras $A,B$ in $\Cscr$ together with an $(A,B)$-bimodule $M$.
\end{example}

\begin{example}[Full sub-operads]
    Let $a\colon\Cscr^\otimes\rightarrow\bE_\infty^\otimes$ be an $\infty$-operad. Fix a full
    subcategory $\Dscr\subseteq\Cscr$ and let $\Dscr^\otimes\subseteq\Cscr^\otimes$ be the full
    subcategory on the $n$-tuples of objects of $\Dscr$. Then, the composition
    $\Dscr^\otimes\rightarrow\bE_\infty^\otimes$ exhibits $\Dscr^\otimes$ as an $\infty$-operad and
    $\Dscr^\otimes\rightarrow\Cscr^\otimes$ is an $\infty$-operad map. We will say that
    $\Dscr^\otimes$ is the full sub-operad generated by $\Dscr\subseteq\Cscr$.
    Conversely, if $\Escr^\otimes\subseteq\Cscr^\otimes$ is a full subcategory and the composition
    $\Escr^\otimes\subseteq\Cscr^\otimes\xrightarrow{a}\bE_\infty^\otimes$ exhibits $\Escr^\otimes$ as an $\infty$-operad, then it is the
    full sub-operad generated by $\Escr\subseteq\Cscr$.
\end{example}

\begin{lemma}[Recognition criterion]\label{lem:fully_faithful_operad_map}
    Suppose that $F\colon\Dscr^\otimes\rightarrow\Cscr^\otimes$ is a map of $\infty$-operads.
    Then, $F$ is fully faithful if and only if for all $n\geq 0$ and all $M_1,\ldots,M_n,N\in\Dscr$ the induced map
    $$\Mul_{\Dscr^\otimes}((M_1,\ldots,M_n),N)\rightarrow\Mul_{\Cscr^\otimes}((F(M_1),\ldots,F(M_n),F(N))$$
    is an equivalence.
\end{lemma}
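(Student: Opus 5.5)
The plan is to reduce full faithfulness on arbitrary mapping anima of $\Dscr^\otimes$ to the case of multilinear maps, using the operad axioms on both sides. The ``only if'' direction is immediate. Since $F$ is a functor over $\bE_\infty^\otimes$, the map $\Map_{\Dscr^\otimes}(M,N)\to\Map_{\Cscr^\otimes}(FM,FN)$ lies over $\Map_{\bE_\infty^\otimes}(\langle m\rangle,\langle n\rangle)$. Hence full faithfulness restricts to an equivalence on the fibres over the active morphism $\langle n\rangle\to\langle 1\rangle$, and these fibres are exactly the $\Mul$ spaces.

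For the converse, fix $M=(M_1,\ldots,M_m)\in\Dscr^\otimes_{\langle m\rangle}$ and $N=(N_1,\ldots,N_n)\in\Dscr^\otimes_{\langle n\rangle}$. Since $\Map_{\bE_\infty^\otimes}(\langle m\rangle,\langle n\rangle)$ is discrete, it suffices to show that $F$ induces an equivalence
\[
\Map^f_{\Dscr^\otimes}(M,N)\to\Map^f_{\Cscr^\otimes}(FM,FN)
\]
on fibres over each $f\colon\langle m\rangle\to\langle n\rangle$. Choose coCartesian lifts $N\to N_i$ of the maps $\rho^i$. Because $F$ preserves inert morphisms, their images $FN\to FN_i$ are again coCartesian lifts of $\rho^i$ in $\Cscr^\otimes$. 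Axiom (ii) on both sides then identifies the map above, compatibly, with the product over $i$ of the maps
\[
\Map^{\rho^i\circ f}_{\Dscr^\otimes}(M,N_i)\to\Map^{\rho^i\circ f}_{\Cscr^\otimes}(FM,FN_i).
\]
This reduces us to the case $n=1$.

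For $n=1$, the morphism $g=\rho^i\circ f\colon\langle m\rangle\to\langle 1\rangle$ factors uniquely as an inert map $\langle m\rangle\to\langle k\rangle$ followed by the active map $\langle k\rangle\to\langle 1\rangle$, where $\langle k\rangle$ corresponds to $g^{-1}(1)$. Let $M\to M'$ be a coCartesian lift of this inert map, so that $M'$ is the subtuple $(M_j)_{j\in g^{-1}(1)}$. Its image $FM\to FM'$ is again an inert lift. The coCartesian property gives equivalences
\[
\Map^g_{\Dscr^\otimes}(M,N_i)\simeq\Mul_{\Dscr^\otimes}(M',N_i)
\qquad\text{and}\qquad
\Map^g_{\Cscr^\otimes}(FM,FN_i)\simeq\Mul_{\Cscr^\otimes}(FM',FN_i),
\]
and these are compatible with $F$. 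By hypothesis the map between the $\Mul$ spaces is an equivalence, which completes the argument.

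The main obstacle is bookkeeping rather than mathematics: one has to check that the identifications coming from axiom (ii) and from the inert/active factorization are natural with respect to $F$. This follows from $F$ preserving coCartesian lifts of inert morphisms, since the relevant equivalences are induced by composition with those lifts.
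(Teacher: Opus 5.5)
Your proposal is correct and follows the paper's proof. Both use axiom (ii) to reduce to targets over $\langle 1\rangle$, factor each $f\colon\langle m\rangle\to\langle 1\rangle$ as an inert map followed by an active one, and use the coCartesian property of inert lifts (preserved by $F$) to identify $\Map^f$ with a $\Mul$ space on both sides. You also spell out the trivial ``only if'' direction and the compatibility of these identifications with $F$, both of which the paper leaves implicit.
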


\begin{proof}
    Condition (ii) of the definition of an $\infty$-operad immediately reduces the problem to
    showing that
    $$\Map_{\Dscr^\otimes}((M_1,\ldots,M_n),N)\rightarrow\Map_{\Cscr^\otimes}((F(M_1),\ldots,F(M_n)),F(N))$$
    is an equivalence for all $n\geq 0$ and all $M_1,\ldots,M_n,N\in\Dscr$. Suppose that 
    $f\colon\langle n\rangle\rightarrow\langle 1\rangle$ is a morphism. It is enough to show that
    $$\Map_{\Dscr^\otimes}^f((M_1,\ldots,M_n),N)\rightarrow\Map_{\Cscr^\otimes}^f((F(M_1),\ldots,F(M_n)),F(N))$$
    is an equivalence. But, $f=g\circ h$ where $h\colon\langle n\rangle\rightarrow\langle m\rangle$
    is inert and $g\colon\langle m\rangle\rightarrow\langle 1\rangle$ is active.
    By the coCartesian property over inert morphisms and~\cite[Prop.~2.4.4.3]{htt}, we have
    $$\Map_{\Dscr^\otimes}^f((M_1,\ldots,M_n),N)\we\Map_{\Dscr^\otimes}^g(h_!(M_1,\ldots,M_n),N)\we\Mul_{\Dscr^\otimes}(h_!(M_1,\ldots,M_n),N)$$
    and similarly for $\Cscr^\otimes$. Thus, $F$ is fully faithful under the hypothesis of the
    lemma.
\end{proof}

\begin{lemma}\label{lem:suboperads}
    Suppose that $a\colon\Cscr^\otimes\rightarrow\Oscr^\otimes$ is a fibration of $\infty$-operads.
    Let $\Dscr\subseteq\Cscr$ be a full subcategory and let $\Dscr^\otimes\subseteq\Cscr^\otimes$
    be the full sub-operad generated by $\Dscr$. Then, the composition
    $b\colon\Dscr^\otimes\subseteq\Cscr^\otimes\xrightarrow{a}\Oscr^\otimes$ is a fibration of $\infty$-operads.
\end{lemma}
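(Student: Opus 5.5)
The plan is to check the three requirements in turn: that $\Dscr^\otimes$ is an $\infty$-operad, that $b$ is a map of $\infty$-operads, and that $b$ is a categorical fibration. Throughout, $\Dscr^\otimes$ denotes the full subcategory of $\Cscr^\otimes$ on those objects $X$ whose images under the inert coCartesian pushforwards $\rho^i_!$ all lie in $\Dscr$. Here $\Cscr^\otimes\to\Oscr^\otimes\to\bE_\infty^\otimes$ is an $\infty$-operad, being the composite of a fibration of $\infty$-operads with an $\infty$-operad (\cite[Sec.~2.1.2]{ha}).

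\emph{$\Dscr^\otimes$ is an $\infty$-operad and $b$ preserves inerts.} First observe that $\Dscr^\otimes$ is closed in $\Cscr^\otimes$ under coCartesian pushforward along inert morphisms. Indeed, if $X=(M_1,\ldots,M_m)$ and $h\colon\langle m\rangle\to\langle n\rangle$ is inert, then $h_!X$ is the tuple $(M_{h^{-1}(j)})_j$, whose entries lie in $\Dscr$. Since $\Dscr^\otimes$ is a full subcategory, an inert coCartesian edge of $\Cscr^\otimes$ with both endpoints in $\Dscr^\otimes$ is still coCartesian relative to $\Dscr^\otimes$. Conditions (ii) and (iii) of the definition then restrict directly from $\Cscr^\otimes$, as they only involve mapping anima and fibers between objects of $\Dscr^\otimes$. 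So $\Dscr^\otimes\to\bE_\infty^\otimes$ is an $\infty$-operad, which is the full sub-operad example. The inclusion preserves inert morphisms, and hence so does $b=a\circ(\text{inclusion})$; the triangle over $\bE_\infty^\otimes$ commutes because it does for $a$.

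\emph{$b$ is a categorical fibration.} This is the only step needing care. A categorical fibration is an inner fibration for which every equivalence in the target lifts, given a lift of its source. The inclusion of a full subcategory is an inner fibration, so $b$ is an inner fibration as a composite of inner fibrations. For the lifting property, let $e\colon b(X)\to Y$ be an equivalence in $\Oscr^\otimes$ with $X\in\Dscr^\otimes$. Since $a$ is a categorical fibration, $e$ lifts to an equivalence $\tilde e\colon X\to\tilde Y$ in $\Cscr^\otimes$. It remains to see that $\tilde Y\in\Dscr^\otimes$, which amounts to knowing that $\Dscr$ is closed under equivalences in $\Cscr$.

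This repletion point is the one place I expect friction. I would handle it by assuming, as is implicit in the phrase ``full subcategory'' in the $\infty$-categorical setting, that $\Dscr$ is replete, i.e. closed under equivalences. Then $\Dscr^\otimes$ is replete as well: $\tilde e$ maps to an equivalence in $\bE_\infty^\otimes$, and the components $\rho^i_!$ of an equivalence are equivalences. Hence $\tilde Y\in\Dscr^\otimes$, and $b$ is a categorical fibration. Alternatively, one can cite \cite[Cor.~2.4.6.5]{htt}, which says that an inner fibration between $\infty$-categories is a categorical fibration precisely when equivalences lift in this way.
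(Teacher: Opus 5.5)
Your proof is correct and is essentially the paper's argument: both reduce to \cite[Cor.~2.4.6.5]{htt} through inner fibrations and the lifting of equivalences. The only differences are that the paper shows the inclusion $\Dscr^\otimes\subseteq\Cscr^\otimes$ is itself a categorical fibration and then composes, whereas you lift equivalences through $a$ first and then check that the lift lands in $\Dscr^\otimes$, and that your explicit repleteness assumption on $\Dscr$ is exactly what the paper's claim ``a fully faithful inclusion is a categorical fibration'' uses silently.
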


\begin{proof}
    Both maps $\Cscr^\otimes\rightarrow\Oscr^\otimes$ and $\Dscr^\otimes\rightarrow\Cscr^\otimes$
    are maps of $\infty$-operads and hence so is the composite. Since $\Oscr^\otimes$ is an
    $\infty$-category and $a$ is a categorical fibration, it is an inner fibration
    by~\cite[Cor.~2.4.6.5]{htt}. Any fully faithful inclusion of $\infty$-categories
    $\Dscr^\otimes\subseteq\Cscr^\otimes$ is an inner
    fibration since whether or not an $n$-simplex of $\Cscr^\otimes$ belongs to $\Dscr^\otimes$
    depends only on its vertices. Again,
    by~\cite[Cor.~2.4.6.5]{htt}, it also follows that a fully faithful inclusion of
    $\infty$-categories is a categorical fibration. Thus, $\Dscr^\otimes\subseteq\Cscr^\otimes$ is
    a categorical fibration and, as these compose, so is $\Dscr^\otimes\rightarrow\Oscr^\otimes$.
\end{proof}

\begin{lemma}\label{lem:suboperad_algebras}
    Suppose that $a\colon\Cscr^\otimes\rightarrow\Oscr^\otimes$ is a fibration of $\infty$-operads,
    that $\Dscr^\otimes\subseteq\Cscr^\otimes$ is the full sub-operad spanned by a full subcategory
    $\Dscr\subseteq\Cscr$, and
    that $f\colon\Oscr'^\otimes\rightarrow\Oscr^\otimes$ is a map of $\infty$-operads.
    Then, the induced functor $\Alg_{\Oscr'/\Oscr}(\Dscr)\rightarrow\Alg_{\Oscr'/\Oscr}(\Cscr)$ is
    fully faithful and the essential image consists of those $\Oscr'$-algebras
    $A\colon\Oscr'^\otimes\rightarrow\Cscr^\otimes$ over $\Oscr^\otimes$ such that the induced
    functor $A_{\langle 1\rangle}\colon\Oscr'\rightarrow\Cscr$ has essential image in
    $\Dscr\subseteq\Cscr$.
\end{lemma}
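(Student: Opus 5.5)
The plan is to reduce the statement to a general fact about full subcategories of functor $\infty$-categories. First, by definition $\Alg_{\Oscr'/\Oscr}(\Cscr)$ is the fiber of $\Alg_{\Oscr'}(\Cscr)\to\Alg_{\Oscr'}(\Oscr)$ over $f$. Here $\Alg_{\Oscr'}(\Cscr)$ is the full subcategory of $\Fun_{\bE_\infty^\otimes}(\Oscr'^\otimes,\Cscr^\otimes)$ on the inert-preserving functors, and likewise for $\Dscr$. By Lemma~\ref{lem:suboperads}, $b\colon\Dscr^\otimes\to\Oscr^\otimes$ is a fibration of $\infty$-operads, so $\Alg_{\Oscr'/\Oscr}(\Dscr)$ makes sense. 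Since $\Dscr^\otimes\to\Cscr^\otimes$ is a categorical fibration, the fiber $\Alg_{\Oscr'/\Oscr}(\Dscr)$ can be computed as the strict fiber of the corresponding simplicial sets. It is therefore the pullback of $\Alg_{\Oscr'/\Oscr}(\Cscr)$ along the inclusion $\Fun_{\bE_\infty^\otimes}(\Oscr'^\otimes,\Dscr^\otimes)\subseteq\Fun_{\bE_\infty^\otimes}(\Oscr'^\otimes,\Cscr^\otimes)$, once we also know which functors into $\Dscr^\otimes$ preserve inerts.

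Full faithfulness comes from the general fact that, for a fully faithful inclusion $\Dscr^\otimes\subseteq\Cscr^\otimes$ of $\infty$-categories, the postcomposition $\Fun(K,\Dscr^\otimes)\to\Fun(K,\Cscr^\otimes)$ is fully faithful. Its image is the set of functors landing objectwise in $\Dscr^\otimes$, and the same holds over $\bE_\infty^\otimes$ by passing to fibers. Restricting to full subcategories and taking fibers over $f$ preserves full faithfulness.

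Next we match the inert-preserving conditions. If $g\colon\Oscr'^\otimes\to\Dscr^\otimes$ preserves inerts, then its composite into $\Cscr^\otimes$ does too, since $\Dscr^\otimes\to\Cscr^\otimes$ is a map of operads. Conversely, suppose $g$ lands in $\Dscr^\otimes$ and preserves inerts as a functor to $\Cscr^\otimes$. An inert morphism of $\Cscr^\otimes$ between objects of $\Dscr^\otimes$ is coCartesian in $\Cscr^\otimes$, and by full faithfulness it is then coCartesian for $\Dscr^\otimes\to\bE_\infty^\otimes$, since the relevant mapping-space squares are literally the same. Hence the two notions of inert agree on $\Dscr^\otimes$.

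Finally, we identify the essential image. An algebra $A$ lands in $\Dscr^\otimes$ objectwise if and only if $A_{\langle 1\rangle}$ lands in $\Dscr$. To see this, let $X\in\Oscr'^\otimes$ lie over $\langle n\rangle$. Then $A(X)\simeq(M_1,\ldots,M_n)$, where $M_i=(\rho^i)_!A(X)\simeq A(\rho^i_!X)$ because $A$ preserves inert coCartesian lifts, and $\rho^i_!X\in\Oscr'$. So $A(X)$ lies in $\Dscr^\otimes$ precisely when each $A_{\langle 1\rangle}(\rho^i_!X)$ lies in $\Dscr$. The one point needing care is the strict-versus-homotopy fiber bookkeeping, which is handled by the categorical fibration property from Lemma~\ref{lem:suboperads}.
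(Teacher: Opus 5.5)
Your proposal is correct and follows the same route as the paper. Full faithfulness comes from postcomposition with the fully faithful inclusion $\Dscr^\otimes\subseteq\Cscr^\otimes$. An algebra whose underlying functor lands in $\Dscr$ lands in $\Dscr^\otimes$ because it preserves inert lifts, and inert morphisms of $\Cscr^\otimes$ between objects of $\Dscr^\otimes$ stay coCartesian in $\Dscr^\otimes$, which the paper obtains from Lurie's coCartesianness criterion. You mainly spell out in more detail the steps that the paper treats as straightforward.
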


\begin{proof}
    Fully faithfulness is straightforward. Thus, suppose that
    $A\colon\Oscr'^\otimes\rightarrow\Cscr^\otimes$ satisfies the condition that $A_{\langle
    1\rangle}$ takes values in $\Dscr\subseteq\Cscr$. Since $A$ preserves inert morphisms, it
    follows that it takes values in $\Dscr^\otimes$. Thus, it factors as the composition of a functor
    $A'\colon\Oscr'^\otimes\rightarrow\Dscr^\otimes$ followed by the inclusion
    $\Dscr^\otimes\subseteq\Cscr^\otimes$. To conclude, we must show that $A'$ is in fact a map of
    $\infty$-operads (the compatibility with projection to $\Oscr^\otimes$ being automatic).
    The only thing to check is that $A'$ preserves inert morphisms. However, a morphism of
    $\Dscr^\otimes$ is inert if it is inert as a morphism in $\Cscr^\otimes$, for example by the
    criterion for coCartesianness of~\cite[Prop.~2.4.1.8]{htt}.
\end{proof}

\begin{example}
    Let $\Cscr^\otimes\rightarrow\Assoc^\otimes$ be a monoidal $\infty$-category. Let $X\in\Cscr$
    be an object and let $\{X\}^\otimes\subseteq\Cscr^\otimes$ be the sub-operad spanned by $X$.
    Unless $X$ is the unit object, $\{X\}^\otimes\subseteq\Cscr^\otimes$ is not monoidal as
    $\{X\}^\otimes$ does not contain the unit object. Nevertheless,
    $\Alg_{/\Assoc}(\{X\})\subseteq\Alg_{/\Assoc}(\Cscr)$ is the full subcategory of associative
    algebra structures on $X$.
\end{example}


\subsection{Monadic bricolage}

We will recall how a monad can be conjugated by an adjunction, using the following standard lemma.

\begin{lemma}\label{lem:functor_category_adjoints}
    Let $F\colon\Cscr\rightleftarrows\Dscr\colon U$ be an adjunction between
    $\infty$-categories with left adjoint $F$ and right adjoint $U$. Let $\Escr$ be another $\infty$-category.
    \begin{enumerate}
         \item[{\em (a)}] There is an adjunction
            $F_\ast \colon\Fun(\Escr,\Cscr)\rightleftarrows\Fun(\Escr,\Dscr)\colon U_\ast$ induced by postcomposition with $F$ and $U$.
        \item[{\em (b)}] There is an adjunction
            $U^\ast \colon\Fun(\Cscr,\Escr)\rightleftarrows\Fun(\Dscr,\Escr)\colon F^\ast$ induced by precomposition with $F$ and $U$.
    \end{enumerate}
    Moreover, if $U$ is monadic, so is $U_\ast$.
\end{lemma}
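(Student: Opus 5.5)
To prove (a) and (b), I would work with the unit $\eta\colon\id_\Cscr\to UF$ and counit $\epsilon\colon FU\to\id_\Dscr$ of the given adjunction. The aim is to show that they induce unit and counit transformations on the functor categories that still satisfy the triangle identities. The cleanest way to do this is to use that $\Fun(\Escr,-)$ and $\Fun(-,\Escr)$ are (contravariant, in the second case) functors of $(\infty,2)$-categories on $\Cat_\infty$. Such functors carry adjunctions to adjunctions, since an adjunction is defined purely 2-categorically by unit, counit and triangle identities.

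For (a), postcomposition $\Fun(\Escr,-)$ is covariant. Applied to $\eta$ and $\epsilon$, it gives transformations $\id\to U_\ast F_\ast$ and $F_\ast U_\ast\to\id$, and the triangle identities are preserved because postcomposition is strictly functorial in 2-cells. If one wants a more concrete argument, one can check the mapping-space criterion directly. For $G\colon\Escr\to\Cscr$ and $H\colon\Escr\to\Dscr$, composing with $\eta$ gives a map
$$\Map(F\circ G,H)\to\Map(G,U\circ H).$$
This is an equivalence because, writing mapping spaces of natural transformations as ends, it is pointwise the adjunction equivalence $\Map_\Dscr(FGe,He)\simeq\Map_\Cscr(Ge,UHe)$, naturally in $e$.

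For (b), precomposition $\Fun(-,\Escr)$ is contravariant on 1-cells and covariant on 2-cells, so it exchanges left and right adjoints. This is why $U^\ast$ becomes the left adjoint of $F^\ast$: precomposition sends $\eta\colon\id\to UF$ to $\id\to F^\ast U^\ast$, which serves as the unit, and sends $\epsilon$ to $U^\ast F^\ast\to\id$, which serves as the counit. The triangle identities again transfer formally. To make this rigorous in the $\infty$-setting, I would cite \cite[Prop.~5.2.2.8]{htt} or the standard fact that $\Fun(-,-)$ is a 2-functor on $\Cat_\infty$.

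For monadicity I would apply the Barr--Beck--Lurie theorem \cite[Thm.~4.7.3.5]{ha} to $U_\ast$. There are two things to check.
\begin{enumerate}
\item[(1)] Conservativity. A natural transformation $\alpha\colon H\to H'$ in $\Fun(\Escr,\Dscr)$ is an equivalence if and only if each component is. Applying $U$ componentwise and using that $U$ is conservative gives conservativity of $U_\ast$.
\item[(2)] Colimits of $U_\ast$-split simplicial objects. If $H_\bullet$ is a simplicial object in $\Fun(\Escr,\Dscr)$ with $U_\ast H_\bullet$ split, then for every $e\in\Escr$ the evaluation $H_\bullet(e)$ is $U$-split, because evaluation preserves splittings. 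So $H_\bullet(e)$ has a geometric realization in $\Dscr$ that $U$ preserves. Since colimits in functor categories that exist pointwise are computed pointwise \cite[Cor.~5.1.2.3]{htt}, $|H_\bullet|$ exists, is computed pointwise, and is preserved by $U_\ast$.
\end{enumerate}

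The only delicate point I expect is step (2): one has to confirm that pointwise existence of the colimits is enough to apply the pointwise-colimit result, rather than needing $\Dscr$ to have all geometric realizations. This is exactly what \cite[Cor.~5.1.2.3]{htt} provides when the colimits exist objectwise, so there is no real obstacle.
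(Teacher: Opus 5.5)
Your proposal is correct and follows essentially the same route as the paper: the adjunctions are obtained by transporting the unit along precomposition (resp.\ postcomposition) and invoking \cite[Prop.~5.2.2.8]{htt}, and monadicity of $U_\ast$ follows because equivalences and geometric realizations of $U_\ast$-split simplicial objects are detected and computed pointwise. You simply spell out more detail than the paper does, for instance the explicit appeal to \cite[Cor.~5.1.2.3]{htt} for pointwise colimits and the remark that (a) supplies the left adjoint needed for Barr--Beck--Lurie.
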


\begin{proof}
	To see part (b), note that precomposition with the unit transformation $\id_\Cscr\rightarrow UF$ 
    induces a unit transformation $\id_{\Fun(\Cscr,\Escr)}\rightarrow(UF)^\ast$ in
    the sense of~\cite[Def.~5.2.2.7]{htt}. Thus, $U^\ast$ is left adjoint to $F^\ast$
    by~\cite[Prop.~5.2.2.8]{htt}. The proof of (a) is similar. 
    If $U$ is monadic, then $U_*$ is monadic since equivalences and
     realizations of $U_*$-split simplicial diagrams are detected pointwise.
\end{proof}

	Combining parts (a) and (b) of Lemma~\ref{lem:functor_category_adjoints}, 
	we obtain an adjunction between endomorphism $\infty$-categories
	$$F_*U^*\colon \End(\Cscr)\rightleftarrows\End(\Dscr)\colon U_*F^*$$
	in which the left adjoint can be described on objects by $T\mapsto F\circ T\circ U$ and the right
    adjoint is given on objects by $S\mapsto U\circ S\circ F$. 
    We need the following compatibility of this adjunction with
    the monoidal structure on endofunctors given by composition.
    See also \cite[Lem.~3.10]{brantner-campos-nuiten}.

\begin{proposition}\label{prop:intertwining}
	Let $F_i\colon\Cscr_i\rightleftarrows\Dscr_i\colon U_i$ be adjunctions for $i=1, 2$, 
	and consider the right adjoints
        $$
        U_{2*}F_2^*\colon \End(\Dscr_2) \rightarrow \End(\Cscr_2),\qquad
        U_{2*}F_1^*\colon \Fun(\Dscr_1, \Dscr_2) \rightarrow \Fun(\Cscr_1, \Cscr_2), \qquad
        U_{1*}F_1^*\colon \End(\Dscr_1)\rightarrow \End(\Cscr_1).
        $$
	Then the following assertions hold.
    \begin{enumerate}
         \item[{\em (a)}] For $i=1, 2$, the right adjoint $U_{i*}F_i^*$ has a natural lax monoidal structure, given on objects $X, Y\in \End(\Dscr_i)$ by the counit transformation
         $$
         (U_i\circ X\circ F_i)\circ (U_i\circ Y\circ F_i)\xrightarrow{U_iX(\epsilon_i)} U_i\circ (X\circ Y)\circ F_i.
         $$
        \item[{\em (b)}] The triple of right adjoints $(U_{2*}F_2^*, U_{2*}F_1^*, U_{1*}F_1^*)$ has the natural structure of a lax $\bB\bM^{\otimes}$-monoidal functor of bimodule $\infty$-categories, given on objects $A\in \End(\Dscr_1)$, $B\in \End(\Dscr_2)$ and $X\in \Fun(\Dscr_1, \Dscr_2)$ by the counit transformations
        $$
        (U_2\circ B\circ F_2)\circ (U_2\circ X\circ F_1)\xrightarrow{U_2B(\epsilon_2)} U_2\circ (B\circ X)\circ F_1 \qquad\quad (U_2\circ X\circ F_1)\circ (U_1\circ A\circ F_1)\xrightarrow{U_2X(\epsilon_1)} U_2\circ (X\circ A)\circ F_1.
		$$          
    \end{enumerate}
\end{proposition}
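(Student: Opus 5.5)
Our plan is to obtain the lax structures formally from a strong monoidal structure on the left adjoints, rather than building coherence data by hand. The left adjoint $F_{i*}U_i^*\colon \End(\Cscr_i)\to\End(\Dscr_i)$, $T\mapsto F_i T U_i$, is not monoidal, so we will not use doctrinal adjunction naively. Instead we encode everything in one structure: the $\infty$-category of adjunctions. Write $\mathrm{Adj}$ for the walking adjunction $2$-category, with objects $0,1$, a $1$-morphism $f\colon 0\to 1$ and a $1$-morphism $u\colon 1\to 0$. The pair $F_i\dashv U_i$ is then a $2$-functor $\mathrm{Adj}\to\iCat$. Set $\Escr_i:=\Cscr_i\sqcup\Dscr_i$ and regard the data as a single category over the two-object set. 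Alternatively, and more concretely, we work with the Grothendieck construction $p_i\colon\Mscr_i\to\Delta^1$ of the adjunction, which is both a Cartesian and a coCartesian fibration with fibres $\Cscr_i$ and $\Dscr_i$.

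The key observation is the following. Consider the $\infty$-category $\Fun_{\Delta^1}(\Mscr_i,\Mscr_i)$ of endofunctors of $\Mscr_i$ over $\Delta^1$, which is monoidal under composition. Restricting to the fibre over $1$ is a monoidal functor
$$r_1\colon\Fun_{\Delta^1}(\Mscr_i,\Mscr_i)\to\End(\Dscr_i).$$
Restricting to the fibre over $0$ gives the functor $r_0$ to $\End(\Cscr_i)$. We will show that $r_1$ admits a right adjoint $\rho$ sending $X$ to the relative right Kan extension of $X$ along $\Dscr_i\subseteq\Mscr_i$. On the fibre over $0$, this extension is computed by $c\mapsto U_iXF_i(c)$, because the relevant comma category has initial object $c\to F_i c$. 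Hence $r_0\circ\rho\simeq U_{i*}F_i^*$.

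Since $r_1$ is strong monoidal, its right adjoint $\rho$ acquires a lax monoidal structure by \cite[Cor.~7.3.2.7]{ha}. The restriction $r_0$ is strong monoidal. The composite $r_0\rho$ is therefore lax monoidal, which gives (a). The structure map $\rho X\circ\rho Y\to\rho(XY)$ is adjoint to the map $r_1(\rho X\rho Y)=XY\to XY$ induced by the counit $r_1\rho\to\id$. Unwinding this on the fibre over $0$ gives $U_iX\epsilon_iYF_i$, which is the formula stated in (a).

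For (b) we use the same construction with bimodules. The category $\Fun_{\Delta^1}(\Mscr_1,\Mscr_2)$ is a $\bB\bM$-monoidal bimodule, left over $\Fun_{\Delta^1}(\Mscr_2,\Mscr_2)$ and right over $\Fun_{\Delta^1}(\Mscr_1,\Mscr_1)$. Restriction to the fibres over $1$ is a strong $\bB\bM$-monoidal map to the triple $(\End(\Dscr_2),\Fun(\Dscr_1,\Dscr_2),\End(\Dscr_1))$, and restriction to the fibres over $0$ is a strong $\bB\bM$-monoidal map to the corresponding triple for the $\Cscr_i$. Over the fibre $\langle 1\rangle$ of each colour, the right adjoint exists by the same Kan extension formula. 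The $\bB\bM$-version of \cite[Cor.~7.3.2.7]{ha} then applies, since it is stated for arbitrary $\infty$-operads via relative adjunctions over $\bB\bM^\otimes$, and composing with the restriction to $0$ gives (b).

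The main obstacle is verifying the hypothesis of \cite[Cor.~7.3.2.7]{ha} that the right adjoint exists fibrewise and that the operad map is a coCartesian fibration. It also requires checking that the restricted Kan extension really yields $U_2XF_1$ together with the stated counit maps. I would check this using the explicit description of mapping spaces in $\Mscr_i$: $\Map(c,d)=\Map_{\Dscr_i}(F_ic,d)$. If this runs into difficulties, the fallback is to cite \cite[Lem.~3.10]{brantner-campos-nuiten} for (a), and to adapt its proof to the bimodule setting for (b).
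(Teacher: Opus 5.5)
Your proposal is correct and is essentially the paper's proof. The paper likewise passes to the biCartesian fibrations $\Mscr_i\to\Delta^1$, shows that restriction $\ev_1$ to the fibres over $1$ is a strong $\bB\bM^\otimes$-monoidal functor with a fully faithful right adjoint $\ev_1^{\R}$ satisfying $\ev_0\circ\ev_1^{\R}\simeq U'_*F^*$, applies \cite[Cor.~7.3.2.7]{ha}, and composes with $\ev_0$. The differences are minor: the paper builds $\ev_1^{\R}$ by writing $\Fun_{/\Delta^1}(\Mscr,\Mscr')$ as an explicit iterated pullback and base-changing a fully faithful right adjoint, rather than by relative right Kan extension, and it identifies the structure map as the unit of $(\ev_1,\ev_1^{\R})$ simplified by a triangle identity, which is equivalent to your counit description.
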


\begin{remark}
	By adjunction, the left adjoint $F_{i*}U_i^*\colon \End(\Cscr_i)\rightarrow \End(\Dscr_i)$
	inherits a natural oplax monoidal structure and the left adjoint
	$F_{2*}U_1^*\colon \Fun(\Cscr_1, \Cscr_2)\rightarrow \Fun(\Dscr_1, \Dscr_2)$
	inherits an oplax $\bB\bM^{\otimes}$-monoidal structure.
\end{remark}

	Before turning to the proof of Proposition~\ref{prop:intertwining}, 
	we record the following consequences.
	
\begin{corollary}\label{cor:e1}
	\begin{enumerate}
		\item[{\em (a)}] Let $F\colon \Cscr\rightleftarrows \Dscr\colon U$ be
		an adjunction between $\infty$-categories,
		and let $T\in \Alg_{\bE_1}(\End(\Cscr))$ be the associated monad.
		Then $T$ is equivalent as an $\bE_1$-algebra to the image of $\id_{\Dscr}$
		under the lax monoidal functor $U_*F^*\colon \End(\Dscr)\rightarrow \End(\Cscr)$.
		
		\item[{\em (b)}] Let $F\colon \Cscr\rightleftarrows \Dscr\colon U$ and 
		$F'\colon \Dscr \rightleftarrows \Escr\colon U'$ be adjunctions of $\infty$-categories,
		and let $T'$ and $T''$ be the monads associated to $(F', U')$ and 
		the composite adjunction $(F'F, UU')$, respectively.
    	Then, $T''\we U\circ T'\circ F$ as $\bE_1$-algebras in $\End(\Cscr)$.
    \end{enumerate}
\end{corollary}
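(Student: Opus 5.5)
We deduce both parts of Corollary~\ref{cor:e1} from Proposition~\ref{prop:intertwining}(a), taken as given. The guiding principle is that a lax monoidal functor carries $\bE_1$-algebras to $\bE_1$-algebras. The identity $\id_\Dscr$ is the unit of $\End(\Dscr)$, hence the initial $\bE_1$-algebra there, so $U_*F^*(\id_\Dscr)=UF$ acquires an $\bE_1$-structure.

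For (a), the only thing to check is that this $\bE_1$-structure on $UF$ is the monad structure coming from the adjunction. On objects the two agree:
\begin{itemize}
\item the multiplication $UF\circ UF\to UF$ provided by Proposition~\ref{prop:intertwining}(a) is $U\epsilon F$;
\item the unit is the lax unit $\id_\Cscr\to U\id_\Dscr F$, namely the unit $\eta$ of the adjunction, as read off from the adjunction $F_*U^*\dashv U_*F^*$.
\end{itemize}
In the $\infty$-categorical setting, however, agreement on objects is not enough; we need an equivalence of full $\bE_1$-algebras. For this we use the standard characterization of the monad of an adjunction as the endomorphism monad. Lurie~\cite[Sec.~4.7]{ha} defines $T$ as the endomorphism algebra of $U$, regarded as an object of the left $\End(\Cscr)$-module $\Fun(\Dscr,\Cscr)$. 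Under the equivalence $\Fun(\Dscr,\Cscr)\we\Fun(\Cscr,\Cscr)$-type adjunction $F^*\dashv$ (precomposition), the endomorphism object of $U$ is computed by $U\circ F$, i.e.\ by precomposing with $F$. The endomorphism algebra of an object $M$ acted on by the unit monoid is the image of the unit under the lax monoidal right adjoint to $-\circ M$. Concretely, we apply Proposition~\ref{prop:intertwining}(b) with $\Cscr_1=\Dscr_1=\Dscr$ and identity adjunction there, and $(F,U)$ in the second slot. The lax $\BM$-structure then transports the tautological left $\End(\Dscr)$-module $\id_\Dscr$ to $U_*(\id_\Dscr)=U$ as a module over $U_*F^*(\id_\Dscr)$. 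This exhibits $UF$ as acting on $U$ compatibly with evaluation, which is the universal property characterizing the endomorphism monad. We expect this identification to be the main obstacle.

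Part (b) follows formally from (a) applied twice. The monad $T''$ is $U_*F^*(T')$ computed for the composite adjunction, where $T'=U'_*F'^*(\id_\Escr)$. The functor $(UU')_*(F'F)^*$ factors as $U_*F^*\circ U'_*F'^*$, and this factorization holds as lax monoidal functors by the naturality of the construction in Proposition~\ref{prop:intertwining}: the counit of the composite adjunction is the composite of the two counits. Hence
$$T''\we U_*F^*\bigl(U'_*F'^*(\id_\Escr)\bigr)=U_*F^*(T')=U\circ T'\circ F$$
as $\bE_1$-algebras.
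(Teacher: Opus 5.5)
Your proposal is essentially the paper's proof. You apply Proposition~\ref{prop:intertwining}(b) to the identity adjunction on $\Dscr$ together with $(F,U)$, which transports the tautological action of $\id_\Dscr$ on itself to an action of $UF$ on $U$ with action map $U(\epsilon)\colon UFU\to U$; you identify this with the endomorphism algebra of $U$, which is Lurie's definition of the monad; and you obtain (b) because $T''$ and $U\circ T'\circ F$ are images of $\id_\Escr$ under equivalent lax monoidal functors.

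The step you flag as the main obstacle is closed in the paper by \cite[Lem.~4.7.3.1]{ha}, and it is also immediate directly. The map $U(\epsilon)$ is the counit at $U$ of the adjunction $U^*\dashv F^*$, so it exhibits $UF\simeq F^*(U)$ as an endomorphism object of $U$. Note that $F^*$ is the right adjoint of precomposition with $U$, not a left adjoint as your garbled sentence suggests.
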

\begin{proof}
	Applying Proposition~\ref{prop:intertwining} to $F\colon \Cscr\rightleftarrows \Dscr\colon U$
	and the identity on $\Dscr$, it follows that the pair of functors
	$U_*F^*\colon \End(\Dscr)\to \End(\Cscr)$ and $U_*\colon \End(\Dscr)\to \Fun(\Dscr, \Cscr)$
	forms a lax $\bL\bM^{\otimes}$-monoidal functor. 
	The canonical action of $\id_{\Dscr}$ on itself therefore induces an action
	of $U\circ F\in \Alg_{\bE_1}(\End(\Cscr))$ on $U\in \Fun(\Dscr, \Cscr)$,
	with underlying action map
	$$
	U(\epsilon)\colon U\circ F\circ U \to U.
	$$
	By \cite[Lem.~4.7.3.1]{ha}, this exhibits $U\circ F$ as the endomorphism algebra of $U$.
	This is precisely the definition of the monad $T$ associated to
	the adjunction \cite[Def.~4.7.3.2]{htt}, proving part (a).
	Part (b) follows because these $\bE_1$-algebras are
	the images of $\id_{\Escr}$
	under equivalent lax monoidal functors.
\end{proof}


\begin{corollary}\label{cor:T-bimodule}
	Let $F_i\colon \Cscr_i\rightleftarrows \Dscr_i\colon U_i$ be adjunctions for $i=1, 2$,
	and let $T_i$ be the associated monads on $\Cscr_i$.
	Then there is a natural functor
	$$
	\psi\colon \Fun(\Dscr_1, \Dscr_2)\rightarrow {}_{T_2}\BMod_{T_1}(\Fun(\Cscr_1, \Cscr_2))
	$$
	sending $X\colon \Dscr_1\rightarrow \Dscr_2$ to a $(T_2, T_1)$-bimodule 
	in $\Fun(\Cscr_1, \Cscr_2)$ with underlying functor $U_2\circ X\circ F_1$.
\end{corollary}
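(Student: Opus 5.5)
The plan is to deduce Corollary~\ref{cor:T-bimodule} from Proposition~\ref{prop:intertwining}(b) and Corollary~\ref{cor:e1}(a), in the same way Corollary~\ref{cor:e1} was deduced. Proposition~\ref{prop:intertwining}(b) says that the triple of right adjoints $(U_{2*}F_2^*, U_{2*}F_1^*, U_{1*}F_1^*)$ is a lax $\BM^\otimes$-monoidal functor. It goes from the bimodule $\infty$-category $(\End(\Dscr_2),\Fun(\Dscr_1,\Dscr_2),\End(\Dscr_1))$ to $(\End(\Cscr_2),\Fun(\Cscr_1,\Cscr_2),\End(\Cscr_1))$.

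A lax $\BM^\otimes$-monoidal functor is a map of $\infty$-operads over $\BM^\otimes$. Composing with it therefore induces a functor on $\BM$-algebras,
$$\Alg_{/\BM}(\Fun(\Dscr_1,\Dscr_2)) \longrightarrow \Alg_{/\BM}(\Fun(\Cscr_1,\Cscr_2)),$$
where the source and target denote triples (algebra, bimodule, algebra) in the respective bimodule $\infty$-categories.

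Next I produce a functor $\Fun(\Dscr_1,\Dscr_2)\to\Alg_{/\BM}(\Fun(\Dscr_1,\Dscr_2))$ sending $X$ to the triple $(\id_{\Dscr_2}, X, \id_{\Dscr_1})$. Here $X$ is regarded as an $(\id,\id)$-bimodule via the trivial unit actions. This is the standard fact that the unit algebras act trivially. Concretely, $\id_{\Dscr_i}$ is the initial object of $\Alg_{\bE_1}(\End(\Dscr_i))$ by \cite[Sec.~3.2.1]{ha}. The forgetful functor from bimodules over the pair of unit algebras to the underlying category is an equivalence by \cite[Prop.~4.3.2.? / Cor.~4.2.3.?]{ha}-type statements. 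Equivalently, ${}_{\id}\BMod_{\id}(\Fun(\Dscr_1,\Dscr_2))\simeq\Fun(\Dscr_1,\Dscr_2)$. Inverting this equivalence gives the desired functor.

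Composing the two functors sends $X$ to a triple $(A_2, M, A_1)$ with the following pieces:
\begin{itemize}
\item $A_i = U_{i*}F_i^*(\id_{\Dscr_i})$ as $\bE_1$-algebras;
\item $M = U_2\circ X\circ F_1$ as the underlying object.
\end{itemize}
By Corollary~\ref{cor:e1}(a), $A_i\simeq T_i$ as $\bE_1$-algebras. It remains to identify the $(A_2,A_1)$-bimodules with the $(T_2,T_1)$-bimodules. I use the fiber of the projection $\Alg_{/\BM}(\Fun(\Cscr_1,\Cscr_2))\to\Alg(\End(\Cscr_2))\times\Alg(\End(\Cscr_1))$ over $(T_2,T_1)$, which is ${}_{T_2}\BMod_{T_1}$. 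This projection is a coCartesian fibration, or at least the fibers are functorial in equivalences, so the fiber over $(A_2,A_1)$ is identified with the fiber over $(T_2,T_1)$.

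To get a functor valued in a single fiber, I note that the composite functor sends every $X$ to the same pair of algebras. Indeed, the pair of algebras depends only on the image of the constant functor to the pair $(\id_{\Dscr_2},\id_{\Dscr_1})$. Hence the composite factors through the fiber over $(A_2,A_1)$, and transporting along $A_i\simeq T_i$ gives $\psi$.

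I expect the main technical point to be making the trivial-bimodule functor $X\mapsto(\id,X,\id)$ precise and functorial. If the citation is awkward, I would instead argue that the $\BM^\otimes$-operad map restricted to the sub-operad spanned by $\id_{\Dscr_2}$, $\id_{\Dscr_1}$ and a variable $X$ has contractible choices. Via Lemma~\ref{lem:suboperad_algebras}, bimodule structures over unit algebras form a contractible space.
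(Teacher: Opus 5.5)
Your proposal is correct and follows the paper's argument. The paper also obtains $\psi$ by applying the lax $\BM^\otimes$-monoidal functor of Proposition~\ref{prop:intertwining}(b) to $X$, viewed as an $(\id_{\Dscr_2},\id_{\Dscr_1})$-bimodule, and using that the images of the units are $T_2$ and $T_1$ (Corollary~\ref{cor:e1}(a)); your treatment of trivial bimodules over the unit algebras and the transport of fibers along $A_i\simeq T_i$ spells out what the paper leaves implicit.
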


\begin{proof}
	This follows from Proposition~\ref{prop:intertwining} and 
	the fact that $T_1$ and $T_2$ are the images of the monoidal units.
\end{proof}

\begin{remark}\label{rem:T-bimodule_explicit}
	The bimodule structure on $\psi(X)$ can be made more explicit using Proposition~\ref{prop:intertwining}: 
	it is given by the natural maps
	\begin{align*}
	T_2\circ \psi(X) = (U_2\circ \id_{\Dscr_2}\circ F_2)\circ (U_2\circ X\circ F_1)
	&\xrightarrow{U_2(\epsilon_2)} U_2\circ (\id_{\Dscr_2}\circ X)\circ F_1=\psi(X),\\
	\psi(X)\circ T_1 = (U_2\circ X\circ F_1)\circ (U_1\circ \id_{\Dscr_1}\circ F_1)
	&\xrightarrow{U_2 X(\epsilon_1)} U_2\circ (X\circ \id_{\Dscr_1})\circ F_1=\psi(X).
	\end{align*}
\end{remark}
	
We now turn to the proof of Proposition~\ref{prop:intertwining},
which requires some preliminary observations.

\begin{lemma}\label{lem:pullback_localization}
	Consider a pullback square of $\infty$-categories
	$$\begin{tikzcd}
	\Cscr'\arrow[d, "F'"{swap}]\arrow[r, "G'"] & \Cscr\arrow[d, "F"]\\
	\Dscr'\arrow[r, "G"] & \Dscr.
	\end{tikzcd}$$
	If $F$ has a fully faithful right adjoint, then $F'$ has a fully faithful right adjoint as well.
\end{lemma}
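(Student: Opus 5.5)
We prove the lemma by reformulating it in terms of localizations. A functor $F\colon\Cscr\to\Dscr$ has a fully faithful right adjoint if and only if it is a left Bousfield localization onto its image, i.e.\ it admits a right adjoint $R$ whose counit $FR\to\id_\Dscr$ is an equivalence. So it suffices to build a right adjoint $R'$ of $F'$ and check that its counit is invertible. The idea is to write $R'$ down directly from $R$ and the pullback description of $\Cscr'$: an object of $\Cscr'\simeq\Dscr'\times_\Dscr\Cscr$ is a triple $(d',c,\alpha\colon G(d')\simeq F(c))$. For $d'\in\Dscr'$ we set
$$R'(d')=\bigl(d',\,R(G(d')),\,G(d')\xrightarrow{\ \simeq\ }FRG(d')\bigr),$$
where the equivalence is the inverse of the counit $\epsilon\colon FR\to\id$ evaluated at $G(d')$. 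Then $F'R'(d')=d'$, and the candidate counit $F'R'\to\id_{\Dscr'}$ is the identity.

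To verify the adjunction we compute mapping spaces in the pullback. For $x=(e',c,\alpha)\in\Cscr'$ we have
$$\Map_{\Cscr'}(x,R'(d'))\simeq\Map_{\Dscr'}(e',d')\times_{\Map_\Dscr(Ge',Gd')}\Map_\Cscr(c,RGd').$$
By adjunction the last factor is $\Map_\Dscr(Fc,Gd')\simeq\Map_\Dscr(Ge',Gd')$, using $\alpha$. The compatibility with the projection to $\Map_\Dscr(Ge',Gd')$ holds because $F$ applied to a map $c\to RGd'$ followed by $\epsilon$ recovers its adjunct. The fiber product therefore collapses to $\Map_{\Dscr'}(e',d')=\Map_{\Dscr'}(F'x,d')$. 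This equivalence is induced by composing with the identity counit, so by \cite[Prop.~5.2.2.8]{htt} $R'$ is right adjoint to $F'$ with invertible counit, and hence $R'$ is fully faithful.

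The main obstacle is making the pointwise construction of $R'$ functorial and homotopy-coherent, rather than defining it objectwise. To handle this, we define $R'$ as the functor $\Dscr'\to\Dscr'\times_\Dscr\Cscr$ induced by the pair $(\id_{\Dscr'},\,R\circ G)$ together with the natural equivalence $G\simeq F\circ R\circ G$ given by $\epsilon^{-1}G$. This uses the universal property of the pullback, which we take to be a homotopy pullback (harmless, by replacing $F$ with a categorical fibration). Given this functor, the mapping-space computation above is natural in $x$, which is exactly what the criterion for an adjunction with a given counit requires.
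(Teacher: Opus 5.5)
Your proof is correct, but it follows a different route from the paper's.

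\textbf{The paper's argument.} The paper's proof is two lines. Since $R$ is fully faithful, $FR\simeq\id_{\Dscr}$. So $R$ can be viewed as a functor over $\Dscr$, with $\Cscr$ lying over $\Dscr$ via $F$, and $F\dashv R$ becomes an adjunction relative to $\Dscr$ in the sense of \cite[Def.~7.3.2.2]{ha}. Relative adjunctions are stable under base change \cite[Prop.~7.3.2.5]{ha}. Pulling back along $G$ therefore gives an adjunction $F'\dashv R'$ relative to $\Dscr'$, and its counit is automatically invertible because it lies over equivalences of $\Dscr'$.

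\textbf{Your argument.} You construct $R'$ explicitly from the universal property of the pullback, using the datum $(\id_{\Dscr'},\,RG,\,\epsilon^{-1}G)$. You then check the adjunction pointwise, using the fiber-product formula for mapping spaces in a pullback together with the counit form of \cite[Prop.~5.2.2.8]{htt}, obtained by passing to opposite categories.

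\textbf{Comparison.}
\begin{itemize}
\item Your mapping-space computation is essentially the content of Lurie's base-change argument, specialized to the case where the right adjoint is a section of $F$.
\item What your version buys is an explicit formula for $R'$ and a transparent use of the hypothesis: invertibility of $\epsilon$ is exactly what lets you define $R'$ as a functor into the pullback at all. It also avoids the relative-adjunction formalism.
\item What the paper's version buys is brevity, with all coherence delegated to \cite{ha}.
\end{itemize}

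\textbf{A small remark.} Your closing concern about naturality in $x$ is unnecessary. Once $R'$ and the equivalence $F'R'\simeq\id_{\Dscr'}$ are given as functors and a natural transformation, the criterion of \cite[Prop.~5.2.2.8]{htt} only asks for a pointwise check. So the pullback's universal property already settles the coherence issue completely.
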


\begin{proof}
	Let $R$ be the right adjoint to $F$. 
	Since $R$ is fully faithful, the adjunction $F\colon \Cscr \rightleftarrows \Dscr \colon R$ 
	can be viewed as an adjunction relative to $\Dscr$ \cite[Def.~7.3.2.2]{ha}, 
	where we consider $\Cscr$ as an $\infty$-category over $\Cscr$ via $F$. 
	The result now follows from \cite[Prop.~7.3.2.5]{ha}.
\end{proof}

\begin{lemma}\label{lem:bicart_adjunction}
	Let $F\colon \Cscr\leftrightarrows \Dscr\colon U$ and $F'\colon \Cscr'\rightleftarrows \Dscr'\colon U'$ 
	be two adjunctions, classifying biCartesian fibrations
	$p\colon \Mscr\to \Delta^1$ and $q\colon \Mscr'\to \Delta^1$.	
	Let $\Fun_{/\Delta^1}(\Mscr, \Mscr')\subseteq \Fun(\Mscr, \Mscr')$ 
	be the full subcategory of functors relative to $\Delta^1$ 
	and consider the functors
	$$\begin{tikzcd}
	\Fun(\Cscr, \Cscr') & \Fun_{/\Delta^1}(\Mscr, \Mscr')\arrow[r, "\ev_1"]\arrow[l, "\ev_0"{swap}] & \Fun(\Dscr, \Dscr')
	\end{tikzcd}$$	
	sending a functor $G\colon \Mscr\to \Mscr'$ relative to $\Delta^1$ to its restrictions 
	$$
	\ev_0(G)\colon \Cscr= p^{-1}(0)\rightarrow q^{-1}(0)= \Cscr',
	 \qquad\qquad \ev_1(G)\colon \Dscr= p^{-1}(1)\rightarrow q^{-1}(1)= \Dscr'.
	$$
	\begin{enumerate}
        \item[{\em (a)}] The functor $\ev_1$ has a fully faithful right adjoint
		 $\ev_1^\R$ and $\ev_0\circ \ev_1^\R$ is naturally equivalent to $U'_*F^*$.
	
		\item[{\em (b)}] The functor $\ev_0$ has a fully faithful left adjoint
        $\ev_0^\L$ and $\ev_1\circ\ev_0^\L$ is naturally equivalent to $F'_*U^*$.
	\end{enumerate}
\end{lemma}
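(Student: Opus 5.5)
We prove (a) in detail; (b) is formally dual, obtained by exchanging left and right adjoints and replacing the coCartesian transport along $\Mscr\to\Delta^1$ by Cartesian transport. The plan is to realize $\ev_1$ as a base change of a functor whose right adjoint is visibly fully faithful, and then apply Lemma~\ref{lem:pullback_localization}.

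\textbf{Step 1: a pullback square.} Since $\Mscr$ is the union of $\Cscr$ and $\Dscr$ glued along $\Delta^1$, restricting along the inclusion $\Dscr=p^{-1}(1)\subseteq\Mscr$ gives a pullback square
$$\begin{tikzcd}
\Fun_{/\Delta^1}(\Mscr,\Mscr')\arrow[r]\arrow[d, "\ev_1"{swap}] & \Fun_{/\Delta^1}(\Mscr,\Mscr')\times_{\Fun_{/\Delta^1}(\Dscr,\Mscr')}\Fun(\Dscr,\Dscr')\arrow[d]\\
\Fun(\Dscr,\Dscr')\arrow[r] & \Fun(\Dscr,\Dscr').
\end{tikzcd}$$
This square is tautological and does no work. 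The useful formulation goes through the inclusion $\{1\}\subseteq\Delta^1$. Because $p$ is Cartesian, a functor $\Mscr\to\Mscr'$ over $\Delta^1$ is equivalently the data of
\begin{itemize}
\item[(1)] a functor $G_1\colon\Dscr\to\Dscr'$,
\item[(2)] a functor $G_0\colon\Cscr\to\Cscr'$,
\item[(3)] a natural transformation $G_0 U\to U'G_1$.
\end{itemize}
Here (3) is the data of where the Cartesian edges $Ud\to d$ go, transported back to $q^{-1}(0)$ using the Cartesian structure of $q$. Equivalently, with $Y=G_1$, we get
$$\Fun_{/\Delta^1}(\Mscr,\Mscr')\simeq \Fun(\Cscr,\Cscr')\times_{\Fun(\Dscr,\Cscr')}\Fun(\Delta^1,\Fun(\Dscr,\Cscr'))\times_{\Fun(\Dscr,\Cscr')}\Fun(\Dscr,\Dscr'),$$
where the two fibre products are taken along $G_0\mapsto G_0U$ and $Y\mapsto U'Y$ respectively. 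We would justify this by the straightening equivalence between Cartesian fibrations over $\Delta^1$ and functors $\Dscr\to\Cscr$. Concretely, a map of Cartesian fibrations over $\Delta^1$ that need not preserve Cartesian edges corresponds to a lax commuting square. This is the step I expect to be the main obstacle: making this lax-square description precise, e.g.\ via \cite[Sec.~7.3]{ha} on relative adjunctions or the Gray-product description of lax transformations.

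\textbf{Step 2: fully faithful right adjoint.} Under this description, $\ev_1$ is the base change of
$$\ev_1'\colon \Fun(\Cscr,\Cscr')\times_{\Fun(\Dscr,\Cscr')}\Fun(\Delta^1,\Fun(\Dscr,\Cscr'))\to\Fun(\Dscr,\Cscr'),$$
which sends $(G_0,\alpha)$ to the target of $\alpha$. The base change is along $U'_*\colon\Fun(\Dscr,\Dscr')\to\Fun(\Dscr,\Cscr')$. The functor $\ev_1'$ has a fully faithful right adjoint: it sends $Z$ to the pair $(Z F,\ Z F U\to Z)$, where the map is induced by the counit. This uses the adjunction $U^*\dashv F^*$ of Lemma~\ref{lem:functor_category_adjoints}(b). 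A morphism $(G_0,\alpha)\to(ZF,\ \epsilon)$ is a compatible pair consisting of a map $G_0\to ZF$ and a map of targets. Since a map $G_0\to ZF$ corresponds to a map $G_0U\to Z$, compatibility forces the first component to be determined by the second. This shows the right adjoint is well defined, and the counit $\ev_1'\circ(\text{right adjoint})\simeq\id$ shows it is fully faithful. By Lemma~\ref{lem:pullback_localization}, $\ev_1$ then has a fully faithful right adjoint $\ev_1^\R$.

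\textbf{Step 3: identifying the composite.} The right adjoint is computed fibrewise, by the same formula as for $\ev_1'$. Hence for $Y\colon\Dscr\to\Dscr'$ the $0$-component of $\ev_1^\R(Y)$ is $(U'Y)F=U'_*F^*(Y)$, which gives $\ev_0\circ\ev_1^\R\simeq U'_*F^*$ naturally in $Y$.

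Part (b) follows by the dual argument. We describe $\Fun_{/\Delta^1}(\Mscr,\Mscr')$ using coCartesian transport, as triples $(G_0,\ G_1,\ F'G_0\to G_1F)$. Then $\ev_0$ is the base change of a source-evaluation functor. Its fully faithful left adjoint sends $W$ to the identity-type transformation $F'W\to F'WUF$ induced by the unit, using the adjunction $U^*\dashv F^*$ once more. Restricting to the $1$-component gives $\ev_1\circ\ev_0^\L\simeq F'_*U^*$.
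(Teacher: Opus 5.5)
Your proposal is correct and follows the paper's proof. It uses the same three-term fibre-product description of $\Fun_{/\Delta^1}(\Mscr,\Mscr')$, the same base change along $U'_*$ of the target-evaluation functor with fully faithful right adjoint $Z\mapsto (ZF,\ Z\epsilon\colon ZFU\to Z)$, and the same appeal to Lemma~\ref{lem:pullback_localization}. For the step you flag as the main obstacle, the paper avoids lax-transformation machinery altogether: it writes $\Mscr$ as the pushout $(\Cscr\times\{0\})\amalg_{\Dscr\times\{0\}}(\Dscr\times\Delta^1)$ along $U$ \cite[Prop.~3.2.2.7]{htt}, and $\Mscr'$ as the pullback $(\Cscr'\times\Delta^1)\times_{\Cscr'^{\lhd}}\Dscr'^{\lhd}$, using that $\Bscr\mapsto\Bscr^{\lhd}$ is right adjoint to $\Ascr\mapsto\Ascr\times_{\Delta^1}\{1\}$; your decomposition then follows directly from these two presentations.
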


\begin{proof}
	We only prove part (a), part (b) is dual.
	Let us start by describing $\Fun_{/\Delta^1}(\Mscr, \Mscr')$ more explicitly. 
	To this end, recall that there is an adjunction $\Cat_{\infty/\Delta^1}\rightleftarrows \Cat_\infty$
	with left adjoint sending $\Ascr\mapsto\Ascr\times_{\Delta^1}\{1\}$.
	The right adjoint sends $\Bscr$ to the cone $\Bscr^{\lhd}\rightarrow \Delta^1$,
	with the cone point mapping to $0$ and $\Bscr$ mapping to $1$; see for instance \cite[Tag 0234]{kerodon}.
	The $\infty$-categories $\Mscr$ and $\Mscr'$ then fit into squares in $\Cat_{\infty/\Delta^1}$
	$$\begin{tikzcd}
	\Dscr\times\{0\}\arrow[r] \arrow[d, "U"{swap}] & \Dscr\times \Delta^1\arrow[d]\\
	\Cscr\times \{0\}\arrow[r] & \Mscr
	\end{tikzcd}
	\qquad\qquad\qquad \begin{tikzcd}
	\Mscr'\arrow[r] \arrow[d]& \Dscr'^{\lhd}\arrow[d, "U^{\lhd}"]\\
	\Cscr'\times \Delta^1\arrow[r] & \Cscr'^{\lhd}.
	\end{tikzcd}$$
	The left square is a pushout by \cite[Prop.~3.2.2.7]{htt} and 
	the right square is a pullback: it is the unstraightening of the square
	in $\Fun(\Delta^{1, \op}, \Cat_{\infty})$ exhibiting $U'\colon \Dscr'\to\Cscr'$ as
	the pullback of $\id_{\Cscr'}\colon \Cscr'\rightarrow \Cscr'$ and $\Dscr'\to \ast$ over $\Cscr'\to \ast$.
	We thus obtain equivalences
	\begin{align*}
	\Fun_{/\Delta^1}(\Mscr, \Mscr') &\simeq \Fun(\Cscr, \Cscr')\times_{\Fun(\Dscr\times \{0\}, \Cscr')} \Fun_{/\Delta^1}(\Dscr\times\Delta^1, \Mscr')\\
	&\simeq \Fun(\Cscr, \Cscr')\times_{\Fun(\Dscr, \Cscr')^{\{0\}}} \Fun(\Dscr, \Cscr')^{\Delta^1}\times_{\Fun(\Dscr, \Cscr')^{\{1\}}} \Fun(\Dscr, \Dscr').
	\end{align*}
	Here the first equivalence follows from the left pushout square,	and
	the second equivalence follows from the right pullback square, the universal
	property of the cone and the equivalence 
	$$
	\Fun_{/\Delta^1}(\Dscr\times \Delta^1, \Cscr\times \Delta^1)\simeq \Fun(\Dscr, \Cscr)^{\Delta^1}.
	$$
	The functor $\ev_1$ is therefore the base change of
	$$
	\tilde{\ev}_1\colon \Fun(\Cscr, \Cscr')\times_{\Fun(\Dscr, \Cscr')^{\{0\}}} \Fun(\Dscr, \Cscr')^{\Delta^1}
	\rightarrow \Fun(\Dscr, \Cscr')^{\{1\}}.
	$$
	This admits a fully faithful right adjoint $\tilde{\ev}^{\R}_1$, 
	sending $X\colon \Dscr\to \Cscr'$ to the pair $(X\circ F, X\epsilon\colon X\circ F\circ U\rightarrow X)$.
	The counit is given by the natural equivalence
	$\tilde{\ev}_1\tilde{\ev}_1^{\R}\simeq \id$,
	and the unit is given on a pair $(Y, f\colon Y\circ U\to X)$ by
	\begin{equation}\label{eq:unit_transformation}\begin{tikzcd}
	Y\arrow[d, "g=f\circ Y(\eta)"{swap}] &[2pc] Y\circ U\arrow[r, "f"]\arrow[d, "g"{swap}] & X\arrow[d, "\id"]\\
	X\circ F & X\circ F\circ U\arrow[r, "X(\epsilon)"] & X.
	\end{tikzcd}\end{equation}
	The triangle identities follow from the triangle identities for the 
	adjunction $(F, U)$. The result now follows from Lemma~\ref{lem:pullback_localization}.	
\end{proof}

\begin{proof}[Proof of Proposition~\ref{prop:intertwining}]
	We only prove (b), part (a) being similar but easier.
    For $i=1, 2$, let $p_i\colon \Mscr_i\rightarrow\Delta^1$
    denote the biCartesian fibration classified by the adjunction
    $F_i\colon\Cscr_i\rightleftarrows\Dscr_i\colon U_i$.
    The triple of functors
    $$
    \ev_1\colon \End_{/\Delta^1}(\Mscr_2)\rightarrow \End(\Cscr_2)
    \qquad
    \ev_1\colon \Fun_{/\Delta^1}(\Mscr_1, \Mscr_2)\rightarrow \Fun(\Cscr_1, \Cscr_2)
    \qquad
    \ev_1\colon \End_{/\Delta^1}(\Mscr_1)\rightarrow \End(\Cscr_1)
    $$
    has the natural structure of a (strong) $\bB\bM^{\otimes}$-monoidal functor, and likewise for $\ev_0$.  
    The right adjoint $\ev_1^\R$ from Lemma~\ref{lem:bicart_adjunction} then
    inherits a lax $\bB\bM^{\otimes}$-monoidal structure by \cite[Cor.~7.3.2.7]{ha}. 
    It follows that the triple of composite right adjoints
    $$
    U_{j*}F_i^*\simeq \ev_0\circ \ev_1^{\R}\colon \Fun(\Dscr_i, \Dscr_j)
    \rightarrow \Fun(\Cscr_i, \Cscr_j)\qquad\qquad (1\leq i\leq j\leq 2)
    $$
    inherits a natural lax $\bB\bM^{\otimes}$-monoidal structure, as desired.
    To identify this lax $\bB\bM^{\otimes}$-monoidal structure, 
    fix $B\in \End(\Dscr_2)$ and $X\in \Fun(\Dscr_1, \Dscr_2)$.
    The lax structure map is then given by
    $$
    (U_2\circ B\circ F_2)\circ (U_2\circ X\circ F_1)\simeq \ev_0\big(\ev_1^{\R}(B)\circ \ev_1^{\R}(X)\big) \xrightarrow{\ev_0(\mu)} \ev_0(\ev_1^{\R}(BX))\simeq U_2\circ (B\circ X)\circ F_1
    $$  
    where $\mu$ is the lax structure map of the adjunction $(\ev_1, \ev_1^{\R})$. 
    Since $\ev_1$ is strong $\bB\bM^{\otimes}$-monoidal and $\ev_1^{\R}$ is fully faithful,
    the map $\mu$ is nothing but the unit of the adjunction $(\ev_1, \ev_1^{\R})$.
    Unraveling the description of this unit from \eqref{eq:unit_transformation},
    one sees that $\ev_0(\mu)$ is given by the composite
    $$
    U_2\circ B\circ F_2\circ U_2\circ X\circ F_1\xrightarrow{U_2BF_2U_2XF_1(\eta_1)} U_2\circ B\circ F_2\circ U_2\circ X\circ F_1\circ U_1\circ F_1\xrightarrow{U_2B(\epsilon_2)\circ U2BF_2U_2X(\epsilon_1)} U_2\circ B\circ X\circ F_1.
    $$
    This is equivalent to $U_2B(\epsilon_2)$ by the triangle identity for $(F_1, U_1)$.
\end{proof}

\subsection{Functors to monadic adjunctions}

We need the following description of the $\infty$-category of functors to an $\infty$-category of
left modules for a monad.

\begin{lemma}\label{lem:functors_to_mod}
	Let $\Cscr$ and $\Dscr$ be $\infty$-categories and consider the $\infty$-category 
	$\Fun(\Dscr,\Cscr)$ with its left-tensoring over $\End(\Cscr)$. 
	For any monad $T$ on $\Cscr$, there is a natural equivalence
    $$\Fun(\Dscr,\LMod_T(\Cscr))\rightarrow\LMod_T(\Fun(\Dscr,\Cscr)).$$
\end{lemma}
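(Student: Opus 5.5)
The natural approach is to express both sides as modules over the same algebra and identify them via Lurie's description of modules in functor categories. Precomposition with a functor $\Dscr\to\Cscr$ is not in play here. Instead, $\End(\Cscr)$ acts on $\Fun(\Dscr,\Cscr)$ by postcomposition, and we compare $\LMod_T(\Fun(\Dscr,\Cscr))$ with $\Fun(\Dscr,\LMod_T(\Cscr))$, where $\LMod_T(\Cscr)$ is formed with respect to the tautological action of $\End(\Cscr)$ on $\Cscr$ (that is, $\Cscr$ as a left module over the monoidal $\infty$-category $\End(\Cscr)$ in $\iCat$).

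The first step is to observe that the left action of $\End(\Cscr)$ on $\Fun(\Dscr,\Cscr)$ is the pointwise action. Concretely, $\Fun(\Dscr,-)$ preserves products and therefore sends the $\bL\bM^\otimes$-monoidal $\infty$-category $(\End(\Cscr),\Cscr)$ to $(\Fun(\Dscr,\End(\Cscr)),\Fun(\Dscr,\Cscr))$. Restricting the algebra part along the diagonal (constant functor) $\End(\Cscr)\to\Fun(\Dscr,\End(\Cscr))$, which is strong monoidal, recovers the postcomposition action. I would check this identification at the level of $\bL\bM^\otimes$-operads. The simplest route is to realize both as $\Fun(\Dscr,-)$ applied to the coCartesian fibration $\Cscr^\otimes\to\bL\bM^\otimes$, using the cotensor construction $\Cscr^{\otimes,\Dscr}:=\Fun(\Dscr,\Cscr^\otimes)\times_{\Fun(\Dscr,\bL\bM^\otimes)}\bL\bM^\otimes$ of~\cite[Rem.~2.1.3.4]{ha}, which is again $\bL\bM$-monoidal with the pointwise structure.

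The second step is the key formal input: for any $\infty$-operad $\Oscr$ and $\Oscr$-monoidal $\Cscr^\otimes$, there is an equivalence $\Alg_{/\Oscr}(\Cscr^{\Dscr})\simeq\Fun(\Dscr,\Alg_{/\Oscr}(\Cscr))$ (cf.~\cite[Rem.~2.1.3.4, Prop.~3.2.4.3]{ha}). Applied to $\Oscr=\bL\bM$, both sides become pairs (algebra, module). Taking the fibre over the constant algebra $T$, equivalently over the $\Dscr$-indexed constant diagram at $T$, gives
$$\LMod_T(\Fun(\Dscr,\Cscr))\simeq \Fun(\Dscr,\LMod(\Cscr))\times_{\Fun(\Dscr,\Alg(\End(\Cscr)))}\{T\}.$$
The right-hand side is then compared with $\Fun(\Dscr,\LMod_T(\Cscr))$. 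Since $\LMod_T(\Cscr)$ is the fibre of $\LMod(\Cscr)\to\Alg(\End(\Cscr))$ over $T$, and $\Fun(\Dscr,-)$ preserves pullbacks, we get $\Fun(\Dscr,\LMod_T(\Cscr))\simeq\Fun(\Dscr,\LMod(\Cscr))\times_{\Fun(\Dscr,\Alg)}\Fun(\Dscr,\{T\})$. Here $\Fun(\Dscr,\{T\})$ is contractible and picks out the constant diagram, which matches the previous fibre.

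The main obstacle is the first step: making rigorous that the postcomposition left-tensoring of $\Fun(\Dscr,\Cscr)$ over $\End(\Cscr)$ (the one used in Proposition~\ref{prop:intertwining}) agrees with the restriction of the pointwise cotensored structure along the diagonal. I would handle this by constructing the postcomposition action directly as that restriction, and then noting that the functor in the statement is by construction the composite of the equivalences above. Naturality in $\Dscr$ and in $T$ then follows because every construction used is functorial.
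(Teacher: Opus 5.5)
Your proposal is correct, but it takes a different route from the paper. The paper never identifies the postcomposition action with a pointwise one. It takes the comparison functor $\psi\colon \Fun(\Dscr,\LMod_T(\Cscr))\to\LMod_T(\Fun(\Dscr,\Cscr))$, $X\mapsto U\circ X$, given by Corollary~\ref{cor:T-bimodule}; this is the lax structure of Proposition~\ref{prop:intertwining} applied to the free--forgetful adjunction $F\dashv U$ and $\id_\Dscr$. It then notes two things:
\begin{itemize}
\item $U_*$ is monadic over $\Fun(\Dscr,\Cscr)$ by Lemma~\ref{lem:functor_category_adjoints}, and so is the forgetful functor $V$;
\item $\psi$ induces an equivalence of monads, since both free objects on $X$ are $T\circ X$.
\end{itemize}
Barr--Beck--Lurie in the form \cite[Cor.~4.7.3.16]{ha} then finishes.

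The paper's argument works directly with the composition action it already uses. That action enters Proposition~\ref{prop:intertwining} and Lurie's endomorphism monad \cite[Def.~4.7.3.2]{ha}, which Proposition~\ref{prop:monads_and_functors} relies on. It also produces exactly the explicit functor $X\mapsto U\circ X$, with module structure coming from the counit, that is later combined with Proposition~\ref{prop:maps_out} in Corollary~\ref{cor:bimodules}.

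Your route avoids monadicity arguments entirely and is more structural. It shows both sides are fibres over $T$ of the same fibration, so it extends at once to bimodules or general $\Oscr$-algebras. Its cost is the coherence issue you flag. If you simply define the postcomposition action as the restriction of the cotensor $\Cscr^{\otimes,\Dscr}$ along the diagonal, you still have to check that it agrees with the action used in Proposition~\ref{prop:intertwining} and in Lurie's formalism before the lemma can be fed into the later results. Also, your step ``take the fibre over the constant algebra'' tacitly uses a standard fact that you should state: $T$-modules for an action restricted along a monoidal functor $f$ are the same as $f(T)$-modules for the original action.
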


\begin{proof}
    Let $F\colon \Cscr \rightleftarrows \LMod_T(\Cscr)\colon U$ denote 
    the free-forgetful adjunction. Applying Corollary~\ref{cor:T-bimodule} to
    this adjunction and the identity on $\Dscr$, we obtain a commuting triangle
    $$\begin{tikzcd}
    \Fun(\Dscr, \LMod_T(\Cscr))\arrow[rd, "U_*"{swap}]\arrow[rr, "\psi"] & & \LMod_T(\Fun(\Dscr, \Cscr))\arrow[ld, "V"]\\
    & \Fun(\Dscr, \Cscr)
    \end{tikzcd}$$
    where $V$ forgets the left $T$-module structure. By Lemma~\ref{lem:functor_category_adjoints}, 
    $U_*$ is monadic with left adjoint $F_*$. 
    Likewise, $V$ is monadic, with left adjoint $F'$ taking the free left $T$-module.
    For any functor $X\colon \Dscr\to \Cscr$, we then obtain a natural map 
    $\alpha_X\colon UFX\to VF'(X)$. This is an equivalence, since the free left 
    $T$-module is given by the composition $VF'(X)\simeq TX$.
    By \cite[Cor.~4.7.3.16]{ha}, the top map $\psi$ is therefore an equivalence.
\end{proof}

In~\cite{ha}, Lurie constructs a functor
$\LMod\colon\Alg(\End(\Cscr))^\op\rightarrow(\Cat_\infty)_{/\Cscr}$ which sends a monad $T$ to
$\LMod_T(\Cscr)\rightarrow\Cscr$.
The next lemma shows that it is fully faithful as was noted in~\cite[Rem.~4.7.3.8]{ha}.
See also~\cite[Thm.~3.24]{henry-meadows}.

\begin{proposition}\label{prop:monads_and_functors}
    Let $F_i\colon \Cscr\rightleftarrows\Dscr_i\colon G_i$ be adjunctions for $i=1,2$. Let $T_i$
    denote the associated monads and assume that $G_2$ is monadic. Then, there is a natural
    equivalence
    $$\Map_{\Alg(\End(\Cscr))}(T_2,T_1)
    \we\Map_{(\Cat_\infty)_{/\Cscr}}(\Dscr_1\xrightarrow{G_1}\Cscr,\Dscr_2\xrightarrow{G_2}\Cscr).$$
\end{proposition}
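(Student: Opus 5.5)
We will compute both sides as spaces of lifts and compare them through Lemma~\ref{lem:functors_to_mod} and Corollary~\ref{cor:e1}. Since $G_2$ is monadic, we may replace $\Dscr_2\to\Cscr$ by $\LMod_{T_2}(\Cscr)\to\Cscr$. A functor $\Dscr_1\to\Dscr_2$ over $\Cscr$ is then a functor $\Dscr_1\to\LMod_{T_2}(\Cscr)$ whose composite with the forgetful functor is identified with $G_1$. By Lemma~\ref{lem:functors_to_mod}, $\Fun(\Dscr_1,\LMod_{T_2}(\Cscr))\we\LMod_{T_2}(\Fun(\Dscr_1,\Cscr))$, compatibly with the forgetful functors to $\Fun(\Dscr_1,\Cscr)$. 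Taking fibres over $G_1$, the right-hand side becomes the space of left $T_2$-module structures on $G_1\in\Fun(\Dscr_1,\Cscr)$, where $\End(\Cscr)$ acts by postcomposition.

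Next we identify left $T_2$-module structures on $G_1$ with $\bE_1$-maps $T_2\to\End(G_1)$, where $\End(G_1)$ is the endomorphism monad of $G_1$ for the left action of $\End(\Cscr)$ on $\Fun(\Dscr_1,\Cscr)$. By \cite[Thm.~4.7.1.34]{ha} and the surrounding discussion, the space of $T$-module structures on an object $M$ of a left-tensored $\infty$-category is equivalent to $\Map_{\Alg(\End(\Cscr))}(T,\End(M))$, provided this endomorphism object exists. In the proof of Corollary~\ref{cor:e1}(a) we saw that, via \cite[Lem.~4.7.3.1]{ha}, the action map $U(\epsilon)\colon UFU\to U$ exhibits $UF$ as the endomorphism algebra of $U$. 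Applied to the adjunction $(F_1,G_1)$, this gives $\End(G_1)\we G_1F_1=T_1$ as $\bE_1$-algebras. Combining the two steps yields
$$\Map_{(\Cat_\infty)_{/\Cscr}}(\Dscr_1,\Dscr_2)\we\Map_{\Alg(\End(\Cscr))}(T_2,T_1).$$

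Naturality in both variables follows because every identification is functorial: Lemma~\ref{lem:functors_to_mod} is natural in $\Dscr_1$, and the module/endomorphism correspondence is natural in the algebra.

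The main obstacle is the middle step. We need a clean statement that endomorphism objects $\End(M)$, in Lurie's sense, represent module structures on $M$ as a space, namely
$$\LMod_T(\Fun(\Dscr_1,\Cscr))\times_{\Fun(\Dscr_1,\Cscr)}\{G_1\}\we\Map_{\Alg}(T,\End(G_1)).$$
Lurie proves this through the $\infty$-category of endomorphism objects, $\Cscr[M]$, which has a final object when $\End(M)$ exists. We would extract the equivalence from the fact that $\Alg(\End(\Cscr)[G_1])\to\Alg(\End(\Cscr))$ is a right fibration represented by the final object (see \cite[Sec.~4.7.1]{ha}). We would also check that the strictification replacing $\Dscr_2$ by $\LMod_{T_2}(\Cscr)$ does not affect the fibre over $G_1$, which holds because an equivalence over $\Cscr$ induces equivalences of mapping spaces in $(\Cat_\infty)_{/\Cscr}$.
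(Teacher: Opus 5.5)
Your proposal is correct and follows essentially the same route as the paper: both combine Lemma~\ref{lem:functors_to_mod} with Lurie's result that $\LMod(\Fun(\Dscr_1,\Cscr))\times_{\Fun(\Dscr_1,\Cscr)}\{G_1\}\to\Alg(\End(\Cscr))$ is a right fibration represented by the endomorphism monad $T_1=G_1F_1$. The one step you leave implicit, which the paper states explicitly, is that the mapping space in $(\Cat_\infty)_{/\Cscr}$ is the fibre of $\Map(\Dscr_1,\Dscr_2)\to\Map(\Dscr_1,\Cscr)$ on cores, and this agrees with the fibre of $\Fun(\Dscr_1,\Dscr_2)\to\Fun(\Dscr_1,\Cscr)$ over $G_1$ precisely because postcomposition with the conservative functor $G_2$ is conservative.
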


\begin{proof}
    Consider the following diagram
    $$\begin{tikzcd}[column sep=2pc, row sep=2pc]
    \makebox[2em]{$\{G_1\}$}\arrow[rd] & & \makebox[2em]{$\LMod(\Fun(\Dscr_1,\Cscr))$}\arrow[rd]\arrow[ld] & & \makebox[2em]{$\{T_2\}$}\arrow[ld]\\
    & \makebox[2em]{$\Fun(\Dscr_1,\Cscr)$} & &  \makebox[2em]{$\Alg(\End(\Cscr)),$}
    \end{tikzcd}$$
    where $\LMod(\Fun(\Dscr_1,\Cscr))\rightarrow\Alg(\End(\Cscr))$ is the $\LMod$-fibration
    associated to the left-tensoring of $\Fun(\Dscr,\Cscr)$ over $\End(\Cscr)$ and
    $\LMod(\Fun(\Dscr_1,\Cscr)\rightarrow\Fun(\Dscr_1,\Cscr)$ is the forgetful functor.
    Lurie proves that the map from the pullback of the left span
    $\{G_1\}\times_{\Fun(\Dscr_1,\Cscr)}\LMod(\Fun(\Dscr_1,\Cscr))$ to
    $\Alg(\End(\Cscr))$ is a right fibration represented by $T_1$. In other words, this pullback
    is $\Alg(\End(\Cscr))_{/T_1}$. As a consequence, the total pullback is
    $\Map_{\Alg(\End(\Cscr))}(T_2,T_1)$. On the other hand, the pullback of the right span
    $\LMod(\Fun(\Dscr_1,\Cscr))\times_{\Alg(\End(\Cscr))}\{T_2\}$ is equivalent to
    $\LMod_{T_2}(\Fun(\Dscr_1,\Cscr))$, by definition of the $\LMod$ fibration.
    We showed in Lemma~\ref{lem:functors_to_mod} that this pullback is then equivalent to
    $\Fun(\Dscr_1,\LMod_{T_2}(\Cscr))\we\Fun(\Dscr_1,\Dscr_2)$. Thus, the total pullback
    $\Map_{\Alg(\End(\Cscr))}(T_2,T_1)$ fits into a diagram of $\infty$-categories
    $$\begin{tikzcd}
    \Map_{\Alg(\End(\Cscr))}(T_2,T_1)\arrow[r]\arrow[d] &  \Map(\Dscr_1,\Dscr_2)\arrow[d]\arrow[r] & \Fun(\Dscr_1,\Dscr_2)\arrow[d]\\
    \{G_1\}\arrow[r]& \Map(\Dscr_1,\Cscr)\arrow[r] & \Fun(\Dscr_1,\Cscr),
    \end{tikzcd}$$
    where the right two vertical arrows are given by composition with $G_2$
    and the outer square is a pullback. 
    Since $G_2$ is conservative, the right square is a pullback as well. 
    The left square is therefore a pullback, which identifies 
    $\Map_{\Alg(\End(\Cscr))}(T_2,T_1)$ with the desired mapping anima in $(\Cat_\infty)_{/\Cscr}$.
\end{proof}

Let $(\Cat_\infty)_{/\Cscr}^\R$ denote the full subcategory of $(\Cat_\infty)_{/\Cscr}$ on those
objects $G\colon\Dscr\rightarrow\Cscr$ where $G$ admits a left adjoint. The functor
$\LMod\colon\Alg(\End(\Cscr))^\op\rightarrow(\Cat_\infty)_{/\Cscr}$ factors through
$(\Cat_\infty)_{/\Cscr}^\R$ by~\cite[Cor.~4.2.4.8]{ha}. In particular, note that the condition of
admitting a left adjoint is automatic in the setting of condition (b) of Lurie's Barr--Beck theorem~\cite[Thm.~4.7.3.5]{ha}.

\begin{corollary}\label{cor:monads_adjunctions}
    The functor $\LMod\colon\Alg(\End(\Cscr))^\op\rightarrow(\Cat_\infty)_{/\Cscr}^\R$ 
    is fully faithful and admits a left adjoint given by taking the monad associated to an
    adjunction.
\end{corollary}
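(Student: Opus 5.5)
Full faithfulness of $\LMod\colon\Alg(\End(\Cscr))^\op\rightarrow(\Cat_\infty)_{/\Cscr}^\R$ follows directly from Proposition~\ref{prop:monads_and_functors}. For monads $T_1,T_2$, apply it to the free-forgetful adjunctions $F_i\colon\Cscr\rightleftarrows\LMod_{T_i}(\Cscr)\colon G_i$. The forgetful functor $G_2$ is monadic, and the monad of this adjunction is $T_i$ itself by \cite[Prop.~4.7.3.3]{ha}, or by Corollary~\ref{cor:e1}(a). The proposition then identifies
$$\Map_{\Alg(\End(\Cscr))}(T_2,T_1)\we\Map_{(\Cat_\infty)_{/\Cscr}}(\LMod_{T_1}(\Cscr),\LMod_{T_2}(\Cscr)).$$
Two things still need checking. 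First, this identification is the map induced by the functor $\LMod$; both come from the same $\LMod$-fibration construction in the proof of the proposition. Second, since $(\Cat_\infty)^\R_{/\Cscr}$ is a full subcategory, the mapping anima agree there as well. The first point is the only real bookkeeping. I would argue that the equivalence of Lemma~\ref{lem:functors_to_mod} is compatible with restriction along monad maps, because the functor $\psi$ of Corollary~\ref{cor:T-bimodule} is natural in the adjunction.

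For the left adjoint, fix $G\colon\Dscr\rightarrow\Cscr$ admitting a left adjoint $F$, with associated monad $T_G=GF$. Corollary~\ref{cor:e1}(a) gives it an $\bE_1$-structure, and it is the endomorphism monad of $G$. Proposition~\ref{prop:monads_and_functors}, applied with $(F_1,G_1)=(F,G)$ and $(F_2,G_2)$ the free-forgetful adjunction for an arbitrary monad $T$, gives natural equivalences
$$\Map_{\Alg(\End(\Cscr))}(T,T_G)\we\Map_{(\Cat_\infty)_{/\Cscr}}(\Dscr\xrightarrow{G}\Cscr,\LMod_T(\Cscr)).$$
This says that the object $T_G\in\Alg(\End(\Cscr))^\op$ corepresents $T\mapsto\Map(G,\LMod(T))$ (the mapping spaces in the opposite category reverse). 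By the pointwise criterion for adjoints \cite[Prop.~5.2.4.2]{htt}, $\LMod$ therefore admits a left adjoint $G\mapsto T_G$, with unit the comparison functor $\Dscr\rightarrow\LMod_{T_G}(\Cscr)$ over $\Cscr$.

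The main obstacle is naturality of these equivalences in $T$. The pointwise criterion needs the equivalence to be induced by a single unit map, namely the canonical $\Dscr\rightarrow\LMod_{T_G}(\Cscr)$ from \cite[Cor.~4.2.4.8, Def.~4.7.3.2]{ha}. To settle this I would unwind the proof of Proposition~\ref{prop:monads_and_functors}: the fiber $\{G\}\times_{\Fun(\Dscr,\Cscr)}\LMod(\Fun(\Dscr,\Cscr))$ is the right fibration represented by $T_G$. Its universal element is exactly the action of $T_G$ on $G$, and that action corresponds under Lemma~\ref{lem:functors_to_mod} to the comparison functor. So the equivalence is given by composition with this unit, as required.
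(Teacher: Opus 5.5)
Your proposal is correct and takes the paper's approach. The paper gives no separate proof: the corollary is read off from Proposition~\ref{prop:monads_and_functors}, taking $G_1$ to be the forgetful functor of $\LMod_{T_1}(\Cscr)$ for full faithfulness and an arbitrary right adjoint $G_1$ for the left adjoint. Your extra bookkeeping is left implicit in the paper but is the right thing to verify: that the equivalence is the map induced by $\LMod$, and that it is given by composing with the comparison functor $\Dscr\to\LMod_{T_G}(\Cscr)$ as unit.
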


%
%

\subsection{Functors from monadic adjunctions}

The following is an elaboration of an idea of Holeman~\cite[Sec.~2.2]{holeman-derived}.

\begin{proposition}\label{prop:maps_out}
    Let $T$ be a monad on an $\infty$-category $\Cscr$ and let $\Dscr$ be an $\infty$-category with
    geometric realizations. The $\infty$-category $\Fun(\Cscr,\Dscr)$ is right-tensored over
    $\End(\Cscr)$ and there is a natural functor
    $$\Fun(\LMod_T(\Cscr),\Dscr)\rightarrow\RMod_T(\Fun(\Cscr,\Dscr))$$ which admits a fully faithful left adjoint with essential image
    the functors which commute with $U$-split geometric realizations.
\end{proposition}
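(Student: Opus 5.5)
The plan is to construct the functor via Corollary~\ref{cor:T-bimodule}, build the left adjoint by a bar construction, and prove full faithfulness and the description of the essential image using Lurie's Barr--Beck machinery.

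\textbf{The functor.} Apply Corollary~\ref{cor:T-bimodule} with $F_1\colon\Cscr\rightleftarrows\LMod_T(\Cscr)\colon U$ and the identity adjunction on $\Dscr$. This yields a functor
$$\psi\colon\Fun(\LMod_T(\Cscr),\Dscr)\rightarrow\RMod_T(\Fun(\Cscr,\Dscr)),\qquad X\mapsto X\circ F.$$
By Remark~\ref{rem:T-bimodule_explicit}, the right action is $X(\epsilon)\colon XFUF\to XF$.

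\textbf{The left adjoint.} Given a right $T$-module $M\colon\Cscr\to\Dscr$, we set $\Lambda(M)(A)=|M\circ T^{\bullet}\circ U(A)|$. This is the geometric realization of the two-sided bar construction $\Bar_\bullet(M,T,UA)$, with face maps given by the action of $M$, the multiplication of $T$, and the module structure $TUA\to UA$. The realization exists since $\Dscr$ has geometric realizations. Functoriality in $M$ and $A$ comes from making the bar construction functorial in the monoidal setting, using the relative tensor product $M\otimes_T(-)$ of~\cite[\S4.4]{ha} for the right-tensoring of $\Fun(\Cscr,\Dscr)$ over $\End(\Cscr)$ acting on $\Fun(\LMod_T(\Cscr),\Cscr)$. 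In other words, $\Lambda(M)=M\otimes_T U$, where $U$ carries its canonical left $T$-module structure.

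The adjunction $\Map(\Lambda M,X)\simeq\Map_{\RMod_T}(M,XF)$ then follows from the standard bar resolution argument. A natural transformation out of $|MT^\bullet U|$ is a compatible family of maps, and these are determined by a $T$-linear map $M\to XF$. To make this precise, I would first check it for free modules $M=N\circ T$, where $\Lambda(NT)\simeq N\circ U$ because the bar construction has an extra degeneracy. The adjunction there reduces to $\Map(NU,X)\simeq\Map(N,XF)$, which is Lemma~\ref{lem:functor_category_adjoints}(b). One then extends to all modules by writing $M$ as the realization of its own bar resolution.

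\textbf{Full faithfulness and essential image.} The unit is $M\to\Lambda(M)\circ F=|MT^\bullet UF|=|MT^{\bullet+1}|$, and this is an equivalence since the augmented simplicial object $MT^{\bullet+1}\to M$ has an extra degeneracy. Hence $\Lambda$ is fully faithful. Next, for any $A$ the simplicial object $T^{\bullet}UA$, more precisely $FUF\cdots UA\to A$, is a $U$-split resolution of $A$ in $\LMod_T(\Cscr)$. Consequently $\Lambda(M)$ commutes with $U$-split realizations, and by construction $\Lambda(XF)(A)=|X(F(UF)^\bullet UA)|$. The counit $\Lambda(XF)\to X$ is therefore an equivalence exactly when $X$ preserves the realization of this $U$-split resolution. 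For the converse containment, take an arbitrary $U$-split simplicial object $A_\bullet$ and show that $|\Lambda M(A_\bullet)|\simeq\Lambda M(|A_\bullet|)$. Commuting the two realizations gives $|M T^n U A_\bullet|$, and $UA_\bullet$ is split, so each $MT^nU(-)$ preserves its colimit. Thus the image of $\Lambda$ is precisely the functors commuting with $U$-split geometric realizations.

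The main obstacle I expect is making $\Lambda$ and the adjunction coherent rather than merely objectwise: constructing the bar functor $\RMod_T(\Fun(\Cscr,\Dscr))\to\Fun(\LMod_T(\Cscr),\Dscr)$ and identifying its unit and counit. I would handle this with Lurie's relative tensor products for the bimodule structure produced by Proposition~\ref{prop:intertwining}. The one point to check carefully is that these realizations exist even though $\Dscr$ has only geometric realizations and not all colimits.
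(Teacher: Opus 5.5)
Your argument is correct in outline, but it takes a different route from the paper. The paper never writes down the left adjoint. It places $\psi$ in a triangle over $\Fun(\Cscr,\Dscr)$ with two other functors:
\begin{itemize}
\item $F^*$, which has the left adjoint $U^*$ and preserves geometric realizations (these are computed pointwise);
\item the monadic forgetful functor $V$.
\end{itemize}
It notes that the comparison $X\circ U\circ F\simeq X\circ T$ between the two free constructions is an equivalence, and applies \cite[Cor.~4.7.3.16]{ha} to get a fully faithful left adjoint $\phi$.

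The paper then handles the essential image by a generation argument. The subcategory $\Kscr$ of functors preserving $U$-split realizations is closed under geometric realizations and contains each $A\circ U=\phi(A\circ T)$, so it contains the image of $\phi$. Moreover $F^*|_{\Kscr}$ is conservative (check on free $T$-modules, then use canonical resolutions), so a second application of the same corollary shows $\psi|_{\Kscr}$ is an equivalence.

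You instead build $\phi=\Lambda$ explicitly as $M\otimes_T U=|MT^\bullet U|$ and read everything off from extra degeneracies:
\begin{itemize}
\item the unit is $|MT^{\bullet+1}|\simeq M$;
\item the image lies in $\Kscr$ by interchanging realizations;
\item the counit at $X$ is the comparison map for the canonical $U$-split resolution $F(UF)^\bullet UA\to A$.
\end{itemize}
This gives an explicit formula for the left adjoint, whereas the paper only identifies it on free modules. It also gives a slightly sharper conclusion: a functor preserving the realizations of the canonical bar resolutions already preserves all $U$-split geometric realizations.

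The cost is exactly the step you defer. You need $\Lambda$ as a functor with $\Lambda(N\circ T)\simeq N\circ U$ natural for \emph{all} maps of right $T$-modules between free modules, not just those of the form $f\circ T$. This matters because the last face map of the resolution $MT^{\bullet+1}$ is not of that form. You also need coherent unit and counit.

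Proposition~\ref{prop:intertwining} does not by itself supply the composition pairing $\Fun(\Cscr,\Dscr)\times\Fun(\LMod_T(\Cscr),\Cscr)\to\Fun(\LMod_T(\Cscr),\Dscr)$ needed to set up a relative tensor product. If you use \cite[\S4.4]{ha}, rely only on facts coming from split (hence absolute) simplicial objects, since postcomposition with an arbitrary $M$ need not preserve geometric realizations.

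The proof of \cite[Cor.~4.7.3.16]{ha} carries out this bar construction coherently. So the economical fix is to let that corollary produce the adjunction, as the paper does, and then use your explicit computations to identify the unit, counit, and essential image.
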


\begin{proof}
    Let $F\colon \Cscr \rightleftarrows \LMod_T(\Cscr)\colon U$ denote 
    the free-forgetful adjunction. Applying Corollary~\ref{cor:T-bimodule} to
    this adjunction and the identity on $\Dscr$, we obtain a commuting triangle
    $$\begin{tikzcd}
    \Fun(\LMod_T(\Cscr), \Dscr)\arrow[rd, "F^*"{swap}]\arrow[rr, "\psi"] & & \RMod_T(\Fun(\Cscr, \Dscr))\arrow[ld, "V"]\\
    & \Fun(\Cscr, \Dscr)
    \end{tikzcd}$$
    where $V$ forgets the right $T$-module structure.
    The functor $V$ is monadic, with left adjoint $F'$ taking the free right $T$-module.
    On the other hand, $F^*$ has a right adjoint $U^*$ and preserves
    geometric realizations, since $\Dscr$ has geometric realizations.
    For any functor $X\colon \Cscr\to \Dscr$, we then obtain a natural map 
    $\alpha_X\colon X\circ U\circ F\to VF'(X)$. This is an equivalence, since the free right $T$-module
    is given by the composition $VF'(X)\simeq X\circ T$.
    By \cite[Cor.~4.7.3.16]{ha}, the functor $\psi$ therefore has a fully faithful left adjoint $\phi$.
    
    To identify the essential image of $\phi$, let us write
    $\Kscr\subseteq \Fun(\LMod_T(\Cscr),\Dscr)$
    for the full subcategory of functors which preserve $U$-split geometric realizations.
    Note that $\Kscr$ is closed under all colimits that exist
    in $\Fun(\LMod_T(\Cscr),\Dscr)$, and in particular under geometric realizations. 
    In addition, it contains each functor $A\circ U$ with $A\colon \Cscr\to \Dscr$;
    this preserves $U$-split geometric realizations as $A$, like any functor, 
    preserves split geometric realizations, which are absolute colimits. 
    Since $\phi$ sends each free right $T$-module $A\circ T$ to $A\circ U$, it follows 
    that its essential image is contained in $\Kscr$.
    It then remains to show that $\psi$ restricts to an equivalence on $\Kscr$. 
    To see this, observe that $(F^*, U^*)$ restricts to an adjunction
    $U^*\colon \Fun(\Cscr, \Dscr)\rightleftarrows \Kscr\colon F^*$,
    with the additional property that $F^*$ is conservative:
    if a natural transformation between functors preserving $U$-split geometric realizations
    is an equivalence on free $T$-modules, then it is an equivalence
    on all $T$-modules using their canonical resolutions.
    Another application of \cite[Cor.~4.7.3.16]{ha} then shows that $\psi$ restricts to an equivalence on $\Kscr$.
\end{proof}

\begin{remark}
    The functor from Proposition~\ref{prop:maps_out} is not an equivalence in general.
    Consider the functor $\K\colon\CAlg_{\bQ}^\heart\rightarrow\Mod_\bS$ given by algebraic $\K$-theory. The
    underlying right $\Sym_\bQ^\heart$-module is the constant functor 
    $\Mod_\bQ^\heart\rightarrow\Mod_\bS$ on $\K(\bQ)$. However, $\K$ is not constant.
\end{remark}

\begin{remark}
    If $T$ is a localization, one can prove Proposition~\ref{prop:maps_out} directly using the
    universal property of localizations.
    Note that in this case, every functor from the localization to $\Dscr$ commutes with $U$-split
    geometric realizations. Indeed, $U$-split geometric realizations are in fact split and hence
    are absolute.
\end{remark}

\begin{corollary}\label{cor:bimodules}
            Let $T_i$ be a monad on $\Cscr_i$ for $i=1,2$, and let $F_i\colon\Cscr_i\rightleftarrows\LMod_{T_i}(\Cscr_i)\colon U_i$ 
            denote the free-forgetful adjunctions. Consider the functor
            $$
            \psi\colon\Fun(\LMod_{T_1}(\Cscr_1),\LMod_{T_2}(\Cscr_2))\rightarrow {}_{T_2}\BMod_{T_1}(\Fun(\Cscr_1,\Cscr_2))
            $$
            sending $X$ to $U_2\circ X\circ F_1$. 
            If $\LMod_{T_2}(\Cscr_2)$ admits geometric
            realizations, then $\psi$ admits
            a fully faithful left adjoint, with essential image the full subcategory of functors
            $X\colon\LMod_{T_1}(\Cscr_1)\rightarrow\LMod_{T_2}(\Cscr_2)$ which preserve $U_1$-split geometric
            realizations.
\end{corollary}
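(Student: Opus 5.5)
The plan is to factor $\psi$ as a composite of two functors, each already controlled by an earlier result. First I apply Lemma~\ref{lem:functors_to_mod} with $\Dscr=\LMod_{T_1}(\Cscr_1)$ and the monad $T_2$ on $\Cscr_2$. This gives an equivalence
$$\Fun(\LMod_{T_1}(\Cscr_1),\LMod_{T_2}(\Cscr_2))\we\LMod_{T_2}(\Fun(\LMod_{T_1}(\Cscr_1),\Cscr_2)),$$
under which $X$ corresponds to $U_2\circ X$ with its evident left $T_2$-action.

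Next I apply Proposition~\ref{prop:maps_out} with target $\Dscr=\LMod_{T_2}(\Cscr_2)$, which has geometric realizations by hypothesis. This gives a functor
$$\Fun(\LMod_{T_1}(\Cscr_1),\LMod_{T_2}(\Cscr_2))\to\RMod_{T_1}(\Fun(\Cscr_1,\LMod_{T_2}(\Cscr_2))),\qquad X\mapsto X\circ F_1,$$
with a fully faithful left adjoint whose essential image is the functors preserving $U_1$-split geometric realizations. I then identify the target with ${}_{T_2}\BMod_{T_1}(\Fun(\Cscr_1,\Cscr_2))$ using Lemma~\ref{lem:functors_to_mod} with $\Dscr=\Cscr_1$:
$$\Fun(\Cscr_1,\LMod_{T_2}(\Cscr_2))\we\LMod_{T_2}(\Fun(\Cscr_1,\Cscr_2)).$$
Since this equivalence is built from Corollary~\ref{cor:T-bimodule}, postcomposition with $U_2$ commutes with precomposition by $T_1$. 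So the equivalence is compatible with the right $\End(\Cscr_1)$-actions by precomposition, and passing to right $T_1$-modules yields
$$\RMod_{T_1}(\LMod_{T_2}(\Fun(\Cscr_1,\Cscr_2)))\we{}_{T_2}\BMod_{T_1}(\Fun(\Cscr_1,\Cscr_2)),$$
using the standard identification of bimodules with right modules in left modules from \cite[Sec.~4.3]{ha}.

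Finally, I check that the composite of these functors is $\psi$, with the bimodule structure of Remark~\ref{rem:T-bimodule_explicit}. Both composites send $X$ to $U_2\circ X\circ F_1$, and both actions come from the counits $\epsilon_2$ and $\epsilon_1$. To do this cleanly, I would note that the relevant equivalence is the one induced by the lax $\BM^\otimes$-monoidal triple of Proposition~\ref{prop:intertwining}. The fully faithful left adjoint and its essential image then transfer through the equivalences.

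The main obstacle is the equivariance step: upgrading the equivalence of Lemma~\ref{lem:functors_to_mod} to one of $\End(\Cscr_1)$-right-module categories so that it passes to $T_1$-modules. I would handle it by rerunning the proof of Lemma~\ref{lem:functors_to_mod} for the triple of adjunctions in Proposition~\ref{prop:intertwining}(b), keeping track of the full $\BM^\otimes$-structure rather than just the left action. Alternatively, I would run Barr--Beck (\cite[Cor.~4.7.3.16]{ha}) directly over $\Fun(\Cscr_1,\Cscr_2)$, using the forgetful functor from bimodules together with $U_{2*}F_1^*$.
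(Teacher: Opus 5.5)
Your proposal is correct and is essentially the paper's proof. The paper combines Proposition~\ref{prop:maps_out} (with target $\LMod_{T_2}(\Cscr_2)$), Lemma~\ref{lem:functors_to_mod} (with source $\Cscr_1$), and Lurie's identification of ${}_{T_2}\BMod_{T_1}$ with $\RMod_{T_1}(\LMod_{T_2}(-))$, and it leaves implicit the $\End(\Cscr_1)$-equivariance that you flag; your opening application of Lemma~\ref{lem:functors_to_mod} with $\Dscr=\LMod_{T_1}(\Cscr_1)$ is never used and can be dropped.
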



\begin{proof}
    Note that $\RMod_{T_1}(\LMod_{T_2}(\Fun(\Cscr_1,\Cscr_2)))$ is equivalent to the
    $\infty$-category of $(T_2,T_1)$-bimodules in $\Fun(\Cscr_1,\Cscr_2)$
    by~\cite[Thm.~3.4.2.7]{ha}, so the result follows by Lemma~\ref{lem:functors_to_mod} and Proposition~\ref{prop:maps_out}.
    \end{proof}


	When $\Cscr_1=\Cscr_2=\Cscr$, Corollary~\ref{cor:bimodules} has 
	the following monoidal refinement.
	
\begin{corollary}\label{cor:bimodules_monoidal}
	Let $\Cscr$ be an $\infty$-category with geometric realizations and
	let $\End'(\Cscr)\subseteq \End(\Cscr)$ be the full subcategory of endofunctors
	preserving geometric realizations.
	If $T\in \Alg(\End'(\Cscr))$ is a monad preserving geometric realizations,
	then the functor of Corollary~\ref{cor:bimodules} restricts
	to a monoidal equivalence
	$$\End'(\LMod_T(\Cscr))\rightarrow{}_{T}\BMod_T(\End'(\Cscr)).$$
\end{corollary}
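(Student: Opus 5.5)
The plan is to deduce this from Corollary~\ref{cor:bimodules} with $\Cscr_1=\Cscr_2=\Cscr$ and $T_1=T_2=T$, and then upgrade the resulting equivalence of underlying $\infty$-categories to a monoidal one using Proposition~\ref{prop:intertwining}.

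\emph{Step 1: the functor $\psi$ is well defined and monoidal.} The functor $\psi\colon \End(\LMod_T(\Cscr))\to {}_T\BMod_T(\End(\Cscr))$ of Corollary~\ref{cor:T-bimodule} sends $X$ to $U\circ X\circ F$. The target is monoidal under the relative tensor product $\circ_T$. The first point is that $\LMod_T(\Cscr)$ has geometric realizations and $U$ preserves them. Since $T$ preserves geometric realizations, these are computed in $\Cscr$ by \cite[Cor.~4.2.3.5]{ha}. Consequently, if $X$ preserves geometric realizations, so does $UXF$, because $F$ preserves all colimits. So $\psi$ restricts to a functor $\End'(\LMod_T(\Cscr))\to{}_T\BMod_T(\End'(\Cscr))$.

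For monoidality, Proposition~\ref{prop:intertwining}(a) makes $U_*F^*$ lax monoidal into $\End(\Cscr)$, and the image of $\id$ is $T$ by Corollary~\ref{cor:e1}. Applying the $\bB\bM$-compatibility of Proposition~\ref{prop:intertwining}(b) to $T$-bimodules, we get a lax monoidal functor into bimodules. Its lax structure map $UXF\circ_T UYF\to UXYF$ is induced by $UX(\epsilon)$.

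To see that this map is an equivalence, write $UXF\circ_T UYF$ as the geometric realization of the bar construction $UXF\circ T^{\circ\bullet}\circ UYF$. By Remark~\ref{rem:T-bimodule_explicit}, this is $UX$ applied to the bar resolution $FT^{\bullet}UYF\to YF$. That resolution is $U$-split, so its realization exists and is computed in $\Cscr$. Since $X$ and $U$ preserve geometric realizations, the realization is $UXYF$. The unit map $T\to U\id F$ is the identity. Hence $\psi$ is strong monoidal.

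\emph{Step 2: $\psi$ is an equivalence.} Corollary~\ref{cor:bimodules} gives a fully faithful left adjoint $\phi$ whose essential image consists of the functors preserving $U$-split geometric realizations. Every $X\in\End'$ preserves all geometric realizations, so $\End'(\LMod_T(\Cscr))$ lies in that image, and $\psi$ is fully faithful on $\End'$.

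For essential surjectivity, take $M\in{}_T\BMod_T(\End'(\Cscr))$. The object $\phi(M)$ is constructed as the realization of $\bar{M}\circ F^{\circ}$-type bar complexes. On a $T$-module $A$, one can describe it by
$$\phi(M)(A)\simeq |M\circ T^{\bullet}\circ U A|$$
as a left $T$-module; this is the relative tensor product $M\circ_T A$. I would then verify that $\phi(M)$ preserves all geometric realizations in $\LMod_T(\Cscr)$. Such realizations are computed in $\Cscr$, $M$ and $T$ preserve them, and realizations commute with realizations. Hence $\phi(M)\in\End'$, which completes the proof.

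\emph{Main obstacle.} The hard step is this last check together with the bar-construction identification in Step 1. Both need care in making the relative tensor product formula for $\phi$ precise rather than only the abstract adjoint provided by \cite[Cor.~4.7.3.16]{ha}. I would try to identify $\phi(M)$ by its value on free modules, $\phi(M)\circ F\simeq M$, and then extend along canonical resolutions, using that $\phi(M)$ preserves $U$-split realizations.
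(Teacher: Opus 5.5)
Your proof is correct, but it takes a different route from the paper in both halves.

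\textbf{Essential surjectivity.} The paper never describes the inverse $\phi$. It observes that $\psi$ restricted to $\End'(\LMod_T(\Cscr))$ is fully faithful and preserves geometric realizations. It also hits every free bimodule $T\circ X\circ T\simeq\psi(F\circ X\circ U)$. Hence its essential image is closed under geometric realizations and is therefore everything.

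You instead show directly that $\phi(M)$ preserves all geometric realizations. You derive $U\phi(M)(A)\simeq|M\circ T^\bullet\circ UA|$ from $\psi\phi(M)\simeq M$ and the $U$-split canonical resolution of $A$. Note that it is $U\circ\phi(M)\circ F$, not $\phi(M)\circ F$, that recovers $M$. This works because $U$ preserves all geometric realizations here, and $M$ and $T$ do as well. It also identifies the inverse explicitly as $M\mapsto M\circ_T(-)$, at the cost of the coherence bookkeeping you flag.

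\textbf{Monoidality.} The paper first checks that $\mu_{X,Y}$ is an equivalence when $X=A\circ U$. This only uses that $\psi(A\circ U)$ is a free right $T$-module on $U\circ A$, together with a triangle identity. It then uses the monadicity of $U_*F^*$, a consequence of the equivalence already established, to resolve an arbitrary $X$ by functors of the form $A\circ U$.

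Your route identifies the whole bar complex $\Bar_\bullet(UXF,T,UYF)$ with $UX$ applied to the $U$-split resolution $(FU)^{\bullet+1}YF\to YF$. This is a one-step argument that does not depend on the first half. However, it requires matching the two simplicial objects coherently. Since all faces are counits, this is standard, but it is exactly the bookkeeping the paper's reduction to free $X$ sidesteps.
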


\begin{proof}
	As usual, let $F\colon \Cscr\rightleftarrows \LMod_T(\Cscr)\colon U$
	denote the free-forgetful adjunction. It follows from \cite[Prop.~4.3.3.9]{ha} that 
	$\LMod_T(\Cscr)$ admits geometric realizations and that $U$ and $F$ preserve these. 
	This implies that the functor from Corollary~\ref{cor:bimodules} indeed
	restricts to a functor $\psi\colon \End'(\LMod_T(\Cscr))\rightarrow{}_{T}\BMod_T(\End'(\Cscr))$.
	By Corollary~\ref{cor:bimodules}, $\psi$ is fully faithful. 
	Its essential image is closed under geometric realizations, since $\psi$ preserves these,
	and contains all free $T$-bimodules $T\circ X\circ T\simeq \psi(F\circ X\circ U)$.
	Consequently, $\psi$ is essentially surjective as well.
	
	For the monoidal structure, first note that the monoidal structures on $\End'(\Cscr)$
	and $\End'(\LMod_T(\Cscr))$ given by composition are compatible with geometric realizations.
	The relative tensor product then endows $\BMod_T(\End'(\Cscr))$ 
	with a natural monoidal structure by \cite[Prop.~4.4.3.12]{ha}.
	By Proposition~\ref{prop:intertwining}, we have a lax monoidal functor 
	$$
	U_*F^*\colon \End'(\LMod_T(\Cscr))\rightarrow \End'(\Cscr).
	$$
	The functor $\psi$ is the induced functor between bimodules over $\id_{\LMod_T(\Cscr)}$
	and its image in $\End'(\Cscr)$, as in Corollary \ref{cor:T-bimodule}.
	In particular, this endows $\psi$ with a lax monoidal structure.
	It remains to show that for any $X, Y\colon \LMod_T(\Cscr)\to \LMod_T(\Cscr)$, 
	the natural transformation
	$$
	\mu_{X, Y}\colon (U\circ X\circ F)\circ_T(U\circ Y\circ F)\rightarrow U\circ (X\circ Y)\circ F
	$$
	is an equivalence. Let us first consider the case where $X\simeq A\circ U$, 
	with $A\colon \Cscr\to \LMod_T(\Cscr)$.	Then the map 
	$$
	U\circ A\xrightarrow{UA(\eta)} U\circ A\circ U\circ F=\psi(A\circ U)
	$$
	exhibits $\psi(A\circ U)$ as a free right $T$-module; 
	this follows from the proof of Proposition~\ref{prop:maps_out} 
	or can be seen directly from Remark~\ref{rem:T-bimodule_explicit}.
	Using this, we obtain a commuting square
	$$\begin{tikzcd}
	(U\circ A)\circ (U\circ Y\circ F)\arrow[r, "\sim"]\arrow[d, "{UA(\eta)}"{swap}] & (U\circ A\circ U\circ F)\circ_T (U\circ Y\circ F)\arrow[d, "\mu_{AU, Y}"]\\
	(U\circ A\circ U\circ F)\circ (U\circ Y\circ F)\arrow[r, "UAU(\epsilon)"] & U\circ (A\circ U\circ Y)\circ F
	\end{tikzcd}$$
	where the bottom map is the lax monoidal structure on $U_*F^*$ from Proposition~\ref{prop:intertwining}.
	The bottom-left composite is an equivalence by the triangle identity, 
	so $\mu_{AU, Y}$ is an equivalence.
	Now, the first part of the proof shows that
	$U_*F^*\colon \End'(\LMod_T(\Cscr))\to \End'(\Cscr)$ is monadic.
	Each $X\in \End'(\LMod_T(\Cscr))$ is therefore the geometric realization of a 
	simplicial diagram of functors in the image	of the left adjoint $F_*U^*$;
	these are all of the form $A\circ U$.
	Since $\mu$ is a natural transformation between functors preserving geometric realizations, 
	it follows that $\mu_{X, Y}$ is an equivalence for all $X$ and $Y$.	
\end{proof}

\section{Free monads and cofree comonads}\label{sec:cofree}

Let us consider the following general situation:
\begin{notation}\label{not:modules for pointed endofunctors}
Let $\Cscr$ be an $\infty$-category and let $\End(\Cscr)$ be its category of endofunctors.
    \begin{enumerate}[(1)]
\item Given a map $\eta\colon \id_{\Cscr}\rightarrow F$ in $\End(\Cscr)$, let $\Mod_F(\Cscr)$ denote the pullback
\[\begin{tikzcd}
\Mod_F(\Cscr)\arrow[r] \arrow[d, "\mathrm{oblv}"{swap}] & \Fun(\Delta^2, \Cscr)\arrow[d]\\
\Cscr\arrow[r, "{(\eta, \id)}"] & \Fun(\Delta^{\{0, 1\}}, \Cscr)\times_{\Fun(\Delta^{\{0\}}, \Cscr)} \Fun(\Delta^{\{0, 2\}}, \Cscr).
\end{tikzcd}\]
\item Dually, given a map $\epsilon \colon G\rightarrow \id_{\Cscr}$ in $\End(\Cscr)$, let $\mathrm{Comod}_G(\Cscr)$ denote the pullback
\[\begin{tikzcd}
\mathrm{Comod}_G(\Cscr)\arrow[r] \arrow[d, "\mathrm{oblv}"{swap}] & \Fun(\Delta^2, \Cscr)\arrow[d]\\
\Cscr\arrow[r, "{(\epsilon, \id)}"] & \Fun(\Delta^{\{1, 2\}}, \Cscr)\times_{\Fun(\Delta^{\{0\}}, \Cscr)} \Fun(\Delta^{\{0, 2\}}, \Cscr).
\end{tikzcd}\]
\end{enumerate}
In other words, an object of $\Mod_F(\Cscr)$ is an object $X\in \Cscr$ equipped with a retraction $F(X)\to X$ of $\eta_X$ and an object of $\mathrm{Comod}_G(\Cscr)$ is an object $Y\in \Cscr$ equipped with a section $Y\rightarrow G(Y)$ of $\epsilon_Y$.
\end{notation}
\begin{remark}\label{rmk:duality sections retracts}
Let $\eta\colon \id_{\Cscr}\to F$ be a natural transformation between left adjoint functors. This induces a natural transformation between the corresponding right adjoints of the form $\epsilon\colon G\to \id_{\Cscr}$; note that $\eta$ and $\epsilon$ do not denote the unit and counit of the adjunctions. In this case, there is a natural equivalence $\Mod_F(\Cscr)\simeq \mathrm{Comod}_G(\Cscr)$ compatible with the forgetful functors, sending a retraction $\rho\colon F(X)\to X$ of $\eta$ to the adjoint map $\rho^\dagger\colon X\to G(X)$, which is a section of $\epsilon$.

To see this, it suffices to observe that there is a natural equivalence between the following two diagrams of $\infty$-categories, where the pullbacks are taken along $\eta$ and $\epsilon$:
\[\begin{tikzcd}[column sep=-0.1pc]
\Fun(\Delta^2, \Cscr)\times^\eta_{\Fun(\Delta^{\{0, 1\}}, \Cscr)} \Cscr\arrow[rd, "{(\ev_0, \ev_2)}"{swap}]\arrow[rr] & & \Fun(\Delta^{\{0, 2\}}, \Cscr)\arrow[ld,"{(\ev_0, \ev_2)}"] &[20pt] \Fun(\Delta^2, \Cscr)\times^\epsilon_{\Fun(\Delta^{\{1, 2\}}, \Cscr)} \Cscr\arrow[rd, "{(\ev_0, \ev_2)}"{swap}]\arrow[rr] & & \Fun(\Delta^{\{0, 2\}}, \Cscr)\arrow[ld,"{(\ev_0, \ev_2)}"]\\
& \Cscr\times\Cscr & & & \Cscr\times\Cscr.
\end{tikzcd}\]
Indeed, we can view both diagrams as maps of bifibrations over $\Cscr\times \Cscr$. Under straightening, these correspond to the natural transformations $\eta^*\colon \Map_{\Cscr}(F-, -)\to \Map_{\Cscr}(-, -)$ and $\epsilon_*\colon \Map_{\Cscr}(-, G-)\to \Map_{\Cscr}(-, -)$. These natural transformations are equivalent by adjunction.
\end{remark}

\begin{proposition}\label{prop:free monad}
Let $\Cscr$ be an $\infty$-category with colimits and let $\eta\colon \id\to F$ be a pointed endofunctor preserving sequential colimits. Then $\mathrm{oblv}\colon \Mod_F(\Cscr)\to \Cscr$ is a monadic right adjoint. The associated monad $T_F$ on $\Cscr$ is the free monad on $(F,\eta)$.
\end{proposition}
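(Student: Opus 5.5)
Throughout, the plan is to construct the left adjoint of $\mathrm{oblv}$ explicitly as a transfinite (here: sequential) colimit, and then to deduce monadicity from the Barr--Beck--Lurie theorem~\cite[Thm.~4.7.3.5]{ha}. The identification with the free monad will then follow from Corollary~\ref{cor:monads_adjunctions}.

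\textbf{Step 1: the left adjoint.} For $X\in\Cscr$, define a sequence
$$X=X_0\xrightarrow{\eta}X_1=F(X)\rightarrow X_2\rightarrow\cdots,$$
where $X_{n+1}$ is the pushout of $F(X_n)\xleftarrow{\eta_{X_n}} X_n\rightarrow X_{n+1}$. Equivalently, $X_{n+1}=F(X_n)\sqcup_{F(X_{n-1})}X_n$, built so that the two maps $F(X_{n-1})\rightrightarrows X_n$ (namely $F$ applied to the transition map, and the structure map $F(X_{n-1})\to X_n$) are coequalized. Set $X_\infty=\colim_n X_n$. Since $F$ preserves sequential colimits, $F(X_\infty)\simeq\colim F(X_n)$, and the maps $F(X_n)\to X_{n+1}$ assemble into a map $F(X_\infty)\to X_\infty$. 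One checks that this map is a retraction of $\eta_{X_\infty}$ by a cofinality argument; the homotopy is supplied by the pushout squares, using naturality of $\eta$.

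Next we verify the universal property. Fix $(Y,\rho)\in\Mod_F(\Cscr)$. Since $\Mod_F(\Cscr)$ is a pullback of functor categories, its mapping spaces are
$$\Map(X_\infty,Y)\times_{\Map(FX_\infty,Y)\times_{\Map(X_\infty,Y)}\Map(X_\infty,Y)}\Map(X_\infty,Y)\ \text{-style fiber products,}$$
which express compatibility of $f$ with the retractions. Writing $X_\infty$ as a colimit, one shows inductively that the restriction $\Map_{\Mod_F}(X_\infty,Y)\to\Map_\Cscr(X,Y)$ is an equivalence. At each stage, extending from $X_n$ to $X_{n+1}$ is forced by the pushout: the map on the $F(X_n)$ summand must be $\rho\circ F(f_n)$, so the space of extensions together with the compatibility homotopies is contractible.

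I expect this coherence bookkeeping to be the main obstacle. It requires making precise that modules for a pointed endofunctor form a category in which such towers compute free objects. A cleaner alternative I would try first is the following: view $\Mod_F(\Cscr)$ as algebras for the endofunctor $F$ satisfying one extra condition, and reduce to the known construction of free algebras for an accessible endofunctor via sequential colimits (the ``algebraically free'' construction), with the pushouts enforcing the unit condition.

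\textbf{Step 2: monadicity.} $\mathrm{oblv}$ is conservative, because the retraction data live in $\Fun(\Delta^2,\Cscr)$ and an equivalence there is detected pointwise. Moreover, $\Mod_F(\Cscr)$ has all colimits and $\mathrm{oblv}$ preserves them. Indeed, colimits in $\Fun(\Delta^2,\Cscr)$ and in the pullback are computed pointwise, provided the bottom functor $(\eta,\id)$ preserves colimits. This needs $F$ to preserve the colimits in question. For geometric realizations this fails in general, so I would instead restrict to $\mathrm{oblv}$-split simplicial objects: there, $F$ preserves the colimit because split realizations are absolute. The $\mathrm{oblv}$-split realization then lifts, and Barr--Beck applies.

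\textbf{Step 3: freeness.} By Corollary~\ref{cor:monads_adjunctions}, maps of monads $T_F\to T$ correspond to functors $\LMod_T(\Cscr)\to\Mod_F(\Cscr)$ over $\Cscr$. Such functors are equivalently natural retractions $F\circ U\to U$ of $\eta U$, where $U$ is the forgetful functor of $\LMod_T$. By the adjunction $\Mod_F$-structure on $U$ $\leftrightarrow$ map $F\to UF_T=T$ of pointed endofunctors, this gives $\Map_{\Alg}(T_F,T)\simeq\Map_{\End(\Cscr)_{\id/}}(F,T)$. Hence $T_F$ is the free monad on $(F,\eta)$.
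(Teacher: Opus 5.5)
The gap is in Step 1, the existence of the left adjoint, which is where nearly all the work in this proposition lies. Steps 2 and 3 are sound and follow the paper. Step 2 checks monadicity on $\mathrm{oblv}$-split simplicial objects, where $F$ preserves the absolute split colimit; your opening claim there that $\Mod_F(\Cscr)$ has all colimits preserved by $\mathrm{oblv}$ is false, but you correct it. Step 3 gets freeness from Corollary~\ref{cor:monads_adjunctions} plus the adjunction identifying retractions $F\circ U\to U$ of $\eta U$ with maps of pointed endofunctors $F\to T$, which is the content of Proposition~\ref{prop:partial left adjoint} and Corollary~\ref{cor:recognition free monads}.

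\textbf{The construction.} As written it is wrong.
\begin{itemize}
\item Your first description of $X_{n+1}$ is circular.
\item The pushout $X_{n+1}=F(X_n)\sqcup_{F(X_{n-1})}X_n$, along $F(\sigma)$ and $\rho$, never forces the transition map $X_n\to X_{n+1}$ to factor as $\rho\circ\eta_{X_n}$. Take $F=(-)\sqcup A$ on sets: $\Mod_F$ is the category of objects under $A$, and the free module on $X$ is $X\sqcup A$. Your tower is $X\sqcup A^{\sqcup n}$ with injective transition maps, so $X_\infty$ contains infinitely many copies of $A$.
\item The parallel maps that should be identified are $F(\sigma)$ and $\eta_{X_n}\circ\rho$, both $F(X_{n-1})\to F(X_n)$. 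In an $\infty$-category a plain coequalizer of these is still wrong. They already agree after precomposing with $\eta_{X_{n-1}}$, by naturality of $\eta$, so the coequalizer adds a second, independent homotopy. For $F=(-)\sqcup A$ on spaces this gives $X\times S^1\sqcup A$ at the first step.
\item The correct $X_{n+1}$ is the colimit of the $\Iscr_{\le 1}$-shaped diagram in Remark~\ref{rem:free monad explicit}. That diagram also contains $X_{n-1}$ and the naturality $2$-cell $F(\sigma)\circ\eta\simeq\eta\circ\rho\circ\eta$. Equivalently, it is the iterated pushout in the proof of Lemma~\ref{lem:cofibers of filtration}.
\end{itemize}

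\textbf{The universal property.} Even with the right $X_{n+1}$, your inductive verification is not yet an argument. The $X_n$ are not $F$-modules, so $\Map_{\Mod_F(\Cscr)}(X_\infty,Y)$ does not decompose as a limit of mapping spaces out of the $X_n$ in $\Mod_F(\Cscr)$. You do not specify where ``the space of extensions together with the compatibility homotopies'' is computed, and that coherence is the whole proof here. The paper supplies it as follows:
\begin{itemize}
\item the $\infty$-category $\FMod_F(\Cscr)$ of filtered objects whose transition maps are factored as $\rho\circ\eta$, with an adjunction $\colim\dashv\mathrm{cst}$ to $\Mod_F(\Cscr)$ (Construction~\ref{con:fmod}); this is where preservation of sequential colimits enters;
\item a presentation of $\FMod_F(\Cscr)$ as a pullback involving $\Fun(\Iscr,\Cscr)$ (Corollary~\ref{cor:FMod}), resting on the pushouts of $\infty$-categories in Lemmas~\ref{lem:pushout of cats} and~\ref{lem:link with sequential retracts};
\item a tower decomposition of $\FMod_F(\Cscr)\to\Cscr$ into equivalences and functors with fully faithful left adjoints given by left Kan extension (Lemma~\ref{lem:iterated LKE}).
\end{itemize}
Your proposal has no substitute for this.

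Your fallback via algebras for the endofunctor $F$ does not reduce matters directly either. In an $\infty$-category, being a retraction of $\eta_X$ is extra structure (a homotopy $r\circ\eta_X\simeq\id_X$), not a condition. So $\Mod_F(\Cscr)$ is not a full subcategory of $F$-algebras, and its free objects are not obtained by restricting the free $F$-algebra construction.
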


To prove Proposition \ref{prop:free monad}, we will construct the left adjoint to $\Mod_F(\Cscr)\to \Cscr$ by an inductive process. This requires a few preliminary definitions.
\begin{construction}\label{con:fmod}
Let $\Cscr$ be an $\infty$-category. Restriction along $s\colon \NN_{\leq} \times\Delta^1\to \NN_{\leq}; s(n, \epsilon)=n+\epsilon$ gives rise to a functor
\[\begin{tikzcd}
s^*\colon \Fun(\NN_{\leq}, \Cscr)\arrow[r] & \Fun(\Delta^1, \Fun(\NN_{\leq}, \Cscr))
\end{tikzcd}\]
that we will identify with a natural transformation $\sigma\colon \id\to \mathrm{shift}$ of endofunctors of $\Fun(\NN_{\leq}, \Cscr)$. Explicitly, this is given at an increasingly filtered object by the natural map $\sigma\colon F_{\fs}X\to \mathrm{shift}(F_{\fs}X)=F_{\fs+1}X$. 

Let us now consider the diagram
\[\begin{tikzcd}
\Cscr\arrow[d]\arrow[r, "{(\eta, \id)}"] & \Fun(\Lambda^2_0, \Cscr)\arrow[d] & \Fun(\Delta^2, \Cscr)\arrow[d]\arrow[l]\\
\Fun(\NN_{\leq}, \Cscr)\arrow[r, "{(\eta, \sigma)}"] & \Fun(\Lambda^2_0, \Fun(\NN_{\leq}, \Cscr)) & \Fun(\Delta^2, \Fun(\NN_{\leq}, \Cscr))\arrow[l]
\end{tikzcd}\]
where the vertical functors take the constant $\NN_{\leq}$-diagram on an object in $\Cscr$. Each of these vertical functors has a left adjoint, taking the colimit over $\NN_{\leq}$. Since $F$ preserves these colimits, these vertical left adjoints commute with the horizontal functors. 
Taking pullbacks of the rows therefore yields an adjoint pair that we will denote by
\[\begin{tikzcd}
\colim \colon \FMod_F(\Cscr)\arrow[r, yshift=1ex] & \Mod_F(\Cscr) \colon \mathrm{cst}\arrow[l, yshift=-1ex]
\end{tikzcd}\]
where $\FMod_F(\Cscr)$ is the $\infty$-category of objects with an increasing filtration $F_{\fs}X$, equipped with a factorization of the structure maps of its filtration as
\[\begin{tikzcd}
\sigma\colon F_{\fs}X \arrow[r, "\eta"] & F(F_{\fs}X)\arrow[r, "\rho"] &  F_{\fs+1}X.
\end{tikzcd}\]
Taking the colimit over $\NN_{\leq}$ and using that $F$ preserves sequential colimits, this indeed yields an object in $\Mod_F(\Cscr)$, that is, a retract diagram $\colim(F_{\fs}X)\xrightarrow{\eta} F(\colim F_{\fs}X)\xrightarrow{\rho} \colim(F_{\fs} X)$. 
\end{construction}
We are going to show that the functor $\FMod_F(\Cscr)\to \Cscr; F_{\fs}X\longmapsto F_0X$ admits a fully faithful left adjoint. The composition of this left adjoint with the functor taking colimits over $\NN_{\leq}$ will then provide the left adjoint from Proposition \ref{prop:free monad}. Let us start by giving a different description of the $\infty$-category $\FMod_F(\Cscr)$.
\begin{definition}
Let $\Iscr$ be the category informally depicted as follows:
\[\begin{tikzcd}
{(0, 0)} \arrow[r, "\eta"] & {(0, 1)}\arrow[r, "\rho"]\arrow[rr, "\beta", bend left=30] & {(1, 0)}\arrow[r, "\eta"] & {(1, 1)}\arrow[r, "\rho"]\arrow[rr, "\beta", bend left=30] & {(2, 0)}\arrow[r, "\eta"] & (2, 1)\arrow[r, "\rho"] \arrow[rr, "\beta", bend left=30, end anchor={[yshift=1ex]}] & {(3, 0)}\arrow[r, "\eta"] & \dots
\end{tikzcd}\]
More precisely, $\Iscr$ is the category defined as follows.
\begin{itemize}
\item The objects of $\Iscr$ are tuples $(n, 0)$ and $(n, 1)$ for $n\in\mathbb{N}$;

\item The only non-empty sets of maps between two objects are given by the following sets with at most two elements, for $m\leq n$:
\begin{align*}
\Map_{\Iscr}((m, 0), (n, 0)) &= \{(\rho\eta)^{n-m}\}  & \Map_{\Iscr}((m, 0), (n, 1)) &= \{\eta(\rho\eta)^{n-m}\} \\
 \Map_{\Iscr}((m, 1), (n+1, 0)) &= \{(\rho\eta)^{n-m-1}\rho\} & \Map_{\Iscr}((m, 1), (n, 1)) &= \{(\eta\rho)^{n-m}, \beta^{n-m}\}.
\end{align*}
\item Composition of maps is determined by word concatenation, together with the fact that $\beta \eta = \eta \rho\eta$ and $\rho\beta=\rho\eta\rho$.
\end{itemize}
Let us write $\Iscr_{\leq n}$ for the full subcategory spanned by $(k, 0)$ and $(k, 1)$ with $k\leq n$ and let $\Iscr_{<n}$ be the full subcategory on $(n, 0)$ and all $(k, 0), (k, 1)$ with $k<n$.
\end{definition}
\begin{construction}\label{con:u}
Let $u\colon \NN_{\leq}\times \Delta^2\to \Iscr$ denote the functor sending
\[
\begin{tikzcd}
(n, 0)\arrow[r]\arrow[d] & (n, 1)\arrow[r]\arrow[d] & (n, 2)\arrow[d]\\
(n+1, 0)\arrow[r] & (n+1, 1)\arrow[r] & (n+1, 2)\end{tikzcd} \quad\longmapsto\quad \begin{tikzcd} (n, 0)\arrow[r, "\eta"] \arrow[d, "\rho\eta"{swap}] & (n, 1)\arrow[r, "\rho"]\arrow[d, "\beta"] & (n+1, 0)\arrow[d, "\rho\eta"]\\
(n+1, 0)\arrow[r, "\eta"] & (n+1, 1)\arrow[r, "\rho"] & (n+2, 0)\end{tikzcd}\]
We will make use of various restrictions of $u$, all of which will simply be denoted
by $u$ as well. For example, $u$ restricts to functors
\[
u\colon \Delta^n\times \Delta^{\{0, 1\}} \to \Iscr_{\leq n},  \qquad\qquad u\colon (\Delta^n\times \Delta^{\{0, 1\}})^\circ\to \Iscr_{<n}
\]
where we write $(\Delta^n\times\Delta^{\{0, 1\}})^\circ\subseteq \Delta^n\times\Delta^{\{0, 1\}}$ for the full subcategory on all objects except $(n,1)$.
\end{construction}
\begin{lemma}\label{lem:pushout of cats}
Let $n\geq 1$ and consider the following diagrams.
\[\begin{tikzcd}
\Iscr_{\leq 1}\arrow[r, "f"]\arrow[d] & \Iscr_{\leq n}\arrow[d]\\
\Iscr_{<2}\arrow[r, "f"] & \Iscr_{<n+1}
\end{tikzcd}
\hspace{50pt} \begin{tikzcd}
(\Delta^1\times \Delta^{\{0, 1\}})^\circ\arrow[r, "u"]\arrow[d] & \Iscr_{<1}\arrow[r, "f"]\arrow[d] & \Iscr_{<n}\arrow[d]\\
\Delta^1\times \Delta^{\{0, 1\}}\arrow[r, "u"] & \Iscr_{\leq 1}\arrow[r, "f"] & \Iscr_{\leq n}.
\end{tikzcd}\]
Here the vertical functors are the natural inclusions and the horizontal functors $f$ are the fully faithful inclusions determined by $f(k, \epsilon)=(k+n-1, \epsilon)$. Then all squares appearing in the above diagrams are pushout squares of $\infty$-categories.
\end{lemma}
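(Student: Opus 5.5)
The plan is to treat every square in Lemma~\ref{lem:pushout of cats} as the attachment of one new object that has only incoming morphisms, and to recognize the resulting pushouts through presheaves.

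\textbf{Reduction to a Kan extension.} In each square the vertical functor is a fully faithful inclusion $\Bscr\subseteq\Dscr$ that adds a single object $\ast$, with no non-identity maps out of $\ast$.
\begin{itemize}
\item In the first diagram, $\ast$ is $(2,0)$, respectively $(n+1,0)$.
\item In the second diagram, $\ast$ is $(1,1)$, respectively $(n,1)$.
\end{itemize}
Such an extension is the same datum as $\Bscr$ together with the presheaf $P=\Map_\Dscr(-,\ast)|_{\Bscr}$. Equivalently, $\Dscr$ is the pushout $\Bscr\sqcup_{\Bscr_{/P}}(\Bscr_{/P})^{\rhd}$, where $\Bscr_{/P}$ denotes the right fibration classified by $P$. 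I will use the following general fact. If $g\colon\Bscr\to\Bscr'$ is a functor, then the pushout of $\Bscr\subseteq\Dscr$ along $g$ is the one-object extension of $\Bscr'$ classified by the left Kan extension $(g^{\op})_!P$. One can prove this either with the universal property of the cocone construction or with Lemma~\ref{lem:pullback_localization}-type arguments on right fibrations. Granting it, each square is a pushout exactly when the following holds: the presheaf $\Map(-,\ast')$ on $\Bscr'$ in the bottom-right category is the left Kan extension of $\Map(-,\ast)$ along the top horizontal functor, via the map induced by $f$ or $u$. In particular, that left Kan extension must be discrete.

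\textbf{The simple cases.} For the left square of the second diagram, both categories are posets or close to them. The value of the Kan extension at each of the four objects $(k,\epsilon)\in\Iscr_{<1}$ is computed directly from the comma category. The point to check is that the only maps $(0,0)\to(1,1)$ and $(0,1)\to(1,1)$ that arise are $\eta\rho\eta$, $\eta\rho$ and $\beta$, and that these are distinct where they should be. The outer second square then follows by pasting once the right square is established. In fact the right square of the second diagram has the same shape as the first diagram, with the roles of $(\cdot,0)$ and $(\cdot,1)$ shifted, so I will treat both by one computation.

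\textbf{The main computation.} Consider an object $x=(k,\epsilon)$ with $k<n-1$, which lies outside the image of $f$. I must show that
\[
\colim_{(x\to f(y))\in (x\downarrow f)^{\op}}\Map_{\Iscr_{<2}}(y,(2,0))
\]
is the one-point set $\Map(x,(n+1,0))$, or $\Map(x,(n,1))$ in the second case. This is the step I expect to be the main obstacle. The comma category $x\downarrow f$ has no initial object, because of the doubled hom-sets: a map $\beta^m\colon(k,1)\to(n-1,1)$ does not factor through $(n-1,0)$. It is also not a poset. My first attempt will be to find a cofinal finite subdiagram of $x\downarrow f$. I expect the following to work:
\begin{itemize}
\item the object $x\to(n-1,0)$;
\item the two maps $x\to(n-1,1)$, namely the one factoring through $\eta$ and the $\beta$-type one;
\item the arrows among these.
\end{itemize}
On this subdiagram the colimit should be a coequalizer or pushout of one-point sets. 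It is contractible precisely because of the relation $\rho\beta=\rho\eta\rho$, which identifies the two composites into $(n+1,0)$. In the second case the corresponding identification is $\beta\eta=\eta\rho\eta$, which gives a set with the two required elements $\{(\eta\rho)^{\bullet},\beta^{\bullet}\}$.

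For objects $x$ in the image of $f$, the comma category has the initial object $\id$. Since $f$ is fully faithful, the Kan extension is tautologically correct there. Together with the general fact above, this identifies all the squares as pushouts.
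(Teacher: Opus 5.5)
Your reduction is sound and, once completed, gives a more uniform argument than the paper's. Each vertical functor adjoins one object with no outgoing maps, such an extension is classified by $P=\Map(-,\ast)$, and the pushout along $g$ is the extension classified by $(g^{\op})_!P$. For the first diagram $P$ is terminal, and this is exactly the paper's cone-plus-cofinality argument. For the second diagram the paper instead computes the pushout in a simplicial-category model.

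The gap is in the step you yourself single out. Fix $x=(k,1)$ with $k\le n-2$. Let $A$, $B_\eta$, $B_\beta$, $C$ be the objects of $x\downarrow f$ given by $(\rho\eta)^{n-k-2}\rho\colon x\to(n-1,0)$, by $(\eta\rho)^{n-k-1}$ and $\beta^{n-k-1}\colon x\to(n-1,1)$, and by $(\rho\eta)^{n-k-1}\rho\colon x\to(n,0)$. In your subdiagram $\{A,B_\eta,B_\beta\}$ the only non-identity arrow is $\eta\colon A\to B_\eta$, so $B_\beta$ is isolated. The subdiagram is therefore not cofinal: its slice over $C$ is disconnected. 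The colimit over it is $S^0$ in the first case. In the second case it is a three-point set, since $(B_\beta,\eta\rho)$ is never identified with $(B_\eta,\eta\rho)$. Your justification, that $\rho\beta=\rho\eta\rho$ ``identifies the two composites into $(n+1,0)$'', conflates equality of composites in the target with identification in the colimit. The latter comes only from zigzags of morphisms in the indexing category $x\downarrow f$.

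The missing object is $C$. The relation $\rho\beta=\rho\eta\rho$ enters by giving $\rho\beta^{m}=(\rho\eta)^{m}\rho$. This says that $\rho\colon B_\beta\to C$ is a morphism of $x\downarrow f$, and it is the only thing connecting $B_\beta$ to the rest. With $C$ included the computation works.
In the first case, $K=\{A,B_\eta,B_\beta,C\}$ has weakly contractible slices $K_{/z}$ for every $z\in x\downarrow f$ and has $C$ as terminal object, so the colimit is a point.
In the second case, $x\downarrow f$ is exactly $K$. The category of elements of $P$ then has two contractible components. One is $\{(B_\beta,\beta)\}$. The other contains everything else: $\beta\eta=\eta\rho\eta$ joins $(B_\eta,\beta)$ to $A$, and $\rho$ joins $(B_\beta,\eta\rho)$ to $C$.

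Keep in mind that these colimits live in $\Sscr$, so a $\pi_0$-level ``coequalizer of one-point sets'' is not enough; in spaces that coequalizer is a circle. This is exactly the danger posed by the two parallel arrows $\eta\rho,\beta$ from $B_\eta$ to the object $(\eta\rho)^{n-k}\colon x\to(n,1)$ in the first case. You correctly noticed this double arrow. It is why $x\downarrow f$ is not a poset, although the paper's proof describes it as one. It is harmless only because the loop it creates is filled through $A$, which is initial in the full subcategory on $A$, $B_\eta$, $C$ and that object.

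Minor slips: $\Iscr_{<1}$ has three objects, not four. For $x=(k,0)$ the comma category does have an initial object.
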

\begin{proof}
For the left diagram, let us observe that the inclusion $\Iscr_n\to \Iscr_{<n+1}$ is equivalent to the inclusion $\Iscr_{\leq n}\to \Iscr_{\leq n}\star \Delta^0$ into its cone. By \cite[Lem.~2.1.2.3, Prop.~4.1.1.3]{htt}, it then suffices to verify that $f\colon \Iscr_{\leq 1}\to \Iscr_{\leq n}$ is cofinal. To do this, let us fix an object $(k, \epsilon)\in \Iscr_{\leq n}$ and show that the comma category $f(\Iscr_{\leq 1})_{(k, \epsilon)/}$ is contractible. We distinguish three cases:
    \begin{enumerate}[(1)]
\item $(k, \epsilon)$ with $k\geq n-1$: in this case, $f(\Iscr_{\leq 1})_{(k, \epsilon)/}$ admits an initial object given by the identity on $(k, \epsilon)$.
\item $(k, 0)$ with $k\leq n-2$: in this case, each object of $f(\Iscr_{\leq 1})$ receives a unique map from $(k, 0)$, so that $f(\Iscr_{\leq 1})_{(k, 0)/}\simeq \Iscr_{\leq 1}$ admits an initial object, corresponding to the map $(k, 0)\to (n-1, 0)$.

\item $(k, 1)$ with $k\leq n-2$: in this case, $f(\Iscr_{\leq 1})_{(k, 1)/}$ is isomorphic to the following contractible poset, where objects are labelled by the corresponding map from $(k, 1)$:
\[\begin{tikzcd}
& \beta^{n-k-1}\arrow[r, "\beta\circ(-)"] \arrow[rd, "\rho\circ(-)"] & \beta^{n-k}\\
(\rho\eta)^{n-k-2}\rho\arrow[r, "\eta\circ(-)"] & \eta(\rho\eta)^{n-k-2}\rho\arrow[r, "\rho\circ(-)"] & (\rho\eta)^{n-k-1}\rho\arrow[r, "\eta\circ(-)"] & \eta(\rho\eta)^{n-k-1}\rho
\end{tikzcd}\]
\end{enumerate}
For the right diagram, it suffices to show that the outer rectangle is a pushout (which implies that the left square is as well). Let us write $\Pscr$ for the pushout of $\Delta^1\times \Delta^{\{0, 1\}}\leftarrow (\Delta^1\times\Delta^{\{0, 1\}})^\circ\to \Iscr{<n}$. We will explicitly compute this pushout using the model for $\infty$-categories in terms of simplicial categories. In this model structure, we can factor
\[\begin{tikzcd}
(\Delta^1\times \Delta^{\{0, 1\}})^\circ\arrow[r, >->] & \Qscr\arrow[r, "\sim"] & \Delta^1\times \Delta^{\{0, 1\}}
\end{tikzcd}\]
into a cofibration followed by a weak equivalence, where $\Qscr$ is the simplicially enriched category with objects $(i, j)$ for $0\leq i, j\leq 1$ and whose non-empty mapping spaces are of the form
\[
\Map_{\Qscr}((i, j), (i, j))= \Map_{\Qscr}((i, j), (i+1, j))=\Map_{\Qscr}((i, j), (i, j+1))=\ast \qquad\qquad \Map_{\Qscr}((0, 0), (1, 1))=\Delta^1.
\]
Since the model structure on simplicial categories is left proper, we can model $\Pscr= \Qscr\amalg_{(\Delta^1\times \Delta^{\{0, 1\}})^\circ} \Iscr_{<n}$. Unravelling the definitions, one sees that this pushout freely adds a morphism $\eta\colon (n, 0)\to (n, 1)$, a morphism $\beta\colon (n-1, 1)\to (n, 1)$ and a $1$-simplex between the map $\beta \eta$ and $\eta\rho\eta$ from $(n-1, 0)$ to $(n, 1)$.
It follows that $\Pscr$ contains $\Iscr_{<n}$ as a full subcategory and has other non-trivial mapping spaces given by
\begin{align*}
\Map_{\Pscr}((n, 0), (n, 1)) &=\{\eta\} & \Map_{\Pscr}((n-1, 1), (n, 1)) &= \{\beta, \eta\rho\}\\
\Map_{\Pscr}((k, 0), (n, 1)) &= \Delta^1  & \Map_{\Pscr}((k, 1), (n, 1)) &= \Delta^1\amalg \Delta^1
\end{align*}
for $k<n$. The map $\Pscr\to \Iscr_{\leq n}$ is the equivalence contracting all of the above $1$-simplices.
\end{proof}

\begin{lemma}\label{lem:link with sequential retracts}
Let $n\geq 0$ and consider the square of categories
\[\begin{tikzcd}
\Delta^n\times \Delta^{\{0, 2\}}\arrow[r]\arrow[d, "s"{swap}] & \Delta^n\times \Delta^2\arrow[d, "u"]\\
\Delta^{n+1}\times \Delta^{\{0\}}\arrow[r, "u"] & \Iscr_{<n+1}
\end{tikzcd}\]
where the top functor is the evident inclusion, the left functor sends $(k, 0)\mapsto (k, 0)$ and $(k, 2)\mapsto (k+1, 0)$ and the bottom and right functors are both restrictions of the functor $u$ from Construction \ref{con:u}.
This square is a pushout of $\infty$-categories.
\end{lemma}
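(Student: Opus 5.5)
The plan is to argue by induction on $n$. The inductive step glues the square for $n$ to the square for $n=1$ along a common piece, using the decompositions of $\Iscr_{<n+1}$ from Lemma~\ref{lem:pushout of cats}. This reduces everything to the two cases $n=0$ and $n=1$, which I would check by hand.

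\emph{Base cases.} For $n=0$, the functor $s\colon\Delta^{\{0,2\}}\to\Delta^1$ is an isomorphism. So the pushout is just $\Delta^2$, and $u\colon\Delta^2\to\Iscr_{<1}$ is an isomorphism onto the chain $(0,0)\xrightarrow{\eta}(0,1)\xrightarrow{\rho}(1,0)$.

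For $n=1$, I would model the pushout in the Joyal model structure. The top map $\Delta^1\times\Delta^{\{0,2\}}\to\Delta^1\times\Delta^2$ is a monomorphism, so the homotopy pushout is the ordinary pushout of simplicial sets. Alternatively, as in the proof of Lemma~\ref{lem:pushout of cats}, one can pass to simplicial categories via a cofibrant replacement and use left properness.

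The pushout identifies $(0,2)$ with $(1,0)$ and $(1,2)$ with $(2,0)$ inside $\Delta^1\times\Delta^2$. I would then compute its mapping spaces directly and compare them with those of $\Iscr_{<2}$. The one new phenomenon is that there are now two non-homotopic maps $(0,1)\to(1,1)$:
\begin{itemize}
\item the edge $(0,1)\to(1,1)$ of $\Delta^1\times\Delta^2$, which is $\beta$;
\item the composite $(0,1)\to(0,2)=(1,0)\to(1,1)$, which is $\eta\rho$.
\end{itemize}
These are not identified in the pushout, because the only $2$-simplices containing both come from $\Delta^1\times\Delta^2$, where the second path does not exist. All other mapping spaces should turn out contractible, with the commuting $2$-simplices witnessing the relations $\beta\eta=\eta\rho\eta$ and $\rho\beta=\rho\eta\rho$. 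This matches the description of $\Iscr_{<2}$.

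\emph{Inductive step.} Write $\Delta^{n}\simeq\Delta^{n-1}\amalg_{\Delta^0}\Delta^1$ up to categorical equivalence; the spine inclusion is inner anodyne. Since products with a fixed simplicial set preserve categorical equivalences, this decomposes both
\[
\Delta^n\times\Delta^2\simeq(\Delta^{n-1}\times\Delta^2)\amalg_{\Delta^0\times\Delta^2}(\Delta^1\times\Delta^2)
\]
and $\Delta^n\times\Delta^{\{0,2\}}$ compatibly. The map $s$ is compatible with the analogous decomposition $\Delta^{n+1}\simeq\Delta^{n}\amalg_{\Delta^1}\Delta^2$. By commuting pushouts with pushouts, the pushout $P_n$ of the square for $n$ becomes
\[
P_n\simeq P_{n-1}\amalg_{P_0}P_1 .
\]
By induction and the base cases this is $\Iscr_{<n}\amalg_{\Iscr_{<1}}\Iscr_{<2}$, where $\Iscr_{<2}$ is embedded via the shift $f$.

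It then remains to identify $\Iscr_{<n}\amalg_{\Iscr_{<1}}\Iscr_{<2}\simeq\Iscr_{<n+1}$. I would obtain this by pasting the pushouts of Lemma~\ref{lem:pushout of cats}. The right diagram there gives $\Iscr_{\leq n-1}\simeq\Iscr_{<n-1}\amalg\,(\Delta^1\times\Delta^{\{0,1\}})$, and the left diagram gives $\Iscr_{<n+1}\simeq\Iscr_{\leq n}\amalg_{\Iscr_{\leq1}}\Iscr_{<2}$. Combining these and reorganising the iterated pushouts should give the required gluing along $\Iscr_{<1}$.

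\emph{Main obstacle.} The main difficulty is this last bookkeeping step: making sure that the gluing loci match. That is, the overlap of $\Delta^{n-1}\times\Delta^2$ and $\Delta^1\times\Delta^2$ along $\Delta^0\times\Delta^2$ must map exactly onto the copy of $\Iscr_{<1}$ (the chain $(n-1,0)\to(n-1,1)\to(n,0)$) used in the decompositions of Lemma~\ref{lem:pushout of cats}. If the pasting of those lemmas turns out awkward, the fallback is to redo the explicit mapping-space computation of the $n=1$ case for general $n$. That is still feasible, since $\Iscr_{<n+1}$ has mapping sets of size at most two.
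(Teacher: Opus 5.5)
Your proposal is correct and is essentially the paper's argument. The inductive gluing along $\Delta^n\simeq\Delta^{n-1}\amalg_{\Delta^0}\Delta^1$ is exactly the co-Segal reduction the paper uses, and your ``main obstacle'' is immediate: paste the right square of the right diagram in Lemma~\ref{lem:pushout of cats} onto its left square (they share the edge $\Iscr_{\leq 1}\to\Iscr_{\leq n}$), which gives $\Iscr_{<n+1}\simeq\Iscr_{<n}\amalg_{\Iscr_{<1}}\Iscr_{<2}$ with $\Iscr_{<1}$ sitting over $(n-1,0)\to(n-1,1)\to(n,0)$. For $n=1$ the paper, like you, takes the strict pushout $P$ of simplicial sets, but rather than computing mapping spaces it notes that $P\to N(\Iscr_{<2})$ is injective and that the missing simplices can be attached along inner horns, in this order: the $2$-simplex with long edge $\eta\rho$, then the $3$-simplices $\eta\rho\eta$ and $\rho\eta\rho$, then the $4$-simplex. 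The comparison map is therefore inner anodyne, which is the clean way to make your contractibility claim rigorous.
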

\begin{proof}
Notice that the square is the level $n$ component of a square of cosimplicial diagrams of ($\infty$-)categories, where the bottom left corner is given by $\Delta^\bullet\star \Delta^0$. Each of the four corners satisfies the co-Segal condition, where we use Lemma \ref{lem:pushout of cats} for the bottom right corner. It therefore remains to treat the case $n=1$.

In this case, we will compute the homotopy pushout in the Joyal model structure. Since the map $\Delta^n\times\Delta^{\{0, 2\}}\to \Delta^n\times\Delta^2$ is injective, the homotopy pushout is simply given by the pushout $P$ of simplicial sets. Unravelling the definitions, one sees that the map of simplicial sets $P\to N(\Iscr_{<2})$ is injective. The non-degenerate simplices of $N(\Iscr_{<2})$ that are not in the image of $P$ are
\begin{align*}
& (0, 1)\xrightarrow{\eta\rho} (1, 1)  & & (0, 1)\xrightarrow{\rho} (1, 0)\xrightarrow{\eta} (1, 1) & & (0, 0)\xrightarrow{\eta} (0, 1)\xrightarrow{\rho} (1, 0)\xrightarrow{\eta} (1, 1)\\
& & & (0, 0)\xrightarrow{\eta} (0, 1)\xrightarrow{\eta\rho} (1, 1) & & (0, 1)\xrightarrow{\rho} (1, 0)\xrightarrow{\eta} (1, 1)\xrightarrow{\rho} (2, 0)\\
& & & (0, 1)\xrightarrow{\eta\rho} (1, 1) \xrightarrow{\rho} (2, 0) & & (0, 0)\xrightarrow{\eta} (0, 1)\xrightarrow{\eta\rho} (1, 1)\xrightarrow{\rho} (2, 0)
\end{align*}
and the $4$-simplex $(0, 0)\xrightarrow{\eta} (0, 1)\xrightarrow{\rho} (1, 0)\xrightarrow{\eta} (1, 1)\xrightarrow{\rho} (2, 0)$. It follows that $P\to N(\Iscr_{<2})$ is inner anodyne: we can first attach the $2$-simplex $\eta\circ \rho$ along an inner horn inclusion, then the $3$-simplices $\eta\circ \rho\circ \eta$ and $\rho\circ\eta\circ \rho$ and finally the $4$-simplex $\rho\circ\eta\circ\rho\circ \eta$.
\end{proof}
\begin{corollary}\label{cor:FMod}
The $\infty$-category $\FMod_F(\Cscr)$ fits into a pullback square of $\infty$-categories as follows.
\[\begin{tikzcd}[column sep=3pc]
\FMod_F(\Cscr)\arrow[r]\arrow[d, "\mathrm{oblv}"{swap}] & \Fun(\Iscr, \Cscr)\arrow[d, "u^*"]\\
\Fun(\mathbb{N}_{\leq}, \Cscr)\arrow[r, "\eta\colon \id\to F"] & \Fun(\mathbb{N}_{\leq}\times \Delta^{\{0, 1\}}, \Cscr)
\end{tikzcd}\]
\end{corollary}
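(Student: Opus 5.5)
By Construction~\ref{con:fmod}, $\FMod_F(\Cscr)$ is the pullback of
$$\Fun(\NN_{\leq},\Cscr)\xrightarrow{(\eta,\sigma)}\Fun(\Lambda^2_0,\Fun(\NN_{\leq},\Cscr))\longleftarrow\Fun(\Delta^2,\Fun(\NN_{\leq},\Cscr)).$$
Rewritten with product indexing categories, this says that $\FMod_F(\Cscr)$ is the $\infty$-category of functors $\NN_{\leq}\times\Delta^2\to\Cscr$ with two conditions imposed. First, the edge $\{0,1\}$ must be the component of $\eta$ at the restriction to $\NN_{\leq}\times\{0\}$. Second, the edge $\{0,2\}$ must be the shift map $\sigma$, so that $X|_{\NN_{\leq}\times\{2\}}$ is the shift of $X|_{\NN_{\leq}\times\{0\}}$.

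The plan is to first impose only the second (shift) condition. I claim that the $\infty$-category of diagrams $\NN_{\leq}\times\Delta^2\to\Cscr$ satisfying it is $\Fun(\Iscr,\Cscr)$, via restriction along $u$. Concretely, I want the square
$$\begin{tikzcd}
\NN_{\leq}\times\Delta^{\{0,2\}}\arrow[r]\arrow[d,"s"'] & \NN_{\leq}\times\Delta^2\arrow[d,"u"]\\
\NN_{\leq}\arrow[r,"u"] & \Iscr
\end{tikzcd}$$
to be a pushout of $\infty$-categories, where $s(n,0)=n$ and $s(n,2)=n+1$. Applying $\Fun(-,\Cscr)$ turns this pushout into a pullback. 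That pullback is exactly the data of a diagram on $\NN_{\leq}\times\Delta^2$ whose $\{2\}$-row is the shift of its $\{0\}$-row, with the $\{0,2\}$-maps being the structure maps $\sigma$. It remains to impose the first condition, that the $\{0,1\}$-edge is $\eta$. This is the pullback along $u^*\colon\Fun(\Iscr,\Cscr)\to\Fun(\NN_{\leq}\times\Delta^{\{0,1\}},\Cscr)$ and $\eta\colon\Fun(\NN_{\leq},\Cscr)\to\Fun(\NN_{\leq}\times\Delta^{\{0,1\}},\Cscr)$, which is the square in the statement. Pasting pullback squares then finishes the argument.

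For the pushout, Lemma~\ref{lem:link with sequential retracts} gives the analogous square at each finite stage $n$, with $\Delta^n$ in place of $\NN_{\leq}$ and $\Iscr_{<n+1}$ in place of $\Iscr$. I will write $\NN_{\leq}=\colim_n\Delta^n$, which is a filtered union of full subcategories, and similarly $\Iscr=\colim_n\Iscr_{<n+1}$. These are sequential colimits of $\infty$-categories; since they are unions of full subcategories, they are computed as unions of nerves in the Joyal model structure. The restrictions of $u$ and $s$ are compatible with these inclusions, and pushouts commute with sequential colimits. Hence the squares of Lemma~\ref{lem:link with sequential retracts} assemble into the desired pushout.

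The main obstacle is the bookkeeping needed to match the explicit conditions in Construction~\ref{con:fmod} with the pullback of the mapping categories. In particular, I have to check that the $\{0,2\}$-edges of the diagram correspond to $\sigma$ under $s$. I also have to check that the colimit exhibiting $\Iscr$ is genuinely $\Iscr$ in the $\infty$-categorical sense. I expect the latter to be clear because a union of an increasing chain of full subcategories of a $1$-category has nerve equal to the union of the nerves, and such monomorphisms are cofibrations.
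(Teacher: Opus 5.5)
Your proposal is correct and is essentially the paper's proof. The paper likewise obtains the pushout square with corners $\NN_{\leq}\times\Delta^{\{0,2\}}$, $\NN_{\leq}\times\Delta^2$, $\NN_{\leq}$, $\Iscr$ by taking the colimit over $n$ of the squares in Lemma~\ref{lem:link with sequential retracts}, passes to the corresponding pullback of functor categories, and concludes by unravelling Construction~\ref{con:fmod}, which is your pasting step (with one harmless slip: at stage $n$ the bottom-left corner in that lemma is $\Delta^{n+1}$, not $\Delta^n$).
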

\begin{proof}
Consider the following two squares of $\infty$-categories:
\[\begin{tikzcd}
\NN_{\leq}\times \Delta^{\{0, 2\}}\arrow[r]\arrow[d, "s"{swap}] & \NN_{\leq}\times \Delta^2\arrow[d, "u"]\\
\NN_{\leq}\times \{0\}\arrow[r] & \Iscr
\end{tikzcd}\hspace{30pt} \begin{tikzcd}
\Fun(\Iscr, \Cscr)\arrow[d, "u^*"{swap}]\arrow[r] & \Fun(\NN_{\leq}, \Cscr) \arrow[d, "\sigma"]\\
\Fun(\NN_{\leq}\times \Delta^2, \Cscr)\arrow[r] & \Fun(\NN_{\leq}\times \Delta^{\{0, 2\}}, \Cscr)
\end{tikzcd}\]
Here the right square is obtained by restriction along the functors in the left square. The left square is a pushout square of $\infty$-categories by Lemma \ref{lem:link with sequential retracts} and taking the colimit over all $n$. It follows that the right square is a pullback square. The result now follows by unravelling Construction \ref{con:fmod}.
\end{proof}
We can use Corollary \ref{cor:FMod} to identify $\FMod_F(\Cscr)$ with the limit of a tower
\[\begin{tikzcd}[column sep=1pc]
\FMod_F(\Cscr)\arrow[r] & \dots\arrow[r] & \FMod_F(\Cscr)_{\leq n}\arrow[r] & \FMod_F(\Cscr)_{<n}\arrow[r] & \FMod_F(\Cscr)_{\leq n-1}\arrow[r] & \dots\arrow[r] & \FMod_F(\Cscr)_{<0}\simeq \Cscr,
\end{tikzcd}\]
whose finite stages are given by the following pullbacks of $\infty$-categories:
\[\begin{tikzcd}
\FMod_F(\Cscr)_{\leq n}\arrow[r] \arrow[d] & \Fun(\Delta^n, \Cscr)\arrow[d, "\eta\colon \id\to F" ] &[20pt] \FMod_F(\Cscr)_{< n}\arrow[r]\arrow[d] & \Fun(\Delta^n, \Cscr)\arrow[d, "\eta\colon \id\to F"]\\
\Fun(\Iscr_{\leq n}, \Cscr)\arrow[r, "u^*"] & \Fun(\Delta^n\times\Delta^{\{0, 1\}}, \Cscr) & \Fun(\Iscr_{< n}, \Cscr)\arrow[r, "u^*"] & \Fun((\Delta^n\times\Delta^{\{0, 1\}})^{\circ}, \Cscr).
\end{tikzcd}\]
\begin{lemma}\label{lem:iterated LKE}
Let $\Cscr$ be an $\infty$-category with finite colimits and $\eta\colon \id\to F$ a pointed endofunctor of $\Cscr$.
    \begin{enumerate}[{\em (1)}]
\item For each $n\geq 1$, the functor $\Mod_F(\Cscr)_{\leq n}\to \Mod_F(\Cscr)_{<n}$ is an equivalence.

\item For each $n\geq 1$, consider the diagram
\begin{equation}\label{diag:filtered modules}\begin{tikzcd}
\FMod_{F}(\Cscr)_{<n+1}\arrow[d]\arrow[r] & \Fun(\Iscr_{<n+1}, \Cscr)\arrow[r]\arrow[d] & \Fun(\Iscr_{<2}, \Cscr)\arrow[d]\\
\FMod_F(\Cscr)_{\leq n}\arrow[r] & \Fun(\Iscr_{\leq n}, \Cscr)\arrow[r] & \Fun(\Iscr_{\leq 1}, \Cscr).
\end{tikzcd}\end{equation}
Each square is a pullback square and the vertical functors admit fully faithful left adjoints.
\end{enumerate}
\end{lemma}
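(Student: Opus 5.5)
The plan is to reduce both statements to the pushout descriptions of the indexing categories in Lemma~\ref{lem:pushout of cats}, which become pullbacks after applying $\Fun(-,\Cscr)$, and then paste pullback squares. (I read the $\Mod_F(\Cscr)_{\leq n}$ and $\Mod_F(\Cscr)_{<n}$ in part (1) as the filtered stages $\FMod_F(\Cscr)_{\leq n}$ and $\FMod_F(\Cscr)_{<n}$ defined just before the lemma.)

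For (1), apply $\Fun(-,\Cscr)$ to the outer rectangle of the right-hand diagram in Lemma~\ref{lem:pushout of cats}. This gives a pullback
$$\Fun(\Iscr_{\leq n},\Cscr)\simeq\Fun(\Iscr_{<n},\Cscr)\times_{\Fun((\Delta^1\times\Delta^{\{0,1\}})^\circ,\Cscr)}\Fun(\Delta^1\times\Delta^{\{0,1\}},\Cscr).$$
Analogously, $\Delta^n\times\Delta^{\{0,1\}}$ is the pushout of $(\Delta^n\times\Delta^{\{0,1\}})^\circ$ and $\Delta^1\times\Delta^{\{0,1\}}$ along $(\Delta^1\times\Delta^{\{0,1\}})^\circ$, the last square $\Delta^{\{n-1,n\}}\times\Delta^{\{0,1\}}$. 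This is an elementary pushout of posets, since the only new object $(n,1)$ is attached along that square. The functor $u^*$ is compatible with both pushouts, because $u$ restricted to the last square is $f\circ u$. Hence $\Fun(\Iscr_{\leq n},\Cscr)\to\Fun(\Iscr_{<n},\Cscr)$ is the base change of
$$\Fun(\Delta^n\times\Delta^{\{0,1\}},\Cscr)\to\Fun((\Delta^n\times\Delta^{\{0,1\}})^\circ,\Cscr)$$
along $u^*$. Now paste this with the defining pullbacks of $\FMod_F(\Cscr)_{\leq n}$ and $\FMod_F(\Cscr)_{<n}$, which impose that the $u^*$-restriction equals $\eta\colon X_\bullet\to F(X_\bullet)$. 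The pasting exhibits $\FMod_F(\Cscr)_{\leq n}\to\FMod_F(\Cscr)_{<n}$ as a base change of the identity of $\FMod_F(\Cscr)_{<n}$, hence an equivalence. Informally, the value at $(n,1)$ and the maps $\eta$ and $\beta$ into it are forced to be $F(X_n)$, $\eta_{X_n}$ and $F(\sigma_{n-1})$.

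For (2), the right square of \eqref{diag:filtered modules} is $\Fun(-,\Cscr)$ applied to the left pushout square of Lemma~\ref{lem:pushout of cats}, so it is a pullback. For the left square, write both $\FMod_F(\Cscr)_{<n+1}$ and $\FMod_F(\Cscr)_{\leq n}$ as pullbacks. The $\eta$-constraints for $\FMod_F(\Cscr)_{<n+1}$ live on $(\Delta^{n+1}\times\Delta^{\{0,1\}})^\circ$, whose only object beyond $\Delta^n\times\Delta^{\{0,1\}}$ is $(n+1,0)$. At $(n+1,0)$ the constraint only says the value is $X_{n+1}$ as an object of the filtration, which is already recorded by $\Fun(\Iscr_{<n+1},\Cscr)$. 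Thus the constraint diagrams agree, and pasting gives a pullback.

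For the left adjoints, observe that $\Iscr_{<2}\simeq\Iscr_{\leq 1}\star\Delta^0$, as used in the proof of Lemma~\ref{lem:pushout of cats}. So $\Fun(\Iscr_{<2},\Cscr)\to\Fun(\Iscr_{\leq 1},\Cscr)$ has a fully faithful left adjoint given by left Kan extension, sending $D$ to the cone over $D$ with vertex $\colim D$. This colimit exists because $\Iscr_{\leq 1}$ is a finite $1$-category without nontrivial endomorphisms, so its nerve has finitely many nondegenerate simplices and finite colimits suffice. Finally, use the dual of Lemma~\ref{lem:pullback_localization}: a functor with a fully faithful left adjoint stays so under base change. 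Applying it twice transports the left adjoint along both pullback squares.

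I expect the main obstacle to be the bookkeeping in the pasting for (1). One has to check carefully that the pushout decomposition of $\Iscr_{\leq n}$ is compatible, via $u$, with the corresponding decomposition of $\Delta^n\times\Delta^{\{0,1\}}$. In particular, the morphism $\beta$ into $(n,1)$ must correspond under $u$ to the vertical edge $(n-1,1)\to(n,1)$. This compatibility is what makes the new data in $\FMod_F(\Cscr)_{\leq n}$ entirely determined by the $\eta$-constraint.
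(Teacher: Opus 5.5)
Your proposal follows the paper's proof essentially step for step. For (1) you glue $\Iscr_{\leq n}$ and $\Delta^n\times\Delta^{\{0,1\}}$ along the last square and paste with the defining pullbacks. For (2) you use Lemma~\ref{lem:pushout of cats} for the right square, the observation that $(\Delta^{n+1}\times\Delta^{\{0,1\}})^\circ$ only adds $(n+1,0)$ for the left square, and base change of the left Kan extension along $\Fun(\Iscr_{<2},\Cscr)\to\Fun(\Iscr_{\leq 1},\Cscr)$.

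The one step you should justify, rather than call an ``elementary pushout of posets,'' is that $\Delta^n\times\Delta^{\{0,1\}}$ is the $\infty$-categorical pushout of $(\Delta^n\times\Delta^{\{0,1\}})^\circ$ and the last square. Pushouts of posets along full inclusions need not be pushouts in $\Cat_\infty$, and $(n,1)$ receives maps from every object, not just from the square. The paper gets this by viewing both as cones with cone point $(n,1)$ and checking that $(\Delta^{\{n-1,n\}}\times\Delta^{\{0,1\}})^\circ\hookrightarrow(\Delta^n\times\Delta^{\{0,1\}})^\circ$ is cofinal.
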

\begin{proof}
For the first part, consider the diagram
\[\begin{tikzcd}
(\Delta^{\{n-1, n\}}\times\Delta^1)^\circ\arrow[r]\arrow[d] & (\Delta^n\times \Delta^1)^\circ\arrow[r, "u"]\arrow[d] & \Iscr_{<n}\arrow[d]\\
\Delta^{\{n-1, n\}}\times\Delta^1\arrow[r] & \Delta^n\times \Delta^1\arrow[r, "u"] & \Iscr_{\leq n}.
\end{tikzcd}\]
The total rectangle is a pushout by Lemma \ref{lem:pushout of cats}.
The left square is a pushout as well: 
its bottom row is obtained from the top row by taking cones and the inclusion 
$(\Delta^{\{n-1, n\}}\times\Delta^1)^\circ\hookrightarrow (\Delta^n\times \Delta^1)^\circ$ 
is cofinal (similarly to the proof of Lemma \ref{lem:pushout of cats}). 
It follows that the right square is a pushout. Unwinding the definitions, this implies that 
$\Mod_F(\Cscr)_{\leq n}\to \Mod_F(\Cscr)_{<n}$ is an equivalence.

For the second statement, observe that the right square in \eqref{diag:filtered modules}
is a pullback by Lemma \ref{lem:pushout of cats}.
For the left square, consider the diagram
\[\begin{tikzcd}
\Fun(\Delta^{n+1}, \Cscr)\arrow[d]\arrow[r, "\eta\colon \id\to F"] &[2pc]
\Fun((\Delta^{n+1}\times\Delta^1)^\circ, \Cscr)\arrow[r]\arrow[d] &
\Fun(\Delta^{n+1}\times \{0\}, \Cscr)\arrow[d]\\
\Fun(\Delta^n, \Cscr)\arrow[r, "\eta\colon \id\to F"] &
\Fun(\Delta^{n}\times \Delta^1, \Cscr)\arrow[r] &
\Fun(\Delta^{n}\times \{0\}, \Cscr).
\end{tikzcd}\]
The total square and the right square are pullbacks, 
so that the left square is a pullback as well.
Unravelling the definition of $\FMod_F(\Cscr)_{<n+1}$ and $\FMod_F(\Cscr)_{\leq n}$, 
one then sees that the left square in \eqref{diag:filtered modules} 
is a pullback as well. 
Finally, the right vertical functor admits a fully faithful left adjoint
given by left Kan extension, and this property is stable under base change.
\end{proof}
\begin{proof}[Proof of Proposition \ref{prop:free monad}]
The forgetful functor can be identified with the composite
\[\begin{tikzcd}
\mathrm{oblv}\colon \Mod_F(\Cscr)\arrow[r, "\mathrm{cst}"] & \FMod_F(\Cscr)\arrow[r] & \FMod_F(\Cscr)_{<0} \simeq \Cscr.
\end{tikzcd}\]
The first functor admits a left adjoint by Construction \ref{con:fmod}. 
The second functor admits a fully faithful left adjoint by Lemma \ref{lem:iterated LKE}. 
Indeed, we can decompose $\FMod_F(\Cscr)\to\Cscr$ into a tower where almost all functors are equivalences or admit a fully faithful left adjoint by Lemma \ref{lem:iterated LKE}. 
It remains to examine the lowest stages $\FMod_F(\Cscr)_{<1}\to \FMod_F(\Cscr)_{\leq 0}\to \FMod_F(\Cscr)_{<0}$ of the tower. 
These can be identified with
\[\begin{tikzcd}
\Fun(\Delta^2, \Cscr)\times_{\Fun(\Delta^{\{0, 1\}}, \Cscr)}^\eta \Cscr\arrow[r] &
\Fun(\Delta^{\{0, 1\}}, \Cscr)\times^\eta_{\Fun(\Delta^{\{0, 1\}}, \Cscr)} \Cscr\arrow[r, "\sim"] &
\Cscr
\end{tikzcd}\]
where the pullbacks are taken along $\eta\colon \Cscr\to \Fun(\Delta^{\{0, 1\}}, \Cscr)$. The second functor is an equivalence and the first admits a fully faithful left adjoint, sending $F_0X\xrightarrow{\eta} F(F_0X)$ to $F_0X\xrightarrow{\eta} F(F_0X) = F(F_0(X))$. 

It now remains to verify that the right adjoint $\mathrm{oblv}\colon \Mod_F(\Cscr)\to \Cscr$ is monadic. It is clearly conservative. Given a simplicial diagram of $F$-modules $X_\bullet\to F(X_\bullet)\to X_\bullet$ with a split colimit $X_\bullet\to X_{-1}$ in $\Cscr$, we find that $F(X_\bullet)\to F(X_{-1})$ is a split colimit as well. The natural transformation $F(X_\bullet)\to X_\bullet\to X_{-1}$ then induces a retract diagram $X_{-1}\to F(X_{-1})\to X_{-1}$. This yields an augmented simplicial diagram $\Delta^{\op}_+\to \Mod_F(\Cscr)$ and (the proof of) \cite[Cor.~4.7.2.13]{ha} implies that this diagram is split. In particular, this implies that it is a colimit that is preserved by the forgetful functor. The assertion that the monad associated to
$\mathrm{oblv}\colon \Mod_F(\Cscr)\to \Cscr$ is the free monad on the pointed endofunctor $F$ is an immediate consequence of Corollary~\ref{cor:recognition free monads} below.
\end{proof}
\begin{remark}\label{rem:free monad explicit}
Let $T\colon \Cscr\to \Cscr$ denote the monad associated to the monadic right adjoint $\mathrm{oblv}\colon \Mod_F(\Cscr)\to \Cscr$ from Proposition \ref{prop:free monad}. 
It follows from the proof, and in particular from Lemma \ref{lem:iterated LKE}, 
that $T$ arises as the colimit of a sequence of endofunctors
\[\begin{tikzcd}
\id_{\Cscr}=F^{(0)}\arrow[r, "\sigma"] & F^{(1)}=F\arrow[r, "\sigma"] & F^{(2)}\arrow[r, "\sigma"] & \dots\arrow[r] & T
\end{tikzcd}\]
where each $\sigma$ decomposes as $\sigma\colon F^{(n)}\xrightarrow{\eta} F\circ F^{(n)}\xrightarrow{\rho} F^{(n+1)}$. This sequence is defined inductively, starting with $\sigma=\eta\colon \id_{\Cscr}=F^{(0)}\to F^{(1)}=F$ with $\rho$ being the identity, with $F^{(n+1)}$ for $n\geq 1$ being defined as the colimit of the following $\Iscr_{\leq 1}$-diagram of endofunctors:
\[\begin{tikzcd}
F^{(n-1)}\arrow[r, "\eta"] & F\circ F^{(n-1)}\arrow[rr, bend left=30, "F(\sigma)"] \arrow[r, "\rho"] & F^{(n)}\arrow[r, "\eta"] & F\circ F^{(n)}\arrow[r, dotted, "\rho"] & F^{(n+1)}.
\end{tikzcd}\]
\end{remark}
The filtration from Remark \ref{rem:free monad explicit} can often be understood quite explicitly in a stable setting:
\begin{lemma}\label{lem:cofibers of filtration}
Let $\Cscr$ be an $\infty$-category with colimits and $\eta\colon \id_{\Cscr}\to F$ 
a pointed endofunctor preserving sequential colimits.
Suppose that there exists a colimit preserving functor $U\colon \Cscr\to \Dscr$ 
to a stable $\infty$-category with colimits, together with a cofiber sequence in $\Fun(\Cscr, \Dscr)$
\[\begin{tikzcd}
U\arrow[r, "U\eta"] & UF\arrow[r] & U.
\end{tikzcd}\]
For each $n\geq 1$, there then exists a natural cofiber sequence in
$\Fun(\Cscr, \Dscr)$ of the form
\[\begin{tikzcd}
UF^{(n)}\arrow[r, "U\sigma"] & UF^{(n+1)}\arrow[r] & U.
\end{tikzcd}\]
\end{lemma}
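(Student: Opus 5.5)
We argue by induction on $n$, using the inductive description of $F^{(n+1)}$ from Remark~\ref{rem:free monad explicit}. For $n=1$ (and also $n=0$, where $F^{(0)}=\id$ and $F^{(1)}=F$) the claim is precisely the given cofiber sequence $U\to UF\to U$. For the inductive step, recall that $F^{(n+1)}$ is the colimit of the $\Iscr_{\leq 1}$-diagram
$$F^{(n-1)}\xrightarrow{\eta}F\circ F^{(n-1)}\xrightarrow{\rho}F^{(n)}\xrightarrow{\eta}F\circ F^{(n)},$$
together with the extra arrow $F(\sigma)\colon F\circ F^{(n-1)}\to F\circ F^{(n)}$. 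By cofinality, as in the proof of Lemma~\ref{lem:pushout of cats}, this colimit is the pushout
$$F^{(n+1)}\simeq F\circ F^{(n)}\amalg_{F\circ F^{(n-1)}}F^{(n)},$$
taken along $F(\sigma)$ and $\rho$. The pushout is formed in $\End(\Cscr)$, hence pointwise. Since $U$ preserves colimits, applying $U$ yields a pushout square in $\Fun(\Cscr,\Dscr)$:
$$UF^{(n+1)}\simeq UF F^{(n)}\amalg_{UFF^{(n-1)}}UF^{(n)}.$$
The map $U\sigma\colon UF^{(n)}\to UF^{(n+1)}$ is the right-hand leg of this square. Because $\Dscr$ is stable, the cofiber of $U\sigma$ agrees with the cofiber of the parallel leg $UF(\sigma)\colon UFF^{(n-1)}\to UFF^{(n)}$.

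It therefore remains to compute $\mathrm{cofib}(UF(\sigma))$. Precompose the given cofiber sequence $U\to UF\to U$ with $F^{(n-1)}\xrightarrow{\sigma}F^{(n)}$. This gives a map of cofiber sequences
$$\bigl(UF^{(n-1)}\to UFF^{(n-1)}\to UF^{(n-1)}\bigr)\longrightarrow\bigl(UF^{(n)}\to UFF^{(n)}\to UF^{(n)}\bigr),$$
whose components are $U\sigma$, $UF(\sigma)$ and $U\sigma$; naturality of $\eta$ guarantees that the maps commute. Taking cofibers componentwise gives a cofiber sequence
$$\mathrm{cofib}(U\sigma)\to\mathrm{cofib}(UF\sigma)\to\mathrm{cofib}(U\sigma),$$
where $U\sigma$ here denotes $UF^{(n-1)}\to UF^{(n)}$. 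By the induction hypothesis, $\mathrm{cofib}(U\sigma)\simeq U$.

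At this point we get $\mathrm{cofib}(UF\sigma)$ only as an extension of $U$ by $U$, not as $U$ itself, so the naive induction fails. The fix is to use the other leg of the pushout. The cofiber of $U\sigma$ is also the cofiber of the left-hand leg's parallel map, namely $U\rho\colon UFF^{(n-1)}\to UF^{(n)}$, compared against $UFF^{(n)}\to UF^{(n+1)}$. So instead compute
$$\mathrm{cofib}\bigl(UF^{(n)}\to UF^{(n+1)}\bigr)\simeq\mathrm{cofib}\bigl(UFF^{(n-1)}\xrightarrow{UF\sigma}UFF^{(n)}\bigr)\big/\text{(correction)}.$$
The cleaner route is to compute the cofiber of the composite $UF^{(n)}\xrightarrow{U\eta}UFF^{(n)}\to UF^{(n+1)}$. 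This composite is $U\sigma$ by the factorization $\sigma=\rho\eta$, which the filtered-module structure provides at stage $n$. We then use the octahedral axiom: $\mathrm{cofib}(U\eta F^{(n)})\simeq U$ from the hypothesis, and $\mathrm{cofib}(UFF^{(n)}\to UF^{(n+1)})\simeq\mathrm{cofib}(U\rho\colon UFF^{(n-1)}\to UF^{(n)})$ from the pushout. The main obstacle is to show that this last cofiber vanishes, or more generally that in the octahedron the contributions from stages $n-1$ and $n$ cancel, so that exactly one copy of $U$ survives. I would prove this by strengthening the induction hypothesis: include the statement that $U\rho\colon UFF^{(n-1)}\to UF^{(n)}$ has cofiber $0$ after passing through the comparison with the $\eta$-split sequences. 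I would first check this by hand at $n=1,2$, tracking the octahedra explicitly, and only then run the general induction.
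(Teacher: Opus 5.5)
There is a genuine gap: the pushout formula $F^{(n+1)}\simeq F F^{(n)}\amalg_{F F^{(n-1)}}F^{(n)}$, taken along $F(\sigma)$ and $\rho$, is wrong, because the cofinality claim behind it fails.

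The span $(1,0)\xleftarrow{\rho}(0,1)\xrightarrow{\beta}(1,1)$ is not cofinal in $\Iscr_{\leq 1}$. The category of arrows from $(1,0)$ into this span has two objects, $\id_{(1,0)}$ and $\eta\colon(1,0)\to(1,1)$, and no morphisms between them, so it is not contractible. Your pushout therefore drops two pieces of data: the arrow $\eta\colon F^{(n)}\to F F^{(n)}$ (the requirement that the map out of $F^{(n)}$ be $\rho\circ\eta$), and the $2$-cell $F(\sigma)\circ\eta\simeq\eta\circ\sigma$ coming from $(0,0)$. As a result it is too large.

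Here is a concrete example. Take $\Cscr=\Dscr=\Mod_k$, $U=\id$, $F(X)=X\oplus X$, and $\eta$ the first inclusion. The free monad is $X\mapsto X[t]$ and $F^{(n)}(X)\simeq X^{\oplus(n+1)}$. For $n=1$, however, your formula gives $F F(X)\simeq X^{\oplus 4}$, since $\rho\colon F F^{(0)}\to F^{(1)}$ is the identity. The ``extension of $U$ by $U$'' you found is exactly this extra copy; the induction itself was not at fault.

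The octahedral repair is not carried out, and the strengthened hypothesis it relies on is false. In the same example, $\rho\colon F F^{(1)}(X)\to F^{(2)}(X)$ is $(a,b)\mapsto a+tb$, a surjection with kernel $X$, so its cofiber is $\Sigma X\neq 0$. Separately, the case $n=1$ of the lemma is not the hypothesis; the hypothesis is the case $n=0$, and $n=1$ already needs the inductive step.

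The correct decomposition, which is the paper's, has two steps:
\begin{itemize}
\item First form $P=F^{(n)}\amalg_{F^{(n-1)}}F F^{(n-1)}$, the pushout of $\sigma$ and $\eta$.
\item Then $F^{(n+1)}\simeq F F^{(n)}\amalg_{P}F^{(n)}$, along $(\eta,F(\sigma))\colon P\to F F^{(n)}$ and $(\id,\rho)\colon P\to F^{(n)}$.
\end{itemize}
This records exactly the missing arrow and $2$-cell, and the leg $F^{(n)}\to F^{(n+1)}$ is $\sigma=\rho\circ\eta$.

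Hence $\mathrm{cofib}(U\sigma)\simeq\mathrm{cofib}(UP\to UF F^{(n)})$. This is the \emph{total} cofiber of the square with edges $U\eta$, $U\eta$, $U\sigma$, $UF(\sigma)$. Your own map of cofiber sequences (precomposing $U\to UF\to U$ with $\sigma$) then finishes the argument. It identifies the induced map on horizontal cofibers with $U\sigma\colon UF^{(n-1)}\to UF^{(n)}$, whose cofiber is $U$ by induction.

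Equivalently, in your sequence $\mathrm{cofib}(U\sigma)\to\mathrm{cofib}(UF\sigma)\to\mathrm{cofib}(U\sigma)$, the total cofiber is the cofiber of the first map, namely the third term $U$. You computed the cofiber of the right vertical edge of that square when the total cofiber was needed. No strengthened induction or octahedral bookkeeping is required.
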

\begin{proof}
We proceed by induction on $n$. The colimit from Remark \ref{rem:free monad explicit} can be computed as the iterated pushout
$$\begin{tikzcd}
F^{(n)} \amalg_{F^{(n-1)}} F\circ F^{(n-1)}\arrow[r, "{(\id, \rho)}"]\arrow[d, "{(\eta, F(\sigma))}"{swap}] & F^{(n)}\arrow[d, "\sigma"]\\
F\circ F^{(n)}\arrow[r, "\rho"] & F^{(n+1)}
\end{tikzcd}$$
where the top left corner denotes the pushout of $\sigma\colon F^{(n-1)}\to F^{(n)}$ and $\eta\colon F^{(n-1)}\to F\circ F^{(n-1)}$. It follows that the cofiber of $U\sigma\colon UF^{(n)}\to UF^{(n+1)}$ is naturally equivalent to the cofiber of the image of the left vertical map under $U$, that is, the total cofiber of the square
\[\begin{tikzcd}
UF^{(n-1)}\arrow[d, "\sigma"]\arrow[r, "\eta"] & UF\circ F^{(n-1)}\arrow[d, "F(\sigma)"]\\
UF^{(n)}\arrow[r, "\eta"] & UF\circ F^{(n)}.
\end{tikzcd}\]
The map between the horizontal cofibers can be identified with $U\sigma\colon UF^{(n-1)}\to UF^{(n)}$, so that the total cofiber is equivalent to $U$ by inductive hypothesis.
\end{proof}
\begin{remark}\label{rem:cofree comonad}
Replacing $\Cscr$ by its opposite, one obtains a similar result for the cofree comonad on a co-pointed endofunctor $G\to \id_{\Cscr}$. 
The underlying endofunctor of the induced comonad is then given by $G_{(\infty)}=\lim_i G_{(i)}$ where the $G_{(i)}$ are defined by a limit formula dual to Remark \ref{rem:free monad explicit}. 
In this case, if $U\colon \Cscr\to \Dscr$ is a limit-preserving functor to a stable $\infty$-category such that $\fib(UG\to U)\simeq U$, then there is a fiber sequence
\[
U\to UG_{(n)}\to UG_{(n-1)}
\]
for each $n\geq 2$.
\end{remark}
\begin{remark}\label{rem:adjoint cofree comonad}
Suppose that $\Cscr$ admits both limits and colimits and that the endofunctor $F$ admits a right adjoint $G$. 
Then the natural transformation between left adjoint functors $\id_{\Cscr}\to F$ is adjoint to a natural transformation between the corresponding right adjoint functors $G\to \id_{\Cscr}$. 
In this case, the free monad $F^{(\infty)}$ and the cofree comonad $G_{(\infty)}$ from Remark \ref{rem:cofree comonad} are adjoint as well.
In fact, the functors $F^{(i)}$ and $G_{(i)}$ are adjoint for all $i$.
\end{remark}

\begin{remark}\label{rmk:pb formula cofree}
	Let $\Cscr$ be an $\infty$-category with limits and let $w\colon G \to \id_{\Cscr}$ be a co-pointed endofunctor on $\Cscr$ which preserves sequential limits.
	In the special situation where $G$ is obtained as a pullback
	$$\begin{tikzcd}[column sep=4em]
		G \ar[d, "w"'] \ar[r,"F"] \ar[dr, phantom, "\lrcorner", near start] & \id_{\Cscr} \ar[d, "t"] \\[3ex]
		\id_{\Cscr} \ar[r, "\varphi"] & T
	\end{tikzcd}$$
	in $\End(\Cscr)$, the limit formula for the $G_{(i)}$ from the dual of Remark~\ref{rem:free monad explicit} can be expressed explicitly as follows.
	For any $i \geq 2$, there exists a pullback square
	$$\begin{tikzcd}[column sep=large]
			G_{(i+1)} \ar[r, "F_i"] \ar[d, "r_i"] & G_{(i)} \ar[d, "{(r_{i-1},t_{G_{(i)}})}"] \\[4ex]
			G_{(i)} \ar[r, "{(F_{i-1},\varphi_{G_{(i)}})}"] & G_{(i-1)}\times_{T G_{(i-1)}} T G_{(i)}
	\end{tikzcd}$$
	in $\End(\Cscr)$, where we set $G_{(1)} = \id_{\Cscr}$ and $G_{(2)} = G$. The $r_i$ and $F_i$ are defined inductively with $r_1 = w$ and $F_1 = F$ by definition.
	The pullback in the bottom right corner is taken over the maps $t_{G_{(i-1)}}\colon G_{(i-1)} \to T\circ G_{(i-1)}$ and $Tr_{i-1}\colon TG_{(i)} \to TG_{(i-1)}$ for any $i\geq 2$.
\end{remark}

\begin{definition}
	Let $\Cscr$ be an $\infty$-category. We note that the category $\Mod_F(\Cscr)$ from Notation~\ref{not:modules for pointed endofunctors} is contravariantly functorial in the
	pointed endofunctor $F$. In particular, there is a functor
	\begin{equation*}
		\Mod_{(-)} \colon \Alg_{\bE_0}(\End(\Cscr))^{\op} \to (\iCat)_{/\Cscr}.
	\end{equation*}
	This sends a pointed endofunctor $\eta \colon \id_\Cscr \to F$ to the forgetful functor $\oblv\colon\Mod_F(\Cscr) \to \Cscr$ and a map $\alpha \colon (F,\eta) \to (F',\eta')$ of
	pointed endofunctors to the functor $\alpha^\ast \colon \Mod_{F'}(\Cscr) \to \Mod_F(\Cscr)$ which is induced by pre-composition with $\alpha$.
\end{definition}

\begin{notation}\label{not:transformation E0 E1}
	Let $T$ be a monad on a category $\Cscr$ with unit transformation $\eta\colon \id_{\Cscr} \to T$ in $\End(\Cscr)$. We write $T_0 = (T,\eta)$ for the underlying $\bE_0$-algebra of $T$.
	Then there is a canonical commutative diagram $\beta_T$
	$$\begin{tikzcd}[column sep=small]
		\LMod_T(\Cscr) \ar[dr, "U_T"'] \ar[rr] && \Mod_{T_0}(\Cscr) \ar[dl, "\oblv"] \\[2ex]
		& \Cscr &
	\end{tikzcd}$$
	in $\iCat$ and these diagrams $\beta_T$ define a natural transformation $\beta \colon \LMod \to {\Mod_{(-)}} \circ U$. It follows that there is a lax commutative diagram
	$$\begin{tikzcd}[column sep=small]
		\Alg(\End(\Cscr))^{\op} \ar[dr, hook,"\LMod"'] \ar[rr, "U"] && \Alg_{\bE_0}(\End(\Cscr))^{\op} \ar[dl, "\Mod_{(-)}"] \\
		& (\iCat)_{/\Cscr} &
	\end{tikzcd}$$
	of $\infty$-categories.
\end{notation}

Recall from Corollary~\ref{cor:monads_adjunctions} that the functor $\LMod$ defines an equivalence $\Alg(\End(\Cscr))^{\op} \simeq (\iCat)_{/\Cscr}^{\R,\mon}$ between
the opposite of the category of monads on $\Cscr$ and the full subcategory of $(\iCat)_{/\Cscr}$ spanned by the monadic right adjoints. It follows that there exists a functor
\begin{equation}\label{eq:partial left adjoint}
	(\iCat)_{/\Cscr}^{\R,\mon} \to \Alg_{\bE_0}(\End(\Cscr))^{\op}
\end{equation}
which assigns to a monadic right adjoint $U\colon \Dscr \to \Cscr$ the underlying $\bE_0$-algebra $T_0$ of the monad $T = U \circ U^L$ on $\Cscr$. The following proposition
shows that the natural transformation $\beta$ from Notation~\ref{not:transformation E0 E1} exhibits the functor from Eq.~(\ref{eq:partial left adjoint}) as a partial left adjoint of $\Mod_{(-)}$.

\begin{proposition}\label{prop:partial left adjoint}
	Let $\Cscr$ be an $\infty$-category. For any pointed endofunctor $\eta \colon \id_{\Cscr} \to F$ and any monadic right adjoint
	$U \colon \Dscr \to \Cscr$ with associated monad $T = U \circ U^L$ on $\Cscr$, the natural map
	\begin{equation*}
		\Map_{\Alg_{\bE_0}(\End(\Cscr))}(F,T_0) \to \Map_{(\iCat)_{/\Cscr}}((\Dscr \xrightarrow{U} \Cscr),(\Mod_F(\Cscr) \xrightarrow{\oblv} \Cscr))
	\end{equation*}
	defined by $\alpha \mapsto \alpha^\ast \circ \beta_T$ is an equivalence.
\end{proposition}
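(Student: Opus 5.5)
The plan is to compute both sides as fibers of mapping spaces of endofunctors and match them using the adjunction of Lemma~\ref{lem:functor_category_adjoints}(b).

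\emph{The right-hand side.} Since $\Mod_F(\Cscr)$ is defined as a pullback, $\Fun(\Dscr,-)$ preserves it. Passing to cores, the mapping anima $\Map_{(\iCat)_{/\Cscr}}(U,\oblv)$ is the fiber over $U$ of $\Map(\Dscr,\Mod_F(\Cscr))\to\Map(\Dscr,\Cscr)$. This fiber is the anima of $2$-simplices in $\Fun(\Dscr,\Cscr)$ with edges $U\xrightarrow{\eta U}F\circ U\xrightarrow{\rho}U$ and long edge identified with $\id_U$. Equivalently, it is the fiber of
$$(\eta U)^*\colon \Map_{\Fun(\Dscr,\Cscr)}(F\circ U,U)\to\Map_{\Fun(\Dscr,\Cscr)}(U,U)$$
over $\id_U$.

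\emph{The left-hand side.} Here $\Map_{\Alg_{\bE_0}(\End(\Cscr))}(F,T_0)$ is the fiber of
$$\eta^*\colon\Map_{\End(\Cscr)}(F,T)\to\Map_{\End(\Cscr)}(\id_\Cscr,T)$$
over the unit $u\colon\id\to T$. This is the standard description of $\bE_0$-algebra maps as maps under the unit, i.e.\ of $\Alg_{\bE_0}(\End(\Cscr))\simeq\End(\Cscr)_{\id/}$.

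\emph{Matching the two fibers.} Apply Lemma~\ref{lem:functor_category_adjoints}(b) to the adjunction $U^L\dashv U$ with $\Escr=\Cscr$. It shows that $U^*\colon\Fun(\Cscr,\Cscr)\to\Fun(\Dscr,\Cscr)$ is left adjoint to precomposition with $U^L$, giving a natural equivalence
$$\Map(G\circ U,U)\simeq\Map(G,U\circ U^L)=\Map(G,T)$$
for $G\in\End(\Cscr)$. It sends $f$ to $f U^L\circ G(u)$. Because this equivalence is natural in $G$, precomposition with $\eta U=U^*(\eta)$ on the left corresponds to precomposition with $\eta$ on the right. Moreover, the adjoint of $\id_U$ is the unit $u\colon\id\to T$. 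The two fibers above are therefore equivalent.

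\emph{Compatibility with $\alpha\mapsto\alpha^*\circ\beta_T$.} This is the main obstacle, namely checking that the abstract equivalence is the stated map. The functor $\beta_T\colon\LMod_T(\Cscr)\simeq\Dscr\to\Mod_{T_0}(\Cscr)$ records the action map $a\colon T\circ U\to U$, which is $U\epsilon$ by Corollary~\ref{cor:e1}. It is a retraction of $uU$ by the triangle identity. Composing with $\alpha^*$ gives the retraction $\rho_\alpha=a\circ\alpha U\colon F\circ U\to U$. Its adjoint is
$$U\epsilon U^L\circ\alpha UU^L\circ F(u)=U\epsilon U^L\circ TU(u)\circ\alpha=\alpha,$$
using naturality of $\alpha$ and the triangle identity $U\epsilon\circ uU=\id_U$ applied to $U^L$. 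So, up to the equivalence of fibers, the map in the statement is the identity.

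To make this coherent rather than just objectwise, I would observe that both maps factor through the fibers described above. The map $\alpha\mapsto\rho_\alpha$ is then an explicit map of fibers covering the adjunction equivalence. Naturality in $\alpha$ is automatic because every step is given by composition with fixed natural transformations.
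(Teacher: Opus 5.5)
Your argument is correct and is essentially the paper's proof: the paper presents $\Mod_F(\Cscr)$ as the pullback of $\eta^*\colon \LEq(F,\id_\Cscr)\to\Cscr^{B\bN}$ along the identity section. Under this presentation $\Map_{(\iCat)_{/\Cscr}}(U,\oblv)$ is the fiber of $(\eta U)^*\colon\Map(F\circ U,U)\to\Map(U,U)$ over $\id_U$, and the paper matches it with the fiber of $\eta^*\colon\Map(F,T)\to\Map(\id_\Cscr,T)$ over the unit using the same adjunction $\Map(G\circ U,U)\simeq\Map(G,T)$ that you use.

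There is one small slip in your compatibility check: $TU(u)$ should be $T(u)=UU^L(u)$. Also, $U\epsilon U^L\circ T(u)=\id_T$ comes from the triangle identity $\epsilon U^L\circ U^L(u)=\id_{U^L}$ whiskered by $U$, not from $U\epsilon\circ uU=\id_U$; the conclusion is unaffected.
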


\begin{proof}
	We identify the category $\Mod_F(\Cscr)$ associated to $\eta \colon \id_{\Cscr} \to F$ with the pullback
	$$\begin{tikzcd}
		\Mod_F(\Cscr) \ar[d, "\oblv"'] \ar[r] & \LEq(F,\id_{\Cscr}) \ar[d, "\eta^\ast"] \\[3ex]
		\Cscr \ar[r, "\id"] & \Cscr^{B\bN}
	\end{tikzcd}$$
	in $\iCat$. It follows that for any monadic right adjoint $U\colon\Dscr \xrightarrow{} \Cscr$ there exists a commutative diagram
	$$\begin{tikzcd}
		\Map_{(\iCat)_{/\Cscr}}(\Dscr,\Mod_F(\Cscr)) \ar[r] \ar[d] & \Map_{(\iCat)_{/\Cscr}}(\Dscr,\LEq(F,\id_{\Cscr})) \ar[d, "(\eta^\ast)_\ast"] \ar[r, "\sim"] & \Map_{\End(\Cscr)}(F,U \circ U^L) \ar[d, "\eta^\ast"] \\[3ex]
		\ast \ar[r, "\id \circ U"] & \Map_{(\iCat)_{/\Cscr}}(\Dscr,\Cscr^{B\bN}) \ar[r, "\sim"] & \Map_{\End(\Cscr)}(\id_{\Cscr},U\circ U^L)
	\end{tikzcd}$$
	whose left hand square is a pullback. Here, the inverse of the lower horizontal equivalence on the right sends a map $\alpha \colon \id_{\Cscr} \to U \circ U^L$ in $\End(\Cscr)$ to
	the composite
	\begin{equation*}
		\Dscr \simeq \LMod_T(\Cscr) \xrightarrow{\beta_T} \Mod_{T_0}(\Cscr) \to \LEq(U \circ U^L,\id_{\Cscr}) \xrightarrow{\alpha^\ast} \Cscr^{B\bN}
	\end{equation*}
	in $(\iCat)_{/\Cscr}$. The claim now follows from identifying the outer pullback with $\Map_{\Alg_{\bE_0}(\End(\Cscr))}(F,T_0)$.
\end{proof}

\begin{corollary}\label{cor:recognition free monads}
	Let $\Cscr$ be an $\infty$-category and let $T$ be a monad on $\Cscr$ with underlying $\bE_0$-algebra $T_0$. Let $\alpha \colon (F,\vartheta) \to T_0$ be a map of pointed endofunctors on $\Cscr$. Suppose that the induced functor
	\begin{equation*}
		\alpha^\ast \circ \beta_T \colon \LMod_T(\Cscr) \to \Mod_F(\Cscr)
	\end{equation*}
	is an equivalence in $(\iCat)_{/\Cscr}$. Then $\alpha$ exhibits $T$ as the free $\bE_1$-algebra on $\vartheta\colon\id_{\Cscr} \to F$.
\end{corollary}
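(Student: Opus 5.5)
We prove the universal property directly: for every monad $S$ on $\Cscr$, precomposition with $\alpha$ should induce an equivalence
$$\Map_{\Alg(\End(\Cscr))}(T,S)\to\Map_{\Alg_{\bE_0}(\End(\Cscr))}(F,S_0).$$
Both sides can be rewritten as mapping anima in $(\iCat)_{/\Cscr}$, and the hypothesis then closes the loop.

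\textbf{Step 1.} Since $S$ is a monad, its forgetful functor $U_S\colon\LMod_S(\Cscr)\to\Cscr$ is a monadic right adjoint. Likewise $U_T$ is monadic. Corollary~\ref{cor:monads_adjunctions} therefore gives
$$\Map_{\Alg(\End(\Cscr))}(T,S)\simeq\Map_{(\iCat)_{/\Cscr}}(\LMod_S(\Cscr),\LMod_T(\Cscr)).$$

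\textbf{Step 2.} Proposition~\ref{prop:partial left adjoint}, applied to the pointed endofunctor $(F,\vartheta)$ and the monadic right adjoint $U_S$, gives
$$\Map_{\Alg_{\bE_0}(\End(\Cscr))}(F,S_0)\simeq\Map_{(\iCat)_{/\Cscr}}(\LMod_S(\Cscr),\Mod_F(\Cscr)).$$

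\textbf{Step 3.} By hypothesis, $\alpha^\ast\circ\beta_T\colon\LMod_T(\Cscr)\to\Mod_F(\Cscr)$ is an equivalence over $\Cscr$. Postcomposing with it gives an equivalence between the right-hand sides of Steps 1 and 2.

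\textbf{Step 4.} It remains to check that the resulting composite equivalence is the map given by restricting a monad map $T\to S$ to its underlying $\bE_0$-map and precomposing with $\alpha$. This naturality verification is the step we expect to be the main obstacle.

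Take a monad map $\gamma\colon T\to S$. Its image in $(\iCat)_{/\Cscr}$ is $\gamma^\ast\colon\LMod_S\to\LMod_T$. Composing, we get $\alpha^\ast\beta_T\gamma^\ast$. By naturality of $\beta$ (Notation~\ref{not:transformation E0 E1}), we have $\beta_T\gamma^\ast\simeq\gamma_0^\ast\beta_S$. Hence
$$\alpha^\ast\beta_T\gamma^\ast\simeq\alpha^\ast\gamma_0^\ast\beta_S=(\gamma_0\alpha)^\ast\beta_S,$$
which under the equivalence of Step 2 corresponds to $\gamma_0\circ\alpha$.

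Since this identification holds naturally in $S$, precomposition with $\alpha$ is an equivalence of mapping anima for every monad $S$. This says exactly that $\alpha$ exhibits $T$ as the free $\bE_1$-algebra on $\vartheta\colon\id_\Cscr\to F$. Note that neither existence of free algebras nor any colimit hypothesis on $\Cscr$ is needed.
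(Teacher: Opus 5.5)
Your proposal is correct and is essentially the paper's own proof. The paper uses the same commutative square: its horizontal equivalences come from Corollary~\ref{cor:monads_adjunctions} and Proposition~\ref{prop:partial left adjoint}, its right vertical map is postcomposition with the assumed equivalence, and it commutes by naturality of $\beta$, which forces the left vertical map (restriction to $\bE_0$-algebras followed by precomposition with $\alpha$) to be an equivalence.
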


\begin{proof}
	Let $T'$ be a monad on $\Cscr$. Then there exists a commutative diagram
	$$\begin{tikzcd}
		\Map_{\Alg(\End(\Cscr))}(T,T') \ar[d, "\alpha^\ast \circ U"] \ar[r, "\sim"] & \Map_{(\iCat)_{/\Cscr}}(\LMod_{T'}(\Cscr),\LMod_T(\Cscr)) \ar[d,"{(\alpha^\ast \circ \beta_T)_\ast}"] \\[3ex]
		\Map_{\Alg_{\bE_0}(\End(\Cscr))}(F,T_0') \ar[r, "\sim"] & \Map_{(\iCat)_{/\Cscr}}(\LMod_{T'}(\Cscr),\Mod_F(\Cscr))
	\end{tikzcd}$$
	by naturality of $\beta$. Here, the upper horizontal map is an equivalence by Corollary~\ref{cor:monads_adjunctions} and the lower horizontal equivalence is the map
	from Proposition~\ref{prop:partial left adjoint}. Since the right vertical map is an equivalence by assumption, the left vertical map is such as well.
\end{proof}

\section{Categorical actions}\label{sec:categorical_actions}

We give some background on actions of monoidal categories on other categories, including an
analysis of the orbits and fixed points of these actions. This material is used in the next section for
the actions by the Frobenius homomorphisms and the Adams operations.

\subsection{Functoriality of module categories}\label{ssec:bar-construction}
Let $\Mscr$ be a monoidal $\infty$-category, that is, an $\mathbf E_1$-monoid in
$\Cat_\infty$. 
A right action of $\Mscr$ on an $\infty$-category $\Cscr$ is a right 
$\Mscr$-module structure in the Cartesian symmetric monoidal $\infty$-category $\Cat_\infty$ of
$\infty$-categories.
Let us simply write $\RMod_\Mscr=\RMod_\Mscr(\Cat_\infty)$ for the
$\infty$-category of right $\Mscr$-modules.
Of course, there is a similar $\infty$-category $\LMod_{\Mscr}$ of left $\Mscr$-modules.
This $\infty$-category can also be understood in terms of right modules via the equivalence
$\LMod_{\Mscr^{\mathrm{rev}}}\simeq \RMod_{\Mscr}$, where 
$\Mscr^{\mathrm{rev}}$ denotes the $\infty$-category $\Mscr$ with the reverse
monoidal structure. Given $\Cscr,\Dscr$ in $\RMod_\Mscr$ or $\LMod_\Mscr$, we denote by $\Fun_{\Mscr}(\Cscr,\Dscr)\in \Cat_\infty$ the category of (strong) $\Mscr$-linear functors.

A monoidal functor $f\colon \Mscr\rightarrow \Nscr$ defines a forgetful functor $f_*\colon \RMod_\Nscr\rightarrow \RMod_\Mscr$ which has both left and right adjoints, given by
$$
f^*(-):= -\otimes_{\Mscr}\Nscr \qquad \text{and} \qquad f^!(-):= \Fun_{{\Mscr}}(\Nscr, -).
$$
Here $\Nscr$ is considered as an $(\Mscr,\Nscr)$-bimodule 
in the formula for $f^*$, and as a $(\Nscr,\Mscr)$-bimodule in the formula for $f^!$. 
Given $\Cscr \in \RMod_\Mscr$, the categories $f^*\Cscr$ and $f^!\Cscr\in \RMod_\Nscr$ can be computed by simplicially resolving $\Nscr$ as the geometric
realization of a simplicial free $\Mscr$-module. Namely, consider the simplicial left
$\Mscr$-module category $\Bar_{\Mscr}^\bullet(\Mscr, \Nscr)\colon \Delta^\op \rightarrow
\RMod_{\Mscr}({\Cat}_\infty)$ given by the bar construction (see \cite[Constr.~4.4.2.7]{ha}). Here
we view $\Mscr$ as an $(\Mscr,\Mscr)$-bimodule category, and $\Nscr$ as an $(\Mscr,\Nscr)$-bimodule
category via $f$. By \cite[Thm.~4.4.2.8]{ha} we have a natural equivalence
$|\Bar_{\Mscr}^\bullet(\Mscr, \Nscr)| \simeq \Nscr$ as $(\Mscr,\Nscr)$-bimodules, and so we have 
$$
f^*(\Cscr)\simeq |\Cscr\otimes_{\Mscr}\Bar_{\Mscr}^\bullet(\Mscr,\Nscr)|\simeq
|\Bar_{\Mscr}^\bullet(\Cscr, \Nscr)|.
$$
Similarly, resolving $\Nscr\simeq|\Bar_{\Mscr}^\bullet(\Nscr, \Mscr),
\Nscr)|$ as an $(\Nscr,\Mscr)$-bimodule, we get 
$$
f^!(\Cscr)\simeq \Tot\big(\Fun_{\RMod_\Mscr}(\Bar_{\Mscr}^\bullet(\Nscr,\Mscr),\Cscr)\big).
$$
The category $f^*(\Cscr)$ is typically hard to understand explicitly, since colimits in $\infty$-categories are difficult to compute.
The situation is better for $f^!(\Cscr)$, as it is given by a limit. 
For later purposes, let us give a more explicit description of the cosimplicial diagram $\Fun_{\RMod_\Mscr}(\Bar_{\Mscr}^\bullet(\Nscr,\Mscr),\Cscr)$.

To this end, recall that the $n$-th term in the simplicial $(\Nscr,\Mscr)$-bimodule category $\Bar_{\Mscr}^\bullet(\Nscr, \Mscr)$ is identified with $\Nscr\times \Mscr^{\times n+1}$, with the right action of $\Mscr$ on the right-most term in the product $\Mscr^{\times n+1}$, and the left $\Nscr$-action on $\Nscr$. The boundary functors are induced by 
 \begin{align*}
	\partial_0\colon (N, M_0,\ldots, M_{n}) &\mapsto (N\otimes_{\Nscr} f(M_0),M_1, \ldots,M_n), &\\
	\partial_i\colon (N, M_0,\ldots, M_{n}) &\mapsto (N, M_0, \ldots , M_{i-1}\otimes_{\Mscr}M_i,\ldots, M_n),& \text{for $0<i\le n$}
\end{align*}
Restriction along the unique monoidal functor $e\colon \ast\to \Mscr$ induces an equivalence
$\Fun_{\RMod_\Mscr}(\Dscr\times \Mscr, \Cscr)\simeq \Fun(\Dscr,\Cscr)$, by the $(e^*, e_*)$-adjunction. Consequently, we get a formula for $f^!(\Cscr)$ as the totalization 
\begin{equation}\label{eq:formula for f^!}
f^!(\Cscr)\simeq  \Tot\big(\Fun_{\RMod_\Mscr}(\Nscr\times \Mscr^{\times \bullet+1}, \Cscr)\big) \simeq \Tot\big(\Fun( \Nscr\times \Mscr^{\times \bullet}, \Cscr)\big).
\end{equation}

\begin{remark}\label{rem:formula for differentials}
The coboundary functors $\delta_i\colon \Fun( \Nscr\times \Mscr^{\times n}, \Cscr) \rightarrow  \Fun( \Nscr\times \Mscr^{\times n+1}, \Cscr)$ in the cosimplicial object $\Fun( \Nscr\times \Mscr^{\times \bullet}, \Cscr)$ are described as follows.

\begin{enumerate}
	\item[(a)] For $0\leq i \leq n$, the functor $\delta_i$ is given by precomposition with $\partial_i\colon  \Nscr\times \Mscr^{\times n+1} \rightarrow  \Nscr\times \Mscr^{\times n}$.
	
	\item[(b)] For $i=n+1$, the functor 
$$
\delta_n\colon \Fun( \Nscr\times \Mscr^{\times n},\Cscr) \rightarrow \Fun(\Nscr \times \Mscr^{\times n+1},\Cscr)\simeq \Fun(\Nscr \times \Mscr^{\times n},\Fun(\Mscr, \Cscr))
$$
is induced by the functor $\Cscr\to \Fun(\Mscr, \Cscr)$ that is Cartesian-dual to the action map $\Cscr\times \Mscr\to \Cscr$ (using the rightmost copy of $\Mscr$).
\end{enumerate}

\end{remark}

\subsection{Digression: classifying categories} 
Let $\Cscr$ be a symmetric monoidal category with a unit object $\mathbf 1$. Let $\Alg^{\mathrm{aug}}(\Cscr):=\Alg(\Cscr_{\mathbf 1/  / \mathbf 1})$ and $\cAlg^{\mathrm{aug}}(\Cscr):=(\Alg^{\mathrm{aug}}(\Cscr^\op))^\op$ be the categories of augmented algebras and coalgebras in $\Cscr$. Recall that if $\Cscr$ admits geometric realizations and totalizations there is a pair of adjoint functors
$$
\Bar\colon \Alg^{\mathrm{aug}}(\Cscr) \rightleftarrows \cAlg^{\mathrm{aug}}(\Cscr) \ \! :\!\mathrm{Cobar}
$$
where at the level of underlying objects we have $\Bar(A)\simeq \mathbf 1\otimes_A \mathbf 1$ for an algebra $A$. 
The right adjoint sends a coalgebra $C$ to the bar construction of $C$, viewed as an algebra object in the opposite category. 
If the monoidal structure on $\Cscr$ is Cartesian, then any object has a canonical coalgebra structure, and $\cAlg^{\mathrm{aug}}(\Cscr)\simeq \Cscr_{\mathbf 1/  / \mathbf 1}\simeq  \Cscr_{\mathbf 1/}$.
In this case, $\mathrm{Cobar}(C)$ is given by the pullback $1\times_C 1$ at the level of underlying objects.

\begin{construction} 
Let $\Cscr=\Cat_{\infty}$ with the Cartesian monoidal structure. 
Given a monoidal category $\Mscr \in   \Alg(\Cat_\infty)\simeq  \Alg^{\mathrm{aug}}(\Cat_{\infty})$, we will define its \textit{classifying category} to be
\[
\mathrm B\Mscr:=\Bar(\Mscr)\in \Cat_{\infty,*}.
\]
More generally, let $\Cscr=\Alg_{\mathbf E_n}(\Cat_{\infty})$ be the $\infty$-category of $\mathbf E_n$-monoidal categories for $0 \le n\le \infty$. 
In this case, the bar construction can be viewed as a functor
	$$
	\Bar\colon \Alg_{\mathbf E_{n+1}}(\Cat_{\infty}) \rightarrow \Alg_{\mathbf E_n}(\Cat_{\infty}).
	$$
	Moreover, it is compatible with the forgetful functors $\Alg_{\mathbf E_{n}}(\Cat_{\infty}) \rightarrow \Alg_{\mathbf E_{n-1}}(\Cat_{\infty})$.
	Given an $\mathbf E_{n+1}$-monoidal category $\Mscr$ we will implicitly remember that $\mathrm B\Mscr= \Bar(\Mscr)$ has the natural structure of a $\mathbf E_n$-monoidal category.
\end{construction}	

\begin{remark} 	\label{rem:action of BM}
	By adjunction, given an $\mathbf E_n$-monoidal category $\Nscr$ and an $\mathbf E_{n+1}$-monoidal category $\Mscr$ we get a natural equivalence (of $\infty$-groupoids)
	$$
	\Map_{\Alg_{\mathbf E_{n}}(\Cat_{\infty})}(\mathrm{B}\Mscr, \Nscr)\simeq \Map_{\Alg_{\mathbf E_{n+1}}(\Cat_\infty)}(\Mscr, \mathrm{Cobar}(\Nscr)).
	$$
	Note that $\mathrm{Cobar}(\Nscr)$ is equivalent to the $\mathbf E_{n+1}$-space $\ast\times_\Nscr \ast = \Escr \mathrm{nd}_{\Nscr}(\mathbf 1)$, where the functor $* \rightarrow \Nscr$ is given by the monoidal unit $\mathbf 1\in \Nscr$.
	
	Assume now for simplicity that $\Mscr=M$ is an $\mathbf E_{2}$-space (viewed as an $\mathbf E_{2}$-monoidal category). Then 
	$$\Map_{\Alg_{\mathbf E_{1}}\!(\Cat_\infty)}(\mathrm{B}M, \Nscr)\simeq \Map_{\Alg_{\mathbf E_{2}}\!(\Sscr)}(M, \mathrm{Cobar}(\Nscr)^\simeq)\simeq \Map_{\Alg_{\mathbf E_{2}}(\Sscr)}(M, \Omega_{\mathbf 1}(\Nscr^\simeq)) ,$$ 
	since $\mathrm{Cobar}(\Nscr)^\simeq\simeq \mathrm{Cobar}(\Nscr^\simeq)\simeq \Omega_{\mathbf 1}(\Nscr^\simeq)$. 
	This allows to describe the datum of a monoidal action of $\mathrm B\Mscr$ on a category $\Cscr$; 
	namely, the latter is given by an $\mathbf E_1$-monoidal functor $\mathrm B\Mscr\rightarrow \Escr\mathrm{nd}(\Cscr)$, which by the above is the same as the datum of an $\mathbf E_2$-monoidal map from $\Mscr$ to the $\mathbf E_2$-center
	  $\mathrm{Z}_{\mathbf E_2}(\Cscr):=\End_{\Escr\mathrm{nd}(\Cscr)}(\id_{\Cscr})$ of $\Cscr$.   
\end{remark}	
\begin{remark}\label{rem:BZ is an idempotent algebra} Let us consider the functor 
	$$
	\mathrm B(-)\colon \Alg_{\mathbf E_{\infty}}(\Cat_{\infty}) \rightarrow \Alg_{\mathbf E_{\infty}}(\Cat_{\infty}).
	$$
	As a left adjoint it commutes with all colimits; moreover, it is given explicitly by the functor which sends a symmetric monoidal category $\Mscr$ to the relative tensor product $*\otimes_\Mscr *$.
	
	Let $\bN\rightarrow \bZ$ be the natural embedding. Then the map $\bZ\simeq \bZ\otimes_{\bN}\bN\rightarrow \bZ \otimes_\bN \bZ$ in commutative monoids (in spaces) is an equivalence. Indeed, it is enough to check on the level of underlying $\bZ$-modules, and $\bZ \otimes_\bN \bZ\simeq \bZ \otimes_\bN \colim (\cdots \xrightarrow{+1} \bN\xrightarrow{+1} \bN \xrightarrow{+1} \cdots) \simeq \colim (\ldots \xrightarrow{+1} \bZ\xrightarrow{+1} \bZ \xrightarrow{+1} \ldots)\simeq \bZ$ since the map $+1\colon \bZ \rightarrow \bZ$, $n\mapsto n+1$ is an isomorphism. Thus we get
	$$
	\B\bZ\otimes_{\B\bN}\B\bZ\we\B\bZ.
	$$
\end{remark}

\subsection{Homotopy invariants $\Cscr^{h\Mscr}$}

We specialize the discussion in Section \ref{ssec:bar-construction} to the case of the (unique) monoidal functor $p\colon \Mscr \to *$. 
\begin{notation}
 Let $\Cscr\in \RMod_\Mscr(\Cat_{\infty})$ be an $\Mscr$-module category. The \textit{homotopy invariants} $\Cscr^{h\Mscr}$ are defined as $p^!(\Cscr):=\Fun_{\Mscr}(*,\Cscr)$. The functor $*\to \Mscr$ given by the monoidal unit in $\Mscr$ produces a natural functor $F\colon \Cscr^{h\Mscr} \to \Cscr$.
\end{notation}	

The main goal of this section is to show that the functor $F\colon  \Cscr^{h\Mscr} \to \Cscr$ is monadic under some natural assumptions on $\Mscr$ and $\Cscr$.

\begin{definition}
	We call a monoidal category $\Mscr$ \textit{group-like} if for any object $M\in \Mscr$ the corresponding tensor multiplication functor $-\otimes_{\Mscr} M \colon \Mscr \to \Mscr$ is an equivalence of categories. 
\end{definition}	

\begin{remark}\label{rem:the side of tensor product doesn't matter}
	The condition that $-\otimes M$ is an equivalence of categories implies that there is an object $X\in \Mscr$ and an equivalence $\alpha\colon X\otimes M\xrightarrow{\sim} \mathbf 1_\Mscr $. Then there is a unique equivalence $M\otimes X \xrightarrow{\sim} \mathbf 1_\Mscr$ that is mapped to $\id_M\otimes \alpha \colon M\otimes X \otimes M \xrightarrow{\sim} M$ and thus the left multiplication functor $M\otimes -\colon \Mscr \to \Mscr$ is also an equivalence (with inverse given by left multiplication by $X$). It also follows that given any $\Mscr$-module category $\Cscr$, any object $M\in\Mscr$ acts by an automorphism (with inverse given by action of $X$).
\end{remark}	
In this case we have the following:

\begin{proposition}\label{prop:categorical invariants are monadic} Let $\Mscr$ be a group-like monoidal category, and let $\Cscr \in \RMod_{\Mscr}(\Cat_\infty)$. If $\Cscr$ has $\Mscr$-indexed colimits, then the forgetful functor 
		$$
		F\colon \Cscr^{h\Mscr} \rightarrow \Cscr 
		$$
 is monadic, with the corresponding monad $T$ given explicitly by the colimit-averaging $X \mapsto T(X):=\colim_{M\in \Mscr} M(X)\in \Cscr$ of the $\Mscr$-action.

\end{proposition}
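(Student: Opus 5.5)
Let $p\colon \Mscr\to *$ denote the unique monoidal functor, and $e\colon *\to\Mscr$ the unit. The plan is to realize $F$ as the forgetful functor $p^!\Cscr = \Fun_\Mscr(*,\Cscr)\to \Fun_\Mscr(\Mscr,\Cscr)\simeq\Cscr$, i.e.\ restriction along $p$ viewed as an $\Mscr$-linear functor $\Mscr\to *$, produce its left adjoint as ``left Kan extension along $p$'', and then check Barr--Beck--Lurie. Concretely, I will use the totalization formula \eqref{eq:formula for f^!} with $\Nscr=*$: $\Cscr^{h\Mscr}\simeq\Tot(\Fun(\Mscr^{\times\bullet},\Cscr))$, whose degree-zero term is $\Cscr$ and the coboundaries are described in Remark~\ref{rem:formula for differentials}. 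Thus an object of $\Cscr^{h\Mscr}$ is $X\in\Cscr$ together with coherent equivalences $M(X)\simeq X$, and $F$ is evaluation in degree zero.

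\textbf{Step 1 (left adjoint).} Since $\Cscr$ has $\Mscr$-indexed colimits, the functor $\Fun_\Mscr(\Mscr,\Cscr)\to\Fun_\Mscr(*,\Cscr)$ of restriction along $p$ admits a left adjoint given pointwise by left Kan extension along $\Mscr\to *$, i.e.\ $X\mapsto\colim_{M\in\Mscr}M(X)$. The $\Mscr$-linearity of this Kan extension is the point where group-likeness enters: for $N\in\Mscr$, the action of $N$ on $\colim_M M(X)$ is $\colim_M (M\otimes N)(X)$ (each $N$ acts by an automorphism of $\Cscr$ by Remark~\ref{rem:the side of tensor product doesn't matter}, hence commutes with colimits), and reindexing along the equivalence $-\otimes N\colon\Mscr\to\Mscr$ identifies it with $\colim_M M(X)$. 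So the colimit carries a canonical homotopy fixed point structure, and one checks the universal property directly: maps $\colim_M M(X)\to Y$ in $\Cscr^{h\Mscr}$ correspond, using the invariance $M(Y)\simeq Y$, to the component at the unit, i.e.\ to maps $X\to F(Y)$. Cleanly, I would phrase this as: $F$ is right adjoint to the functor $\Cscr\simeq\Fun_\Mscr(\Mscr,\Cscr)\to\Fun_\Mscr(*,\Cscr)$ given by relative left Kan extension, whose existence and linearity follow from \cite[Sec.~4.3.3]{htt}-type statements plus the reindexing. This yields $T=FF^L\simeq\colim_{M}M(-)$.

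\textbf{Step 2 (monadicity).} Conservativity: a map in $\Cscr^{h\Mscr}$ is a point of a limit of functor categories, and since each term $\Fun(\Mscr^{\times n},\Cscr)$ maps to $\Cscr$ via evaluations which, after using the invariance equivalences, all reduce to the underlying map in degree zero, equivalences are detected by $F$. More precisely, the coboundaries send a map $f$ to the family $M_1\cdots M_n(f)$, each an equivalence if $f$ is, because the $M_i$ act by equivalences. For $F$-split geometric realizations: a split simplicial object has a colimit preserved by every functor, in particular by each action functor $M(-)$ and by every evaluation in the cosimplicial diagram; hence the colimit in $\Cscr$ lifts degreewise to the terms $\Fun(\Mscr^{\times n},\Cscr)$ compatibly with the coboundary functors, and colimits in a limit of $\infty$-categories along colimit-preserving transition functors are computed termwise. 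Barr--Beck--Lurie \cite[Thm.~4.7.3.5]{ha} then gives monadicity.

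The main obstacle is Step 1: making the $\Mscr$-linear structure on the colimit-averaging functor rigorous rather than objectwise. I expect to handle it by working in $\RMod_\Mscr$: group-likeness gives an $\Mscr$-linear equivalence $\Mscr\times\Mscr\simeq\Mscr\times\Mscr$, $(M,N)\mapsto(M\otimes N,N)$, trading the diagonal action for an action on one factor, so the bar resolution of $*$ is by free modules and $\colim_\Mscr$ becomes a genuinely linear functor.
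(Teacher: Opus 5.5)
Your proposal has a genuine gap in Step~1, and Step~2 depends on it.

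Step~2 is essentially fine. Conservativity of $\Tot(\Fun(\Mscr^{\times\bullet},\Cscr))\to\Cscr$ holds for any cosimplicial $\infty$-category and does not need group-likeness, and $F$-split realizations are handled as you say. However, Barr--Beck--Lurie only gives monadicity once $F$ is known to have a left adjoint, and the identification of $T$ with colimit-averaging also rests on Step~1.

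Your objectwise reindexing $\colim_M X\cdot(M\otimes N)\simeq\colim_M X\cdot M$ only shows that each object $N$ fixes the averaged object. An object of $\Cscr^{h\Mscr}$ is a coherent point of the totalization, which also involves the morphisms of $\Mscr$ and all higher compatibilities. Until that structure is built, the claim that ``maps correspond to the component at the unit'' cannot even be stated. Note also that restriction along $p$ goes $\Fun_\Mscr(*,\Cscr)\to\Fun_\Mscr(\Mscr,\Cscr)$, not the other way. A left Kan extension along an $\Mscr$-linear functor is a priori only lax linear; strong linearity is a Beck--Chevalley condition that must be verified.

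The repair you propose fails. Group-likeness only says that each functor $-\otimes N$ is an equivalence, not that the shear $(M,N)\mapsto(M\otimes N,N)$ is one. Take $\Mscr=\B\bN$, which is group-like and is exactly the case used later for Frobenius and Adams operations. There the shear is the functor on $\B(\bN\times\bN)$ given on morphisms by $(a,b)\mapsto(a+b,b)$. This is not full, since $(0,1)$ is not hit, so it is not an equivalence.

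The paper gets the coherences from \cite[Thm.~4.7.5.2]{ha}, applied to the cosimplicial $\infty$-category $\Fun(\Mscr^{\times\bullet},\Cscr)$:
\begin{itemize}
\item The coboundary $\delta_0$ is precomposition with a projection, so it has left adjoint $\delta_0^L=\colim_\Mscr$.
\item One checks that the squares formed by $\delta_0$ and $\delta_{i+1}$ are left adjointable. For $1\le i\le n$ this is pointwise colimits commuting with restriction. For $i=n+1$ it uses that objects of $\Mscr$ act by autoequivalences.
\item For $i=0$, the Beck--Chevalley map evaluated at $N$ is precisely your map $\colim_M G(M\otimes N)\to\colim_M G(M)$. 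It is an equivalence because $-\otimes N$ is an equivalence.
\end{itemize}
Lurie's theorem then yields the left adjoint, monadicity, and $T\simeq\delta_0^L\delta_1=\colim_{M}M(-)$ together. Your key input is the right one, but it has to be fed into such a coherence machine, or into an explicit Beck--Chevalley argument for linear Kan extensions, rather than into a shear equivalence.
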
	

\begin{proof}
	By \eqref{eq:formula for f^!} we can compute the homotopy invariants via the bar resolution, obtaining an equivalence
	$
	\Cscr^{h\Mscr} \simeq \Tot(\Fun(\Mscr^{\times \bullet},\Cscr)). 
	$ 
	We will prove that the cosimplicial $\infty$-category $\Fun(\Mscr^{\times\bullet},\Cscr)$ satisfies the Lurie--Barr--Beck conditions, or, more precisely, the conditions in \cite[Thm~4.7.5.2]{ha}.
	Remark \ref{rem:formula for differentials} shows that for each cosimplicial degree $n$, the functor $\delta_0\colon \Fun(\Mscr^{\times n},\Cscr)\to  \Fun(\Mscr^{\times n+1},\Cscr)$ is given by precomposing with the projection $\Mscr^{\times n+1} \to \Mscr^{\times n}$ along the leftmost coordinate.
	It follows that $\delta_0$ has a left adjoint $\delta_0^\L$, given by the functor
	$$
	\Fun(\Mscr^{\{0,\ldots, n\}},\Cscr)\simeq \Fun(\Mscr, \Fun(\Mscr^{\{1,\ldots, n\}},\Cscr)) \xrightarrow{\colim_{\Mscr}} \Fun(\Mscr^{\{1,\ldots, n\}},\Cscr).
	$$
	This is well-defined because $\Cscr$ has $\Mscr$-indexed colimits. 
	It thus remains to check that the commutative square 
$$
	\begin{tikzcd}
	\Fun(\Mscr^{\times n},\Cscr) \arrow[r,"\delta_0"]\arrow[d, "\delta_i"'] & 	\Fun(\Mscr^{\times n+1},\Cscr) \arrow[d, "\delta_{i+1}"]\\	
	\Fun(\Mscr^{\times n+1},\Cscr)\arrow[r, "\delta_0"] & \Fun(\Mscr^{\times n+2},\Cscr)
	\end{tikzcd}
$$
is left adjointable for any $i$ and $n$: namely,
the natural transformation 
\begin{equation}\label{eq:adjointable transformation}
	\delta_0^\L\circ \delta_{i+1} \to \delta_i\circ \delta_0^\L
\end{equation}
 for the adjoint square 
$$
\begin{tikzcd}
	\Fun(\Mscr^{\times n},\Cscr)  \arrow[d, "\delta_i"'] &  \Fun(\Mscr^{\times n+1},\Cscr) \arrow[d, "\delta_{i+1}"] \arrow[l,"\delta_0^\L"']\\ 
	\Fun(\Mscr^{\times n+1},\Cscr) & \Fun(\Mscr^{\times n+2},\Cscr) \arrow[l, "\delta_0^\L"'] \arrow[lu, Rightarrow, shorten=3ex]
\end{tikzcd}
$$
is an equivalence.
For $1\le i \le n+1$, we can identify $\delta_{i+1}\colon \Fun(\Mscr, \Fun(\Mscr^{\times n},\Cscr))\to \Fun(\Mscr, \Fun(\Mscr^{\times n+1},\Cscr))$ with the map induced by
$\delta_{i}\colon \Fun(\Mscr^{\times n},\Cscr)\to \Fun(\Mscr^{\times n+1},\Cscr)$.
For $1\le i\le n$, the functor $\delta_i$ is given by restriction, so
the statement follows from restriction functors preserving colimits
(which are computed pointwise in a functor category). 
For $i=n+1$, the functor $\delta_i$ arises from the $\Mscr$-action on $\Cscr$, and the statement follows from the action preserving $\Mscr$-indexed colimits; this holds because objects of $\Mscr$ act by autoequivalences (Remark \ref{rem:the side of tensor product doesn't matter}). 
The most tricky is the case $i=0$, where we are concerned with the diagram
$$
\begin{tikzcd}
	\Fun(\Mscr^{\times n},\Cscr)  \arrow[d, "F\mapsto \underline F"'] &[1pc] & \Fun(\Mscr, \Fun(\Mscr^{\times n},\Cscr)) \arrow[d, "F\mapsto F\circ m_\Mscr"] \arrow[ll,"\colim_{\Mscr}"']\\ 
	\Fun(\Mscr, \Fun(\Mscr^{\times n},\Cscr)) & \Fun(\Mscr, \Fun(\Mscr, \Fun(\Mscr^{\times n},\Cscr))) \arrow[r,"\sim"]   \arrow[l, "\colim_\Mscr"']  &\Fun(\Mscr^{\times 2}, \Fun(\Mscr^{\times n},\Cscr).
\end{tikzcd}
$$
Here $\underline{F}\colon \Mscr \to \Fun(\Mscr^{\times n},\Cscr)$ denotes the constant functor with value $F\in \Fun(\Mscr^{\times n},\Cscr)$ and $m_\Mscr \colon \Mscr\times \Mscr \to \Mscr$ is the multiplication.  Evaluation of the transformation \eqref{eq:adjointable transformation} on a given functor $G\colon \Mscr \to \Fun(\Mscr^{\times n},\Cscr)$ yields a natural transformation
$$ 
\colim_{M\in\Mscr} G (m_\Mscr(M,-)) \longrightarrow \underline{\colim_{M\in \Mscr} G(M)}
$$ 
in $\Fun(\Mscr, \Fun(\Mscr^{\times n},\Cscr))$. On an object $N\in \Mscr$, it is given by the natural map 
$$
\colim_{M\in \Mscr} G(M\otimes N) \to \colim_{M\in\Mscr} G(M) ,
$$
induced by the functor $-\otimes N \colon \Mscr \to \Mscr$. This functor, and consequently the map above, is an equivalence, since we assumed $\Mscr$ to be group-like.  By \cite[Thm.~4.7.5.2(3)]{ha} we get that the functor $F\colon \Cscr^{h\Mscr}\simeq \Tot(\Fun(\Mscr^{\bullet},\Cscr)) \rightarrow \Cscr$ is monadic and \cite[Thm.~4.7.5.2(3)]{ha} identifies the underlying functor of the monad with $\delta_0^\L\circ \delta_1\colon \Cscr \rightarrow\Cscr$.
Noting that $\delta_1\colon \Cscr \rightarrow \Fun(\Mscr,\Cscr)$ sends $X\in \Cscr$ to the functor sending $Y\in \Mscr$ to $Y(X)\in \Cscr$, 
this is exactly given by the above formula.
\end{proof}	

\begin{remark}\label{rem:invariants are also comonadic} Given $\Cscr\in \Cat_{\infty}$, one has a natural equivalence of monoidal categories $\End(\Cscr^\op)\simeq \End(\Cscr)^{\mathrm{rev}}$. Consequently, the datum of an $\Mscr$-module structure on $\Cscr$ is equivalent to the datum of an $\Mscr^{\mathrm{rev}}$-module structure on $\Cscr^{\op}$.
	Replacing $\Cscr$ by $\Cscr^\op$ and $\Mscr$ by $\Mscr^{\mathrm{rev}}$ in Proposition \ref{prop:categorical invariants are monadic},
	 we get the statement that if $\Cscr$ has $\Mscr$-indexed limits, then $F\colon  \Cscr^{h\Mscr} \rightarrow \Cscr$ is comonadic with comonad given by limit-averaging $X\mapsto \lim_{M\in \Mscr} M(X)$. Finally, if $\Cscr$ has both $\Mscr$-indexed limits and colimits, then $F$ is both monadic and comonadic with the  monad and comonad given by colimit and limit averaging of the action.
\end{remark}	
 
\begin{example}\label{ex: invariants of actions}\leavevmode
	\begin{enumerate}
		\item[(1)] Let $\Mscr =\bZ$. Then an $\Mscr$-action on $\Cscr$ is equivalent to the datum of an autoequivalence $F\colon \Cscr \xrightarrow{\sim} \Cscr$. The corresponding monad $T$ is given by the countable coproduct $X\mapsto \sqcup_{i\in\bZ} F^i(X)$, provided this exists in $\Cscr$.
		
		\item[(2)] Let $\Mscr =B\bN$, with monoidal structure induced by addition on $\bN$. By Remark \ref{rem:action of BM}, a $B\bN$-action on a category $\Cscr$ amounts to an $\mathbf E_2$-monoid map $\bN \to \mathrm{Z}_{\mathbf E_2}(\Cscr):=\End_{\Escr\mathrm{nd}(\Cscr)}(\id_{\Cscr})$. Let $\psi\colon \id_\Cscr \rightarrow \id_\Cscr$ be the image of $1\in \bN$. We note that the $B\bN$-action is not just determined by $\psi$, as $\bN$ is not the free $\bE_2$-algebra on one generator.
		The underlying endofunctor of the corresponding monad $T\in \End(\Cscr)$ is then given by 
		$$
		X \mapsto \colim_{B\bN} (X\curvearrowleft\psi_X) \simeq \mathrm{coeq}_{\psi_X,\id}(X):=\mathrm{coeq} \!\!\begin{tikzcd}(X\arrow[r, shift left=0.5ex, "\psi_X"]\arrow[r, shift right=0.5ex, "\id_X"']& X)\end{tikzcd}\!.
		$$
		Here $(X\curvearrowleft\psi_X)$ denotes the functor $B\bN\rightarrow\Cscr$ sending the base point to $X$ and $1\in\bN$ to $\psi_X$. 
		To identify its colimit with a coequalizer, we use that $B\bN$ is equivalent
		to the pushout of $\infty$-categories $* \sqcup_{*\sqcup *}[1]$, where $[1]$ is a single arrow and $*\sqcup * \to [1]$ is the embedding of endpoints. A functor $F\colon B\bN \to \Cscr$ is given by an object $X:=F(*)$ and an endomorphism $\psi_X:=F([1])$, and has colimit
	$$
	\colim_{B\bN} F\simeq \colim\big(\colim_*F\leftarrow \colim_{*\sqcup *}F\rightarrow \colim_{[1]}F\big) \simeq \colim\big(X\xleftarrow{\id_X\sqcup \id_X} X\sqcup X\xrightarrow{\psi_X\sqcup \id_X} X\big).
	$$
	 This pushout is equivalent to the coequalizer $\mathrm{coeq}_{\psi_X,\id}(X)$.
		
		\item[(3)]\label{item: example of BN^I} Let $\Mscr = B\bN^{\oplus I}$, where $I$ is some indexing set. An $\Mscr$-action on $\Cscr$ then amounts to an $\mathbf E_2$-monoid map $\bN^{\oplus I}\to \mathrm{Z}_{\mathbf E_2}(\Cscr)$, in particular producing an $I$-tuple of homotopy-commuting endotransformations $\psi_i\colon \id_\Cscr \rightarrow \id_\Cscr$. The corresponding monad $T$ is given by
		$$
		X\mapsto \colim_{B\bN^{\oplus I}} (X,\{\psi_{i,X}\}).
		$$
		In the case when $I$ is countable, choosing an identification $I\simeq \bN$ we can also identify the latter colimit with the iterated coequalizer 
		$$
		\colim_{n\in \bN} (\mathrm{coeq}_{\psi_n,\id}\circ \ldots \circ \mathrm{coeq}_{\psi_1,\id})(X).
		$$
		
	\end{enumerate}
\end{example}

\begin{remark}\label{rem:fixed points for 1-categories} Let $\mathcal M=B\Nscr$ for some $\bE_2$-space $\Nscr$. Let $\mathcal C$ be a 1-category with has $\Mscr$-indexed colimits, endowed with an $\Mscr$-action. We claim that the forgetful functor $F\colon\Cscr^{h\Mscr}\to \Cscr$ is a  fully-faithful embedding with image spanned by objects $X\in \Cscr$ for which the induced $\Nscr$-action is trivial: namely, the homomorphism $a_X\colon \Nscr\to \End_{\Cscr}(X)$ induced by the action factors through $\id_X$ (note that the target is discrete, so the last condition is a property and not a structure).
	
	Indeed, following the notation in the proof of Propoisiton \ref{prop:categorical invariants are monadic} $F$ is induced by the transformation $\Fun(\Mscr^{\times \bullet},\Cscr)\to \Fun(*,\Cscr)\simeq \Cscr$ of functors $\Delta\to \Cat_{\infty}$ induced by $*\to \Mscr$ after passing to totalization. For fully faithfulness, since $\Cscr$ is a 1-category, we need to see that functor $\Fun(\Mscr,\Cscr)\simeq \Fun(B\pi_0(\Nscr),\Cscr)\to \Cscr$ for $\bullet=1$ is faithful (note that for $\bullet=0$ the functor is $\id_{\Cscr}\colon \Cscr \to \Cscr$ and is fully faithful); this follows since $\Mscr$ is connected and $\Cscr$ is a 1-category. The image is exactly spanned  by those $X\in \Cscr$ for which the two objects $\delta^0(X)$ and $\delta^1(X)$ in $\Fun(\Mscr,\Cscr)$ are isomorphic. Since this objects are given by the constant functor $M\mapsto \underline{X}$ and $M\mapsto M(X)$ correspondingly, we get that this is equivalent to the triviality of $\Nscr$-action on $X$.
\end{remark}

\begin{remark}
	The group-like assumption on $\Mscr$ in Proposition \ref{prop:categorical invariants are monadic} is necessary in general. For example, let $\Mscr=\bN$, the monoid of natural numbers. Then an $\bN$-action on a category $\Cscr$ is given by an endofunctor $F\in \End(\Cscr)$, and $\Cscr^\bN=\lim_{B\bN} \Cscr$ is described as the equalizer $\mathrm{eq}_{F,\id_\Cscr}\!(\Cscr)$. An object in $\mathrm{eq}_{F,\id_\Cscr}\!(\Cscr)$ is thus given by the datum of an object $X\in\Cscr$ and an isomorphism $\alpha\colon F(X)\xrightarrow{\sim} X$. On the other hand, the monad $T$ in Proposition \ref{prop:categorical invariants are monadic} is given by $X\mapsto \sqcup_{n\in\bN} F^n(X)$, and a module over it rather encodes a datum of an object $X\in\Cscr$ and a map $\alpha\colon F(X)\to X$, which is \textit{not} necessarily an isomorphism. Instead, the latter category can be identified with the lax version of invariants, namely the category $\Fun_{\bN\text{-lax}}(*,\Cscr)$ of lax $\bN$-module functors.
\end{remark}

\subsection{Categorical perfection $\Cscr\rightsquigarrow \Cscr^\perf$}

Let $I$ be a set, and let $\bN^{\oplus I}\to \bZ^{\oplus I}$ be the natural homomorphism of commutative monoids, inducing a symmetric monoidal functor $f\colon B\bN^{\oplus I} \to B\bZ^{\oplus I}$. In this section we will discuss some properties of the corresponding pullback functors $f^*$ and $f^!$.
Recall that we have the forgetful functor $f_*\colon \Mod_{B\bZ^{\oplus I}}(\Cat_{\infty}) \to \Mod_{B\bN^{\oplus I}}(\Cat_{\infty})$, for which $f^*$ is a left adjoint.

\begin{lemma} The natural transformation 
	$$
	f^*f_*\to  \id_{\Mod_{B\bZ^{\oplus I}}} 
	$$
is an equivalence. 	In particular, $f_*\colon \Mod_{B\bZ^{\oplus I}}(\Cat_{\infty}) \to  \Mod_{B\bN^{\oplus I}}(\Cat_{\infty})$ is fully faithful.
	
\end{lemma}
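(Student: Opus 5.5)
The plan is to reduce the counit statement to the idempotence of $\B\bZ^{\oplus I}$ over $\B\bN^{\oplus I}$, in the spirit of Remark~\ref{rem:BZ is an idempotent algebra}. For $\Cscr\in\Mod_{\B\bZ^{\oplus I}}(\Cat_\infty)$ we have
$$f^*f_*\Cscr\simeq \Cscr\otimes_{\B\bN^{\oplus I}}\B\bZ^{\oplus I}\simeq \Cscr\otimes_{\B\bZ^{\oplus I}}\bigl(\B\bZ^{\oplus I}\otimes_{\B\bN^{\oplus I}}\B\bZ^{\oplus I}\bigr),$$
using associativity of relative tensor products \cite[Sec.~4.4]{ha}. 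Here $\B\bZ^{\oplus I}$ is regarded as a $(\B\bZ^{\oplus I},\B\bZ^{\oplus I})$-bimodule via the symmetric monoidal structure. Under these identifications the counit is induced by the multiplication map
$$m\colon \B\bZ^{\oplus I}\otimes_{\B\bN^{\oplus I}}\B\bZ^{\oplus I}\to\B\bZ^{\oplus I}.$$
It therefore suffices to show that $m$ is an equivalence of bimodules. Since everything is commutative, it is enough to check this in $\Alg_{\bE_\infty}(\Cat_\infty)$, where relative tensor products of commutative algebras are pushouts.

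The main step is to prove that $m$ is an equivalence. We use that $\B(-)\colon\Alg_{\bE_\infty}(\Cat_\infty)\to\Alg_{\bE_\infty}(\Cat_\infty)$ is a left adjoint, and that $\B(-)$ is also given by $*\otimes_{\Mscr}*$, as recorded in Remark~\ref{rem:BZ is an idempotent algebra}. It follows that $\B$ preserves pushouts, so
$$\B\bZ^{\oplus I}\otimes_{\B\bN^{\oplus I}}\B\bZ^{\oplus I}\simeq \B\bigl(\bZ^{\oplus I}\otimes_{\bN^{\oplus I}}\bZ^{\oplus I}\bigr),$$
and the map $m$ is $\B$ applied to the multiplication of the commutative monoid $\bZ^{\oplus I}$. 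This reduces the question to showing that $\bZ^{\oplus I}\otimes_{\bN^{\oplus I}}\bZ^{\oplus I}\to\bZ^{\oplus I}$ is an equivalence of commutative monoids in spaces.

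The $I=\{*\}$ case is exactly the computation in Remark~\ref{rem:BZ is an idempotent algebra}: $\bZ=\colim(\bN\xrightarrow{+1}\bN\to\cdots)$ as $\bN$-modules, so $\bZ\otimes_\bN\bZ$ is a colimit of copies of $\bZ$ along the invertible map $+1$. For general $I$, $\bN^{\oplus I}$ is the coproduct of copies of $\bN$ in commutative monoids, namely the filtered colimit of finite sums. Relative tensor products commute with these colimits, so the claim reduces to finite $I$. The finite case follows from the single-factor case, because tensor products of pushouts distribute: $(\bZ\otimes_\bN\bZ)^{\otimes n}\simeq \bZ^{\oplus n}\otimes_{\bN^{\oplus n}}\bZ^{\oplus n}$.

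The main obstacle I expect is the bookkeeping in the first paragraph. One has to check that the counit of $f^*\dashv f_*$ really corresponds to $\id_\Cscr\otimes m$ under the associativity equivalence. I would handle this by the standard argument that a map of commutative algebras $A\to B$ with $B\otimes_AB\xrightarrow{\sim}B$ is an idempotent (epimorphic) map, for which restriction of scalars is fully faithful \cite[Sec.~4.8.2]{ha}. Full faithfulness of $f_*$ then follows formally, since the counit is an equivalence.
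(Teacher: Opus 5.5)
Your proposal is correct and follows the same route as the paper. Like the paper, you rewrite $f^*f_*\Cscr\simeq\Cscr\otimes_{B\bZ^{\oplus I}}\bigl(B\bZ^{\oplus I}\otimes_{B\bN^{\oplus I}}B\bZ^{\oplus I}\bigr)$ and conclude from the idempotence of $B\bZ^{\oplus I}$ over $B\bN^{\oplus I}$ (Remark~\ref{rem:BZ is an idempotent algebra}). The one addition is your reduction from general $I$ to a single copy of $\bN$ via filtered colimits and coproducts, a step the paper leaves implicit when it cites that remark, and your reduction is sound.
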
	
\begin{proof}
	On an object $\Cscr$, the above transformation is given by the action map $\Cscr \otimes_{B\bN^{\oplus I}}B\bZ^{\oplus I} \to \Cscr$, which in turn is equivalent to 
	$$
	\Cscr \otimes_{B\bN^{\oplus I}}B\bZ^{\oplus I}\simeq \Cscr \otimes_{B\bZ^{\oplus I}} (B\bZ^{\oplus I}\otimes_{B\bN^{\oplus I}}B\bZ^{\oplus I}) \longrightarrow \Cscr \otimes_{B\bZ^{\oplus I}} B\bZ^{\oplus I} \simeq \Cscr.
	$$
	This is an equivalence because $B\bZ^{\oplus I}$ is an idempotent algebra object in $\Mod_{B\bN^{\oplus I}}(\Cat_{\infty})$, by Remark \ref{rem:BZ is an idempotent algebra}.
\end{proof}

\begin{notation} Given $\Cscr\in  \Mod_{B\bN^{\oplus I}}(\Cat_{\infty})$, we denote
	$$
	\Cscr_{\perf}:= f^*(\Cscr)\simeq \Cscr \otimes_{B\bN^{\oplus I}}B\bZ^{\oplus I}, \qquad\qquad \Cscr^\perf:=f^!(\Cscr)\simeq \Fun_{B\bN^{\oplus I}}(B\bZ^{\oplus I},\Cscr).
	$$
	Abusing notation slightly, we will use the same notation for the images of $\Cscr_{\perf}, \Cscr^{\perf}$ in $\Mod_{B\bN^{\oplus I}}(\Cat_{\infty})$ under $f_*$. By adjunction, we have natural $B\bN^{\oplus I}$-linear functors $\alpha_{\Cscr}\colon \Cscr \to \Cscr_{\perf}$ and $\beta_\Cscr\colon \Cscr^{\perf}\to \Cscr$.
\end{notation}	

Let $\Cscr$ be an $\infty$-category with a $B\bN^{\oplus I}$-action. 
As in Example \ref{ex: invariants of actions}, we denote by $\{\psi_i\}_{i\in I}$ the corresponding collection of endotransformations $\psi_i\colon \id_{\Cscr} \to  \id_{\Cscr}$. Denote by $\psi_{X,i}\colon X\to X$ their evaluations on an object $X\in \Cscr$. Let us denote by $(\bZ^{\oplus I}, \le)$ the poset where $(a_i)_{i\in I}\le (b_i)_{i\in I}$ if and only if $a_i\le b_i$ for all $i\in I$; note that this condition is equivalent to $(a_i-b_i)_{i\in I} \in \bN^{\oplus I}$. It is easy to see that the category $(\bZ^{\oplus I}, \le)$ is filtered.
\begin{lemma}\label{lem:categorical inverse limit perfection} The functor $\beta_\Cscr\colon \Cscr^{\perf}\to \Cscr$ is a fully faithful embedding with image spanned by objects $X$ for which the maps $\psi_{X,i}\colon X\to X$ are invertible for all $i\in I$. If $\Cscr$ has colimits indexed by $(\bZ^{\oplus I}, \le)$, then $\beta_\Cscr$ has a left adjoint $\beta_{\Cscr}^L\colon \Cscr \to \Cscr^{\perf}$, making $\Cscr^{\perf}$ a (reflexive) localization of $\Cscr$.
\end{lemma}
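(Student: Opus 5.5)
We plan to compute $\Cscr^{\perf}=f^!(\Cscr)=\Fun_{B\bN^{\oplus I}}(B\bZ^{\oplus I},\Cscr)$ by factoring the $B\bN^{\oplus I}$-linear structure through the idempotence of Remark~\ref{rem:BZ is an idempotent algebra}. The first step is to identify $f^!$ with a full subcategory. Since $f_*$ is fully faithful with $f^*f_*\simeq\id$, the counit-type map $f_*f^!\Cscr\to\Cscr$, namely $\beta_\Cscr$, corepresents $\Map(f_*\Dscr,\Cscr)$ for $B\bZ^{\oplus I}$-module categories $\Dscr$. We would show that $\beta_\Cscr$ is fully faithful by evaluating $f^!$ via the totalization \eqref{eq:formula for f^!}, as in Remark~\ref{rem:fixed points for 1-categories}. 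Alternatively, and more cleanly, we use $B\bZ^{\oplus I}\otimes_{B\bN^{\oplus I}}B\bZ^{\oplus I}\simeq B\bZ^{\oplus I}$, which gives $f^!f_*f^!\simeq f^!$, so that $f_*f^!$ is a colocalization (an idempotent comonad) on $\Mod_{B\bN^{\oplus I}}$. Its underlying functor is then a monomorphism of $\infty$-categories, and we upgrade it to full faithfulness by checking mapping spaces with $\Fun(\Delta^1,-)$, using that cotensors commute with $f^!$.

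The second step identifies the essential image. Reduce first to a single generator, then take products/limits over finite subsets of $I$, using $\Fun_{B\bN^{\oplus I}}(B\bZ^{\oplus I},-)$ and the fact that $B\bZ^{\oplus I}$ is the filtered colimit of the finite $B\bZ^{\oplus J}$ relative to $B\bN$'s. For $I=\ast$, write $B\bZ$ as the $B\bN$-module obtained from $B\bN$ by inverting $1$, i.e.\ the sequential colimit of $B\bN\xrightarrow{\cdot 1}B\bN\to\cdots$ as $B\bN$-modules; this is the categorical shadow of the computation in Remark~\ref{rem:BZ is an idempotent algebra}. Then $\Fun_{B\bN}(B\bZ,\Cscr)\simeq\lim(\cdots\xrightarrow{\psi}\Cscr\xrightarrow{\psi}\Cscr)$, where $\psi$ acts as an endotransformation of $\id_\Cscr$, viewed as a natural transformation rather than a functor. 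Concretely, an object of the limit is an $X$ with compatible maps, and the constraint forces $\psi_X$ to be invertible; conversely, an invertible $\psi_X$ extends uniquely. The main obstacle is making this rigorous at the $\infty$-level: the ``colimit of $B\bN$ along multiplication by $1$'' is not along a functor but along a natural transformation of the identity. We would handle this through the Cobar description of Remark~\ref{rem:action of BM}, where $B\bZ$-actions correspond to $\bE_2$-maps $\bZ\to\mathrm{Z}_{\bE_2}$. Since $\bZ$ is the group completion of $\bN$, a map out of $\bN$ extends to $\bZ$ if and only if the image of $1$ is invertible, and the extension is unique. Applied objectwise, with $\mathrm{Z}_{\bE_2}$ of the full subcategory on which the $\psi_i$ are invertible, this identifies the image.

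For the left adjoint, assume colimits indexed by $(\bZ^{\oplus I},\le)$ and set $\beta^L(X)=\colim_{(\bZ^{\oplus I},\le)}X$, the diagram with all $\psi_i$ as transition maps. Filteredness shows that each $\psi_i$ is invertible on this colimit, since it acts as a cofinal shift. The universal property against a perfect $Y$ follows because maps from the diagram to a constant $Y$ whose transitions are invertible are computed by the value at $0$.
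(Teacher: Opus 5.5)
The gap is in your first step, the full faithfulness of $\beta_\Cscr$. Both your identification of the image and your left adjoint depend on it.

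Idempotence of the comonad $f_*f^!$ does not make its counit a monomorphism. The adjunction $f_*\dashv f^!$ and full faithfulness of $f_*$ only give that $\beta_\Cscr$ induces equivalences $\Map(f_*\Dscr,f_*\Cscr^{\perf})\to\Map(f_*\Dscr,\Cscr)$ for $B\bZ^{\oplus I}$-modules $\Dscr$. They say nothing about maps out of general $B\bN^{\oplus I}$-modules, such as the free module $B\bN^{\oplus I}$, and that is what detects whether $(\Cscr^{\perf})^{\simeq}\to\Cscr^{\simeq}$ is a monomorphism. Already for $1$-categories the counit of a coreflection need not be mono: coreflecting $\Fun(\Delta^1,\mathrm{Set})$ onto its isomorphisms has counit $(A=A)\to(A\xrightarrow{g}B)$, which is mono only if $g$ is injective. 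Your cotensor argument with $\Fun(\Delta^1,-)$ reduces full faithfulness to exactly this monomorphism property plus the object-level image description, so it inherits the gap.

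Your other option, computing $f^!$ by the totalization \eqref{eq:formula for f^!}, is the paper's route, but it needs two inputs you do not supply. First, the restrictions $\Fun(B\bZ^{\oplus I}\times(B\bN^{\oplus I})^{\times n},\Cscr)\to\Fun(B\bN^{\oplus I}\times(B\bN^{\oplus I})^{\times n},\Cscr)$ are fully faithful, because $B\bN^{\oplus I}\to B\bZ^{\oplus I}$ inverts all morphisms (it is the groupoid completion). Second, a levelwise fully faithful map of cosimplicial $\infty$-categories has fully faithful totalization, with image detected in degree $0$. Given these, the image description is immediate: a functor out of $B\bN^{\oplus I}$ extends to $B\bZ^{\oplus I}$ iff the generators act invertibly. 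The sequential colimit of $B\bN$-modules (which, as you note, does not exist as written) and the reduction to finite $I$ are then unnecessary.

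Your group-completion idea can also be made into a complete proof of both claims at once. It has to be run as a universal-property argument, not as an identification of the image on top of an unproved full faithfulness:
\begin{enumerate}
\item The full subcategory $\Cscr_0\subseteq\Cscr$ of objects with all $\psi_{X,i}$ invertible is stable under the action.
\item Its $\mathbf E_2$-map $\bN^{\oplus I}\to\End(\id_{\Cscr_0})$ lands in units, so it extends uniquely to $\bZ^{\oplus I}$. Hence $\Cscr_0$ lies in the essential image of $f_*$.
\item By linearity, every $B\bN^{\oplus I}$-linear functor from a $B\bZ^{\oplus I}$-module to $\Cscr$ lands in $\Cscr_0$.
\item With $f_*$ fully faithful, the inclusion $\Cscr_0\hookrightarrow\Cscr$ therefore has the universal property of $\beta_\Cscr$, i.e.\ $\Cscr^{\perf}\simeq\Cscr_0$ over $\Cscr$.
\end{enumerate}

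Once this is in place, your left adjoint is correct and is simpler than the paper's route. It suffices to exhibit, for each $X$, the reflection $X\to\colim_{(\bZ^{\oplus I},\le)}X$ into $\Cscr_0$. The diagram comes from restricting the coaction $\Cscr\to\Fun(B\bN^{\oplus I},\Cscr)$ along $(\bZ^{\oplus I},\le)\to B\bN^{\oplus I}$. Invertibility of $\psi_i$ on the colimit follows by identifying it, via naturality, with a shift. The paper instead builds the left adjoint levelwise by left Kan extension and assembles it using left adjointability of the coface squares.
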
	

\begin{proof}
	The functor $\beta_\Cscr\colon \Cscr^{\perf}\to \Cscr$ is equivalent to the restriction $\Fun_{B\bN^{\oplus I}}(B\bZ^{\oplus I},\Cscr) \to \Fun_{B\bN^{\oplus I}}(B\bN^{\oplus I},\Cscr)$, 
	and can thus be analyzed via the natural transformation between the cosimplicial bar constructions $\Bar_{B\bN^{\oplus I}}^\bullet(B\bN^{\oplus I}, B\bN^{\oplus I})\to \Bar_{B\bN^{\oplus I}}^\bullet(B\bZ^{\oplus I}, B\bN^{\oplus I})$. 
	More precisely, $\beta_{\Cscr}$ is given by the totalization of the natural transformation 
	\begin{equation}\label{eq:transformation from Bar for BZ to BN}
	\mathrm{res}^\bullet \colon \Fun( B\bZ^{\oplus I}\times (B\bN^{\oplus I})^{\times \bullet}, \Cscr) \longrightarrow \Fun( B\bN^{\oplus I}\times (B\bN^{\oplus I})^{\times \bullet}, \Cscr)
	\end{equation}
	induced by the embedding $B\bN^{\oplus I} \to B\bZ^{\oplus I}$. Now note that for any $\infty$-category $\Dscr$, the restriction functor $\mathrm{res}_{\Dscr}\colon \Fun(B\bZ^{\oplus I},\Dscr) \to \Fun(B\bN^{\oplus I},\Dscr)$ is fully faithful: indeed, $f\colon B\bN^{\oplus I}\to B\bZ^{\oplus I}$ is a cofinal functor, since the comma category over the (unique) object in $B\bZ^{\oplus I}$ is given by the poset $(\bZ^{\oplus I}, \le)$, which is filtered. If $\Dscr$ has colimits indexed by $(\bZ^{\oplus I}, \le)$, then the left Kan extension along $f$ exists and provides a left adjoint $\mathrm{res}_{\Dscr}^L$ to the restriction functor $\mathrm{res}_{\Dscr}$. Indeed, the colimit formula for left Kan extensions shows that $\mathrm{res}_{\Dscr}^L$ sends $X\colon B\bN^{\oplus I}\to \Dscr$ to the colimit of the diagram $(\bZ^{\oplus I}, \le)\to \Dscr$ sending
	$$
	\big((a_i)\leq (b_i)\big)\quad \longmapsto \quad \big(X(\ast)\xrightarrow{X(b_i-a_i)} X(\ast)\big).
	$$
	This diagram looks like a lattice with vertices $X(\ast)$ and arrows given by copies of the endomorphisms $\alpha_i$ corresponding to the generators of $\bN^{\oplus I}$. The colimit comes with an $I$-tuple of autoequivalences, induced by the $I$-tuple of autoequivalences of the above diagram by shifting by $1$ in the $i$-th direction.

	 We now use the following general fact: given a natural transformation $F^\bullet \colon \Cscr^\bullet \to \Escr^\bullet$ of cosimplicial $\infty$-categories which is fully faithful in each cosimplicial degree, the induced functor $\Tot(F^\bullet)\colon \Tot(\Cscr^\bullet) \to \Tot(\Escr^\bullet)$ is also fully faithful. Moreover, $X\in  \Tot(\Escr^\bullet)$ is in the image of  $\Tot(F^\bullet)$ if and only if its image $\delta(X)\in \Escr^0$ is in the image of $F^0$. Applying this, and the discussion about $\mathrm{res}_{\Dscr}$, to $\Dscr= \Fun((B\bN^{\oplus I})^{\times \bullet}, \Cscr)$ in \eqref{eq:transformation from Bar for BZ to BN} we get that $\beta_\Cscr=\Tot(\mathrm{res}^\bullet)$ is fully faithful. Moreover, an object $X\in\Cscr$ lies in the image of $\beta_\Cscr$ exactly when the functor $\delta_1(X)\in \Fun(B\bN^{\oplus I}, \Cscr)$ given by $(X,\phi_{X,i})$ comes from $\Fun(B\bZ^{\oplus I}, \Cscr)$ or, in other words, if  all endomorphisms $\phi_{X,i}\colon X\to X$ are invertible.
	
Finally, if $\Cscr$ has colimits indexed by $(\bZ^{\oplus I}, \le)$, then so does $\Fun((B\bN^{\oplus I})^{\times \bullet}, \Cscr)$. Consequently, all individual functors in \eqref{eq:transformation from Bar for BZ to BN} have left adjoints. We claim that for each $\delta_i\colon [n]\to [n+1]$, the square 
	$$
	\begin{tikzcd}
		\Fun( B\bZ^{\oplus I}\times (B\bN^{\oplus I})^{\times n}, \Cscr)\arrow[d,"\delta_i"'] \arrow[r, "\mathrm{res}^n"]&\Fun( B\bN^{\oplus I}\times (B\bN^{\oplus I})^{\times n}, \Cscr) \arrow[d, "\delta_i"] \\
		\Fun( B\bZ^{\oplus I}\times (B\bN^{\oplus I})^{\times n+1}, \Cscr)\arrow[r, "\mathrm{res}^{n+1}"] & \Fun( B\bN^{\oplus I}\times (B\bN^{\oplus I})^{\times n+1}, \Cscr)		
	\end{tikzcd}
	$$ 
	is horizontally left adjointable. When $1\leq i\leq n+1$, we can use Remark \ref{rem:formula for differentials} to identify the above square with a square of the form
	$$\begin{tikzcd}
	\Fun(B\bZ^{\oplus I}, \Dscr)\arrow[r, "\mathrm{res}"]\arrow[d, "F_{i*}"'] & \Fun(B\bN^{\oplus I}, \Dscr)\arrow[d, "F_{i*}"]\\
	\Fun(B\bZ^{\oplus I}, \Dscr')\arrow[r, "\mathrm{res}"] & \Fun(B\bN^{\oplus I}, \Dscr')
	\end{tikzcd}$$
	where $F_i\colon \Dscr\to \Dscr'$ is a functor preserving colimits indexed by $(\bZ^{\oplus I}, \le)$:
	for $1\leq i\leq n$, the functor $F_i$ is a restriction functor and for $i=n+1$ it arises from the $\bN^{\oplus I}$-action on $\Cscr$. For $i=0$, the square takes the form
	$$\begin{tikzcd}
	\Fun(B\bZ^{\oplus I}, \Escr)\arrow[r, "\mathrm{res}"]\arrow[d, "m^*"'] &  \Fun(B\bN^{\oplus I}, \Escr)\arrow[d, "m^*"]\\
	\Fun(B\bZ^{\oplus I}\times B\bN^{\oplus I}, \Escr)\arrow[r, "\mathrm{res}"] & \Fun(B\bN^{\oplus I}\times B\bN^{\oplus I}, \Escr)
	\end{tikzcd}$$
	where $m$ denotes the action of $B\bN^{\oplus I}$ on $B\bZ^{\oplus I}$ and itself. This square is horizontally left adjointable because the action map $m\colon B\bZ^{\oplus I}\times B\bN^{\oplus I}\to B\bZ^{\oplus I}$ is a Cartesian fibration (see \cite[Lem.~9.2.7, Cor.~9.3.4]{riehl-verity}).
	
	By \cite[Cor.~4.7.4.18]{ha}, the collection of functors $\mathrm{res}^{\bullet, L}$ then lifts canonically to a transformation 
	$$
	\mathrm{res}^{\bullet,L} \colon \Fun( B\bZ^{\oplus I}\times (B\bN^{\oplus I})^{\times \bullet}, \Cscr) \longrightarrow \Fun( B\bN^{\oplus I}\times (B\bN^{\oplus I})^{\times \bullet}, \Cscr),
	$$
	whose totalization provides a left adjoint to $\beta_{\Cscr}$.
\end{proof}	

\begin{remark}\label{rem:formula for perfection monad}
	Being a localization, $\Cscr^\perf$ is monadic over $\Cscr$ with the monad given by $T=\beta_\Cscr\circ \beta_\Cscr^L\in \End(\Cscr)$. Following the construction of $\beta_\Cscr^L$ in the proof of Lemma \ref{lem:categorical inverse limit perfection}, one can explicitly describe the action of $T$ on objects of $\Cscr$: namely, given $X\in \Cscr$, viewed as a $B\bN^{\oplus I}$-linear functor $X\colon B\bN^{\oplus I}\to \Cscr$, one has  $T(X)=\colim_{(\bZ^{\oplus I},\le)}X(*)$ with the colimit along a diagram $(\bZ^{\oplus I},\le)\to \Cscr$ that sends a morphism $((a_i)\le (b_i))_{i\in I}$ to $(X(b_i-a_i)\colon X(*)\to X(*))_{i\in I}$. When $I=\{*\}$ is a singleton, a diagram $X\colon B\bN \to \Cscr$ amounts to an object $X\in \Cscr$ and an endomorphism $\psi_X\colon X\to X$. Then the formula for $T(X)$ simplifies to 
	$$
	T(X)=\colim(\ldots \xrightarrow{\psi_X} X \xrightarrow{\psi_X} X  \xrightarrow{\psi_X}  X  \xrightarrow{\psi_X}  \ldots).
	$$
\end{remark}	

\begin{remark}\label{rem:right adjoint to embedding of prefect modules} Applying Lemma \ref{lem:categorical inverse limit perfection} to $\Cscr^\op$ in place of $\Cscr$ (together with the induced $(B\bN^{\oplus I})^{\mathrm{rev}}\simeq B\bN^{\oplus I}$-action on $\Cscr^\op$) we can get a dual statement: namely, that if $\Cscr$ admits $(\bZ^{\oplus I},\le)$-indexed limits, then $\beta_\Cscr$ admits a right adjoint $\beta_\Cscr^R$, making $\Cscr^\perf$ a reflexive colocalization of $\Cscr$. Indeed, there is a natural functor of $B\bN^{\oplus I}$-module categories $\beta_{\Cscr^{\op}}^{\op}\colon f^!(\Cscr^{\op})^\op \to \Cscr$  by adjunction inducing a functor of $B\bZ^{\oplus I}$-linear categories $\alpha\colon f^!(\Cscr^{\op})^\op \to f^!(\Cscr)$ whose composition with $\beta_\Cscr$ is isomorphic to $\beta_{\Cscr^{\op}}^{\op}$. Since by Lemma \ref{lem:categorical inverse limit perfection} both $\beta_\Cscr$ and $\beta_{\Cscr^{\op}}^{\op}$ are fully faithful embeddings with the same image, it follows that $\alpha$ is an equivalence, and then $\beta_{\Cscr^{\op}}^L$ provides a right adjoint to $\beta_\Cscr$.
	
	By Remark \ref{rem:formula for perfection monad}, in this case $\Cscr^\perf$ is comonadic over $\Cscr$ with the comonad $T$ given by $ \beta_\Cscr\circ \beta^R_\Cscr$. When $I$ is a singleton one has a simple formula for $T(X)$:
	$$
	T(X)=\lim(\ldots \xrightarrow{\psi_X} X \xrightarrow{\psi_X} X  \xrightarrow{\psi_X}  X  \xrightarrow{\psi_X}  \ldots).
	$$
\end{remark}

\begin{remark}\label{rem:one can first perfect and then take invariants}
	Let us also note that decomposing the projection $B\bN^{\oplus I}\to *$ as the composition $B\bN^{\oplus I}\to B\bZ^{\oplus I} \to *$, one gets natural equivalences
	$$
	\Cscr^{hB\bN^{\oplus I}}\simeq (\Cscr^\perf)^{hB\bZ^{\oplus I}}\simeq (\Cscr^\perf)^{hB\bN^{\oplus I}}.
	$$
	
\end{remark}	

We can also identify $\Cscr_\perf$. To this end, recall that for an $\infty$-category $\Cscr$
with a class of morphisms $S\subset \mathrm{Ar}(\Cscr)$,
a functor $g\colon \Cscr \to \Cscr[S^{-1}]$ identifies $\Cscr[S^{-1}]$ with the Dwyer--Kan localization of $\Cscr$ with respect to $S$ if for any $\infty$-category $\Dscr$ the restriction $g^*\colon \Fun(\Cscr[S^{-1}],\Dscr) \to \Fun(\Cscr,\Dscr)$ is fully faithful with image given by those functors $F\colon \Cscr \to \Dscr$ that send any $s\in S$ to an isomorphism.

\begin{corollary}\label{cor:localization} The functor $\alpha_{\Cscr}\colon \Cscr \to \Cscr_{\perf}$ identifies  $\Cscr_{\perf}$ with the Dwyer--Kan localization of $\Cscr$ with respect to the collection of morphisms $\{\psi_{X,i}\}_{X\in \Cscr,i\in I}$.
\end{corollary}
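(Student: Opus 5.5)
The plan is to identify $\Cscr_\perf=\Cscr\otimes_{B\bN^{\oplus I}}B\bZ^{\oplus I}$ through its universal property as a relative tensor product, and then compare that universal property with the one defining a Dwyer--Kan localization. For any $\infty$-category $\Dscr$ (with trivial action) the adjunction $f^*\dashv f_*$ gives
$$\Fun(\Cscr_\perf,\Dscr)\simeq \Fun_{B\bN^{\oplus I}}(\Cscr,\Fun(B\bZ^{\oplus I},\Dscr)),$$
since $\Fun(\Cscr\otimes_{B\bN^{\oplus I}}B\bZ^{\oplus I},\Dscr)\simeq\Fun_{B\bZ^{\oplus I}}(\Cscr_\perf,\Fun(B\bZ^{\oplus I},\Dscr))$ and then adjunction. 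Here $\Fun(B\bZ^{\oplus I},\Dscr)$ is viewed as a $B\bZ^{\oplus I}$-module, and hence a $B\bN^{\oplus I}$-module, via precomposition with the translation action. Similarly, $\Fun(\Cscr,\Dscr)\simeq\Fun_{B\bN^{\oplus I}}(\Cscr,\Fun(B\bN^{\oplus I},\Dscr))$, using the cofree module. Under these identifications, $\alpha_\Cscr^*$ becomes postcomposition with the restriction $\mathrm{res}\colon\Fun(B\bZ^{\oplus I},\Dscr)\to\Fun(B\bN^{\oplus I},\Dscr)$, which is a $B\bN^{\oplus I}$-linear functor.

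The key steps are as follows.
\begin{enumerate}
\item[(1)] The cofinality argument in the proof of Lemma~\ref{lem:categorical inverse limit perfection} shows that $\mathrm{res}$ is fully faithful, with essential image the diagrams sending each generator to an equivalence. It is moreover linear.
\item[(2)] A linear functor that is fully faithful induces a fully faithful functor on $\Fun_{B\bN^{\oplus I}}(\Cscr,-)$. To see this, compute $\Fun_{B\bN^{\oplus I}}(\Cscr,\Escr)$ as a totalization $\Tot\Fun(\Cscr\times(B\bN^{\oplus I})^{\times\bullet},\Escr)$ via the bar resolution of $\Cscr$, exactly as in \eqref{eq:formula for f^!}. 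Levelwise fully faithfulness then passes to $\Tot$ by the general fact already used in the proof of Lemma~\ref{lem:categorical inverse limit perfection}. That fact also identifies the essential image degreewise from degree $0$.
\item[(3)] Unwinding the image condition: a functor $G\colon\Cscr\to\Dscr$ corresponds to the linear functor $\Cscr\to\Fun(B\bN^{\oplus I},\Dscr)$ sending $X$ to $G(X)$ with endomorphisms $G(\psi_{X,i})$. This lands in the image of $\mathrm{res}$ exactly when every $G(\psi_{X,i})$ is invertible.
\end{enumerate}

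The main obstacle is step (3), specifically making precise that the cofree correspondence really sends $G$ to the diagram $(G(X),G(\psi_{X,i}))$. This requires checking that the action map $\Cscr\times B\bN^{\oplus I}\to\Cscr$, restricted to $X$, classifies $\psi_{X,i}$; this is the description of the action recalled in Remark~\ref{rem:action of BM} and Example~\ref{ex: invariants of actions}. A further subtlety is that the essential image in $\Fun_{B\bN^{\oplus I}}(\Cscr,\Fun(B\bN^{\oplus I},\Dscr))$ needs to be detected on underlying functors, which is exactly what the degree-$0$ criterion of step (2) provides. Since equivalences are detected objectwise, the pointwise condition of step (1) suffices, and together these identify $\alpha_\Cscr^*$ as fully faithful with the required essential image. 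This is exactly the defining property of the Dwyer--Kan localization.
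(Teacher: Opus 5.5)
Your proposal is correct and is essentially the paper's argument. Both reduce $\alpha_\Cscr^*$ to a totalization of levelwise postcompositions with the fully faithful restriction $\Fun(B\bZ^{\oplus I},\Dscr)\to\Fun(B\bN^{\oplus I},\Dscr)$, use that $\Tot$ of a levelwise fully faithful map is fully faithful with essential image detected in degree $0$, and finish with the pointwise condition that each $G(\psi_{X,i})$ is invertible.

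The only difference is the entry point. The paper keeps $\Cscr_\perf$ in the source, resolves it by $\Bar^\bullet_{B\bN^{\oplus I}}(B\bZ^{\oplus I},\Cscr)$, and realizes $\alpha_\Cscr$ as a map of bar constructions. You instead use $f^*\dashv f_*$ and the cofree module $\Fun(B\bZ^{\oplus I},\Dscr)$ to move $B\bZ^{\oplus I}$ into the target and then resolve $\Cscr$ itself; this lands on the same cosimplicial diagram and avoids modeling $\alpha_\Cscr$ explicitly.
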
	

\begin{proof}
	We need to check that for any $\infty$-category $\Dscr$ the induced functor $\alpha_\Cscr^*\colon \Fun(\Cscr_{\perf}, \Dscr) \to \Fun(\Cscr, \Dscr)$ is fully faithful with image given by those functors that send the morphisms $\{\psi_{X,i}\}_{X\in \Cscr,i\in I}$ to isomorphisms. Note that we can rewrite $\alpha_\Cscr^*$ as a functor between $B\bN^{\oplus I}$-linear functor categories, namely as
\begin{equation}\label{eq:linear version of restriction from Cperfection}
	\Fun(\Cscr_{\perf}, \Dscr)\simeq \Fun_{B\bN^{\oplus I}}(\Cscr_{\perf}, \Fun(B\bN^{\oplus I}, \Dscr)) \xrightarrow{\alpha_\Cscr^*}  \Fun_{B\bN^{\oplus I}}(\Cscr, \Fun(B\bN^{\oplus I}, \Dscr)) \simeq \Fun(\Cscr,\Dscr).
\end{equation}
	For brevity, let $\Escr:=\Fun(B\bN^{\oplus I}, \Dscr)$. The functor $\alpha_\Cscr\colon \Cscr \to \Cscr_\perf$ is obtained by applying geometric realization to the natural transformation $\Bar_{B\bN^{\oplus I}}^\bullet(B\bN^{\oplus I},\Cscr) \to \Bar_{B\bN^{\oplus I}}^\bullet(B\bZ^{\oplus I},\Cscr)$ between bar constructions; on the $n$-th term it is given by the natural functor $(f,\id, \id)\colon B\bN^{\oplus I} \times (B\bN^{\oplus I})^{n-1} \times \Cscr \to B\bZ^{\oplus I} \times (B\bN^{\oplus I})^{n-1} \times \Cscr$. Here we will use the $B\bN^{\oplus I}$-module structure given by the action on the left-most term. 
	
	We get that the transformation \eqref{eq:linear version of restriction from Cperfection} is obtained as the totalization of the diagram of restriction functors
	$$
	\Fun_{B\bN^{\oplus I}}(B\bZ^{\oplus I} \times (B\bN^{\oplus I})^{\bullet} \times \Cscr, \Escr) \longrightarrow \Fun_{B\bN^{\oplus I}}(B\bN^{\oplus I} \times (B\bN^{\oplus I})^{\bullet} \times \Cscr, \Escr).
	$$
	By Lemma \ref{lem:categorical inverse limit perfection} this functor is fully faithful, with image given by those functors for which the transformations defined by the first copy of $B\bN^{\oplus I}$ are invertible. By the same argument as in the proof of \textit{loc.cit.}, we then get that $\alpha^*_\Cscr$ is fully faithful. Moreover, its image is given by those functors $F\in \Fun(\Cscr,\Dscr)$ that are sent to functors in $\Fun(\Cscr,\Fun(B\bZ^{\oplus I},\Dscr))$ by the functor $\Fun(\Cscr,\Dscr) \to \Fun(\Cscr,\Fun(B\bN^{\oplus I},\Dscr))$ dual to the $B\bN^{\oplus I}$-action on $\Cscr$. As $\Fun(B\bZ^{\oplus I},\Dscr)\to \Fun(B\bN^{\oplus I},\Dscr)$ is fully faithful, this happens if and only if $F(\psi_{X,i})$ is invertible for all $X\in\Cscr$ and $i\in I$. 
\end{proof}

\begin{remark}
One can consider the composition 
	$$\begin{tikzcd}
	\gamma_\Cscr \colon f^*(\Cscr)=\Cscr^\perf \arrow[r,"\beta_\Cscr"]& \Cscr \arrow[r,"\alpha_\Cscr"]& \Cscr_{\perf}=f^!(\Cscr),
\end{tikzcd}$$
a kind of ``norm map''. 
A natural question is whether it is an equivalence. 
In the context of $\Cat_{\infty}$, this is typically not the case, 
even though it is not far from being one. 
More precisely, let $S$ be the collection of morphisms in $\Cscr$ given by $\{\psi_{X,i}\}_{X\in\Cscr, i\in I}$. 
It is not hard to see from Lemma \ref{lem:categorical inverse limit perfection} that $\Cscr^\perf\subset \Cscr$ is exactly the subcategory of $S$-local (and $S$-colocal) objects, i.e., of those objects $X\in \Cscr$ such that $s^*\colon \Map_\Cscr(Y',X)\to \Map_\Cscr(Y,X)$ is an equivalence for all $s\colon Y\to Y'$ in $S$. 
It follows that $\gamma_{\Cscr}$ is fully faithful.
Indeed, the universal property of the Dwyer--Kan localization implies that for any $X\in \Cscr$, the map
\[
\alpha_{\Cscr}\colon \Map_{\Cscr}(-, X)\longrightarrow \Map_{\Cscr_{\perf}}\big(\alpha_\Cscr(-), \alpha_{\Cscr}(X)\big)
\]
exhibits the target as the initial presheaf with a map from $\Map_{\Cscr}(-, X)$ 
that sends all maps in $S$ to equivalences. 
If $X\in \Cscr^{\perf}$ is $S$-local, this map is an equivalence.

However, the functor $\gamma_{\Cscr}$ is typically not essentially surjective. 
For instance, when $\Cscr$ has colimits indexed by $(\bZ^{\oplus I}, \le)$, then 
$\beta^L_\Cscr\colon \Cscr \to \Cscr^\perf$ is the Dwyer--Kan localization of $\Cscr$ with respect to the class $W$ of $S$-local equivalences, namely those morphisms $w$ in $\Cscr$ for which $\beta^L_\Cscr(w)$ is an isomorphism. One has an inclusion $S\subset W$, but it is not a bijection; rather $W$ is a strong saturation of $S$.

This issue disappears in the presentable setting. To see this, consider the monoidal left adjoint functor $f\colon \mathrm{PShv}(B\bN^{\oplus I})\to \mathrm{PShv}(B\bZ^{\oplus I})$ 
and let $\Cscr\in \Mod_{\mathrm{PShv}(B\bN^{\oplus I})}(\PrL)$.
Then the left adjoint $f_*f^!(\Cscr)\to \Cscr$ is equivalent to the functor
	$\beta_{\Cscr}$ from Lemma \ref{lem:categorical inverse limit perfection}. 
	This follows from the fact that limits in $\PrL$ are computed on the level of underlying $\infty$-categories.
On the other hand, the left adjoint $\Cscr\to f_*f^*(\Cscr)$ is the Dwyer--Kan
	localization of $\Cscr$ with respect to the class $W$ of $S$-local equivalences.
	This follows from the proof of Corollary~\ref{cor:localization}, 
	applied to the presentable setting.
In particular, the composite $f_*f^!(\Cscr)\to f_*f^*(\Cscr)$ is then an 
equivalence.
\end{remark}

\section{Deriving a monad}\label{sec:deriving}


In this section we prove a general result about functors between categories of the form $\Mod_k$
and $\Mod_A$ for commutative rings $k$ and $A$ and use it to classify derived plethories.
The material here is strongly influenced by the work~\cite{brantner-mathew}.

\subsection{Derived functors}

Throughout this section, let $k$ and $A$ be commutative rings. 
We will make use of the following subcategories of the $\infty$-category $\Mod_k$ of $k$-module spectra.

\begin{notation}
	Let $k$ be a commutative ring. We will write
	$$
	\Proj_k^\omega\subseteq \Mod_k^\flat\subseteq\Mod_k^\heart
	$$
	for the full subcategories of compact projective $k$-modules and flat $k$-modules, respectively. 
	Recall that flat modules are filtered colimits of free modules; 
	Lazard's theorem~\cite[\href{https://stacks.math.columbia.edu/tag/058G}{Tag 058G}]{stacks} says
that $\Ind(\Proj_k^\omega)\we\Mod_k^\flat$.

	For $a\leq b$, let us write $\Mod_k^{w[a, b]}$ for the full subcategory of $\Mod_k$
	on those $a$-connective $k$-modules with Tor-amplitude in $[a, b]$.
	Let us furthermore write $\Mod_k^{w\leq b}=\bigcup_a \Mod_k^{w[a,b]}$.
	Alternatively, one can characterize $\Mod_k^{w\leq b}$ as the smallest
	subcategory of $\Mod_k$ which contains all $b$-fold suspensions of flat $k$-modules and is closed under coskeletal totalizations
	(see Definition~\ref{def:cosk}).
\end{notation}

We will be interested in the process of extending a functor $\Mod_k^\flat\rightarrow \Mod_A^\flat$
to a functor $\Mod_k\rightarrow \Mod_A$ \cite{brantner-mathew, raksit}, 
and especially in the compatibility of this process with composition.
As a warm-up, let us recall how this works in the connective setting using animation~\cite[Sec.~5.5.8]{htt}.

\begin{lemma}\label{lem:sift-completion}
	Let $\Cscr$ and $\Dscr$ be $(n, 1)$-categories for some $1\leq n\leq \infty$, such that
	$\Dscr$ has sifted colimits, that is, colimits indexed by sifted $\infty$-categories. 
	Suppose that $\iota\colon \Cscr_0\hookrightarrow \Cscr$ is a small full
	subcategory such that for each object $Y\in \Cscr$, the $\infty$-category
	$(\Cscr_0)_{/Y}$ is sifted 
	and the canonical cone $(\Cscr_0)_{/Y}^{\rhd}\to \Cscr$ (with cone point $Y$) is a colimit.
	Then the restriction functor 
	$$
	\iota^*\colon \Fun(\Cscr, \Dscr)\rightarrow \Fun(\Cscr_0, \Dscr)
	$$
	admits a fully faithful left adjoint, whose essential image consists of the functors
	 $F\colon \Cscr\to \Dscr$ that preserve those sifted colimits in $\Cscr$ that are preserved by the functors 
	 $\Map_{\Cscr}(X, -)\colon \Cscr\rightarrow \Spc_{\leq n-1}$ with $X\in \Cscr_0$.
\end{lemma}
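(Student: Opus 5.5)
The left adjoint will be left Kan extension along $\iota$, $\iota_!$. First I check that it exists. For $G\colon\Cscr_0\to\Dscr$ and $Y\in\Cscr$, the pointwise formula is $(\iota_!G)(Y)=\colim_{(\Cscr_0)_{/Y}}G$. This colimit exists because $(\Cscr_0)_{/Y}$ is sifted by hypothesis and $\Dscr$ has sifted colimits; since $\Cscr_0$ is small, there is no size issue. By the standard theory of pointwise Kan extensions (\cite[Sec.~4.3.2--4.3.3]{htt}), $\iota_!$ is left adjoint to $\iota^*$. Because $\iota$ is fully faithful, $(\Cscr_0)_{/X}$ has a terminal object $\id_X$ whenever $X\in\Cscr_0$, so the unit $G\to\iota^*\iota_!G$ is an equivalence and $\iota_!$ is fully faithful. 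The essential image therefore consists exactly of the $F$ with $F\simeq\iota_!\iota^*F$, i.e.\ those for which the canonical cone $(\Cscr_0)_{/Y}^{\rhd}\to\Cscr\xrightarrow{F}\Dscr$ is a colimit for every $Y$.

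Next I show this condition agrees with the stated one, one direction at a time. Call a sifted diagram $p\colon K\to\Cscr$ with colimit $Y$ \emph{good} if every $\Map_\Cscr(X,-)$ with $X\in\Cscr_0$ preserves it. The key claim is that the canonical diagram $(\Cscr_0)_{/Y}\to\Cscr$ is good. Its colimit is $Y$ by hypothesis, so I must show that for $X\in\Cscr_0$ the map $\colim_{(Z\to Y)\in(\Cscr_0)_{/Y}}\Map(X,Z)\to\Map(X,Y)$ is an equivalence in $\Spc_{\leq n-1}$ (colimit taken there, or in $\Spc$ followed by truncation). This is the co-Yoneda lemma: for the presheaf $h_Y|_{\Cscr_0}$ on $\Cscr_0$, the colimit over its category of elements of the representables is $h_Y|_{\Cscr_0}$ itself. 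Evaluated at $X$, this is exactly the required statement, because the category of elements of $h_Y|_{\Cscr_0}$ is $(\Cscr_0)_{/Y}$ by full faithfulness of $\iota$. The truncation $\Spc_{\leq n-1}$ causes no trouble: $\Cscr$ is an $(n,1)$-category, so these mapping spaces are already $(n-1)$-truncated. Consequently, if $F$ preserves all good sifted colimits, then $F\simeq\iota_!\iota^*F$.

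For the converse I fix a good sifted diagram $p\colon K\to\Cscr$ with colimit $Y$ and show $\iota_!G$ preserves it. My plan is to factor $\iota_!$ through the restricted Yoneda embedding $\nu\colon\Cscr\to\Fun(\Cscr_0^\op,\Spc_{\leq n-1})$, $Y\mapsto\Map_\Cscr(-,Y)|_{\Cscr_0}$. By co-Yoneda, $(\iota_!G)(Y)\simeq\colim_{(\Cscr_0)_{/Y}}G\simeq \tilde G(\nu Y)$, where $\tilde G$ is the left Kan extension of $G$ along the Yoneda embedding $\Cscr_0\to\Fun(\Cscr_0^\op,\Spc_{\leq n-1})$, restricted to the presheaves $\nu Y$. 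Goodness of $p$ says exactly that $\nu$ carries $\colim p$ to the objectwise colimit, i.e.\ to $\colim \nu\circ p$ in the presheaf category. So it remains to see that $\tilde G$ preserves this particular sifted colimit.

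This last step is where I expect the real difficulty, since $\Dscr$ has only sifted colimits and $\tilde G$ is not globally defined. My first attempt is to use the Grothendieck construction: the category of elements of $\colim_K\nu p$ is the colimit (in $\Cat_\infty$, then localized) of the $(\Cscr_0)_{/p(k)}$. Then $\colim_{(\Cscr_0)_{/Y}}G\simeq\colim_{k\in K}\colim_{(\Cscr_0)_{/p(k)}}G$ by \cite[Cor.~4.2.3.10]{htt}-type decomposition of colimits, i.e.\ via the cofinal map from the Grothendieck construction $\int_K(\Cscr_0)_{/p(-)}\to(\Cscr_0)_{/Y}$. Proving that cofinality is exactly where goodness enters. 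For each $(Z\to Y)$ the relevant comma category has classifying space $\colim_k\Map(Z,p(k))\times_{\Map(Z,Y)}\{*\}$, which is contractible because $\Map(Z,-)$ preserves the colimit of $p$. Every colimit appearing there is sifted (over $K$, or over sifted slices), so it exists in $\Dscr$.
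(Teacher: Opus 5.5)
Your construction of $\iota_!$ and its full faithfulness, and the implication from preserving good sifted colimits to $F\simeq\iota_!\iota^*F$ via co-Yoneda, are correct and match the paper. The converse direction has a gap when $n<\infty$.

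\textbf{The gap.} In your cofinality check you claim the comma category over $(Z\to Y)$ is weakly contractible ``because $\Map(Z,-)$ preserves the colimit of $p$''. Its classifying space is $\colim_k\Map(Z,p(k))\times_{\Map(Z,Y)}\{*\}$, with the colimit taken in $\Spc$. Goodness, as defined in the statement, only says that $\Map(Z,-)\colon\Cscr\to\Spc_{\leq n-1}$ preserves the colimit, i.e.\ that $\tau_{\leq n-1}\colim^{\Spc}_k\Map(Z,p(k))\simeq\Map(Z,Y)$. Sifted colimits of $(n-1)$-truncated spaces, computed in $\Spc$, need not be $(n-1)$-truncated.

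For example, take $n=1$, $\Cscr=\Mod_k^\heart$, $\Cscr_0=\Proj_k^\omega$ and $k\neq 0$. The bar construction $\B_\bullet(k)$ has colimit $0$ in $\Mod_k^\heart$, and every $\Hom_k(P,-)$ into sets preserves it. Yet for $Z=k$ your comma category has classifying space $|\B_\bullet(k)|\simeq K(k,1)$.

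Your argument also never uses that $\Dscr$ is an $(n,1)$-category, and without that hypothesis the statement is false. For $\Dscr=\Mod_k$ and $G$ the inclusion of $\Proj_k^\omega$, $\iota_!G$ is the inclusion $\Mod_k^\heart\subseteq\Mod_k$. This does not preserve the colimit above, since $|\B_\bullet(k)|\simeq k[1]$ in $\Mod_k$. (Your remark that the truncation ``causes no trouble'' is right for the co-Yoneda direction, but this is exactly where it does cause trouble.)

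\textbf{The repair.} Replace cofinality by a truncated version; this is where the hypothesis on $\Dscr$ enters. For $\phi\colon A\to B$ and $d\in\Dscr$, the presheaf $H=\Map_\Dscr(G(-),d)$ takes values in $\Spc_{\leq n-1}$. Then $\lim_{A^\op}H\circ\phi$ is the space of maps, in presheaves on $B$, from $b\mapsto|A\times_B B_{b/}|$ to $H$. That space only sees the pointwise $(n-1)$-truncation of the source. Hence, when both colimits exist (as here, being iterated sifted colimits), $\colim_A G\phi\to\colim_B G$ is an equivalence as soon as each $|A\times_B B_{b/}|$ has contractible $(n-1)$-truncation. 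In your situation the comma category's classifying space is the fiber of $X\to\tau_{\leq n-1}X$ with $X=\colim^{\Spc}_k\Map(Z,p(k))$, and such fibers do have contractible $(n-1)$-truncation.

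\textbf{Comparison with the paper.} With this change, and verbatim for $n=\infty$, your Grothendieck-construction plus Quillen~A argument is a valid alternative route. The paper instead embeds $\Dscr$ into a cocomplete $(n,1)$-category. It then factors $\iota_!F\simeq F''\circ\tau_{\leq n-1}\circ h$ through $(n-1)$-truncated presheaves on $\Cscr_0$, using \cite[Cor.~5.5.6.22]{htt} to descend the Yoneda extension along $\tau_{\leq n-1}$. That truncation plays exactly the role of the truncated cofinality above. Your route avoids the embedding into a cocomplete category, at the cost of an explicit cofinality computation.
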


\begin{proof}
	Let $F\colon \Cscr_0\rightarrow \Dscr$ be a functor. By our assumption on $\Cscr_0$ and
	the pointwise formula for Kan extensions, $F$ admits a left Kan extension 
	$\iota_!F\colon \Cscr\rightarrow \Dscr$ \cite[Lem.~4.3.2.13]{htt}. 
	This shows that the restriction functor has a fully faithful left adjoint.	
		
	To identify its essential image, let us first check that $\iota_! F$ preserves 
	the desired sifted colimits. To see this, note that there exists a fully faithful functor
	$\Dscr\hookrightarrow \Dscr'$ preserving sifted colimits, 
	where $\Dscr'$ is an $n$-category with small colimits (see \cite[Sec.~5.3.6, 5.5.6]{htt}).
	We may therefore assume that $\Dscr$ has small colimits. In that case,
	the functor $\iota_! F$ fits into a commuting diagram of the form
	$$\begin{tikzcd}[column sep=3pc]
	\Cscr_0\arrow[r, "\iota"]\arrow[rrd, "F"{swap}] & \Cscr\arrow[r, "h"]\arrow[rd, "\iota_!F"] & \Fun(\Cscr_0^{\op}, \Spc)\arrow[r, "\tau_{\leq n-1}"]\arrow[d, "F'"] & \Fun(\Cscr_0^{\op}, \Spc_{\leq n-1})\arrow[ld, "F''"]\\
	& & \Dscr.
	\end{tikzcd}$$
	Here $h$ is the restricted Yoneda embedding, which is fully faithful by (b).
	The functor $F'$ is the unique colimit-preserving extension of $F$, i.e.\ the left
	Kan extension of $F$ along the Yoneda embedding. 
	By transitivity of Kan extensions and $h$ being fully faithful, 
	we have that $F'\we h_!\iota_!F$ and hence $F'\circ h \we \iota_!F$.
	As $\Dscr$ is an $n$-category, $F'$ descends to a unique colimit-preserving
	functor $F''$, as indicated: this follows from \cite[Cor.~5.5.6.22]{htt}, 
	which does not rely on $\Dscr$ being presentable.
	It follows that $\iota_!F\we F''\circ \tau_{\leq n}\circ h$ preserves all sifted
	colimits in $\Cscr$ that are preserved by 
	$\tau_{\leq n}\circ h\colon \Cscr\rightarrow \Fun(\Cscr_0^{\op}, \Spc_{\leq n-1})$.
	These are precisely the colimits from the statement.
	
	Conversely, suppose that $G\colon \Cscr\rightarrow\Dscr$ preserves
	all sifted colimits in $\Cscr$ that are preserved by the functors 
	 $\Map_{\Cscr}(X, -)\colon \Cscr\rightarrow \Spc_{\leq n-1}$ with $X\in \Cscr_0$.
	 Then the counit map $\iota_!\iota^*G\to G$ is an equivalence: 
	 it is an equivalence for all $X\in \Cscr_0$ and both functors preserve 
	 the sifted colimit $Y\simeq \colim_{X\in (\Cscr_0)_{/Y} X}$.
\end{proof}

\begin{example}\label{ex:animated-modules}
	The inclusion $\Proj_k^{\omega}\subseteq \Mod_k^{\cn}$ satisfies the condition
	from Lemma~\ref{lem:sift-completion}, as each $(\Proj_k^{\omega})_{/M}$ 
	has finite coproducts. For each compact projective $k$-module $P$, the functor
	$\Map_k(P, -)\colon \Mod_k^{\cn}\to \Spc$ preserves all sifted colimits.
	Lemma~\ref{lem:sift-completion} therefore reduces to the statement that $\Mod_k^{\cn}$
	is the animation of $\Proj_k^{\omega}$, that is, the free
	$\infty$-category generated by $\Proj_k^{\omega}$ under sifted colimits \cite[Sec.~5.5.8]{htt}.
		
	Lemma~\ref{lem:sift-completion} also applies to the inclusion $\Proj_k^{\omega}\subseteq \Mod_k^{\heart}$.
	In this case, $\Map_k(P, -)\colon \Mod_k^{\heart}\to \Spc_{\leq 0}$ preserves all sifted colimits.
	It follows that $\Mod_k^{\heart}$ is the free $1$-category generated by $\Proj_k^{\omega}$
	under sifted colimits.
\end{example}

\begin{proposition}\label{prop:animation}
	There is a commuting diagram of equivalences of $\bB\bM^\otimes$-monoidal categories
	$$\begin{tikzcd}
	\Fun^{\flat}_{\Sigma}(\Mod^\cn_k, \Mod_A^{\cn})^{\otimes}\arrow[rr, "\we"]\arrow[rd, "\we"{swap}] & & \Fun^{\flat}_{\Sigma}(\Mod^\heart_k, \Mod_A^{\heart})^{\otimes}\arrow[ld, "\we"]\\
	& \Fun_\omega(\Mod_k^{\flat}, \Mod_A^{\flat})^{\otimes}
	\end{tikzcd}$$
	where:
	\begin{enumerate}
	\item[{\em (a)}] $\Fun^{\flat}_{\Sigma}(\Mod^\cn_k, \Mod_A^{\cn})^{\otimes}\subseteq 
	\Fun(\Mod^{\cn}_k, \Mod_A^{\cn})^{\otimes}$ and 
	$\Fun^{\flat}_{\Sigma}(\Mod^\heart_k, \Mod_A^\heart)^{\otimes}\subseteq 
	\Fun(\Mod^\heart_k, \Mod_A^\heart)^{\otimes}$ denote the full $\bB\bM^\otimes$-monoidal subcategories
	of functors preserving sifted colimits and flat modules.
	
	\item[{\em (b)}]  $\Fun_\omega(\Mod_k^{\flat}, \Mod_A^{\flat})^{\otimes}\subseteq \Fun(\Mod_k^{\flat}, \Mod_A^{\flat})^{\otimes}$ 
	is the full $\bB\bM^\otimes$-monoidal subcategory of functors preserving filtered colimits.
	
	\item[{\em (c)}] The two downwards pointing equivalences are given by restriction.
	
	\item [{\em (d)}] The horizontal equivalence sends $F\colon \Mod_k^{\cn}\to \Mod_A^{\cn}$
	to $\pi_0F\colon \Mod_k^{\heart}\to \Mod_A^\heart$, and likewise for endofunctors.
	\end{enumerate}
\end{proposition}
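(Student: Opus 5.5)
We will first establish the two downward equivalences as equivalences of underlying categories, then upgrade to $\bB\bM^\otimes$-monoidal statements, and finally deduce the horizontal one.

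\emph{Connective case.} Since $\Mod_k^{\cn}$ is the animation of $\Proj_k^\omega$ (Example~\ref{ex:animated-modules}), restriction gives an equivalence between sifted-colimit preserving functors $\Mod_k^{\cn}\to\Mod_A^{\cn}$ and arbitrary functors $\Proj_k^\omega\to\Mod_A^{\cn}$. Via $\Mod_k^\flat\simeq\Ind(\Proj_k^\omega)$ (Lazard), functors $\Proj_k^\omega\to\Mod_A^\cn$ also correspond to filtered-colimit preserving functors $\Mod_k^\flat\to\Mod_A^\cn$. Every flat $k$-module is a filtered colimit of finite free modules, and filtered colimits in $\Mod_k^\cn$ of discrete flat modules are computed in $\Mod_k^\heart$. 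Hence a sifted-colimit preserving $F$ sends flat modules to flat modules iff it sends finite free modules to flat $A$-modules (flatness being closed under filtered colimits). This identifies $\Fun^\flat_\Sigma(\Mod_k^\cn,\Mod_A^\cn)$ with $\Fun(\Proj_k^\omega,\Mod_A^\flat)\simeq\Fun_\omega(\Mod_k^\flat,\Mod_A^\flat)$ via restriction.

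\emph{Discrete case.} Apply Lemma~\ref{lem:sift-completion} with $n=1$ to $\Proj_k^\omega\subseteq\Mod_k^\heart$ and target the $1$-category $\Mod_A^\heart$. Since $\Map_k(P,-)$ preserves all sifted colimits in $\Mod_k^\heart$, sifted-colimit preserving functors $\Mod_k^\heart\to\Mod_A^\heart$ correspond to functors $\Proj_k^\omega\to\Mod_A^\heart$. The flatness condition is again detected on $\Proj_k^\omega$, because filtered colimits in $\Mod_k^\heart$ agree with those in $\Mod_k^\cn$ restricted to flat objects. This yields the second restriction equivalence.

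\emph{Horizontal functor and monoidality.} The functor $F\mapsto\pi_0F$ preserves sifted colimits on $\Mod_k^\heart$, because $\pi_0=\tau_{\le 0}$ preserves sifted colimits, and a discrete module is a sifted colimit of projectives computed by $\tau_{\le0}$ of the connective colimit. Since $\pi_0F$ and $F$ agree on flat modules, the triangle commutes, so the horizontal map is an equivalence by two-out-of-three.

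For the $\bB\bM^\otimes$-structures, note that each subcategory is closed under composition: sifted-colimit preserving and flat-preserving functors compose, and filtered-colimit preserving functors compose. Therefore each is a full sub-$\bB\bM^\otimes$-monoidal category, and restriction to $\Mod^\flat$ is strictly compatible with composition precisely because the functors preserve flat modules. We realize this by applying restriction along the inclusions $\Mod_k^\flat\subseteq\Mod_k^\cn$ (resp.\ $\Mod_k^\heart$) in a $\bB\bM$-family, and then checking the full faithfulness criterion of Lemma~\ref{lem:fully_faithful_operad_map}. The main obstacle is writing restriction as a genuine map of $\bB\bM^\otimes$-operads, since it is not induced by a single functor. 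We will handle this by passing to the full sub-operad of $\Fun(\Mod_k^\cn,\Mod_A^\cn)^\otimes$ on flat-preserving functors and mapping it to $\Fun(\Mod^\flat_k,\Mod^\flat_A)^\otimes$ via the $\bB\bM$-monoidal functor restricting endofunctors that preserve a full subcategory. Once we have this operad map, it is monoidal, and being an equivalence is checked on underlying categories, which is the step already done.
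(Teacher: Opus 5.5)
Your proposal is correct and follows the paper's proof essentially step for step. The steps match: animation plus Lazard give the connective restriction equivalence, Lemma~\ref{lem:sift-completion} with $n=1$ gives the discrete one, restriction is strong $\bB\bM^{\otimes}$-monoidal, and the horizontal equivalence is read off from the commuting triangle.

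The only step stated too quickly is that $\pi_0F|_{\Mod_k^{\heart}}$ preserves sifted colimits. The identity $\colim^{\heart}=\tau_{\leq 0}\colim^{\cn}$ only reduces this to $\pi_0F(Y)\cong\pi_0F(\pi_0Y)$ for connective $Y$. The paper obtains this by observing that $\pi_0F$ and $F'\circ\pi_0$ are sifted-colimit-preserving functors $\Mod_k^{\cn}\to\Mod_A^{\heart}$ that agree on $\Proj_k^{\omega}$, hence are equivalent. Here $F'$ is the sifted-colimit-preserving extension of $F|_{\Proj_k^{\omega}}$ to $\Mod_k^{\heart}$.
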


\begin{proof}
	Let us start by noting that the two downwards pointing functors, induced by restriction, 
	do indeed inherit a $\BM^\otimes$-monoidal structure.
	Since $\Mod_k^{\cn}$ is the animation of $\Proj_k^{\omega}$ and $\Mod_k^{\flat}\simeq \Ind(\Proj_k^{\omega})$ by Lazard's theorem,
	restriction induces equivalences
	$$
	\Fun_{\Sigma}(\Mod_k^{\cn}, \Mod_A^{\cn})\simeq \Fun_\omega(\Mod_k^{\flat}, \Mod_A^{\cn})\simeq \Fun(\Proj_k^{\omega}, \Mod_A^{\cn}).
	$$
	This identifies the full subcategories of functors preserving flat modules. 
	Replacing $k$ by $A$ (or vice versa), it follows that the left restriction functor is
	a $\BM^\otimes$-monoidal equivalence.
	Likewise, the right restriction functor is a $\BM^\otimes$-monoidal equivalence 
	because $\Mod_k^{\heart}$ is the free $1$-category with sifted colimits generated
	by $\Proj_k^{\omega}$ by Example \ref{ex:animated-modules}.
	
	We therefore obtain a unique horizontal equivalence making the triangle commute.
	Let us write $F'\colon \Mod_k^{\heart}\rightarrow \Mod_A^{\heart}$ for the image of
	$F\colon  \Mod_k^{\cn}\rightarrow \Mod_A^{\cn}$ under this equivalence.
	This gives functors $F'\pi_0\colon \Mod_k^{\cn} \rightarrow \Mod_A^{\heart}$ 
	and $\pi_0F\colon \Mod_k^{\cn} \rightarrow \Mod_A^{\heart}$,
	which preserve sifted colimits and agree on flat modules. 
	It follows that $F'\pi_0\we \pi_0F$. Restricting to discrete $k$-modules, we
	obtain a natural equivalence between $F'$ and $\pi_0F\colon \Mod_k^\heart\rightarrow \Mod_A^{\heart}$.
\end{proof}

Let us now discuss an extension of the above result to the unbounded derived category.

\begin{definition}[(Co)skeletal objects]\label{def:cosk}
    A simplicial diagram $X\colon\Delta^\op\rightarrow\Cscr$ is skeletal if it is left Kan extended
    from its restriction $X|_{\Delta_{\leq n}^\op}\colon\Delta_{\leq n}^\op\rightarrow\Cscr$ for
    some $n\geq 0$. Similarly, a cosimplicial diagram $Y\colon\Delta\rightarrow\Cscr$ is coskeletal if it
    is right Kan extended from its restriction $Y|_{\Delta_{\leq n}}\colon\Delta_{\leq
    n}\rightarrow\Cscr$ for some $n\geq 0$.
    
    A functor $F\colon \Cscr\rightarrow \Dscr$ is said to preserve coskeletal totalizations
    if for any coskeletal cosimplicial diagram $X\colon\Delta\rightarrow\Cscr$
    with totalization $\Tot(X)$, the augmented cosimplicial diagram $F(\Tot(X))\to F(X)$
    is a limit in $\Dscr$, and similarly for skeletal geometric realizations.
    Note that we do not require the diagram $F(X)$ to be coskeletal itself. 
\end{definition}

\begin{definition}
    Let $\Perf_k^\cn\subseteq\Mod_k^\cn$ denote the full subcategory of connective
    perfect $k$-modules. 
    Equivalently, $\Perf_k^\cn$ consists of the compact objects in $\Mod_k^\cn$. 

    Let $\Perf_k^{w\leq 0}\subseteq\Mod_k$ denote the full subcategory consisting of $k$-linear
    duals of the objects in $\Perf_k^\cn$. Equivalently, $\Perf_k^{w\leq 0}$ consists of the
    perfect $k$-modules with Tor-amplitude in $[-n,0]$ for some $n\geq 0$. If $M\in\Perf_k^{w\leq
    0}$, we say that $M$ is a coconnectively flat perfect complex. The notation is chosen to evoke
    the notion of weight structures, although we do not use these elsewhere.
\end{definition}

\begin{warning}
    Note that a $k$-module which is perfect and coconnective need not be in $\Perf_{k}^{w\leq 0}$. An
    example is given by $\bZ/p$ as a $\bZ$-module.
\end{warning}

The following lemma is a standard application of the philosophy of
animation~\cite[Sec.~5.5.8]{htt}, but restricted to small objects as in~\cite{brantner-mathew}.

\begin{lemma}\label{lem:universal}\label{lem:tfae}
    Let $k$ be a commutative ring.
\begin{enumerate}
        \item[{\em (a)}]
            If $\Cscr$ admits sifted colimits, then the restriction functor
            $$\Fun(\Perf_k^\cn,\Cscr)\rightarrow\Fun(\Proj_k^\omega,\Cscr)$$
            admits a fully faithful left adjoint, whose essential image 
            $\Fun_\sigma(\Perf_k^\cn,\Cscr)\subseteq\Fun(\Perf_k^\cn,\Cscr)$ 
            consists of the functors $F\colon\Perf_k^\cn\rightarrow\Cscr$ 
            satisfying the following equivalent conditions:
            \begin{enumerate}
                \item[{\em (i)}] $F$ is left Kan extended from its restriction to $\Proj_k^\omega$;
                \item[{\em (ii)}] $F$ preserves skeletal geometric realizations;
                \item[{\em (iii)}] $F$ preserves geometric realizations of skeletal simplicial
                    diagrams of finitely generated projective $k$-modules;
                \item[{\em (iv)}] for each $X\in\Perf_k^\cn$ there is some object
                    $X_\bullet\in \s\Proj_k^\omega$ such that $|X_\bullet|\we X$ and
                    $|F(X_\bullet)|\we F(X)$.
            \end{enumerate}
        \item[{\em (b)}] If $\Dscr$ admits sifted limits, then the restriction functor
            $$\Fun(\Perf_k^{w\leq 0},\Dscr)\rightarrow\Fun(\Proj_k^\omega,\Cscr)$$
            admits a fully faithful right adjoint, whose essential image 
            $\Fun_\epsilon(\Perf_k^{w\leq 0},\Dscr)\subseteq\Fun(\Perf_k^{w\leq 0},\Dscr)$ 
            consists of the functors $F\colon\Perf_k^{w\leq 0}\rightarrow\Dscr$ 
            satisfying the following equivalent conditions:
            \begin{enumerate}
                \item[{\em (i)}] $F$ is right Kan extended from its restriction to $\Proj_k^\omega$;
                \item[{\em (ii)}] $F$ preserves coskeletal totalizations; 
                \item[{\em (iii)}] $F$ preserves totalizations of coskeletal cosimplicial
                    diagrams of finitely generated projective $k$-modules;
                \item[{\em (iv)}] for each $X\in\Perf_k^{w\leq 0}$ there is some object
                    $X^\bullet\in \c\Proj_k^\omega$ such that $X\we\Tot(X^\bullet)$ and
                    $F(X)\we\Tot(F(X^\bullet))$.
            \end{enumerate}
    \end{enumerate}
\end{lemma}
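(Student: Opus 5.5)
Part (b) is formally dual to part (a): taking opposites sends $\Perf_k^{w\leq 0}$ to $(\Perf_k^{\cn})^{\op}$ via $k$-linear duality, preserves $\Proj_k^\omega$ (which is self-dual), exchanges coskeletal cosimplicial diagrams with skeletal simplicial ones, and exchanges right and left Kan extensions. I will therefore prove (a) and deduce (b) by applying it to $\Cscr=\Dscr^{\op}$.

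For the existence of the fully faithful left adjoint, I would use the pointwise formula for left Kan extension along the fully faithful inclusion $\iota\colon\Proj_k^\omega\subseteq\Perf_k^\cn$. The key input is that for $X\in\Perf_k^\cn$ the slice $(\Proj_k^\omega)_{/X}$ is sifted, since it has finite coproducts. Since $\Cscr$ has sifted colimits, $\iota_!F$ then exists for every $F$. Because $\iota$ is fully faithful, the unit is an equivalence, so $\iota_!$ is fully faithful. Its essential image is by definition the class (i).

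Next I would show the conditions are equivalent. The implications (ii)$\Rightarrow$(iii) and (iii)$\Rightarrow$(iv) are immediate, once one knows that every $X\in\Perf_k^\cn$ admits a skeletal resolution by objects of $\s\Proj_k^\omega$. For this I would take a finite cell structure of $X$ and apply Dold--Kan to a finite complex of finitely generated projectives, which yields a skeletal simplicial object. For (i)$\Rightarrow$(ii), I would embed $\Cscr$ via Yoneda into a category with all colimits preserving sifted colimits, as in the proof of Lemma~\ref{lem:sift-completion}. Then $\iota_!F$ factors as the restricted Yoneda embedding $\Perf_k^\cn\to\Fun((\Proj_k^\omega)^{\op},\Spc)\simeq\Ani(\Proj_k^\omega)\simeq\Mod_k^\cn$ followed by a sifted-colimit preserving functor. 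That first functor is just the inclusion $\Perf_k^\cn\subseteq\Mod_k^\cn$, using Example~\ref{ex:animated-modules}. Geometric realizations of skeletal diagrams in $\Perf_k^\cn$ are finite colimits, so they agree with the corresponding geometric realizations in $\Mod_k^\cn$, and hence $\iota_!F$ preserves them. For (iv)$\Rightarrow$(i), consider the counit $\iota_!\iota^*F\to F$. It is an equivalence on $\Proj_k^\omega$. At $X\cong|X_\bullet|$, the source preserves this realization by (i)$\Rightarrow$(ii), and the target does by hypothesis (iv). The counit is therefore the realization of equivalences, hence an equivalence.

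The main obstacle is the step (i)$\Rightarrow$(ii). There one must check that colimits of skeletal diagrams computed in the small category $\Perf_k^\cn$ are also colimits in $\Mod_k^\cn=\Ani(\Proj_k^\omega)$, so that the left Kan extension preserves them. The point is that skeletal realizations are finite colimits, and $\Perf_k^\cn$ is closed under finite colimits in $\Mod_k^\cn$. The remaining care is in making the embedding of $\Cscr$ into a cocomplete category preserve sifted colimits, which I would handle exactly as in Lemma~\ref{lem:sift-completion}.
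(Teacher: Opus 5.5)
This is correct and is essentially the paper's proof: you deduce (b) from (a) by duality, obtain the left adjoint from the pointwise left Kan extension using that $(\Proj_k^\omega)_{/X}$ is sifted (the paper packages this as Lemma~\ref{lem:sift-completion}), prove (i)$\Rightarrow$(ii) through the restricted Yoneda factorization, and prove (iv)$\Rightarrow$(i) using the counit. There are two harmless slips. First, $\Fun((\Proj_k^\omega)^{\op},\Spc)$ is not $\Ani(\Proj_k^\omega)$; the restricted Yoneda functor lands in the product-preserving presheaves $\simeq\Mod_k^\cn$, whose inclusion into all presheaves preserves sifted colimits, and that is all you need. Second, the resolution in (iv) need not be skeletal, so in (iv)$\Rightarrow$(i) you should use that $\iota_!\iota^*F$ is the restriction of a sifted-colimit-preserving functor on $\Mod_k^\cn$, rather than appealing to (ii) itself.
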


\begin{proof}
	Note that part (b) follows from part (a) using the equivalence
	$\Perf_k^{w\leq 0}\we(\Perf_k^\cn)^\op$ induced by $k$-linear duality.
	We therefore only give the proof of (a). 
	The inclusion $\Proj_k^{\omega}\subseteq \Perf_k^{\cn}$ satisfies the 
	conditions of Lemma~\ref{lem:sift-completion}.
	The restriction functor therefore admits a fully faithful left adjoint, 
	whose essential image consists of functors $F\colon \Perf_k^\cn\rightarrow \Cscr$ 
	that are left Kan extended from their restriction to $\Proj_k^{\omega}$.

	It remains to prove that conditions (i)-(iv) are equivalent. 
	Condition (i) implies (ii) by Lemma~\ref{lem:sift-completion}, because 
	mapping out of a compact projective $k$-module preserves skeletal geometric
	realizations in $\Perf_k^{\cn}$.
	It is clear that (ii) implies (iii), and (iii) implies (iv) because 
	every object $X\in \Perf_k^\cn$ can be obtained as the geometric realization of a skeletal simplicial diagram with values in $\Proj_k^\omega$.
	Finally, note that for two functors $F$ and $G$ satisfying (iv), a natural transformation
	$\alpha\colon G\rightarrow F$ is an equivalence if its restriction to $\Proj_k^\omega$ is such.
	In particular, this applies if we take $G$ to be the left Kan extension
	of the restriction of $F$ to $\Proj_k^\omega$, and $\alpha$ to be the counit map. 
	This shows that (iv) implies (i).
\end{proof}

\begin{remark}
    Using Lemma~\ref{lem:universal}, the left Kan extension $\L F$ of a functor
    $F\colon\Proj_k^\omega\rightarrow\Cscr$
    can be computed as follows. For
    $X\in\Perf_k^\cn$ one takes a skeletal
    simplicial object $P_\bullet\in\Fun(\Delta^\op,\Proj_k^\omega)$ with the property that
    $\colim_{\Delta^{\op}}\we X$. For example, one can find an explicit non-negatively
    graded, bounded above
    chain complex in $\Proj_k^\omega$ which is quasi-isomorphic to $X$ and apply the
    Dold--Kan theorem to obtain $P_\bullet$. This object will be skeletal, for example
    by~\cite[\href{https://kerodon.net/tag/05GA}{Tag 05GA}]{kerodon}.
    Then, $(\L F)(X)\we\colim_{\Delta^\op} F(P_\bullet)$.
    A similar remark applies to the value of the right Kan extension at coconnectively flat perfect
    objects.
\end{remark}


\begin{remark}
    As $\Mod_k^\cn$ is the animation of $\Proj_k^\omega$, to compute the value of the left Kan extension
    of a functor $F\colon\Proj_k^\omega\rightarrow\Cscr$ on $X\in\Perf_k^\cn$,
    by~\cite[Lem.~5.5.8.14]{htt}, one can use an
    arbitrary simplicial flat module $P_\bullet$ with $|P_\bullet|\we X$ and find that $(\L
    F)(X)\we\colim_{\Delta^\op} F(P_\bullet)$. It is not necessary to restrict to skeletal
    simplicial objects. An entirely similar remark applies to the case of right Kan extension,
    except that one uses cosimplicial objects of $\Pro(\Proj_k^\omega)$ 
    (which is equivalent to the opposite of $\Flat_k^\omega$).
\end{remark}

\begin{remark}\label{rem:additive-case}
    The equivalence $\Fun_\sigma(\Perf_k^\cn,\Cscr)\we\Fun(\Proj_k^\omega,\Cscr)$
    restricts to an equivalence $$\Fun_\rex(\Perf_k^\cn,\Cscr)\we\Fun_\sqcup(\Proj_k^\omega,\Cscr),$$
    where $\Fun_\rex(\Perf_k^\cn,\Cscr)\subseteq\Fun(\Perf_k^\cn,\Cscr)$ is the full
    subcategory of functors which preserve finite colimits and $\Fun_\sqcup(\Proj_k^\omega,\Cscr)$
    is the full subcategory of functors which preserve finite coproducts.
    Similarly, for functors out of $\Perf_k^{w\leq 0}$, restriction induces an equivalence
    $$\Fun_\lex(\Perf_k^{w\leq 0},\Cscr)\we\Fun_\sqcap(\Proj_k^\omega,\Cscr),$$ with the obvious
    notation.
\end{remark}

\begin{example}
    Let $\L\eta_p\colon\Perf_\bZ^\cn\rightarrow\Mod_\bZ$ denote the (derived) $\eta_p$
    functor, restricted to connective perfect complexes. On a cochain complex of free abelian
    groups $M^\bullet$, one identifies $\L\eta_p(M)$
    with the subcomplex $\eta_p(M)^\bullet\subseteq M[1/p]^\bullet$ with $\eta_p(M)^i=\{x\in
    p^iM^i\colon \d(x)\in p^{i+1}M^{i+1}\}$. There is a natural identification $\H^*(\L\eta_pM)\iso\H^*(M)/\H^*(M)[p]$, the quotient of the cohomology of $M$ by the
    $p$-torsion in the cohomology.

    We see from the formula for cohomology that $\L\eta_p$ (restricted to connective perfect complexes as above) preserves
    finite coproducts.
    It also preserves the Bar complexes for $\bZ[n]$. For example, the natural map
    $|\L\eta_p(\B_\bullet(\bZ[n-1]))|\rightarrow\L\eta_p\bZ[n]\we\bZ[n]$ is an equivalence.
    If, as a consequence, $\L\eta_p$ were to
    preserve all skeletal geometric realizations of perfect complexes, it would therefore be left
    Kan extended off of $\Proj_\bZ^\omega$. But, on $\Proj_\bZ^\omega$ it gives the identity
    functor, so we would find $M\we\L\eta_pM$ for all $M$. As this is not the case, we see that,
    even in the presence of additivity, preserving Bar constructions of shifts of free objects is
    not sufficient to preserve all skeletal geometric realizations. For a specific example,
    $\L\eta_p(\bF_p[0])\we 0$, but if $\L\eta_p$ preserved skeletal geometric realizations, we would
    find $\L\eta_p(\bF_p[0])\we\bF_p[0]$.
\end{example}

\begin{remark}
    In the work above, we are not using additivity of $\Proj_k^\omega$ in any particular way. The
    main thing, besides idempotent completeness, is the fact that $\Perf_k^\cn$ is freely
    generated under skeletal geometric realizations by $\Proj_k^\omega$.
\end{remark}

The following definition appears in~\cite{brantner-mathew}.

\begin{definition}
    Assume that $\Cscr$ admits sifted colimits.
    Let $\Fun_\delta(\Perf_k^{w\leq 0},\Cscr)\subseteq\Fun(\Perf_k^{w\leq 0},\Cscr)$ denote the full
    subcategory of functors which preserve colimits of skeletal simplicial diagrams
    $X_\bullet\in\Fun(\Delta^\op,\Perf_k^{w\leq 0})$ whose colimit in $\Mod_k$ is in $\Perf_{k}^{w\leq
    0}$.
\end{definition}

\begin{lemma}\label{lem:bm}
    Suppose that $\Cscr$ is an $\infty$-category admitting sifted colimits.
    Restriction induces an equivalence $\Fun_\Sigma(\Mod_k,\Cscr)\rightarrow\Fun_\delta(\Perf_k^{w\leq
    0},\Cscr)$ with inverse given by left Kan extension.
\end{lemma}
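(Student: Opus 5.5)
The plan is to imitate the proof of the Brantner--Mathew extension theorem~\cite{brantner-mathew}, working throughout with a general base ring $k$. I would first show that left Kan extension along $\iota\colon\Perf_k^{w\leq 0}\hookrightarrow\Mod_k$ exists and is fully faithful. For each $M\in\Mod_k$, the comma category $(\Perf_k^{w\leq 0})_{/M}$ has finite coproducts (direct sums), so it is sifted. Hence $\iota_!F$ exists for every $F$ with values in $\Cscr$, and $\iota^*\iota_!F\we F$ because $\iota$ is fully faithful. Conversely, if $G\in\Fun_\Sigma(\Mod_k,\Cscr)$, then $\iota^*G$ lies in $\Fun_\delta(\Perf_k^{w\leq 0},\Cscr)$. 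Indeed, the relevant skeletal realizations are computed in $\Mod_k$, where $G$ preserves them. So the whole statement reduces to two claims: (A) $\iota_!F$ preserves sifted colimits whenever $F\in\Fun_\delta$; (B) every $G\in\Fun_\Sigma(\Mod_k,\Cscr)$ is left Kan extended from $\Perf_k^{w\leq 0}$.

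I would factor the extension through $\Perf_k$ in two stages.

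\emph{Stage 1.} Every perfect complex $P$ is the geometric realization of a skeletal simplicial diagram $P_\bullet$ with terms in $\Perf_k^{w\leq 0}$. To build it, represent $P$ by a bounded complex of finitely generated projectives. Split this complex by brutal truncation into its part in degrees $\leq 0$ and finitely many positive shifts. Then use the stable Dold--Kan correspondence (increasing filtrations versus simplicial objects, \cite[Sec.~1.2.4]{ha}) to turn this finite filtration into a skeletal simplicial object whose terms are finite sums of objects of $\Perf_k^{w\leq 0}$. The basic case is $\Sigma^bQ$ with $Q\in\Perf_k^{w\leq 0}$, realized as an iterated bar construction on $Q$.

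Using these resolutions, I would run the argument of Lemma~\ref{lem:tfae}(a) with $\Proj_k^\omega$ replaced by $\Perf_k^{w\leq 0}$, and with the class of colimits replaced by skeletal realizations landing in $\Perf_k^{w\leq 0}$. This should show that restriction identifies functors $\Perf_k\to\Cscr$ preserving skeletal geometric realizations with $\Fun_\delta(\Perf_k^{w\leq 0},\Cscr)$. The key point is the (iv)$\Rightarrow$(i) style argument: two functors that are both computed on a fixed skeletal resolution and agree on $\Perf_k^{w\leq 0}$ must agree.

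\emph{Stage 2.} I would then pass from $\Perf_k$ to $\Mod_k\we\Ind(\Perf_k)$ by left Kan extension along filtered colimits. The resulting functor preserves filtered colimits and skeletal realizations of perfect diagrams.

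The main obstacle is to show that the Stage 2 extension preserves \emph{all} geometric realizations in $\Mod_k$. Since sifted colimits are generated by filtered colimits and geometric realizations, this would give claim (A). My plan is to write $|X_\bullet|\we\colim_n|\mathrm{sk}_nX_\bullet|$, which reduces the problem to skeletal diagrams. I would then exhibit an $n$-skeletal diagram in $\Ind(\Perf_k)$ as a filtered colimit of $n$-skeletal diagrams in $\Perf_k$. For this I would work with the finite semisimplicial truncation $\Delta_{\mathrm{inj},\leq n}$ rather than $\Delta_{\leq n}$, so that $\Fun(\Delta_{\mathrm{inj},\leq n}^\op,\Ind\Perf_k)\we\Ind\Fun(\Delta_{\mathrm{inj},\leq n}^\op,\Perf_k)$ holds. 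Here I need to check carefully that skeletal realizations are compatible with the passage to semisimplicial diagrams.

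Claim (B) then follows in the same way as in Lemma~\ref{lem:sift-completion}. For $G\in\Fun_\Sigma(\Mod_k,\Cscr)$, the counit $\iota_!\iota^*G\to G$ is a natural transformation between two sifted-colimit-preserving functors. It is an equivalence on $\Perf_k^{w\leq 0}$, hence on $\Perf_k$ by the skeletal resolutions of Stage 1, and hence on all of $\Mod_k$ by passing to filtered colimits.
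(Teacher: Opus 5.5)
\textbf{There is a genuine gap in Stage~1, and it is the heart of the lemma.}

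The argument of Lemma~\ref{lem:tfae}(a) rests on Lemma~\ref{lem:sift-completion}, that is, on the fact that $\Map_k(P,-)$ commutes with the relevant skeletal realizations for $P\in\Proj_k^\omega$. This is what gives (i)$\Rightarrow$(ii). The (iv)$\Rightarrow$(i) step you single out as the key point only works because one of the two functors is already known to preserve \emph{all} skeletal realizations.

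Objects of $\Perf_k^{w\leq 0}$ have no such projectivity. For instance, $\Bar_\bullet(k[-1])$, with terms $k[-1]^{\oplus n}$, is a $1$-skeletal diagram in $\Perf_k^{w\leq 0}$ with realization $k\in\Perf_k^{w\leq 0}$. Yet $\Map_k(k,-)$ sends every term to a point, while $\Map_k(k,k)\simeq k$. So nothing in your sketch shows that the left Kan extension of $F\in\Fun_\delta(\Perf_k^{w\leq 0},\Cscr)$ to $\Perf_k$ preserves skeletal realizations, or that it is computed by your skeletal resolutions. Nor does it explain how the $\delta$-hypothesis, which only concerns realizations landing in $\Perf_k^{w\leq 0}$, controls realizations that leave it, such as $|\Bar_\bullet(k)|\simeq k[1]$. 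Claim (B) inherits the same problem, since it needs $\iota_!\iota^*G$ to preserve skeletal realizations on $\Perf_k$.

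The paper does not reprove this step. It cites Brantner--Mathew's extension result (their Prop.~3.14, via Prop.~3.11), noting only that over a general $k$ their amplitude conditions must be read as Tor-amplitude. That cited result is exactly what your proposal is missing.

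One way to supply the existence half is to induct on the Tor-amplitude bound. Suppose $F_m$ is defined on $\Sigma^m\Perf_k^{w\leq 0}$ and preserves skeletal realizations that land there, with $F_0=F$. For $X\in\Sigma^{m+1}\Perf_k^{w\leq 0}$, set $F_{m+1}(X)=|F_m(\Bar_\bullet(\Sigma^{-1}X))|$; here $\Bar_q(\Sigma^{-1}X)=(\Sigma^{-1}X)^{\oplus q}\in\Sigma^m\Perf_k^{w\leq 0}$. Now take a skeletal $X_\bullet$ with realization $X$ in $\Sigma^{m+1}\Perf_k^{w\leq 0}$, and exchange the two realizations of the bisimplicial object $(\Sigma^{-1}X_p)^{\oplus q}$. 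For fixed $q$, the diagram in $p$ is skeletal in $\Sigma^m\Perf_k^{w\leq 0}$ with realization $(\Sigma^{-1}X)^{\oplus q}$ in that same category. This is precisely where the $\delta$-condition enters.

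You would still need a separate argument identifying the result with $\iota_!F$. The bar diagram is not cofinal: for $m=0$ and $X=k\oplus k[1]$, the object $(k\to X)$ given by the summand inclusion of $(\Perf_k^{w\leq 0})_{/X}$ admits no map to any term of the bar diagram, since the augmentations factor through $\Bar_0=0$.

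\textbf{A secondary problem in Stage~2.} The device you propose fails as stated, because $\Delta^{\op}_{\mathrm{inj},\leq 1}\to\Delta^{\op}_{\leq 1}$ is not cofinal. For example, the $\Delta^{\op}_{\mathrm{inj},\leq 1}$-colimit of the constant diagram on $V$ is $V\oplus\Sigma V$, not $V$. Instead, use the stable Dold--Kan correspondence, which you already invoke in Stage~1, to identify $n$-skeletal simplicial objects with filtrations in $\Fun(\Delta^n,-)$; these do commute with $\Ind$.
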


\begin{proof}
    This is~\cite[Prop.~3.14]{brantner-mathew}, but in the proof, which relies
    on~\cite[Prop.~3.11]{brantner-mathew}, one must ensure that, when it appears in their proof, $\Perf_{k,[-n,0]}$ refers to perfect
    $k$-modules having Tor-amplitude in $[-n,0]$ as Brantner and Mathew work only over fields.
\end{proof}

\begin{construction}
    Recall that $k$ and $A$ are commutative rings.
    Let $\Fun(\Mod_k,\Mod_A)^\otimes\rightarrow\BM^\otimes$ be the $\BM^\otimes$-monoidal
    $\infty$-category classifying $\Fun(\Mod_k,\Mod_A)$ together with its
    $(\End(\Mod_A),\End(\Mod_k))$-bimodule structure. The category $\BM^\otimes_{\langle 1\rangle}$
    has objects (colors) $\afrak_l$, $\mfrak$, and $\afrak_r$. The fibers of $\Fun(\Mod_k,\Mod_A)^\otimes$
    over these objects are $\End(\Mod_A)$, $\Fun(\Mod_k,\Mod_A)$, and $\End(\Mod_k)$.

    We let $\Fun_\Sigma(\Mod_k,\Mod_A)^\otimes\subseteq\Fun(\Mod_k,\Mod_A)^\otimes$ be the
    sub-$\BM$-monoidal category on the functors that preserve sifted colimits and we let
    $\Fun_\Sigma^{\flat}(\Mod_k,\Mod_A)^\otimes\subseteq\Fun_\Sigma(\Mod_k,\Mod_A)^\otimes$ be the
    $\BM$-monoidal subcategory of functors which furthermore preserve flat objects. Restriction
    induces a
    $\BM^\otimes$-monoidal functor
    $$\Fun_\Sigma^{\flat}(\Mod_k,\Mod_A)^\otimes\rightarrow\Fun_\omega(\Mod_k^\flat,\Mod_A^\flat)^\otimes,$$
    where on the right we have a $\BM^\otimes$-monoidal $1$-category of functors on flat modules
    which preserve filtered colimits.
\end{construction}

\begin{definition}
    Suppose that $\Cscr$ is an $\infty$-category which admits finite limits and colimits.
    We say that a functor $F\colon \Proj_k^\omega\rightarrow\Cscr$ is right extendable if its right
    Kan extension $\R F\colon\Perf_k^{w\leq 0}\rightarrow\Cscr$ belongs to $\Fun_\delta(\Perf_k^{w\leq
    0},\Cscr)$, i.e., if $\R F$ preserves skeletal simplicial diagrams whose colimit in $\Mod_k$ is  
    in $\Perf_k^{w\leq 0}$.
\end{definition}


\begin{proposition}\label{prop:joost}
    Let $\Fun_\Sigma'(\Mod_k,\Mod_A)^\otimes\subseteq\Fun_\Sigma^{\flat}(\Mod_k,\Mod_A)^\otimes$ be the
    sub-operad on the functors which preserve coskeletal totalizations of cosimplicial flat
    modules. The restriction functor
    $$\Fun_\Sigma'(\Mod_k,\Mod_A)^\otimes\rightarrow\Fun_\omega(\Mod_k^\flat,\Mod_A^\flat)^\otimes$$
    is a fully faithful map of $\infty$-operads with essential image the sub-operad on the functors
    $\Mod_k^\flat\rightarrow\Mod_A^\flat$ which preserve filtered colimits and whose restrictions
    to functors $\Proj_k^\omega\rightarrow\Mod_A$ are right extendable.
\end{proposition}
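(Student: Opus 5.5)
Since both sides are full sub-operads of $\BM^\otimes$-monoidal categories, I would first reduce to a statement about underlying categories. Restriction is already a $\BM^\otimes$-monoidal functor. By Lemma~\ref{lem:fully_faithful_operad_map}, a map of full sub-operads inside such a functor is fully faithful as soon as it is fully faithful on the underlying categories over each color. Here we also need it to be compatible with the multimorphism anima, which are mapping anima in the ambient functor categories after composing. So it suffices to prove the claim colorwise: for each of the three pairs $(k,k)$, $(k,A)$, $(A,A)$, restriction
$$\Fun_\Sigma'(\Mod_k,\Mod_A)\rightarrow\Fun_\omega(\Mod_k^\flat,\Mod_A^\flat)$$
should be fully faithful with the stated essential image. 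We must also check that the essential image is closed under composition, so that it forms a sub-operad. This follows once the colorwise statement is known, since the source is closed under composition: compositions of sifted-colimit preserving, flatness preserving functors that preserve coskeletal totalizations of flat modules again have these properties, because flatness is preserved at each stage.

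For the colorwise statement I would factor restriction through the intermediate categories
$$\Fun_\Sigma(\Mod_k,\Mod_A)\xrightarrow{\sim}\Fun_\delta(\Perf_k^{w\leq 0},\Mod_A)\rightarrow\Fun(\Proj_k^\omega,\Mod_A)\xleftarrow{\sim}\Fun_\omega(\Mod_k^\flat,\Mod_A).$$
The first equivalence is Lemma~\ref{lem:bm} and the last is Lazard's theorem $\Mod_k^\flat\simeq\Ind(\Proj_k^\omega)$.

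The key step is to show that $F\in\Fun_\Sigma^\flat(\Mod_k,\Mod_A)$ preserves coskeletal totalizations of cosimplicial flat modules if and only if $F|_{\Perf_k^{w\leq0}}$ lies in $\Fun_\epsilon(\Perf_k^{w\leq 0},\Mod_A)$, i.e.\ is right Kan extended from $\Proj_k^\omega$.
\begin{itemize}
\item[($\Rightarrow$)] Every $X\in\Perf_k^{w\leq0}$ is the totalization of a coskeletal cosimplicial object in $\Proj_k^\omega$, obtained by dualizing a skeletal simplicial resolution. Condition (iv) of Lemma~\ref{lem:tfae}(b) then gives right Kan extendedness.
\item[($\Leftarrow$)] Let $X^\bullet$ be a coskeletal cosimplicial flat module. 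Write it as a filtered colimit of coskeletal cosimplicial objects in $\Proj_k^\omega$, using Lazard levelwise; this requires some care to keep coskeletality of a fixed degree. Coskeletal totalizations of bounded degree are finite limits, so they commute with filtered colimits in the stable category $\Mod_A$. Since $F$ preserves filtered colimits, the claim reduces to the $\Proj_k^\omega$ case, which is condition (iii) of Lemma~\ref{lem:tfae}(b).
\end{itemize}

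Given the key step, full faithfulness follows: objects of $\Fun_\Sigma'$ correspond under Lemma~\ref{lem:bm} to functors in $\Fun_\delta\cap\Fun_\epsilon$, and by Lemma~\ref{lem:tfae}(b) these embed fully faithfully into $\Fun(\Proj_k^\omega,\Mod_A)$ via restriction.

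For the essential image, start with a filtered-colimit preserving $G\colon\Mod_k^\flat\to\Mod_A^\flat$ whose restriction to $\Proj_k^\omega$ is right extendable. Then $\R G\in\Fun_\delta$. Let $F$ be its left Kan extension to $\Mod_k$, which preserves sifted colimits by Lemma~\ref{lem:bm}. Then $F|_{\Mod_k^\flat}\simeq G$, because flat modules are filtered colimits of $\Proj_k^\omega$ and $F$ preserves filtered colimits. So $F$ preserves flat objects, and $F$ lies in $\Fun_\Sigma'$ by the key step. Conversely, any $F$ in $\Fun_\Sigma'$ has right extendable restriction by ($\Rightarrow$).

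I expect the main obstacle to be the ($\Leftarrow$) direction: presenting an arbitrary coskeletal cosimplicial flat module as a filtered colimit of coskeletal cosimplicial diagrams of finitely generated projectives. If this is awkward, I would instead argue with Tor-amplitude. Such a totalization lies in $\Mod_k^{w\leq0}$. I would then approximate by $\Perf_k^{w\leq 0}$ via filtered colimits, using that on bounded Tor-amplitude objects the functor $F$ is determined by filtered colimits of its values on $\Perf_k^{w\leq0}$.
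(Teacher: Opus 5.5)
Your colorwise analysis is essentially the paper's. Full faithfulness comes from Lemma~\ref{lem:bm} together with the right Kan extension adjunction of Lemma~\ref{lem:tfae}(b). The direction ($\Rightarrow$) uses dual coskeletal resolutions. The direction ($\Leftarrow$) and the essential image use reduction to the $\Proj_k^\omega$ case via filtered colimits.

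\textbf{The gap.} It lies in passing from the colorwise statement to the operadic one. By Lemma~\ref{lem:fully_faithful_operad_map} you need $\Map(K,J)\to\Map(K^{\flat},J^{\flat})$ to be an equivalence for composites $K=F_1\circ\cdots\circ F_m\circ G\circ H_1\circ\cdots\circ H_n$. You get this from the colorwise statement only by asserting that $\Fun'_\Sigma$ is closed under composition.

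Your reason (flatness is preserved at each stage) does not suffice. If $X^\bullet$ is a coskeletal cosimplicial flat module, then $H(X^\bullet)$ is a cosimplicial flat module with $\Tot H(X^\bullet)\simeq H(\Tot X^\bullet)$. But $H(X^\bullet)$ is in general no longer coskeletal; Definition~\ref{def:cosk} deliberately does not require this. So the hypothesis on the next functor $F$ gives no control over $F(\Tot H(X^\bullet))$. The paper's proof explicitly notes that $K$ need not lie in $\Fun'_\Sigma$, only in $\Fun^{\flat}_\Sigma$, which is why the statement is phrased with sub-operads. Relatedly, your extra check that the essential image is closed under composition ``so that it forms a sub-operad'' is unnecessary: the full sub-operad spanned by any full subcategory is already an $\infty$-operad.

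\textbf{The repair.} Prove colorwise full faithfulness with only the target constrained. Take $K\in\Fun^{\flat}_\Sigma(\Mod_k,\Mod_A)$ arbitrary and $J\in\Fun'_\Sigma(\Mod_k,\Mod_A)$.
\begin{enumerate}
\item Lemma~\ref{lem:bm} gives $\Map(K,J)\simeq\Map(K|_{\Perf_k^{w\leq 0}},J|_{\Perf_k^{w\leq 0}})$.
\item $J|_{\Perf_k^{w\leq 0}}$ is right Kan extended from $\Proj_k^\omega$ by your ($\Rightarrow$) step. So the adjunction of Lemma~\ref{lem:tfae}(b) identifies this with $\Map(K|_{\Proj_k^\omega},J|_{\Proj_k^\omega})$.
\item By Lazard, this is $\Map(K^{\flat},J^{\flat})$.
\item $K^{\flat}$ is the composite of the restrictions of the factors, because each factor preserves flat modules.
\end{enumerate}
Your colorwise argument never actually used that the source preserves coskeletal totalizations. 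So the fix is a matter of stating it this way, but as written the reduction rests on an unsupported closure claim.

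\textbf{A related point in ($\Leftarrow$).} The diagrams $F(X^\bullet_\alpha)$ are not coskeletal either. Commuting $\Tot$ past the filtered colimit on the target side therefore needs a different reason than ``coskeletal totalizations are finite limits.'' The right reason is that totalizations of cosimplicial discrete modules are computed by conormalized cochain complexes, so they commute with filtered colimits.
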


\begin{proof}
    To prove fully faithfulness, we can reduce to multilinear mapping spaces by
    Lemma~\ref{lem:fully_faithful_operad_map} and further split by cases depending on the colors of the objects in
    question. We give the argument in the most complicated case. Namely, suppose that
    $F_1,\ldots,F_m\in\End_\Sigma'(\Mod_A)$, $G,J\in\Fun'_\Sigma(\Mod_k,\Mod_A)$, and
    $H_1,\ldots,H_n\in\End_\Sigma'(\Mod_k)$. Let $(-)^\flat$ denote the restrictions to flat
    modules. We have that
    $$\Mul_{\Fun(\Mod_k,\Mod_A)^\otimes}(F_1,\ldots,F_m,G,H_1,\ldots,H_n,J)\we\Map_{\Fun(\Mod_k,\Mod_A)}(F_1\circ\cdots\circ
    F_m\circ G\circ H_1\circ\cdots\circ H_n,J)$$ and similarly for the restrictions to flats.
    Note that the composition, say $K=F_1\circ\cdots\circ F_m\circ G\circ H_1\circ\cdots\circ H_n$,
    is not necessarily in $\Fun'_\Sigma(\Mod_k,\Mod_A)$ but lies in
    $\Fun^{\flat}_\Sigma(\Mod_k,\Mod_A)$. However, preservation of sifted colimits implies that
    \begin{equation}\label{eq:kj}
        \Map_{\Fun_\Sigma(\Mod_k,\Mod_A)}(K,J)\we\Map_{\Fun_\delta(\Perf_{R,\leq
        0},\Mod_A)}(K|_{\Perf_k^{w\leq 0}},J|_{\Perf_k^{w\leq 0}})
    \end{equation}
    by Lemma~\ref{lem:bm}.
    As $J$ is right Kan extended from $\Proj_k^\omega$, the terms of~\eqref{eq:kj} are further
    equivalent to
    $$\Map_{\Fun(\Proj_k^\omega,\Mod_A)}(K|_{\Proj_k^\omega},J|_{\Proj_k^\omega})$$
    using the adjunction of Lemma~\ref{lem:universal}(b).
    Now, note that $K^\flat\we F_1^\flat\circ\cdots\circ F_m^\flat\circ G^\flat\circ H_1^\flat\circ
    H_n^\flat$ and
    $$\Map_{\Fun_\omega(\Mod_k^\flat,\Mod_A^\flat)}(K^\flat,J^\flat)\we\Map_{\Fun(\Proj_k^\omega,\Mod_A^\flat)}(K|_{\Proj_k^\omega},J|_{\Proj_k^\omega}).$$
    However, $\Fun(\Proj_k^\omega,\Mod_A^\flat)\subseteq\Fun(\Proj_k^\omega,\Mod_A)$ is precisely
    the full subcategory of functors $\Mod_k^\flat\rightarrow\Mod_A$ taking flat values.
    This proves fully faithfulness.

    The statement about the essential image can be checked at the level of the fibers
    $\End(\Mod_A)$, $\End(\Mod_k)$, and $\Fun(\Mod_k,\Mod_A)$. We explain the case of
    $\Fun(\Mod_k,\Mod_A)$. If $G\colon\Mod_k\rightarrow\Mod_A$ preserves sifted
    colimits, then its restriction to $\Perf_k^{w\leq 0}$ preserves coconnective skeletal geometric
    realizations and coskeletal totalizations. The latter property implies it is right Kan extended
    off of $\Proj_k^\omega$, so the restriction $G^{\flat}$ is contained
    in the claimed subcategory. Conversely, let $G'\colon \Mod_k^\flat\rightarrow\Mod^{\flat}_A$ 
    be a functor preserving filtered colimits, whose restriction to $\Proj_k^\omega$ is right extendable. 
    Its right Kan extension along $\Proj_k^\omega\to \Perf_k^{w\leq 0}$ preserves coconnective skeletal geometric
    realizations, by definition. Under the equivalence
    $\Fun_\Sigma(\Mod_k,\Mod_A)\we\Fun_\delta(\Perf_k^{w\leq 0},\Mod_A)$ of Lemma~\ref{lem:bm},
    this condition implies $G'\simeq G^{\flat}$ for a functor $G\colon \Mod_k\to \Mod_A$ 
    preserving sifted colimits and coskeletal totalizations of cosimplicial objects in $\Proj^\omega_k$. 
    Since every $n$-coskeletal cosimplicial diagram of flat modules is a filtered colimit
    of $n$-coskeletal objects in $\Proj^\omega_k$ and $n$-coskeletal totalizations commute with filtered colimits, we conclude that $G\in \Fun'_\Sigma(\Mod_k, \Mod_A)$, as desired.
\end{proof}

We can use Proposition~\ref{prop:joost} to construct derived monads and functors between
$\infty$-categories of modules for derived monads. Let $\End_\omega^\RE(\Mod_k^{\mathrm{flat}})$ be
the $\infty$-category of endofunctors of $\Mod_k^\flat$ which preserve filtered colimits and are
right extendable.

\begin{corollary}
    Let $k$ be a ring and let
    $\Alg(\End'_\Sigma(\Mod_k))\hookrightarrow \Alg(\End(\Mod_k))$
    be the full subcategory of monads on $\Mod_k$ which preserve sifted colimits, flat modules, and
    coskeletal totalizations of cosimplicial flat modules. Restriction to the flat $k$-modules defines an equivalence
    $\Alg(\End'_\Sigma(\Mod_k))\to \Alg(\End_\omega^\RE(\Mod_k^{\mathrm{flat}}))$.
\end{corollary}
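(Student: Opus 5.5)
This is a formal consequence of Proposition~\ref{prop:joost}, applied with $A=k$ and restricted to the monoidal fiber over the color $\afrak_r$ (equivalently $\afrak_l$) of $\BM^\otimes$. First I will extract the associative pieces. Pulling the $\BM^\otimes$-monoidal $\infty$-category $\Fun(\Mod_k,\Mod_k)^\otimes$ back along the inclusion $\Assoc^\otimes\to\BM^\otimes$ picking out $\afrak_r$ gives the monoidal $\infty$-category $\End(\Mod_k)^\otimes$ under composition. The sub-operad $\Fun'_\Sigma(\Mod_k,\Mod_k)^\otimes$ pulls back to a full sub-operad $\End'_\Sigma(\Mod_k)^\otimes\subseteq\End(\Mod_k)^\otimes$, namely the one generated by endofunctors that preserve sifted colimits, flat modules, and coskeletal totalizations of cosimplicial flat modules. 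Likewise, $\Fun_\omega(\Mod_k^\flat,\Mod_k^\flat)^\otimes$ pulls back to $\End_\omega(\Mod_k^\flat)^\otimes$, and its full sub-operad generated by right-extendable functors is $\End^\RE_\omega(\Mod_k^\flat)^\otimes$. Since pullback along $\Assoc^\otimes\to\BM^\otimes$ preserves full faithfulness of operad maps (mapping spaces are computed fiberwise over the base), Proposition~\ref{prop:joost} yields a fully faithful map of $\infty$-operads
$$\End'_\Sigma(\Mod_k)^\otimes\to\End_\omega(\Mod_k^\flat)^\otimes.$$
Its essential image on underlying categories is exactly $\End_\omega^\RE(\Mod_k^\flat)$, so by the converse part of the full sub-operad example it identifies with the sub-operad $\End^\RE_\omega(\Mod_k^\flat)^\otimes$.

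Next I will pass to algebras. A fully faithful map of $\infty$-operads that is essentially surjective onto a full sub-operad is an equivalence of $\infty$-operads onto that sub-operad. Therefore it induces an equivalence
$$\Alg_{/\Assoc}(\End'_\Sigma(\Mod_k))\simeq\Alg_{/\Assoc}(\End^\RE_\omega(\Mod_k^\flat)).$$
By Lemma~\ref{lem:suboperad_algebras}, the left-hand side is the full subcategory of $\Alg(\End(\Mod_k))$ consisting of monads whose underlying endofunctor lies in $\End'_\Sigma(\Mod_k)$, which is precisely the category named in the statement. This holds even though $\End'_\Sigma(\Mod_k)$ need not be closed under composition, since Lemma~\ref{lem:suboperad_algebras} works at the level of sub-operads. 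The same lemma identifies the right-hand side with monads on $\Mod_k^\flat$ whose underlying endofunctor is filtered-colimit preserving and right extendable. The resulting functor is the one induced by restriction, because the operad map in Proposition~\ref{prop:joost} is restriction to flat modules.

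The only step needing care is the identification of the pulled-back sub-operads with the sub-operads generated by the relevant full subcategories. This is needed because the composite of two functors in $\End'_\Sigma(\Mod_k)$ may fail to preserve coskeletal totalizations, so these are genuinely operads and not monoidal categories. I would handle it through the fiberwise description of full sub-operads: both the source and target of the pulled-back map are full subcategories of $\End(\cdot)^\otimes$ on tuples of objects of the given full subcategory. The recognition criterion, Lemma~\ref{lem:fully_faithful_operad_map}, then checks full faithfulness using only the multilinear mapping spaces already computed in the proof of Proposition~\ref{prop:joost}.
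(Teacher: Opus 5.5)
Your proposal is correct and follows the same route as the paper, whose proof simply cites Proposition~\ref{prop:joost} together with Lemma~\ref{lem:suboperad_algebras}. You have spelled out the steps that citation leaves implicit: restricting to the color $\afrak_r$, noting that a fully faithful operad map whose essential image is a full sub-operad is an equivalence onto that sub-operad, and using Lemma~\ref{lem:suboperad_algebras} to identify algebras in these sub-operads with monads whose underlying endofunctor lies in the relevant subcategory.
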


\begin{proof}
    We use Lemma~\ref{lem:suboperad_algebras} and Proposition~\ref{prop:joost}.
\end{proof}

\begin{example}
    Let $A$ be a flat $k$-algebra. The monoid $\Map_{\End(\Mod_A)}(\id_{\Mod_A},\id_{\Mod_A})$ of
    $k$-linear natural transformations of the identity functor is given by the usual center of $A$
    since it can be reduced to the discrete, flat case.
\end{example}

\begin{remark}
    Suppose that $T$ is a monad on $\Mod_k$ which preserves sifted colimits and flat modules.
    Suppose further that $T$, as a functor, is a filtered colimit of excisively polynomial
    functors. Then, $T$ preserves coskeletal totalizations of cosimplicial flat modules
    by~\cite[Prop.~3.38]{brantner-mathew}. Consequently, $T$ belongs to $\Alg(\End'_\Sigma(\Mod_k))$ and
    is thus ``derived''.
    Conversely, consider a monad $T$ on $\Mod_k^{\mathrm{flat}}$ whose underlying functor is a filtered colimit of functors of finite degree, preserving filtered colimits. Then $T$ is right extendable by~\cite[Thm.~3.27]{brantner-mathew}.
\end{remark}

\begin{example}
	The monad $\Sym\colon \Mod_k^{\mathrm{flat}}\rightarrow \Mod_k^{\mathrm{flat}}$
	is a direct sum of functors of finite degree and hence extends to a monad
	$\LSym\colon \Mod_k\rightarrow \Mod_k$. Its category of algebras is the 
	category $\DAlg_k$ of derived commutative $k$-algebras.
\end{example}

\subsection{Derived plethories}\label{sec:derived_plethories}

Let $k$ be a commutative ring. Consider the full sub-categories
$$
\End_\Sigma^{\flat}(\DAlg_k)\subseteq \End(\DAlg_k), \qquad \End_\Sigma^{\flat}(\Ani_k)\subseteq \End(\Ani_k)\qquad \End_\Sigma^{\flat}(\CAlg_k^\heart)\subseteq \End(\CAlg_k^\heart)
$$ 
of endofunctors that preserve sifted colimits and send $\Sym(M)$ to a flat (discrete) $k$-algebra
whenever $M$ is a flat $k$-module.
Note that these subcategories are closed under composition and 
that any endofunctor in $\End_\Sigma^{\flat}(\DAlg_k)$ preserves animated $k$-algebras.

The results of the previous sections then have the following consequence.

\begin{proposition}\label{prop:derived-monad}
	There is a commuting square of monoidal $\infty$-categories
	$$\begin{tikzcd}
	\End^{\flat}_{\Sigma}(\DAlg_k)\arrow[r]\arrow[d, "\we"] & \End_{\Sigma}^{\flat}(\Ani_k)\arrow[r, "\we"]\arrow[d, "\we"] & \End_{\Sigma}^{\flat}(\CAlg_k^\heart)\arrow[d, "\we"]\\
	\BMod_{\LSym}\big(\End^{\flat}_\Sigma(\Mod_k)\big)\arrow[r] & \BMod_{\LSym}\big(\End^{\flat}_\Sigma(\Mod^{\cn}_k)\big)\arrow[r, "\we"] & \BMod_{\Sym}\big(\End^{\flat}_\Sigma(\Mod^{\heart}_k)\big),
	\end{tikzcd}$$
	where the vertical functors conjugate an endofunctor by the free-forgetful adjunction. 
	The left horizontal functors are given by restriction to connective objects,
	and the right horizontal functors send endofunctors $F\colon \Ani_k\rightarrow \Ani_k$ and $G\colon \Mod_k^{\cn}\rightarrow \Mod_k^{\cn}$ to
	$\pi_0F\colon \CAlg_k^\heart\rightarrow \CAlg_k^\heart$ and $\pi_0G\colon \Mod_k^{\heart}\rightarrow \Mod_k^{\heart}$.
\end{proposition}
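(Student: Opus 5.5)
The square is assembled from three ingredients: Corollary~\ref{cor:bimodules_monoidal} for the vertical functors, the restriction results of Section~\ref{sec:deriving} for the horizontal functors, and Corollary~\ref{cor:T-bimodule} with Proposition~\ref{prop:intertwining} for naturality.

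\textbf{Vertical functors.} I apply Corollary~\ref{cor:bimodules_monoidal} to the three monadic adjunctions
\[
\LSym\colon\Mod_k\rightleftarrows\DAlg_k,\qquad \LSym\colon\Mod_k^\cn\rightleftarrows\Ani_k,\qquad \Sym\colon\Mod_k^\heart\rightleftarrows\CAlg_k^\heart .
\]
Each monad preserves sifted colimits, and in particular geometric realizations. The corollary then gives monoidal equivalences $\End'(\LMod_T)\simeq{}_T\BMod_T(\End')$, sending $X$ to $U\circ X\circ F$.

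I then restrict to the full subcategories of functors that preserve sifted colimits and the flatness condition.
\begin{enumerate}
\item[(i)] Since $U$ and $F$ preserve and detect sifted colimits, $X$ preserves sifted colimits if and only if $UXF$ does. For the converse direction, I resolve an algebra by its bar resolution, which consists of free algebras, and pass to sifted colimits of free algebras.
\item[(ii)] The condition ``$X$ sends $\Sym(M)$ to a flat algebra for flat $M$'' is literally the condition that $UXF$ preserves flat modules.
\end{enumerate}
These subcategories are closed under composition and under the relative tensor product, because sifted colimits commute with the bar construction. So the equivalences restrict to monoidal equivalences on the $\flat$, $\Sigma$ subcategories. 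I interpret $\BMod_{\LSym}$ as $\LSym$-bimodules whose underlying functor lies in $\End^\flat_\Sigma$.

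\textbf{Horizontal functors.} On the bottom row I use Proposition~\ref{prop:animation} with $A=k$. It gives a monoidal equivalence $\End^\flat_\Sigma(\Mod_k^\cn)\simeq\End^\flat_\Sigma(\Mod_k^\heart)$ via $\pi_0$, and under it $\LSym|_{\cn}$ corresponds to $\Sym$. Hence it induces an equivalence of bimodule categories. The left bottom functor is restriction to connective objects, which is monoidal because endofunctors preserving sifted colimits and flat modules preserve $\Mod_k^\cn$, the sifted-colimit closure of the flats.

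On the top row, endofunctors in $\End^\flat_\Sigma(\DAlg_k)$ preserve $\Ani_k$, since $\Ani_k$ is generated under sifted colimits by $\LSym$ of finite free modules. So restriction is a monoidal functor. The map $F\mapsto\pi_0F$ on $\Ani_k$ is defined by $\pi_0F|_{\CAlg^\heart}$.

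\textbf{Commutativity.} Each square commutes because conjugation by the adjunctions is compatible with these operations. For the left square, $U\circ X\circ F$ restricted to connective modules equals $U\circ(X|_{\Ani})\circ F$. For the right square, $\pi_0$ commutes with $U$ and with $F$ on discrete inputs: indeed $\pi_0\LSym(M)\simeq\Sym(\pi_0M)$ for connective $M$, and both sides agree on flat inputs.

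The top right functor is then an equivalence by two-out-of-three. For this I need the compatibility of the monoidal, bimodule-level structures; it follows from the naturality of Corollary~\ref{cor:T-bimodule} and Proposition~\ref{prop:intertwining} with respect to the morphisms of adjunctions given by $\Mod^\cn\subset\Mod$ and $\pi_0$.

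\textbf{Main obstacle.} I expect the hardest step to be making the right-hand square commute \emph{monoidally}. The functor $\pi_0$ is not a restriction, so it is not obviously compatible with the lax structures. I would handle it as in the proof of Proposition~\ref{prop:animation}: identify both the top and the bottom right functors with the unique inverses of restriction to $\Proj_k^\omega$, that is, to $\Poly_k$ on the algebra side. Then both are determined by their values on free objects, where they tautologically agree, so the monoidal compatibility holds.
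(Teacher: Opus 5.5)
Your treatment of the vertical equivalences matches the paper: you use Corollary~\ref{cor:bimodules_monoidal} together with the bar-resolution argument that $X$ preserves sifted colimits if and only if $U\circ X\circ\LSym$ does. The same goes for the left square and for the bottom-right equivalence via Proposition~\ref{prop:animation}.

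The step that does not go through as written is the right square. You fix the top-right functor to be $F\mapsto\pi_0F|_{\CAlg_k^{\heart}}$, want the square to commute (monoidally), and then conclude by two-out-of-three.

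\begin{itemize}
\item Your ``Commutativity'' paragraph only compares underlying functors on flat inputs. Two-out-of-three needs the two composites to agree as functors into the bimodule category, bimodule structures included.
\item The remedy you propose does not supply this. A priori $F\mapsto\pi_0\circ F\circ\iota$ is only \emph{oplax} monoidal, being conjugation by $\pi_0\dashv\iota$. Corollary~\ref{cor:T-bimodule} and Proposition~\ref{prop:intertwining} say nothing about naturality in morphisms of adjunctions.
\item In Proposition~\ref{prop:animation}, restriction to $\Mod_k^{\flat}\we\Ind(\Proj_k^\omega)$ is itself a monoidal functor. On the algebra side, restriction to $\Poly_k$ is not, since endofunctors in $\End^{\flat}_\Sigma$ send polynomial algebras to flat algebras that need not be (ind-)polynomial.
\item So ``both are inverse to restriction to $\Poly_k$ and agree on free objects'' can at best identify underlying functors, never monoidal structures.
\item Even that agreement is not tautological. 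Morphisms in $\Poly_k$ are not all of the form $\Sym$ of a module map, so naturality on $\Poly_k$ requires the right $\Sym$-module structure.
\end{itemize}

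The clean fix, which is what the paper does, is to reverse the logic. The other three functors of the right square are monoidal equivalences. Hence there is a unique monoidal equivalence $\End^{\flat}_\Sigma(\Ani_k)\we\End^{\flat}_\Sigma(\CAlg_k^{\heart})$, $F\mapsto F'$, making the square commute. One then only has to identify its underlying functor with $\pi_0$. Both $F'$ and $\pi_0F$ preserve sifted colimits, so it suffices to compare them on $\Poly_k$. There the bar-construction description of the inverse in Corollary~\ref{cor:bimodules_monoidal} gives natural isomorphisms
\[
F'(A)\we\big|\Bar_\bullet(F'\circ\Sym,\,U\circ\Sym,\,UA)\big|\we\big|\Bar_\bullet(F\circ\LSym,\,U\circ\LSym,\,UA)\big|\we\pi_0F(A).
\]
The middle isomorphism holds because $U\circ F'\circ\Sym$ and $U\circ F\circ\LSym$ restrict to isomorphic $\Sym$-bimodules in $\End_\omega(\Mod_k^{\flat})$ by construction. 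The monoidal structure on ``$\pi_0$'' is then inherited by transport, rather than something you have to verify.
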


\begin{proof}
	The vertical equivalences all follow in the same way from Corollary~\ref{cor:bimodules_monoidal}. 
	Let us therefore only discuss the equivalence between $\End^{\flat}_{\Sigma}(\DAlg_k)$
	and $\LSym$-bimodules in $\End^{\flat}_{\Sigma}(\Mod_k)$.
	By Corollary~\ref{cor:bimodules_monoidal}, conjugation with the free-forgetful
	adjunction $\LSym\colon \DAlg_k\leftrightarrows \Mod_k\colon U$ induces 
	an equivalence of monoidal $\infty$-categories
	$$
	\End'(\DAlg_k)\we \BMod_{\LSym}(\End'(\Mod_k))
	$$
	of endofunctors preserving geometric realizations.
	It remains to show that $F\in \End^\flat_\Sigma(\DAlg_k)$ if and only if
	$U\circ F\circ \LSym\in \End^{\flat}_{\Sigma}(\Mod_k)$.	
	To see this, suppose that $F\colon \DAlg_k\to \DAlg_k$ preserves geometric realizations. 
	If $F$ preserves sifted colimits, then $U\circ F\circ \LSym$ preserves sifted colimits as well.
	Conversely, suppose that $U\circ F\circ \LSym$ preserves sifted colimits.
	Then $F\circ \LSym$ preserves sifted colimits as well, so that each term 
	in the bar construction $\Bar_\bullet(F\circ \LSym, U\circ \LSym, U)$
	preserves sifted colimits. Taking the geometric realization, we find that $F$
	preserves sifted colimits.
	
	The commutativity of the left square follows directly from the fact that the free-forgetful
	adjunction $\LSym\colon \DAlg_k\leftrightarrows \Mod_k\colon U$ preserves connective
	objects. For the right square, recall from Proposition~\ref{prop:animation}
	that there are unique monoidal equivalences
	$\End^{\flat}_\Sigma(\Mod^{\cn}_k)\we \End^{\flat}_\Sigma(\Mod^{\heart}_k)
	\we \End_\omega(\Mod_k^{\flat})$, where the right equivalence and the 
	composite are given by restriction to flat modules.
	This induces monoidal equivalences on categories of bimodules
	$$
	\BMod_{\LSym}\big(\End^{\flat}_\Sigma(\Mod^{\cn}_k)\big)\we 
	\BMod_{\Sym}\big(\End^{\flat}_\Sigma(\Mod^{\heart}_k)\big)
	\we \BMod_{\Sym}\big(\End_\omega(\Mod_k^{\flat})\big)
	$$
	where the right equivalence and the composite equivalence are given by restriction.
	There is then a unique monoidal equivalence 
	$\End^{\flat}_{\Sigma}(\Ani_k)\we \End^{\flat}_{\Sigma}(\CAlg_k^\heart)$
	making the right square commute. To identify the underlying functor,
	let us write $F'\in \End^{\flat}_{\Sigma}(\CAlg_k^\heart)$ for the endofunctor corresponding
	to $F\in \End^{\flat}_{\Sigma}(\Ani_k)$. We claim that $F'$ is naturally equivalent to
	$\pi_0F\colon \CAlg_k^\heart\to \CAlg_k^\heart$. Since both of these functors preserve sifted colimits, 
	it suffices	to provide a natural isomorphism between their restrictions to polynomial algebras.
	Unravelling the proof of Corollary~\ref{cor:bimodules_monoidal} and
	Proposition \ref{prop:animation}, we have natural isomorphisms of functors 
	$\Poly_k\rightarrow \CAlg_k^\heart$ of the form
	$$
	F'(A)\we \big|\Bar_\bullet(F'\circ \Sym, U\circ \Sym, UA)\big| \we 
	\big|\Bar_\bullet(F\circ \LSym, U\circ \LSym, UA)\big|\we \pi_0 F(A).
	$$
	Indeed, the first and last isomorphisms follow from the description of the equivalence
	in Corollary~\ref{cor:bimodules_monoidal}.
	The middle isomorphism follows from the fact that $U\circ F'\circ \Sym$ and 
	$U\circ F\circ \LSym$ restrict to naturally isomorphic $\Sym$-bimodules
	in $\End_\omega(\Mod_k^{\flat})$ by construction.
\end{proof}

\begin{corollary}\label{cor:dalg-monads}
    Consider the sub-operad $\End_\Sigma'(\DAlg_k)\subseteq\End^{\flat}_\Sigma(\DAlg_k)$
    of sifted colimit-preserving functors $F$ such that $F\circ\LSym_k$ preserves
     flat objects and coskeletal totalizations of cosimplicial flat modules.
     \begin{enumerate}
     \item[{\em (a)}] The composite 
     $
     \End_\Sigma'(\DAlg_k)\hookrightarrow\End^{\flat}_\Sigma(\DAlg_k)\rightarrow \End^{\flat}_{\Sigma}(\CAlg_k^\heart)
     $
     induces an equivalence between $\End_\Sigma'(\DAlg_k)$ and
     the sub-operad of endofunctors with the additional property that 
     $U\circ F\circ \LSym_k\colon \Proj_k^{\omega}\rightarrow \Mod_k^{\flat}$
     is right extendable.
     
     \item[{\em (b)}] $\End_\Sigma'(\DAlg_k)$ is equivalent to the sub-operad of 
     $(\Sym,\Sym)$-bimodules in $\End_\omega(\Mod_k^\flat)$ which are right extendable.
    \end{enumerate} 
     
\end{corollary}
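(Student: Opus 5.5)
We prove (b) first and deduce (a) from it using the commuting square of Proposition~\ref{prop:derived-monad}.

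\emph{Step 1: reduction to bimodules.} Proposition~\ref{prop:derived-monad} identifies $\End^\flat_\Sigma(\DAlg_k)$ monoidally with $\BMod_{\LSym}(\End^\flat_\Sigma(\Mod_k))$ by sending $F$ to $U\circ F\circ\LSym_k$. Under this equivalence, the defining condition of $\End'_\Sigma(\DAlg_k)$ says precisely that the underlying endofunctor $U\circ F\circ \LSym_k$ lies in $\End'_\Sigma(\Mod_k)$. That is, it should preserve sifted colimits (automatic), flat objects, and coskeletal totalizations of cosimplicial flat modules. Since the forgetful functor $U$ preserves and detects limits, this matches the condition on $F\circ\LSym_k$ stated in the corollary. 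Thus $\End'_\Sigma(\DAlg_k)$ corresponds to the full sub-operad of $\LSym$-bimodules whose underlying functor lies in $\End'_\Sigma(\Mod_k)$. Note that $\LSym$ itself lies in $\End'_\Sigma(\Mod_k)$, as recorded in the example following Proposition~\ref{prop:joost}.

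\emph{Step 2: transport along restriction.} Proposition~\ref{prop:joost} with $A=k$ gives a fully faithful $\BM^\otimes$-monoidal restriction
\[\End'_\Sigma(\Mod_k)^\otimes\to\End_\omega(\Mod_k^\flat)^\otimes,\]
whose essential image is the right extendable functors. We apply it to $\BM$-algebras, that is, bimodules, using Lemma~\ref{lem:suboperad_algebras} (compare the corollary after Proposition~\ref{prop:joost}). This gives an equivalence between $\LSym$-bimodules in $\End'_\Sigma(\Mod_k)$ and $\Sym$-bimodules in $\End_\omega(\Mod_k^\flat)$ whose underlying functor is right extendable; here the restriction of $\LSym$ is $\Sym$.

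\emph{Main obstacle.} The delicate point is that bimodules are algebras over the operad $\BM^\otimes$ in a sub-operad that is not closed under composition. Composites such as $\LSym\circ X\circ\LSym$ need not preserve coskeletal totalizations of flats. This is exactly why Proposition~\ref{prop:joost} is phrased for sub-operads rather than monoidal subcategories, with fully faithfulness checked on multimorphism spaces. I would verify carefully that the operadic $\BM$-algebras in the sub-operad agree with bimodules in $\End^\flat_\Sigma(\Mod_k)$ whose underlying object lies in the subcategory. This is Lemma~\ref{lem:suboperad_algebras}, applied with $\Oscr^\otimes=\BM^\otimes$.

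\emph{Step 3: deducing (a).} The right-hand equivalences of Proposition~\ref{prop:derived-monad} identify $\End^\flat_\Sigma(\CAlg^\heart_k)$ with $\Sym$-bimodules in $\End_\omega(\Mod_k^\flat)$ via $F\mapsto U\circ F\circ\Sym$, restricted to flat modules. The composite $\End^\flat_\Sigma(\DAlg_k)\to\End^\flat_\Sigma(\CAlg_k^\heart)$ is compatible with the restriction map of Step 2 by commutativity of that square. Hence (a) follows from (b) by taking the essential image in $\End^\flat_\Sigma(\CAlg^\heart_k)$: it consists of those $F$ for which $U\circ F\circ\Sym$, restricted to $\Proj^\omega_k$, is right extendable.
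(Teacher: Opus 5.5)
Your proposal is correct and follows the paper's proof. The paper likewise obtains both (a) and (b) by passing through the vertical equivalences of Proposition~\ref{prop:derived-monad} and then taking $(\LSym,\LSym)$-bimodules along the fully faithful map of $\infty$-operads from Proposition~\ref{prop:joost}; you make explicit the use of Lemma~\ref{lem:suboperad_algebras} for $\BM$-algebras and the compatibility with restriction to flat modules in the bottom row of that square.
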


\begin{proof}
	Under the vertical equivalences from Proposition~\ref{prop:derived-monad}, these
	equivalences are induced by the fully faithful map of $\infty$-operads
	$
	\End_\Sigma'(\Mod_k)^{\otimes}\rightarrow\End_\Sigma(\Mod_k^{\heart})
	\we \End_\omega(\Mod_k^\flat)^{\otimes}
	$
	by taking $(\LSym, \LSym)$-bimodules.
\end{proof}

Note that the composite equivalence in Theorem~\ref{thm:extending-monad-intro} is a consequence of part (b) of the above corollary, while the second equivalence in Theorem~\ref{thm:extending-monad-intro} follows from Proposition~\ref{prop:joost}.

\begin{example}[Frobenius]\label{ex:frobenius}
    Let $k$ be a commutative ring with $\Spec k$ connected.
    Consider the monoid $\Map_{\End(\DAlg_{k})}(\id_{\DAlg_k},\id_{\DAlg_k})$ of natural
    endomorphisms of the identity functor on $\DAlg_k$. By Corollary~\ref{cor:bimodules}, this monoid agrees
    with the monoid of endomorphisms of $\LSym$ as an $(\LSym,\LSym)$-bimodule in the
    $\BM^\otimes$-monoidal category $\Fun(\Mod_k,\Mod_k)^\otimes$. By Proposition~\ref{prop:joost},
    the monoid is equivalent to the monoid of endomorphisms of $\Sym$ as a $(\Sym,\Sym)$-bimodule on flats.
    Such an endomorphism is now determined by the image of $x$ under $k[x]\rightarrow k[x]$. Let $f(x)$
    be the image. We must have $f(rs)=f(r)f(s)$, so $f(x)=x^n$ for some $n$ (locally on
    $\Spec k$ and hence on all of $\Spec k$). Additivity then implies that $n$ is a prime power
    $p^f$ where $p$ is the characteristic of $k$.
    We see that the Frobenius endomorphism of derived
    commutative $\bF_p$-algebras does indeed exist.
\end{example}

We will now apply these results in the setting of plethystic algebra studied 
by Borger and Wieland in~\cite{borger-wieland}. Recall that a plethory is a monad $T\colon \CAlg_k^\heart\rightarrow \CAlg_k^\heart$
which admits a right adjoint $R_T\colon \CAlg_k^\heart\rightarrow \CAlg_k^\heart$.
The right adjoint $R_T$ can also be viewed as an affine ring scheme: indeed, we can identify
$$
R_T\cong \Spec(T(k[x]))
$$ 
for a certain biring structure on $T(k[x])$. The data of a plethory can then equivalently
be encoded as an algebra structure on $T(k[x])$ in the category of $k$-birings.
This is a kind of transmutation argument in the sense of Drinfeld. We will call a plethory flat if
$T(k[x])$ is a flat commutative $k$-algebra.

We will be interested in the following variant of this notion for derived rings.

\begin{definition}[Derived flat plethories]\label{def:derived_plethory}
    Let $k$ be a ring. A monad $T\colon \DAlg_k\rightarrow \DAlg_k$ is
    called a derived flat plethory if it satisfies the following conditions.
    
    \begin{enumerate}
    \item[(a)] $T$ admits a right adjoint $R_T\colon \DAlg_k\rightarrow \DAlg_k$.
    
    \item[(b)] $T(k[x])$ is a flat discrete $k$-algebra.
    
    \item[(c)] Suppose that $M\rightarrow N\rightarrow k[-n]$ is
    a cofiber sequence in $\Perf_k^{w\leq 0}$ and let
    $B^\bullet\colon \Delta\to \DAlg_k$
    be the \v{C}ech complex of $T(\LSym_k(M))\rightarrow T(\LSym_k(N))$. 
    Then $T(\LSym_k(M))\rightarrow \Tot(B^\bullet)$ is an equivalence.
    \end{enumerate}
    We will write $\LSym_k^T\colon \Mod_k\rightarrow \Mod_k$ for the composite monad.
\end{definition}

\begin{remark}\label{rem:flat-plethory}
	Let $T$ be a derived flat plethory. Then $\LSym_k^T(M)$ is a flat $k$-algebra
	for any flat $k$-module $M$. This follows from condition (b)
	by taking sums and filtered colimits. 
	Similarly, $T$ preserves animated rings, because it preserves sifted colimits and
	sends polynomial rings to (discrete) animated rings.
\end{remark}

\begin{lemma}\label{lem:cech descent vs right extended}
	Let $T\colon \DAlg_k\rightarrow \DAlg_k$ be a left adjoint functor.
	Then $T$ satisfies condition \emph{(c)} from Definition \ref{def:derived_plethory} 
	if and only if $\LSym_k^T\colon \Perf_k^{w\leq 0}\rightarrow \Mod_k$
	is right Kan extended from its restriction to $\Proj_k^\omega$.
\end{lemma}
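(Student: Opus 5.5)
The plan is to prove both directions through the characterization in Lemma~\ref{lem:universal}(b), after rewriting the \v{C}ech complex in condition (c) as $\LSym_k^T$ applied to a coskeletal cosimplicial diagram of modules. Since $T$ and $\LSym_k$ are left adjoints, $T\circ\LSym_k\colon\Mod_k\to\DAlg_k$ preserves pushouts. So for a cofiber sequence $M\to N\to k[-n]$ in $\Perf_k^{w\leq 0}$, the \v{C}ech conerve $B^\bullet$ of $T\LSym_k(M)\to T\LSym_k(N)$ is $T\LSym_k$ applied to the \v{C}ech conerve $C^\bullet$ of $M\to N$ in $\Mod_k$. The latter has terms
$$C^m\simeq N\oplus_M\cdots\oplus_M N\simeq N\oplus k[-n]^{\oplus m},$$
all of which lie in $\Perf_k^{w\leq 0}$. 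Moreover $\Tot(C^\bullet)\simeq M$, because $\Mod_k$ is stable and $C^\bullet$ is the cobar construction for the fiber.

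\textbf{Step 1: $C^\bullet$ is coskeletal.} I would argue via the stable Dold--Kan correspondence. Up to the constant summand $N$, $C^\bullet$ is the cobar construction on $k[-n]$, which is dual to the simplicial bar construction $\mathrm{B}_\bullet$ of an abelian group. The normalized complex of the latter is concentrated in degree $1$. Hence $C^\bullet$ is right Kan extended from $\Delta_{\leq 1}$ (or from some small $\Delta_{\leq 2}$; this is harmless), so it is coskeletal.

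\textbf{Step 2: ($\Leftarrow$).} Suppose $\LSym_k^T$ is right Kan extended from $\Proj_k^\omega$ on $\Perf_k^{w\leq 0}$. By Lemma~\ref{lem:universal}(b)(ii), $\LSym_k^T$ preserves the coskeletal totalization $\Tot(C^\bullet)\simeq M$. Since the forgetful functor $\DAlg_k\to\Mod_k$ is conservative and preserves limits, the map $T\LSym_k(M)\to\Tot(B^\bullet)$ is an equivalence. This is condition (c).

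\textbf{Step 3: ($\Rightarrow$).} Let $F=\LSym_k^T|_{\Perf_k^{w\leq0}}$ and let $G$ be the right Kan extension of $F|_{\Proj_k^\omega}$. Let $\alpha\colon F\to G$ be the unit map, which is an equivalence on $\Proj_k^\omega$. By Step 1 and Lemma~\ref{lem:universal}(b), $G$ also turns each such \v{C}ech conerve into a limit. By hypothesis (c), $F$ does as well. Let $\mathcal{S}\subseteq\Perf_k^{w\leq 0}$ be the full subcategory on which $\alpha$ is an equivalence. Then whenever all terms $N\oplus k[-n]^{\oplus m}$ of $C^\bullet$ lie in $\mathcal{S}$, so does $M$.

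It then remains to show $\mathcal{S}=\Perf_k^{w\leq0}$, which I would do by induction on the Tor-amplitude $n$ together with a cell count. Two observations drive the induction:
\begin{itemize}
\item Direct sums with shifts are reachable, because $N\oplus k[-n]\simeq\fib(N\xrightarrow{0}k[-n+1])$.
\item Any $X$ of Tor-amplitude $[-n,0]$ is, dually to a cell attachment on $X^\vee$, of the form $\fib(N\to Q[-n])$ with $Q$ finitely generated projective and $N$ of amplitude $[-n+1,0]$. Writing $Q$ as a retract of $k^r$ and attaching one copy of $k[-n]$ at a time, with retracts handled by closure of $\mathcal{S}$ under retracts, reduces the statement to cofiber sequences with last term $k[-n]$.
\end{itemize}

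\textbf{Main obstacle.} I expect the bookkeeping of this induction to be the hardest part. The terms $C^m$ contain the summand $k[-n]^{\oplus m}$, which has the same amplitude as $X$. So the induction must be organized with a well-founded measure, for instance lexicographic in the amplitude and the number of top cells, such that every $N\oplus k[-n]^{\oplus m}$ is strictly smaller than $M$. The summands $k[-n]$ themselves are handled by the previous amplitude level, using $k[-n]=\fib(0\to k[-n+1])$. Step 1 is the other point that needs care, but I expect it to be standard.
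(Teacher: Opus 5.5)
Your strategy is the paper's. For the ``if'' direction you apply Lemma~\ref{lem:universal}(b) to the coskeletal \v{C}ech conerve of $M\to N$; the paper leaves its coskeletality implicit. For the converse you compare $\LSym^T_k$ with the right Kan extension of its restriction to $\Proj_k^\omega$, inducting on Tor-amplitude and then on the number of bottom cells.

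However, your cell decomposition is off by one, and the ``main obstacle'' you describe comes from this error. Suppose $N$ has Tor-amplitude $[1-n,0]$. Then $\fib(N\to Q[-n])$ is an extension of $N$ by $Q[-n-1]$, so in general its amplitude is $[-n-1,0]$, not $[-n,0]$. For example, when $n=1$ the map $N\to Q[-1]$ is null, so you only get $N\oplus Q[-2]$ and never $k[-1]$. The correct decomposition comes from dualizing the brutal truncation of a length-$n$ complex of finitely generated projectives representing $X^\vee$. This gives a cofiber sequence $X\to N\to Q[1-n]$ with $N\in\Perf_k^{w[1-n,0]}$. With your indexing, the \v{C}ech terms $N\oplus k[-n]^{\oplus m}$ acquire $m$ cells in the top degree, so the lexicographic measure you propose would not decrease.

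With the correct shift, attaching one cell at a time uses cofiber sequences $X_j\to X_{j-1}\to k[1-n]$. The \v{C}ech terms are then $X_{j-1}\oplus k[1-n]^{\oplus m}$. The extra summands have amplitude $[1-n,1-n]$ and add no cells in degree $-n$. So each term either lies in $\Perf_k^{w[1-n,0]}$ or needs at most $j-1$ generators in degree $-n$, and the induction closes immediately.

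This is exactly the paper's proof. It inducts on $n$, then on the minimal $r$ such that some $\gamma\colon k[-n]^{\oplus r}\to M$ has cofiber in $\Perf_k^{w[1-n,0]}$, and pushes out one summand $k[-n]\to 0$ at a time. Your retract step is harmless but unnecessary. If $Q$ is a summand of $k^{r}$, the composite $k[-n]^{\oplus r}\to Q[-n]\to X$ already has cofiber in $\Perf_k^{w[1-n,0]}$.
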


\begin{proof}
	The \v{C}ech complex of the map of derived algebras $\LSym^T_k(M)\to \LSym^T_k(N)$ is equivalent
	to the image of the \v{C}ech conerve of $M\to N$ under $\LSym_k^T$, since $\LSym_k^T$ preserves colimits.
	If $\LSym_k^T\colon \Perf_k^{w\leq 0}\rightarrow \Mod_k$ is right Kan extended
	from its restriction to $\Proj_k^\omega$, then condition (c) therefore holds
	by Lemma~\ref{lem:universal}. 
	
	Conversely, let $\R\LSym^T_k$ denote the right Kan extension of the restriction
	of $\LSym_k^T$ to $\Proj_k^\omega$.
    Thus, there is a natural transformation $\mu\colon \LSym^T_k\rightarrow\R\LSym^T_k$ of functors
    $\Perf_k^{w\leq 0}\rightarrow\DAlg_k$.
    We will prove by induction on $n\geq 0$ that 
    $$
    \mu\colon \LSym^T_k(M)\rightarrow\R\LSym^T_k(M)
    $$
    is an equivalence for all $M\in \Perf_k^{w[-n, 0]}$.
    For $n=0$, we have that $M\in \Proj_k^{\omega}$ and $\mu$ is an equivalence by construction.
    
    Let us now take $n\geq 1$ and $M\in \Perf_k^{w[-n, 0]}$. 
    Let $r\geq 1$ be the minimal number of generators of $\pi_{-n}(M)$, i.e.\ the 
    smallest integer such that there exists a map $\gamma\colon k[-n]^{\oplus r}\rightarrow M$
    with cofiber in $\Perf_k^{w[1-n, 0]}$.
    We will prove by induction on $r$ that $\mu$ is an equivalence.
    Consider the pushout diagram of $k$-modules
    $$\begin{tikzcd}
    k[-n]\arrow[r, "{(\id, 0, \dots, 0)}"]\arrow[d] &[3ex] k[-n]^{\oplus r}\arrow[r, "\gamma"] & M\arrow[d]\\
    0\arrow[rr] & & N
    \end{tikzcd}$$
    and let $N^\bullet$ be the \v{C}ech complex of the map $M\rightarrow N$.
    Since the cofiber of $M\rightarrow N$ is $k[1-n]$, each term of the
    \v{C}ech complex is given by $N^s\we N\oplus k[1-n]^{\oplus s}$.
    Consequently, each $N^s\in \Perf_k^{w[1-n, 0]}$ if $r=1$. If $r>1$, then
    each $N^s\in \Perf_k^{w[-n, 0]}$ has a smaller number of generators in degree $-n$.
    Applying the functors $\LSym^T_k$ and $\R\LSym^T_k$, we obtain a natural transformation of augmented cosimplicial diagrams
        $$\begin{tikzcd}[column sep=1pc]
    \LSym^T_k(M)\arrow[r]\arrow[d, "\mu"{swap}] & \LSym^T_k(N^0)\arrow[r, yshift=0.5ex]\arrow[r, yshift=-0.5ex]\arrow[d, "\mu"{swap}] & \LSym^T_k(N^1)\arrow[d, "\mu"]\arrow[r, yshift=1ex]\arrow[r, yshift=-1ex]\arrow[r] & \dots\\
    \R\LSym^T_k(M)\arrow[r] & \R\LSym^T_k(N^0)\arrow[r, yshift=0.5ex]\arrow[r, yshift=-0.5ex] & \R\LSym^T_k(N^1)\arrow[r, yshift=1ex]\arrow[r, yshift=-1ex]\arrow[r] & \dots
    \end{tikzcd}$$
     The bottom row is a limit diagram by Lemma~\ref{lem:universal} and 
     the top row is a limit by condition (c).
     Since the maps $\mu\colon \LSym^T_k(N^s)\rightarrow \R\LSym^T_k(N^s)$ are equivalences
    for all $s\geq 0$ by inductive hypothesis, the result follows.
\end{proof}

\begin{lemma}\label{lem:ccff condition}
	Let $T\colon \DAlg_k\rightarrow \DAlg_k$ be a left adjoint functor such that $T(k[x])$ is flat.
	If the map 
	$$
	\LSym^T(k[-n])\rightarrow \LSym^T(0)=k
	$$
	is coconnectively faithfully flat
	for each $n>0$, then $T$ satisfies condition \emph{(iii)} from
	Definition~\ref{def:derived_plethory}. The converse holds if $k$ is a ring such 
	that every finitely presented $k$-module has finite flat dimension.
\end{lemma}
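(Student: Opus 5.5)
The plan is to translate condition (c) of Definition~\ref{def:derived_plethory} (the condition the statement calls (iii)) into a base-change statement. Then I would invoke the descent theory for coconnectively faithfully flat (ccff) maps of Mathew--Mondal~\cite{mathew-mondal}. I will use three facts from there: ccff maps are stable under base change in $\DAlg_k$; for a ccff map $A\to B$ the augmented \v{C}ech conerve $A\to B^{\otimes_A(\bullet+1)}$ is a limit diagram; and ccff of $A\to B$ is detected by $B\otimes_A-$ reflecting and preserving coconnectivity together with the appropriate flatness-type Tor condition.

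\emph{Forward direction.} Let $M\to N\to k[-n]$ be a cofiber sequence in $\Perf_k^{w\leq 0}$. Rotating it gives a pushout $N\simeq M\amalg_{k[-n-1]}0$ in $\Mod_k$. Since $T$ is a left adjoint, $\LSym^T_k=T\circ\LSym_k$ preserves colimits. It therefore carries this pushout to a pushout of derived $k$-algebras:
$$\LSym^T_k(N)\simeq \LSym^T_k(M)\otimes_{\LSym^T_k(k[-n-1])}k .$$
By hypothesis (applied with $n+1>0$) the map $\LSym^T_k(k[-n-1])\to k$ is ccff. Base change then shows that $\LSym^T_k(M)\to\LSym^T_k(N)$ is ccff. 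Its \v{C}ech complex is exactly the $B^\bullet$ of condition (c), and ccff descent gives $\LSym^T_k(M)\simeq\Tot(B^\bullet)$. Since $T(\LSym_k M)=\LSym^T_k(M)$, this is condition (c). This direction should be formal once the Mathew--Mondal inputs are quoted.

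\emph{Converse.} Assume (c) and that finitely presented $k$-modules have finite flat dimension. Set $A=\LSym^T_k(k[-n])$ for $n>0$. Applying (c) to the cofiber sequence $k[-n]\to 0\to k[1-n]$ shows that $A\to k$ satisfies \v{C}ech descent. The terms $k^{\otimes_A(s+1)}\simeq\LSym^T_k(k[1-n]^{\oplus s})$ are computable. By Lemma~\ref{lem:cech descent vs right extended}, $\LSym^T_k$ is right Kan extended on $\Perf_k^{w\leq 0}$. Hence both $A$ and these terms are totalizations of cosimplicial diagrams of flat discrete $k$-algebras of the form $\LSym^T_k(P)$ with $P\in\Proj_k^\omega$ (Remark~\ref{rem:flat-plethory}). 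In particular they are coconnective, and they have controlled Tor-amplitude over $k$.

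To check ccff of $A\to k$, I would first verify that $k\otimes_A X$ is coconnective when $X$ is coconnective. This should follow by writing $X$ via the descent diagram and using that the terms are flat over $k$. Conversely, if $k\otimes_A X$ is coconnective, I would recover $X\simeq\Tot(k^{\otimes_A(\bullet+1)}\otimes_A X)$ from descent; a totalization of coconnective objects is coconnective. The flatness part of ccff should reduce to Tor computations against discrete $k$-modules. The finite flat dimension hypothesis enters exactly here: any discrete test module is a filtered colimit of finitely presented ones, and those have finite flat dimension, so they are bounded perfect-like. This lets me commute $-\otimes_A X$ past the (coskeletal) totalizations, which is only legitimate against objects of bounded Tor-amplitude.

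The main obstacle is this converse. The worry is whether the interchange of tensor products with the infinite totalization is justified, which is why the finite flat dimension assumption is needed. I would try first to reduce to test modules $k/I$ with $I$ finitely generated and run the argument degree by degree using the coskeletal (finite) nature of the cosimplicial resolutions.
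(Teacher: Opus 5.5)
Your forward direction is the paper's argument. The map $\LSym^T_k(M)\to\LSym^T_k(N)$ is the base change of the ccff map $\LSym^T_k(k[-n-1])\to k$, and ccff descent then gives condition (c).

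The converse, however, has a genuine gap. You propose to verify the defining properties of ccff for $A=\LSym^T_k(k[-n])\to k$ by hand, but neither half has a working mechanism.

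\begin{itemize}
\item Condition (c) gives \v{C}ech descent only for $A$ itself. Identifying $X\simeq\Tot(k^{\otimes_A(\bullet+1)}\otimes_A X)$ for an arbitrary coconnective $A$-module $X$ requires commuting $-\otimes_A X$ with a non-coskeletal totalization. That is precisely what ccff (or a Tor-amplitude bound on the augmentation fiber) would provide, so this step is circular.
\item Knowing that $A$ and the $k^{\otimes_A(s+1)}$ are totalizations of flat $k$-modules does not by itself control the coconnectivity of $k\otimes_A X$, which is a geometric realization.
\item The finite flat dimension hypothesis is misplaced in your sketch. It is not used to commute $-\otimes_A X$ past totalizations. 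It is used to commute $-\otimes_k M$, for $M$ a finitely presented discrete $k$-module, past totalizations of cosimplicial flat $k$-modules. A bounded flat resolution of $M$ suffices for this; $M$ need not be perfect.
\end{itemize}

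The missing ideas are the following.

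First, by \cite[Prop.~2.10]{mathew-mondal}, ccff of $A\to k$ reduces to showing that $\fib(A\to k)$ has Tor-amplitude $\leq -1$ over $k$. So you never have to handle arbitrary $A$-modules.

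Second, this fiber is computed via Lemma~\ref{lem:cech descent vs right extended}. Choose $N^\bullet\colon\Delta\to\Proj_k^\omega$ with $\Tot(N^\bullet)\simeq k[-n]$ and $N^s=0$ for $s<n$, for instance reduced cochains on a simplicial $n$-sphere. Then $\fib(A\to k)\simeq\Tot(\fib(\LSym^T_k(N^\bullet)\to k))$ is a totalization of flat $k$-modules vanishing in cosimplicial degrees $<n$, in particular in degree $0$. It is this degree-zero vanishing that yields Tor-amplitude $\leq -1$, via \cite[Rem.~3.17]{mathew-mondal}; this is where the hypothesis on $k$ is used.

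Your own \v{C}ech setup could be completed the same way. Since $k^{\otimes_A 1}=k$, the fiber of $A\to k$ is the totalization of $\fib(k^{\otimes_A(\bullet+1)}\to k)$. This vanishes in degree $0$, and its terms $\fib(\LSym^T_k(k[1-n]^{\oplus s})\to k)$ have Tor-amplitude $\leq 0$ by induction on $n$. But your proposal never isolates this degree-zero vanishing or states the Tor-amplitude target. Without them the converse is not established.
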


\begin{proof}
	For each cofiber sequence $M\rightarrow N\rightarrow k[-n]$, the map
	$\LSym^T(M)\rightarrow \LSym^T(N)$ is the base change of $\LSym^T(k[-n-1])\rightarrow k$.
	It is therefore ccff, and $\LSym^T(M)$ is indeed the totalization of the \v{C}ech complex.
	For the converse, it suffices to verify that the fiber of $\LSym^T(k[-n])\rightarrow k$
	is a $k$-module of Tor-amplitude $\leq -1$ by \cite[Prop.~2.10]{mathew-mondal}.
	Pick a cosimplicial diagram $N^\bullet\colon \Delta\rightarrow \Proj_k^\omega$
	such that $\Tot(N^\bullet)\we k[-n]$ and $N^s\we 0$ for all $s<n$; one can for example
	take the reduced cochains on a suitable simplicial model for the $n$-sphere.
	It follows from Lemma~\ref{lem:cech descent vs right extended} that
	$$
	\fib(\LSym^T(k[-n])\rightarrow k)\simeq \Tot\big(\fib(\LSym^T(N^\bullet)\rightarrow k\big)
	$$
	is the totalization of a cosimplicial diagram of flat $k$-modules, which is zero
	in cosimplicial degrees $<n$. 
	The fiber then has tor-amplitude $\leq -1$ by \cite[Rem.~3.17]{mathew-mondal}.
\end{proof}

\begin{theorem}\label{thm:deriving-plethories}
	Let $k$ be a commutative ring.
    Consider the full subcategory 
    $\Pleth^{\flat}_k\subseteq \Alg(\End(\DAlg_k))$ 
    of derived flat plethories and the full subcategory
    $\Pleth^{\heart, \flat}_k\subseteq \Alg(\End(\CAlg_k^\heart))$
    of flat plethories. Then there is a natural fully faithful functor
    $$
    \Pleth^{\flat}_k\longrightarrow\Pleth_k^{\heart,\flat}; \quad T\longmapsto \pi_0T\big|_{\CAlg_k^{\heart}}.
    $$
    Its essential image consists of the flat plethories $T'$ such that
    $\R\Sym^{T'}_k\colon \Perf_k^{w\leq 0}\rightarrow \DAlg_k$ has the following property:
    for each fiber sequence $M'\rightarrow M\rightarrow M''$
    in $\Perf_k^{w\leq 0}$, the map 
    $$
    R\Sym^{T'}_k(M)\otimes_{\R\Sym^{T'}_k(M')} k \rightarrow \R\Sym^{T'}_k(M'')
    $$
    is an equivalence.
\end{theorem}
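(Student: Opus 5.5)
We will deduce the theorem from Corollary~\ref{cor:dalg-monads} by restricting the equivalence there to algebras, and then translating conditions (a)--(c) of Definition~\ref{def:derived_plethory} into conditions on the discrete monad $T'=\pi_0T|_{\CAlg_k^\heart}$. The first step is to check that a derived flat plethory $T$ lies in $\Alg(\End'_\Sigma(\DAlg_k))$. Being a left adjoint, $T$ preserves all colimits, in particular sifted ones. By Remark~\ref{rem:flat-plethory}, $T\circ\LSym_k$ sends flat modules to flat discrete algebras. By Lemma~\ref{lem:cech descent vs right extended}, condition (c) says exactly that $\LSym^T_k|_{\Perf_k^{w\leq 0}}$ is right Kan extended from $\Proj_k^\omega$. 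By Lemma~\ref{lem:universal}(b), this is equivalent to preservation of coskeletal totalizations of cosimplicial finitely generated projectives. We then pass to coskeletal cosimplicial flat modules using filtered colimits and the fact that $\LSym^T_k$ commutes with filtered colimits, as at the end of the proof of Proposition~\ref{prop:joost}. Lemma~\ref{lem:suboperad_algebras} together with Corollary~\ref{cor:dalg-monads}(a) then gives a fully faithful functor
\[
\Pleth_k^\flat\hookrightarrow\Alg(\End'_\Sigma(\DAlg_k))\simeq\Alg(\End^{\RE}(\CAlg_k^\heart)),\qquad T\mapsto\pi_0T|_{\CAlg^\heart_k}.
\]
Here $\End^{\RE}(\CAlg_k^\heart)$ denotes the sub-operad described in Corollary~\ref{cor:dalg-monads}(a), and the formula for the functor comes from Proposition~\ref{prop:derived-monad}.

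The second step is to check that the image consists of flat plethories, and that this step involves no loss of information. Since $T$ is a left adjoint, $\pi_0T$ on $\CAlg_k^\heart$ preserves colimits: it preserves sifted colimits, and finite coproducts because $T$ sends $\otimes$ of polynomial algebras to $\otimes$ of flat discrete algebras. By the adjoint functor theorem, or by directly writing $R_{T'}=\Hom(T'(k[x]),-)$ with its induced ring structure, $\pi_0T$ then has a right adjoint. Flatness of $T'(k[x])=T(k[x])$ is condition (b). The functor $\Alg(\End'_\Sigma(\DAlg_k))\to\Alg(\End(\CAlg_k^\heart))$ is fully faithful onto its image, and the property of being a plethory is a property of the underlying monad. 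Hence $\Pleth^\flat_k\to\Pleth^{\heart,\flat}_k$ is fully faithful.

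For the essential image, start with a flat plethory $T'$. By Corollary~\ref{cor:dalg-monads}(a), $T'$ lifts to a monad $T\in\Alg(\End'_\Sigma(\DAlg_k))$ exactly when $\Sym^{T'}_k|_{\Proj^\omega_k}$ is right extendable. In that case, the restriction of $\LSym^T_k$ to $\Perf_k^{w\leq 0}$ agrees with $\R\Sym^{T'}_k$, so condition (c) holds by Lemma~\ref{lem:cech descent vs right extended}. Condition (b) holds by construction. For condition (a), we argue as follows. The functor $\LSym^T_k$ preserves sifted colimits and is right Kan extended on coconnective flat perfects, and we want it to send pushouts of free algebras to pushouts. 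This holds on flat modules by the plethory hypothesis, and it extends to $\Perf^{w\leq 0}_k$ through the cosimplicial resolutions, because tensor products of flat modules commute with coskeletal totalizations. Granting this, $T$ preserves all colimits and is therefore a left adjoint by presentability.

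It remains to identify right extendability with the stated base-change property, and this is the step we expect to be the main obstacle. Suppose first that $\R\Sym^{T'}_k$ satisfies the base-change formula for every fiber sequence. Given a skeletal simplicial diagram $X_\bullet$ in $\Perf^{w\leq 0}_k$ with colimit in $\Perf_k^{w\leq 0}$, we would reduce to the elementary bar constructions $|\Bar_\bullet(0,M',M)|\simeq M''$ by induction on the skeletal dimension, using the formula $\R\Sym(M)\otimes_{\R\Sym(M')}k\simeq\R\Sym(M'')$ to compute realizations degreewise. This reduction mimics the proof of \cite[Prop.~3.14]{brantner-mathew}. Conversely, if $\R\Sym^{T'}_k$ preserves the relevant skeletal realizations, then it agrees with $\LSym^T_k$, which preserves colimits. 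The base-change formula then follows by applying $\LSym^T_k$ to the pushout square exhibiting $M''$ as $M\amalg_{M'}0$. The delicate point in the first direction is that base change along fiber sequences only directly controls pushouts, and general skeletal realizations need to be reduced to iterated pushouts. We would try to handle this with the skeletal filtration of $X_\bullet$, in which each latching step is a pushout in $\Perf_k^{w\leq 0}$.
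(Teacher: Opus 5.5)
Your full-faithfulness argument (Steps~1 and~2) matches the paper. There is a genuine gap, however, in the essential-image part, at condition~(a).

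\textbf{The gap.} You try to show that $\LSym^T_k$ sends $M\oplus N$ to $\LSym^T_k(M)\otimes_k\LSym^T_k(N)$ for $M,N\in\Perf_k^{w\le 0}$, using only right extendability and the fact that tensor products commute with coskeletal totalizations. That fact is true but does not apply. $\R\Sym^{T'}_k(M)=\Tot\,\Sym^{T'}_k(M^\bullet)$ is the totalization of a cosimplicial flat module which is \emph{not} coskeletal, even when $M^\bullet$ is. What you actually need is a K\"unneth isomorphism
\[
\Tot(A^\bullet)\otimes_k\Tot(B^\bullet)\xrightarrow{\ \sim\ }\Tot(A^\bullet\otimes_k B^\bullet)
\]
for arbitrary cosimplicial flat $k$-modules. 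This amounts to K-flatness of unbounded cochain complexes of flat modules, and it fails over a general ring. For instance, take $k=\bF_p[\epsilon]/(\epsilon^2)$ and the complex $k\xrightarrow{\epsilon}k\xrightarrow{\epsilon}k\to\cdots$. Via Dold--Kan it is $\Tot(A^\bullet)$ for a cosimplicial diagram $A^\bullet$ of finite free modules, and $\Tot(A^\bullet)\simeq\bF_p$. Then $\Tot(A^\bullet\otimes_kA^\bullet)$ is coconnective, whereas $\bF_p\otimes_k\bF_p$ has nonzero $\pi_i$ for every $i\geq 0$. Over rings of finite weak dimension such as $\bZ$ the step can be rescued, but the theorem is stated for arbitrary $k$.

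This gap propagates. Your final paragraph derives the base-change formula from right extendability by invoking that $\LSym^T_k$ preserves colimits, which is exactly what was not established. As a result, the base-change hypothesis is never actually used to obtain condition~(a).

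\textbf{The fix.} Follow the paper and extract finite coproducts from the base-change property itself. Note that $\R\Sym^{T'}_k(0)=T'(k)=k$. Apply base change to the fiber sequence $N[-1]\xrightarrow{0}M\to M\oplus N$; since the first map factors through $0$, this gives
\[
\R\Sym^{T'}_k(M\oplus N)\simeq\R\Sym^{T'}_k(M)\otimes_{\R\Sym^{T'}_k(N[-1])}k\simeq\R\Sym^{T'}_k(M)\otimes_k\bigl(k\otimes_{\R\Sym^{T'}_k(N[-1])}k\bigr).
\]
Applying base change to $N[-1]\to 0\to N$ identifies the last factor with $\R\Sym^{T'}_k(N)$. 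More generally, base change makes $\R\Sym^{T'}_k$ preserve the finite colimits in $\Perf_k^{w\le0}$ that are preserved in $\Perf_k$. This yields both of the following.
\begin{itemize}
\item Right extendability. Your skeletal-filtration plan handles this; the paper treats it in one line, so it is not the real obstacle.
\item Condition~(a). This follows because $\DAlg_k$ is generated under sifted colimits by the $\LSym_k(M)$ with $M\in\Perf_k^{w\le0}$.
\end{itemize}
For the other direction, a derived flat plethory is a left adjoint by definition, and $\LSym^T_k|_{\Perf_k^{w\le0}}\simeq\R\Sym^{\pi_0T}_k$, so the base-change formula is immediate. You never need to prove that right extendability alone implies base change.
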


\begin{proof}
	We have a commuting diagram of fully faithful functors
	$$\begin{tikzcd}[column sep=5pc]
	\Pleth_k^{\flat}\arrow[r, dotted]\arrow[d, hookrightarrow] & \Pleth_k^{\heart, \flat}\arrow[d, hookrightarrow]\\
	\Alg(\End'_\Sigma(\DAlg_k))\arrow[r, hookrightarrow, "{T\mapsto \pi_0T|_{\CAlg_k^{\heart}}}"] & \Alg(\End_{\Sigma}(\CAlg_k^{\heart})).
	\end{tikzcd}$$
	For the left vertical inclusion, this uses that every derived plethory is contained 
	in $\End'_\Sigma(\DAlg_k)$ by Lemma~\ref{lem:cech descent vs right extended}.
	The bottom functor is the fully faithful functor induced by Corollary~\ref{cor:dalg-monads}.
	To see that this functor preserves plethories, let $T\colon \DAlg_k\rightarrow \DAlg_k$
	be a derived flat plethory. By construction, the two functors 
	$$
	\pi_0T\circ \Sym\colon \Proj_k^{\omega}\to \CAlg_k^{\heart, \flat}
	\qquad\qquad T\circ \Sym\colon \Proj_k^{\omega}\to \CAlg_k^{\heart, \flat}\subseteq \DAlg_k
	$$
	are naturally equivalent. As $T$ was a left adjoint, these equivalent functors 
	preserve finite coproducts. The monad 
	$\pi_0T\colon \CAlg_k^{\heart}\rightarrow \CAlg_k^{\heart}$
	therefore preserves finite coproducts 
	of polynomial algebras. Since $\pi_0T$ preserves sifted colimits
	and every $k$-algebra is a sifted colimit of 
	polynomial algebras,
	it follows that $\pi_0T$ preserves finite coproducts. This means that it preserves
	all colimits, so it is a left adjoint and hence a plethory.
	In addition, note that the two functors
	$$
	\R\Sym^{\pi_0T}_k\colon \Perf_k^{w\leq 0}\rightarrow \DAlg_k\qquad\qquad \LSym^T_k\colon \Perf_k^{w\leq 0}\to \DAlg_k
	$$
	are naturally equivalent as well, as both are right Kan extended from $\Proj_k^{\omega}$.
	The fact that $\R\Sym^{\pi_0T}_k$ sends fiber sequences to pushouts 
	of derived algebras	then follows from the fact that 
	$\LSym^T_k\colon \Mod_k\to \DAlg_k$	is a left adjoint.
	
	Conversely, suppose that $T'\colon \CAlg_k^{\heart}\rightarrow \CAlg_k^{\heart}$
	is a flat plethory such that $\R\Sym_k^{T'}$ sends fiber sequences to pushouts
	of derived algebras. This implies that $\R\Sym_k^{T'}$ preserves all pushouts,
	and hence all finite colimits, in $\Perf_k^{w\leq 0}$ that are preserved by
	the inclusion into $\Perf_k$. 
	In particular, $\R\Sym_k^{T'}$ preserves skeletal geometric realizations 
	that are preserved by the inclusion into $\Perf_k$.
	This means that $\Sym_k^{T'}\colon \Proj_k^\omega\rightarrow \DAlg_k$ is right extendable.
	Since the forgetful functor $\DAlg_k\rightarrow \Mod_k$ detects limits and
	sifted colimits, the functor 
	$\Sym_k^{T'}\colon \Proj_k^\omega\rightarrow \Mod_k$ is right extendable as well.
	By Corollary~\ref{cor:dalg-monads}, there then exists a unique monad 
	$T\in \Alg(\End'_\Sigma(\DAlg_k))$ such that $T'=\pi_0T$.
	To see that $T$ is a derived flat plethory, it remains to verify that it preserves
	colimits. Since $T$ preserves sifted colimits and every derived algebra is
	a sifted colimit of free derived algebras $\LSym_k(M)$ with $M\in \Perf_k^{w\leq 0}$,
	it suffices to show that $\LSym^T\colon \Perf_k^{w\leq 0}\rightarrow \DAlg$ 
	preserves finite coproducts. But this functor is equivalent to $\R\Sym^{T'}_k$, 
	which preserved all finite colimits preserved by the inclusion into $\Perf_k$.
\end{proof}

Let us add a useful criterion for a classical flat plethory $T$ to extend to a 
derived flat plethory. To this end, let us say that a commutative affine flat group
scheme $\CAlg_k^{\heart}\rightarrow \Ab$ is strongly unipotent if for each $n\geq 0$, 
the fpqc stack $\B^n H\colon \CAlg_k^{\heartsuit}\rightarrow \Spc$
is affine in the sense of Toën~\cite{toen-affines}.
If $k$ is a ring such that every finitely presented $k$-module has finite flat dimension, 
this is equivalent to $H$ being unipotent, in the sense that $\B H$ is affine, 
by \cite[Prop.~4.14]{mathew-mondal}.	

\begin{proposition}\label{prop:image restriction plethories}
	Let $k$ be a ring and $T\in \Pleth_k^{\heart, \flat}$ be a flat plethory. 
	If $R_T\colon \CAlg_k^{\heart}\rightarrow \Ab$ is strongly unipotent, 
	then $T$ lies in the essential image of the fully faithful inclusion of 
	Theorem \ref{thm:deriving-plethories}.
	The converse holds if every finitely presented $k$-module has finite flat dimension.
\end{proposition}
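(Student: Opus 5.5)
The plan is to verify the criterion in Theorem~\ref{thm:deriving-plethories} by identifying $\R\Sym^{T}_k$ on $\Perf_k^{w\leq 0}$ with cochains on the higher stacks $R_T\otimes M^\vee$. The first step is this identification. For $P\in\Proj_k^\omega$ we have $\Sym^{T}_k(P)=T(\Sym_k P)=\Oscr(R_T\otimes_k P^\vee)$, the ring of functions on the affine scheme $\underline{\Hom}(P,R_T)\cong R_T^{\oplus\mathrm{rk}P}$, and this is natural in $P$. Write $M\in\Perf_k^{w\leq 0}$ as $\Tot(P^\bullet)$ with $P^\bullet$ coskeletal in $\Proj_k^\omega$. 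Then
\[
\R\Sym^T_k(M)\simeq\Tot\,\Oscr(R_T\otimes (P^\bullet)^\vee)\simeq \R\Gamma\big(X_M,\Oscr\big),
\]
where $X_M\colon\CAlg_k^\heart\to\Spc$ is the fpqc stack obtained by geometric realization of the simplicial affine schemes $R_T\otimes(P^\bullet)^\vee$, i.e.\ the Dold--Kan stack $\underline{\Map}(M^\vee\text{-dual},R_T)$. In particular $X_{k[-n]}\simeq \B^n R_T$. A fiber sequence $M'\to M\to M''$ in $\Perf_k^{w\leq 0}$ yields a fiber sequence of group stacks, and hence a pullback square $X_{M''}\simeq X_M\times_{X_{M'}}\ast$.

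For the forward direction, assume each $\B^nR_T$ is affine in Toën's sense. The first task is to show that every $X_M$ is affine. I would argue by induction on Tor-amplitude and the number of generators in the bottom degree, exactly as in the proof of Lemma~\ref{lem:cech descent vs right extended}. Each $X_M$ sits in a fiber sequence whose other terms are $X_{M_0}$ for smaller $M_0$ and $\B^nR_T^{\oplus r}$. Affine stacks are closed under limits, and a fiber sequence of group stacks presents $X_M$ as a limit of affine stacks. Hence every $X_M$ is affine.

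Once affineness is known, I would use Toën's theorem that $\Spec$ is an equivalence from coconnective derived (cosimplicial) $k$-algebras to affine stacks. Under this equivalence, limits of affine stacks correspond to colimits of algebras. Applying $\Oscr$ to the pullback square $X_{M''}\simeq X_M\times_{X_{M'}}\ast$ then gives
\[
\R\Sym^T_k(M)\otimes_{\R\Sym^T_k(M')}k\simeq\R\Sym^T_k(M''),
\]
which is exactly the condition of Theorem~\ref{thm:deriving-plethories}. A subtle point is that Toën works over a field or with cosimplicial algebras. I would phrase the argument using the comparison of derived and cosimplicial rings, or directly using descent along the ccff maps $\R\Sym^T_k(k[-n-1])\to k$. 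The affineness of $\B^{n+1}R_T$ supplies precisely the base-change formula
\[
\Oscr(\B^{n}R_T)\simeq k\otimes_{\Oscr(\B^{n+1}R_T)}k
\]
needed for this descent, and the general case follows by the same induction.

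For the converse, assume every finitely presented $k$-module has finite flat dimension and that $T$ extends. Then Lemma~\ref{lem:ccff condition} shows that $\LSym^T(k[-n])\to k$ is ccff for all $n>0$. Iterating the base-change formula in the pushout squares $\LSym^T(k[-n])\otimes_{\LSym^T(k[-n-1])}k\simeq k$, together with ccff descent as in \cite{mathew-mondal}, identifies $\B^nR_T$ with $\Spec\LSym^T(k[-n])$. This shows $\B^nR_T$ is affine, i.e.\ $R_T$ is strongly unipotent; alternatively, one can invoke \cite[Prop.~4.14]{mathew-mondal}.

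I expect the main obstacle to be the comparison between the cosimplicial cochains $\R\Gamma(X_M,\Oscr)$ and Toën's affine-stack formalism over a general base ring $k$. This requires in particular that the cochain algebra of an affine stack recovers the stack, so that tensor products correspond to fiber products. I would first try to bypass this comparison entirely by phrasing everything through ccff descent along $\R\Sym^T_k(k[-n])\to k$.
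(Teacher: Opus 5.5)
Your route is the paper's. You identify $\R\Sym^T_k(M)$ with $\Oscr(X_M)$ for $X_M=R_T\otimes_k M^\vee$. You deduce affineness of every $X_M$ from affineness of the $\B^nR_T$. You then turn the fiber square $X_{M''}\simeq X_M\times_{X_{M'}}\ast$ into the required pushout using To\"en and Mathew--Mondal. The converse, via Lemma~\ref{lem:ccff condition} and induction, is also as in the paper.

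\textbf{The gap is in that last step of the forward direction.} You sense this is where the difficulty lies, but the problem is not only comparing formalisms over a general $k$. To\"en's equivalence turns fiber products of affine stacks into pushouts \emph{computed among coconnective algebras}. The criterion of Theorem~\ref{thm:deriving-plethories}, however, concerns the pushout in $\DAlg_k$, and coconnective derived rings are not closed under pushouts there. For example, $\ast\times_{\Ga}\ast\simeq\ast$, yet $k\otimes_{k[x]}k\simeq k\oplus k[1]$. So ``applying $\Oscr$ to the pullback square'' gives your formula only once you know that $\R\Sym^T_k(M)\otimes_{\R\Sym^T_k(M')}k$ is coconnective.

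The missing idea, which the paper supplies, is an effective-epimorphism argument:
\begin{enumerate}
\item Dualizing gives a cofiber sequence $M''^\vee\to M^\vee\to M'^\vee$ of connective modules, so $\pi_0(M^\vee)\to\pi_0(M'^\vee)$ is surjective.
\item Hence $X_M\to X_{M'}$ is an fpqc effective epimorphism, and $\R\Sym^T_k(M')\to\R\Sym^T_k(M)$ is ccff.
\item Its base change along the augmentation to $k$ is therefore coconnective.
\item Only now do \cite[Cor.~2.2.3]{toen-affines} and \cite[Cor.~3.7]{mathew-mondal} identify it with $\Oscr(X_{M''})$.
\end{enumerate}

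Your proposed bypass through the ccff maps $\R\Sym^T_k(k[-n-1])\to k$ does not replace this. Writing a general map $\R\Sym^T_k(M')\to\R\Sym^T_k(M)$ as a base change of those maps presupposes the very pushout property you are proving. Even the special formula $\Oscr(\B^nR_T)\simeq k\otimes_{\Oscr(\B^{n+1}R_T)}k$ needs the same argument, applied to the effective epimorphism $\ast\to\B^{n+1}R_T$.

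Two smaller points.
\begin{itemize}
\item The induction of Lemma~\ref{lem:cech descent vs right extended} does not give affineness. There $X_M$ is the total space of a fiber sequence $X_N\to X_M\to\B^nR_T$, equivalently the realization of a \v{C}ech nerve, not a limit of affine stacks. Rotating to $X_M\simeq\B^nR_T\times_{X_{N[-1]}}\ast$ makes $N[-1]$ larger for that induction. Instead, induct on the width of the Tor-amplitude and split off the whole bottom term $P[-n]\to M$ at once. Then $X_M\simeq X_{P[-n]}\times_{X_{N[-1]}}\ast$, where $X_{P[-n]}$ is a retract of some $(\B^nR_T)^r$ and $N[-1]$ has smaller width. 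This is the paper's ``finite limits and retracts of the $k[n]$''.
\item In the converse, the base-change formula should read $k\otimes_{\LSym^T(k[-n-1])}k\simeq\LSym^T(k[-n])$.
\end{itemize}
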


\begin{proof}
	Note that $R_T$ comes with a natural $k$-linear structure. Using this, we obtain
	a functor $\Perf_k^{\cn}\rightarrow \mathrm{St}_k$ sending each $M\in \Perf_k^{\cn}$
	to the fpqc-stack $R_T\otimes_k M\colon\CAlg_k^{\heart}\rightarrow \Spc$.
	This sends $k[n]$ to $\B^n R_T$ and each cofiber sequence in $\Perf_k^{\cn}$ to a 
	fiber sequence of stacks. Since each $M\in \Perf_k^{\cn}$ can be 
	obtained from the suspensions $k[n]$ using finite limits and retracts,
	it follows that each $R_T\otimes_k M$ is an affine stack. Furthermore, we have 
	a natural equivalence of derived $k$-algebras
	$$
	\Gamma(R_T\otimes_k M, \Oscr)\we \R\Sym^T_k(M^\vee).
	$$
	Now let $M''\rightarrow M\rightarrow M'$ be a cofiber sequence in $\Perf_k^{\cn}$.
	In particular, $\pi_0M\rightarrow \pi_0M'$ is surjective, so that
	$R_T\otimes_k M\rightarrow R_T\otimes_k M'$ is an effective epimorphism for the
	fpqc topology. The map $\R\Sym^T_k(M'^\vee)\rightarrow \R\Sym^T_k(M^\vee)$
	is then coconnectively faithfully flat, so that
	$$
	\R\Sym^T_k(M^\vee)\otimes_{\R\Sym^T_k(M'^\vee)} k \rightarrow \R\Sym^T_k(M''^\vee)
	$$
	is a map of coconnective derived rings. This map is an equivalence by \cite[Cor.~2.2.3]{toen-affines}
	and \cite[Cor.~3.7]{mathew-mondal}. We conclude that $T$ lies in the essential image 
	of the embedding from Theorem~\ref{thm:deriving-plethories}.
	
	For the converse, let $k$ be a ring such that every finitely presented $k$-module
	has finite flat dimension and suppose that $T$ extends to a derived flat plethory.
	Lemma~\ref{lem:ccff condition} then implies that $\R\Sym^T_k(k[-n])\rightarrow k$ is ccff
	for each $n\geq 1$. It follows by induction that $\Spec(\R\Sym^T_k(k[-n]))\we \B^n R_T$,
	so $R_T$ is strongly unipotent.
\end{proof}

\begin{definition}
    Let $T\colon \DAlg_k\rightarrow \DAlg_k$ be a derived flat plethory. 
    We will write $\DAlg_k^T$ for the category of $T$-algebras.
    The forgetful functor is part of both a monadic adjunction and a comonadic adjunction
    $$\begin{tikzcd}
        \mathrm{Free}_T\colon \DAlg_k\arrow[r, yshift=1ex]& \DAlg_k^T\colon \mathrm{oblv}\arrow[l, yshift=-1ex] & \mathrm{oblv}\colon \DAlg^T_k\arrow[r, yshift=1ex]& \DAlg_k\colon \mathrm{coFree}_{R_T},\arrow[l, yshift=-1ex]
    \end{tikzcd}$$
\end{definition}

\begin{lemma}\label{lem: induced monads}
	Let $T$ be a derived flat plethory and consider the following pullback squares in $\Cat_\infty$.
	$$\begin{tikzcd}
	\CAlg_k^{T,\heartsuit} \ar[r, hook] \ar[d, "U^{\heartsuit}"{swap}] &
	\DAlg_k^{T, \cn}\ar[d, "U^\cn"]\ar[r, hook] & \DAlg_k^T\ar[d, "U"]\\
			\CAlg_k^\heartsuit \ar[r,hook] & \DAlg_k^{\cn}\ar[r, hook] & \DAlg_k.
	\end{tikzcd}$$
	\begin{enumerate}[{\em (1)}]
	\item The right square is a pullback in $\PrL$ and is vertically left adjointable.
	\item The left square is a pullback in $\PrR$, the functor $U^\heartsuit$ is monadic and the associated monad is
	isomorphic to the flat plethory $\pi_0T\big|_{\CAlg_k^{\heart}}$
	from Theorem \ref{thm:deriving-plethories}.
	\end{enumerate}
\end{lemma}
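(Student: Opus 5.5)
The plan is to treat the two squares separately, using two inputs throughout. First, $T$ preserves colimits, since it has a right adjoint $R_T$. Second, $T$ preserves connective objects and sifted colimits (Remark~\ref{rem:flat-plethory}).

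\textbf{Right square.} The functor $U$ is monadic. Since $T$ preserves colimits, $U$ creates colimits, and it also has the right adjoint $\mathrm{coFree}_{R_T}$. Hence $\DAlg_k^T$ is presentable and $U$ is a colimit-preserving left adjoint. The inclusion $\DAlg_k^\cn\hookrightarrow\DAlg_k$ is likewise a colimit-preserving functor between presentable categories, so it is a morphism in $\PrL$. Limits in $\PrL$ are computed on underlying $\infty$-categories, so the pullback $\DAlg_k^{T,\cn}$ formed in $\Cat_\infty$ is also the pullback in $\PrL$. Concretely, $\DAlg_k^{T,\cn}$ is the full subcategory of $T$-algebras whose underlying derived ring is connective.

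For vertical left adjointability, note that $\mathrm{Free}_T$ has underlying functor $T$, which carries $\DAlg_k^\cn$ into itself. Therefore $\mathrm{Free}_T$ restricts to a left adjoint $\mathrm{Free}_T^\cn$ of $U^\cn$, and the Beck--Chevalley transformation is the identity on underlying objects. In particular, $U^\cn$ is monadic with monad $T|_{\DAlg_k^\cn}$. This holds because $U^\cn$ is conservative and creates the relevant geometric realizations, which are computed in $\DAlg_k$ and stay connective.

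\textbf{Left square.} The inclusion $\CAlg_k^\heart\hookrightarrow\DAlg_k^\cn$ is accessible, preserves limits and filtered colimits, and has left adjoint $\pi_0$, so it lies in $\PrR$. The functor $U^\cn$ is a right adjoint, by the previous step, between presentable categories. Since limits in $\PrR$ are also computed in $\Cat_\infty$, the square is a pullback in $\PrR$. Thus $\CAlg_k^{T,\heart}$ consists of the $T$-algebras whose underlying ring is discrete.

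Next I construct the left adjoint of $U^\heart$ as $A\mapsto\pi_0^T(\mathrm{Free}_T^\cn A)$. Here $\pi_0^T$ is a left adjoint to the inclusion $\CAlg_k^{T,\heart}\subseteq\DAlg_k^{T,\cn}$. The key point is that for a connective $T$-algebra $X$, the discrete ring $\pi_0X$ inherits a $T$-algebra structure compatible with $X\to\pi_0X$. For this I need the following.
\begin{itemize}
\item \emph{Claim:} for connective $B$, the map $\pi_0T(B)\to\pi_0T(\pi_0B)$ is an isomorphism.
\end{itemize}
This follows from Proposition~\ref{prop:derived-monad}, which identifies $\pi_0T$ on $\Ani_k$ with a functor factoring through $\pi_0$. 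Both sides preserve sifted colimits and agree on polynomial rings.

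Granting the claim, $\tau_{\leq0}$ commutes with the monad action. Since $T$ sends discrete rings to connective ones, the free discrete $T$-algebra on $A\in\CAlg_k^\heart$ is $\pi_0T(A)$. The resulting monad is therefore $\pi_0T|_{\CAlg_k^\heart}$, and its multiplication is induced from that of $T$ via the claim. This matches the monad from Theorem~\ref{thm:deriving-plethories} by the construction in Corollary~\ref{cor:dalg-monads}.

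\textbf{Monadicity of $U^\heart$.} I will apply Barr--Beck. $U^\heart$ is conservative because $U$ is. For a $U^\heart$-split simplicial object, the split colimit in $\CAlg_k^\heart$ is also a split colimit in $\DAlg_k^\cn$, since split colimits are absolute. So $U^\cn$ creates it, and the resulting $T$-algebra is discrete.

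I expect the main obstacle to be the $\pi_0$-compatibility claim, together with checking that the monad structures (not just the underlying functors) on $\pi_0T$ agree. The cleanest route I would try first is to work at the level of $(\Sym,\Sym)$-bimodules via Proposition~\ref{prop:derived-monad}, where both monads restrict to the same algebra in $\End_\omega(\Mod_k^\flat)$.
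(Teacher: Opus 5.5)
Part (1) of your proposal is the paper's argument: $T$ preserves connective objects, so $\mathrm{Free}_T$ restricts to a left adjoint of $U^\cn$, and the Beck--Chevalley map is trivial.

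For part (2) you take a genuinely different route. You build the left adjoint of $U^\heart$ as $\pi_0^T\circ\mathrm{Free}_T^\cn$. You then use the claim $\pi_0T(B)\cong\pi_0T(\pi_0B)$ to compute $0$-truncations of $T$-algebras on underlying rings, and finish with Barr--Beck. The claim is true: $\pi_0\circ T$ and $F'\circ\pi_0$, with $F'$ the sifted-colimit-preserving extension of $\pi_0T|_{\Poly_k}$, both preserve sifted colimits on $\Ani_k$ and agree on $\Poly_k$.

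The paper never needs this claim, because it uses a right adjoint instead.
\begin{itemize}
\item $U^\cn$ preserves colimits, so it has a right adjoint $R^\cn$. Right adjoints automatically preserve $0$-truncated objects, so $U^\cn\dashv R^\cn$ restricts to $U^\heart\dashv R^\cn|_{\CAlg_k^\heart}$.
\item Hence $U^\heart$ is comonadic with comonad $U^\cn R^\cn|_{\CAlg_k^\heart}$. Its left adjoint is $\pi_0T|_{\CAlg_k^\heart}$, by $\Hom(\pi_0TA,B)\simeq\Map(TA,B)\simeq\Map(A,U^\cn R^\cn B)$ for discrete $A,B$.
\item Coalgebras for a comonad are algebras for its left adjoint monad, which identifies the monad.
\end{itemize}

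This sidesteps exactly the step you flag as the obstacle. As written, ``$\tau_{\le0}$ commutes with the monad action'' is only a sketch. A pointwise isomorphism $\pi_0T(B)\cong\pi_0T(\pi_0B)$ does not by itself give $\pi_0X$ a coherent $T$-algebra structure; you would need a general lemma on localizations compatible with a monad.

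If you keep your route, the quickest fix is formal. $U^\cn$ preserves colimits and, being a right adjoint, $0$-truncated objects. So it sends $\tau_{\le0}$-equivalences, which are generated under colimits by the fold maps $S^1\otimes C\to C$, to $\tau_{\le0}$-equivalences. Hence $U^\cn\circ\pi_0^T\simeq\pi_0\circ U^\cn$, and therefore $U^\heart\pi_0^T\mathrm{Free}_T^\cn\simeq\pi_0T$ with no hand-built structure.

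What your approach buys is that, granting such a coherence lemma, part (2) only uses that $T$ preserves sifted colimits and connective objects, not the right adjoint $R_T$. The paper's approach is shorter and has no coherence issue.
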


\begin{proof}
	Assertion (1) follows from the fact that the derived plethory $T$ preserves
	animated rings by Remark \ref{rem:flat-plethory}: 
	the left adjoint to $U$ then restricts to a left adjoint $\DAlg_k^{\cn}\to \DAlg_k^{T, \cn}$ to $U^{\cn}$.
	For (2), recall that $\CAlg_k^\heart\subseteq \DAlg_k^{\cn}$ is the
	full subcategory of $0$-truncated objects. 
	Because $U^{\cn}$ is a conservative right adjoint,
	$\CAlg_k^{T, \heart}\subseteq \DAlg_k^{T, \cn}$ also coincides with
	the full subcategory of $0$-truncated objects. 
	It follows that $U^{\heart}$ is the restriction of the right adjoint $U^{\cn}$
	to $0$-truncated objects. Note that $U^{\cn}$ has a further right adjoint $R^\cn$, 
	such that the comonad $U^{\cn}R^{\cn}$ is the right adjoint of $T$, restricted
	to animated rings. This right adjoint $R^{\cn}$ preserves $0$-truncated 
	objects as well, so that $U^{\heart}$ is both monadic and comonadic.
	The corresponding comonad is the restriction of the comonad $U^\cn\circ R^\cn$
	to $0$-truncated objects. 
	Taking left adjoints, one sees that the corresponding monad is isomorphic to $\pi_0T\big|_{\CAlg_k^{\heart}}$.
\end{proof}

\begin{corollary}\label{cor:center of DAlg_T is discrete} Let $T$ be a derived plethory. Then restriction induces an equivalence
	$$\End(\id_{\DAlg_{k}^T})\simeq \End(\id_{\CAlg_k^{T,\heartsuit}})$$ 
	of $\mathbb E_2$-spaces. In particular, $\End(\id_{\DAlg_{k}^T})$ is a discrete commutative monoid.  
	
\end{corollary}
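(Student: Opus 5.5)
We want $\End(\id_{\DAlg_k^T})\simeq\End(\id_{\CAlg_k^{T,\heart}})$ as $\bE_2$-spaces, and the plan is to reduce both sides to endomorphisms of the same monad. Write $U\colon\DAlg_k^T\to\Mod_k$ for the composite forgetful functor. It is monadic, with monad $\LSym^T_k=U_{\DAlg}\circ T\circ\LSym_k$, which preserves sifted colimits, and in particular geometric realizations. Corollary~\ref{cor:bimodules_monoidal} then gives a monoidal equivalence $\End'(\DAlg_k^T)\simeq{}_{\LSym^T}\BMod_{\LSym^T}(\End'(\Mod_k))$ carrying the unit $\id$ to $\LSym^T_k$. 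Since a monoidal equivalence identifies endomorphisms of the unit as $\bE_2$-monoids, we get $\End(\id_{\DAlg_k^T})\simeq\End_{\BMod}(\LSym^T_k)$ as $\bE_2$-spaces.

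Next, $\LSym^T_k$ lies in $\End'_\Sigma(\Mod_k)$. It preserves flat modules by Remark~\ref{rem:flat-plethory}, and it preserves coskeletal totalizations of cosimplicial flat modules by Lemma~\ref{lem:cech descent vs right extended} together with the argument closing Proposition~\ref{prop:joost}. So Proposition~\ref{prop:joost} applies and restriction to flat modules is a fully faithful map of operads. Fully faithful operad maps induce fully faithful functors on bimodule categories by Lemma~\ref{lem:suboperad_algebras}, so they preserve bimodule endomorphism monoids together with their $\bE_2$-structure, the latter coming from the composition monoidal structure in both settings. This yields $\End_{\BMod}(\LSym^T_k)\simeq\End_{\BMod}(\Sym^{\pi_0T}_k|_{\Mod_k^\flat})$, computed in the $1$-category $\End_\omega(\Mod_k^\flat)$; in particular this monoid is discrete.

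On the discrete side, Lemma~\ref{lem: induced monads}(2) shows that $\CAlg_k^{T,\heart}$ is monadic over $\Mod_k^\heart$ with monad $\Sym^{\pi_0T}_k$, which preserves sifted colimits. Applying Corollary~\ref{cor:bimodules_monoidal} again, and then the equivalence $\End^\flat_\Sigma(\Mod_k^\heart)\simeq\End_\omega(\Mod_k^\flat)$ of Proposition~\ref{prop:animation}, identifies $\End(\id_{\CAlg_k^{T,\heart}})$ with the same bimodule endomorphism monoid. We still have to check that the resulting equivalence is induced by restricting along $\CAlg_k^{T,\heart}\subseteq\DAlg_k^T$, and this is where I expect the main difficulty. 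To handle it, I would observe that a natural endomorphism of $\id_{\DAlg_k^T}$ restricts to a natural endomorphism of $\id_{\CAlg_k^{T,\heart}}$, and that the free algebras $\mathrm{Free}(\Sym(P))$ with $P\in\Proj_k^\omega$ are discrete by condition (b). Both sides are then determined by their components on these free objects, and the identifications above are compatible with that description, so the restriction map agrees with the composite equivalence.
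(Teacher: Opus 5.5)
Your proposal is correct and follows essentially the same route as the paper: Corollary~\ref{cor:bimodules_monoidal} identifies both sides with the endomorphisms of the unit bimodule ($\LSym^T_k$, resp.\ $\Sym^{\pi_0T}_k$), and the middle identification is restriction to flat modules via Proposition~\ref{prop:joost}, which the paper packages as Corollary~\ref{cor:dalg-monads}. Your extra check that the composite equivalence really is the restriction map, using components on the discrete free algebras $\mathrm{Free}(\Sym(P))$, is a detail the paper leaves implicit.
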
	
\begin{proof} We have a chain of equivalences
	$$
	\End(\id_{\DAlg_{k}^T})\simeq \End_{\BMod_{\LSym_k^T}}(\LSym_k^T)\simeq \End_{\BMod_{\Sym_k^{T,\heartsuit}}}(\Sym_k^{T,\heartsuit}) \simeq  \End(\id_{\CAlg_k^{T,\heartsuit}}).
	$$
	Here the first and the last equivalences are given by Corollary \ref{cor:bimodules_monoidal}, and the middle one by Corollary \ref{cor:dalg-monads}.
\end{proof}	

\section{The simplicial cosimplicial comparison theorem}\label{sec:cosimplicial}

Let $T$ be a derived flat plethory and let us write $\CAlg_k^{T, \heart}$ for the $1$-category of $T$-algebras
whose underlying $k$-module is discrete. The forgetful functor $\CAlg_k^{T, \heart}\rightarrow \CAlg_k^{\heart}$
admits a left adjoint $\pi_0 T$. This exhibits $\CAlg_k^{T, \heart}$ as the category of algebras
over the flat plethory $\pi_0 T\colon \CAlg_k^{\heart}\rightarrow \CAlg_k^{\heart}$ by Lemma~\ref{lem: induced monads}.

\begin{notation}\label{not:simplicial cosimplicial}
Let $\Dscr$ be an ordinary category with small limits and let us write $\mathrm{sc}\Dscr$ for the category of
simplicial cosimplicial objects in $\Dscr$. Restriction and right Kan extension then determine an equivalence
$\mathrm{sc}\Dscr\simeq \Fun^{\R}(\mathrm{sSet}^{\op}, \mathrm{c}\Dscr)$. Given a simplicial cosimplicial object
$X^\bullet_\bullet$ and a simplicial set $K$, let us write $X_K^\bullet$ for the value at $K$ of the right
adjoint functor associated to $X^\bullet_\bullet$. For instance, when $K=\partial\Delta^n$, this reproduces the
usual $n$-matching object.

The category $\mathrm{sc}\Dscr$ is cotensored over $\mathrm{sSet}$ in two ways, using either the simplicial or
cosimplicial direction:
$$
\big({}^K X\big)^t_s = X^t_{K\times \Delta^s} \qquad\qquad\qquad \big(X^K\big)^t_s = \prod_{K_t} X^t_s.
$$
Taking $K=\Delta^1$, this gives a simplicial and a cosimplicial version of the path space. Consequently, we have
both a simplicial and a cosimplicial notion of homotopy in $\mathrm{sc}\Dscr$.
\end{notation}
\begin{construction}\label{con:simplicial cosimplicial modules}
Let $k$ be a discrete ring and let $\mathrm{scMod}_k^{\heart}$ denote the category of simplicial cosimplicial
(discrete) $k$-modules. There is a natural functor
$$\begin{tikzcd}
\Tot^{\oplus}\colon \mathrm{scMod}_k^{\heart}\arrow[r] & \Mod_k; \qquad \Tot^{\oplus}(M^\bullet_\bullet)=
|\Tot(M^\bullet_\bullet)|=\colim_m \lim_n M^n_m.
\end{tikzcd}$$
We will say that a map of simplicial cosimplicial $k$-modules $f\colon N^\bullet_\bullet\rightarrow
M^\bullet_\bullet$ is:
\begin{enumerate}
    \item[(1)] a \emph{weak equivalence} if $\Tot^{\oplus}(N^\bullet_\bullet)\rightarrow \Tot^{\oplus}(M^\bullet_\bullet)$
is an equivalence;

\item[(2)] a \emph{fibration} if the maps $N^\bullet_n\to
N^\bullet_{\partial\Delta^n}\times_{M^\bullet_{\partial\Delta^n}} M^\bullet_n$ are surjective in each
cosimplicial degree and if for each horn inclusion $\Lambda^n_t\hookrightarrow \Delta^n$, the map of cosimplicial
$k$-modules
$$\begin{tikzcd}
N^\bullet_n\arrow[r] & N^\bullet_{\Lambda^n_t}\times_{M^\bullet_{\Lambda^n_t}} M^\bullet_n
\end{tikzcd}$$
is a degreewise surjection such that the totalization of its fiber is $0$-connective;

\item[(3)] a \emph{trivial fibration} if each map of cosimplicial $k$-modules
$$\begin{tikzcd}
N^\bullet_n\arrow[r] & N^\bullet_{\partial\Delta^n}\times_{M^\bullet_{\partial\Delta^n}} M^\bullet_n
\end{tikzcd}$$
is a degreewise surjection such that the totalization of its fiber is $0$-connective.
\end{enumerate}
This defines a model structure on $\mathrm{scMod}_k$, and the functor $\Tot^\oplus\colon
\mathrm{scMod}_k\rightarrow \Mod_k$ exhibits $\Mod_k$ as its localization \cite[Thm.~5.5]{brantner-campos-nuiten}. Furthermore, the model structure is simplicial
with respect to the simplicial cotensoring ${}^KM$ coming from the simplicial direction.
\end{construction}
\begin{theorem}\label{thm:cosimplicial}
Let $T\colon \DAlg_k\rightarrow \DAlg_k$ be a derived flat plethory,
and consider the category $\mathrm{scCAlg}_k^{T,
\heart}=\Fun(\Delta^{\op}\times \Delta, \DAlg_k^{T, \heart})$ of simplicial cosimplicial $T$-algebras.
This admits a
simplicial model structure in which a map is a weak equivalence (fibration) if the underlying map of simplicial
cosimplicial $k$-modules is a weak equivalence (fibration) in the sense of Construction \ref{con:simplicial
cosimplicial modules}. Furthermore, the canonical functor
$$\begin{tikzcd}
\Tot^\oplus\colon \mathrm{scCAlg}_k^{T, \heart}\arrow[r] & \DAlg^T_k; \quad A^\bullet_\bullet\arrow[r, mapsto] &
{|\Tot(A^\bullet_\bullet)|}
\end{tikzcd}$$
exhibits $\DAlg^T_k$ as the localization of $\mathrm{scCAlg}_k^{T, \heart}$ at its weak equivalences.
\end{theorem}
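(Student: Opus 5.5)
The plan is to transfer the model structure from $\mathrm{scMod}_k^\heart$ along the free--forgetful adjunction and then compare the resulting $\infty$-category with $\DAlg^T_k$ through monadicity. The key input is Proposition~\ref{prop:monads_and_functors}: it suffices to show that the localization is monadic over $\Mod_k$ with monad $\LSym^T_k$. I would proceed in the following order, generalizing the argument of \cite{brantner-campos-nuiten} for $T=\id$.

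\emph{Step 1: transfer the model structure.} The forgetful functor $\mathrm{scCAlg}_k^{T,\heart}\to\mathrm{scMod}_k^\heart$ has a left adjoint given levelwise by $\Sym^{\pi_0T}_k=\pi_0T\circ\Sym_k$. This is a flat plethory by Lemma~\ref{lem: induced monads} and Theorem~\ref{thm:deriving-plethories}. To transfer, I would check the usual conditions. First, the forgetful functor preserves filtered colimits, and the relevant sources are small. Second, pushouts along generating trivial cofibrations $\Sym^{\pi_0T}(J)$ are weak equivalences. For the second point, I would use a path-object argument: the model structure on $\mathrm{scMod}_k$ is simplicial via the cotensoring ${}^KM$ in the simplicial direction, and this cotensor is computed levelwise. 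Hence ${}^{\Delta^1}A$ is again a $T$-algebra, because it is built from limits in the simplicial direction, which $T$-algebras admit since the forgetful functor creates limits. Every object is fibrant, because every simplicial cosimplicial $k$-module is fibrant (degreewise surjectivity is automatic for simplicial abelian groups). So the path-object criterion of Quillen (or Kan's transfer lemma) applies, and the transferred structure is simplicial.

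\emph{Step 2: compare via $\Tot^\oplus$.} The functor $\Tot^\oplus$ inverts weak equivalences by definition, so it induces a functor $\mathrm{scCAlg}^{T,\heart}_k[W^{-1}]\to\DAlg^T_k$ over $\Mod_k$. Here $\Tot^\oplus$ lands in $T$-algebras because $\Tot$ of a cosimplicial object in $\CAlg^{T,\heart}_k$ is a limit, and $|\cdot|$ is sifted, hence computed in $\DAlg^T_k$. The localized forgetful functor $\mathrm{scCAlg}^{T,\heart}_k[W^{-1}]\to\mathrm{scMod}_k[W^{-1}]\simeq\Mod_k$ is a right Quillen functor, so it has a left adjoint, namely the derived functor $\mathbb{L}\Sym^{\pi_0T}$. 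It is conservative by definition of the weak equivalences, and it preserves geometric realizations, since these are computed as homotopy colimits in the simplicial direction and the forgetful functor commutes with them levelwise. Barr--Beck then gives monadicity. It remains to identify the monad $U\mathbb{L}\Sym^{\pi_0T}$ with $\LSym^T_k$ compatibly with the comparison map.

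\emph{Step 3: identify the monads; this is the main obstacle.} Both monads preserve sifted colimits, and they agree on flat modules concentrated in a single bidegree, where each gives $\Sym^{\pi_0T}(M)$. Every object of $\Mod_k$ is represented by a cofibrant object whose terms are cosimplicial-degreewise of the form $\Tot^\oplus$ of levelwise projective pieces. On such objects I would compute $U\mathbb{L}\Sym^{\pi_0T}(M^\bullet_\bullet)=\Tot^\oplus(\Sym^{\pi_0T}M^\bullet_\bullet)$. The crux is to show that $\Sym^{\pi_0T}$ commutes with the totalization in the cosimplicial direction, i.e.\ that $\Tot\,\Sym^{\pi_0T}(M^\bullet)\simeq\LSym^T(\Tot M^\bullet)$ for (coskeletal, by reduction) cosimplicial flat modules. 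This is exactly the statement that $\LSym^T_k\in\End'_\Sigma$ preserves coskeletal totalizations of cosimplicial flat modules, established via Lemma~\ref{lem:cech descent vs right extended} and Corollary~\ref{cor:dalg-monads}. For non-coskeletal cofibrant objects, I would reduce using the filtration by skeleta in the simplicial direction and by coskeleta in the cosimplicial direction, as in \cite{brantner-campos-nuiten}; this reduction is the delicate bookkeeping.

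Once the two monads agree, Proposition~\ref{prop:monads_and_functors} shows that the comparison functor of Step 2 is an equivalence over $\Mod_k$. It therefore agrees with the functor induced by $\Tot^\oplus$, which proves the claim.
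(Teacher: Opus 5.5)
\textbf{The gap is in Step 1.} You claim every simplicial cosimplicial $k$-module is fibrant, but this is false for the model structure of Construction~\ref{con:simplicial cosimplicial modules}. A fibration requires, for each horn $\Lambda^n_t\hookrightarrow\Delta^n$, that the map $N^\bullet_n\to N^\bullet_{\Lambda^n_t}\times_{M^\bullet_{\Lambda^n_t}}M^\bullet_n$ be a degreewise surjection \emph{whose fiber has $0$-connective totalization}. Only the surjectivity is automatic for simplicial abelian groups.

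For a counterexample, let $C$ be a cosimplicial module with $\Tot C\simeq k[-1]$, such as $\coker(k\to k^{\partial\Delta^2})$. Place $C$ in simplicial degree $1$ via Dold--Kan in the simplicial direction to get an object $N$. Then $N_0=0$, and the fiber of $N_1\to N_{\Lambda^1_0}=N_0=0$ is $C$, whose totalization is not $0$-connective. So $N$ is not fibrant.

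Consequently the path-object criterion (all objects fibrant, plus functorial path objects ${}^{\Delta^1}A$) does not apply. You also have no fibrant replacement functor on $T$-algebras to put in its place, since constructing one is essentially what the transfer is meant to provide.

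\textbf{How the paper handles the transfer.} It avoids fibrancy entirely. Every generating trivial cofibration of $\mathrm{scMod}_k^\heart$ is the inclusion of a strong deformation retract, for either the simplicial or the cosimplicial homotopy of Notation~\ref{not:simplicial cosimplicial}. Such retracts are preserved by the levelwise functor $\Sym^{\pi_0T}_k$ and by pushouts. Moreover, $\Tot^\oplus$ inverts both kinds of homotopy equivalence. Hence pushouts of $\Sym^{\pi_0T}_k(J)$ are weak equivalences directly. You need an argument of this type, and note that it has to handle the cosimplicial direction as well, which your simplicial path objects alone do not address.

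\textbf{Steps 2 and 3.} These essentially match the paper. Both forgetful functors are monadic, so it suffices to show that $\LSym^T_k(\Tot^\oplus M)\to\Tot^\oplus(\Sym^{\pi_0T}_k M)$ is an equivalence for cofibrant $M$. The crux is that $\LSym^T_k$ preserves coskeletal totalizations of cosimplicial free modules and agrees with $\Sym^{\pi_0T}_k$ termwise.

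Two refinements:
\begin{itemize}
\item You do not need the skeletal/coskeletal bookkeeping. The comparison map depends only on $\Tot^\oplus(M)\in\Mod_k$, and both sides preserve sifted colimits. This lets the paper reduce to $V[-n]$, represented by the simplicially constant, $n$-coskeletal object $\coker(V\to V^{\partial\Delta^{n+1}})$.
\item You assert sifted-colimit preservation but only justify geometric realizations. For filtered colimits, use that weak equivalences are stable under filtered colimits, so filtered colimits are homotopy colimits.
\end{itemize}
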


\begin{proof}
We will transfer the model structure on simplicial cosimplicial $k$-modules along the free-forgetful adjunction
$\Sym^{\pi_0 T}_k\colon \mathrm{scMod}_k^\heart\leftrightarrows \mathrm{scCAlg}^{T, \heartsuit}\colon
\mathrm{oblv}$.
The resulting model structure will then be simplicial for the simplicial cotensoring $A{}^K$, which is preserved
by the forgetful functor.

Since the forgetful functor preserves filtered colimits, it will suffice to verify the following condition for
the existence of the transferred model structure: given a generating trivial cofibration $i\colon
V^\bullet_\bullet\rightarrow W^\bullet_\bullet$ in $\mathrm{scMod}_k^\heart$ and a pushout of the form
$$\begin{tikzcd}
\Sym^{T^\heart}_k(V^{\bullet}_\bullet)\arrow[r] \arrow[d] & \Sym^{T^\heart}_k(W^\bullet_\bullet)\arrow[d]\\
A\arrow[r] & B,
\end{tikzcd}$$
the map $\Tot^{\oplus}(A)\rightarrow \Tot^\oplus(B)$ is an equivalence in $\Mod_k$. To see this, it suffices to
note that any generating trivial cofibration $i\colon V^\bullet_\bullet\rightarrow W^\bullet_\bullet$ in
$\mathrm{scMod}^{\heart}_k$ (cf.\ \cite[Rem.~5.6]{brantner-campos-nuiten}) is the inclusion of a strong deformation retract for either the
simplicial or the cosimplicial notion of homotopy from Notation \ref{not:simplicial cosimplicial}. Such strong
deformation retracts are preserved by the functor $\Sym^{\pi_0 T}_k$ and by pushouts, so that $A\rightarrow B$
is a strong deformation retract as well. This implies that $A\rightarrow B$ is a weak equivalence, because
$\Tot^\oplus$ sends both simplicial and cosimplicial homotopy equivalences to equivalences in $\Mod_k$.

Finally, let us consider the commutative diagram
$$\begin{tikzcd}
{\mathrm{scCAlg}_k^{T, \heart}[W^{-1}]}\arrow[r, "\Tot^\oplus"]\arrow[d, "\mathrm{oblv}"] & \DAlg^T_k\arrow[d,
"\mathrm{oblv}"]\\
{\mathrm{scMod}_k^{\heart}[W^{-1}]}\arrow[r, "\Tot^\oplus"] & \Mod_k
\end{tikzcd}$$
where the bottom functor is an equivalence by~\cite[Thm.~5.5]{brantner-campos-nuiten}.
The vertical functors are conservative right adjoints preserving sifted colimits, so in particular monadic. For the left
vertical functor, this uses that every filtered colimit in $\mathrm{scCAlg}_k^{T, \heart}$ is a homotopy colimit,
since weak equivalences are stable under filtered colimits, and that geometric realizations of simplicial objects
can be computed using the diagonal; this implies that the right Quillen functor $\mathrm{scCAlg}_k^{T,
\heart}\rightarrow \mathrm{scMod}_k^{T, \heart}$ preserves filtered homotopy colimits and geometric realizations.

Since the vertical functors are monadic, it then suffices to show that for any cofibrant $M^\bullet_\bullet$, the
natural comparison map
$$\begin{tikzcd}
\mu\colon \LSym^T_k\big(\Tot^{\oplus}(M^\bullet_\bullet)\big)\arrow[r] &
\Tot^\oplus\big(\Sym_k^{\pi_0 T}(M^\bullet_\bullet)\big)
\end{tikzcd}$$
is an equivalence. Note that by construction, $\mu$ only depends on the image of $M^\bullet_\bullet$ in
$\mathrm{scMod}_k^{\heart}[W^{-1}]\simeq \Mod_k$, i.e.\ on $\Tot^\oplus(M^\bullet_\bullet)$.
Since the domain and codomain of $\mu$ preserve sifted colimits, we can therefore reduce to the case where
$\Tot(M^\bullet_\bullet)\simeq V[-n]$, where $V$ is a finite type free $k$-module and $n\geq 0$.

An explicit (cofibrant) simplicial cosimplicial representative of $V[-n]$ is given by the $n$-coskeletal
cosimplicial $k$-module $M^\bullet=\coker\big(V\to V^{\partial\Delta^{n+1}}\big)^\bullet$. On such cosimplicial $k$-modules,
constant in the simplicial direction, $\Tot^\oplus$ is simply given by the usual totalization. Since $\LSym^T_k$
preserves totalizations of $n$-coskeletal cosimplicial diagrams of free $k$-modules, it then suffices to verify
that $\mu$ is an equivalence on each individual term $M^t$. On each of these finite type free modules $M^t$,
$\mu$ can be identified with the map $\LSym^T_k(M^t)\rightarrow \Sym^{\pi_0T}_k(M^t)$.
This is an equivalence because $T$ and $\pi_0T$ agree on flat $k$-modules.
\end{proof}

\section{Comparison results}\label{sec:comparison}

In this section, we assemble the previous material together to analyze various higher categories of
derived commutative rings equipped with extra structures.

\subsection{Endomorphisms}\label{sec:endomorphisms}

Let $k$ be a commutative ring and consider the $1$-category $\CAlg_k^{\End,\heartsuit}$ of commutative $k$-algebras
equipped with an endomorphism. 
\begin{lemma}\label{lem:endo plethory}
The forgetful functor $\CAlg_k^{\End,\heartsuit}\to \CAlg_k^{\heartsuit}$ is monadic and the corresponding monad $T^{\heartsuit}_{\End}$ is a flat plethory.
\end{lemma}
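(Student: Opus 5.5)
The plan is to exhibit $\CAlg_k^{\End,\heartsuit}$ as modules for an explicit monad and then read off the plethory conditions directly. Concretely, I would take $T^\heartsuit_{\End}(A)=\bigotimes_{i\in\bN}A$, the filtered colimit over finite subsets $S\subseteq\bN$ of the finite tensor products $\bigotimes_{i\in S}A$ over $k$. It carries the endomorphism $\phi$ that shifts the $i$-th tensor factor to the $(i+1)$-st, and comes with the unit $A\to T^\heartsuit_{\End}(A)$ given by inclusion into the $0$-th factor. Given a pair $(B,f)$ and an algebra map $g\colon A\to B$, the unique map of algebras with endomorphism $(T^\heartsuit_{\End}(A),\phi)\to(B,f)$ extending $g$ sends the $i$-th factor via $f^i\circ g$. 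This shows that the functor $A\mapsto(T^\heartsuit_{\End}(A),\phi)$ is left adjoint to the forgetful functor $\CAlg_k^{\End,\heartsuit}\to\CAlg_k^{\heartsuit}$. Equivalently, $\CAlg_k^{\End,\heartsuit}=\Fun(B\bN,\CAlg_k^\heartsuit)$ and the left adjoint is left Kan extension along $\ast\to B\bN$, computed by the coproduct $\coprod_{n\in\bN}A$ in $\CAlg_k^\heartsuit$.

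For monadicity I would apply Barr--Beck in the $1$-categorical form. The forgetful functor is conservative, because an inverse of an underlying algebra map automatically commutes with the endomorphisms. It also creates all colimits, and in particular coequalizers of split pairs, since colimits in $\Fun(B\bN,\CAlg_k^\heartsuit)$ are computed pointwise. Hence the forgetful functor is monadic with monad $T^\heartsuit_{\End}$.

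Next I would show that $T^\heartsuit_{\End}$ is a plethory, i.e.\ that it admits a right adjoint. As a functor $\CAlg_k^\heartsuit\to\CAlg_k^\heartsuit$ it is the countable coproduct of copies of the identity, so it preserves all colimits: colimits commute with coproducts. Since $\CAlg_k^\heartsuit$ is presentable and $T^\heartsuit_{\End}$ is accessible, the adjoint functor theorem gives a right adjoint. Explicitly, this right adjoint is $R(B)=\prod_{n\in\bN}B$, with the shift endomorphism on the coalgebra side. Finally, flatness follows from $T^\heartsuit_{\End}(k[x])\cong k[x_0,x_1,x_2,\dots]$, a polynomial ring on countably many variables and hence free as a $k$-module.

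I expect no real obstacle. The only step needing care is making sure the monad structure coming from the adjunction is identified with $T^\heartsuit_{\End}$. This is handled by the universal property described in the first step, since the monad of an adjunction is by definition the composite of the forgetful functor with its left adjoint.
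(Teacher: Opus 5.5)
Your proposal is correct and follows essentially the same route as the paper: identify $\CAlg_k^{\End,\heartsuit}$ with $\Fun(\B\bN,\CAlg_k^\heartsuit)$, and obtain the left adjoint $A\mapsto\coprod_{\bN}A$ by Kan extension. Monadicity then follows from conservativity and pointwise colimits, the monad has a right adjoint because it preserves colimits, and flatness follows from $T^\heartsuit_{\End}(k[x])\cong k[x_0,x_1,\dots]$. Your explicit right adjoint $B\mapsto\prod_{\bN}B$ is just the right Kan extension that the paper's proof alludes to.
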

\begin{proof}
The forgetful functor can be identified with the functor $\Fun(\B\bN, \CAlg_k^{\heart})\to \CAlg_k^{\heart}$ restricting along $\ast\to \B\bN$. This is conservative and (co)monadic, because Kan extension provides a left and a right adjoint. 
The associated monad $T_{\End}^{\heartsuit}$ preserves colimits and $T_{\End}^{\heartsuit}(k[x]) = k[x, f(x), f^2(x), \dots]$ is flat over $k$, so $T_{\End}^{\heartsuit}$ is a flat plethory. 
\end{proof}
The group scheme underlying the plethory $T_{\End}^{\heartsuit}$ can be identified with $\prod \Ga\simeq \Spec(k[x, f(x), \dots])$. Using this, one can see that $T_{\End}^{\heartsuit}$ is contained in the essential image of the functor from Theorem~\ref{thm:deriving-plethories} and hence admits a derived monad $ T_{\End}$. Then, there are at least three possibilities for the definition
of a derived commutative $k$-algebra with an endomorphism:
\begin{equation*}
    \Mod_{
    T^{\End}}(\DAlg_k),\quad\quad\Fun(\B\bN,\DAlg_k),\quad\quad(\s\c\CAlg_k^\End)[W^{-1}].
\end{equation*}
Note that by Lemma \ref{lem:functors_to_mod}, the category  $\Fun(\B\bN,\DAlg_k)$ is equivalent to the category of modules over the monad on $\Fun(\B\bN,\Mod_k)$ defined by post-composition with $\LSym_k$.

\begin{remark}
    There is yet another way we could attempt to construct a notion of a derived commutative ring
    with an endomorphism. We could look at the free commutative ring monad on $\Fun(\B\bN,\Mod_k^\heartsuit)$
    and attempt to derive it. A priori, this is a different construction from deriving $T_{\End}^{\heartsuit}$. However, the natural $t$-structure and
    symmetric monoidal structure on $\Fun(\B\bN,\Mod_k)$ does not form a derived algebraic context
    in the sense of~\cite{raksit}. 
\end{remark}

\begin{proposition}\label{lem:derived_endomorphism}
    There are natural equivalences  $(\s\c\CAlg_k^\End)[W^{-1}] \simeq \Mod_{
    	T^{\End}}(\DAlg_k)\simeq \Fun(\B\bN,\DAlg_k)$.
\end{proposition}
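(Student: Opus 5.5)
The first equivalence $(\s\c\CAlg_k^\End)[W^{-1}]\simeq\Mod_{T^{\End}}(\DAlg_k)$ follows directly from Theorem~\ref{thm:cosimplicial}. That theorem applies to the derived flat plethory $T_{\End}$ obtained from $T_{\End}^\heartsuit$ via Theorem~\ref{thm:deriving-plethories}. Its $1$-category of discrete algebras is $\CAlg_k^{T,\heart}$, which by Lemma~\ref{lem: induced monads} is the category of algebras over $\pi_0T_{\End}|_{\CAlg_k^\heart}\simeq T^\heartsuit_{\End}$, i.e.\ $\CAlg_k^{\End,\heart}$ by Lemma~\ref{lem:endo plethory}. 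So $\s\c\CAlg_k^{T,\heart}$ is exactly $\s\c\CAlg_k^\End$.

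For the second equivalence I would show that the forgetful functor $\ev_*\colon\Fun(\B\bN,\DAlg_k)\to\DAlg_k$ is monadic, with monad $T'$ lying in $\Alg(\End'_\Sigma(\DAlg_k))$, and that $\pi_0T'|_{\CAlg_k^\heart}\simeq T^\heartsuit_{\End}$. Then the full faithfulness in Corollary~\ref{cor:dalg-monads} (equivalently Theorem~\ref{thm:deriving-plethories}) gives $T'\simeq T_{\End}$ as monads, hence an equivalence of module categories.

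Monadicity holds because restriction along $\ast\to\B\bN$ has a left adjoint given by left Kan extension, is conservative, and preserves all colimits, since these are computed pointwise; Barr--Beck then applies. The pointwise Kan extension formula identifies the monad: the comma category $\ast\times_{\B\bN}\B\bN_{/\ast}$ is the discrete set $\bN$, so $T'(A)\simeq\coprod_{n\in\bN}A$, the coproduct of countably many copies in $\DAlg_k$, i.e.\ $\colim_m A^{\otimes m}$. This $T'$ preserves all colimits. Moreover $U T'\LSym_k(M)\simeq\LSym_k(\bigoplus_{\bN}M)$, which preserves sifted colimits and flat modules. On $\Perf_k^{w\leq0}$ it is a filtered colimit of the functors $M\mapsto\LSym_k(M^{\oplus m})$. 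These preserve coskeletal totalizations of flat cosimplicial modules by \cite[Prop.~3.38]{brantner-mathew}, being filtered colimits of excisively polynomial functors, and $n$-coskeletal totalizations commute with filtered colimits. So $T'\in\Alg(\End'_\Sigma(\DAlg_k))$. On discrete algebras, $\pi_0T'(A)=\bigotimes_{\bN}A$ (restricted tensor product), which is the free algebra with endomorphism, i.e.\ $T^\heartsuit_{\End}(A)$. This identification is compatible with monad structures, since both $T^\heartsuit_{\End}$ and $\pi_0T'$ are the monads of left Kan extension along $\ast\to\B\bN$, for $\CAlg_k^\heart$ and for $\DAlg_k$ respectively.

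The main obstacle is making this last identification rigorous as an equivalence of monads rather than just of endofunctors, so that Corollary~\ref{cor:dalg-monads} genuinely applies. I would handle it via Lemma~\ref{lem: induced monads}-style reasoning. The pullback of $\Fun(\B\bN,\DAlg_k)\to\DAlg_k$ along $\CAlg_k^\heart\hookrightarrow\DAlg_k$ is $\Fun(\B\bN,\CAlg_k^\heart)$, and the left adjoint $T'$ preserves $0$-truncated connective objects up to $\pi_0$. Taking $\pi_0$ of the monadic adjunction therefore yields the discrete monadic adjunction of Lemma~\ref{lem:endo plethory}, whose monad is $T^\heartsuit_{\End}$. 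Full faithfulness of $T\mapsto\pi_0T$ then upgrades this to $T'\simeq T_{\End}$, and Corollary~\ref{cor:monads_adjunctions} finishes the proof.
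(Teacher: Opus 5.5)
Your proof is correct, and its skeleton matches the paper's. The first equivalence comes from Theorem~\ref{thm:cosimplicial}. The second comes from four ingredients: monadicity of $\Fun(\B\bN,\DAlg_k)\to\DAlg_k$, the formula $T'(A)\simeq\coprod_{\bN}A$, the identification of the discrete algebras with $\Fun(\B\bN,\CAlg_k^\heart)$, and full faithfulness of $T\mapsto\pi_0T|_{\CAlg_k^\heart}$.

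Where you genuinely differ is in how you show that $T'$ is ``derived''. The paper checks condition (c) of Definition~\ref{def:derived_plethory} via Lemma~\ref{lem:ccff condition}: the map $T'(\LSym_k(k[-n]))\to k$ is a countable coproduct of copies of the ccff map $\LSym_k(k[-n])\to k$, hence ccff. You instead verify right extendability directly. You write $M\mapsto\LSym_k(\bigoplus_{\bN}M)$ as a filtered colimit of excisively polynomial functors and invoke \cite[Prop.~3.38]{brantner-mathew}. This is more elementary and avoids coconnective faithful flatness altogether. The paper's route, however, is the one that carries over to the $\delta$-, $\lambda$-, binomial and Boolean cases, where no such polynomial description is available.

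The obstacle you flag about identifying monads rather than endofunctors also dissolves. By Lemma~\ref{lem:tfae}(b) and Lemma~\ref{lem:cech descent vs right extended}, your verification already shows that $T'$ satisfies condition (c). Moreover $T'(k[x])=k[x_0,x_1,\ldots]$ is flat and $T'$ has a right adjoint. So $T'$ is itself a derived flat plethory, and Lemma~\ref{lem: induced monads} applies verbatim, exactly as in the paper.
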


\begin{proof} The first equivalence follows from Theorem \ref{thm:cosimplicial}. The forgetful functor $\Fun(\B\bN,\DAlg_k)\rightarrow\DAlg_k$ preserves all limits and colimits and
	is additionally conservative; consequently, it is monadic with associated monad $T$. This takes any commutative derived $k$-algebra $R$ to the countable coproduct $R^{\otimes \bN}$. In particular, $T(k[x])$ is flat. By Lemma \ref{lem:ccff condition}, it remains to show that the map $T(\LSym_k(k[-n]))\to k$ is ccff for every $n\ge 1$, but this is equivalent to a countable coproduct of ccff maps. Finally, the category of discrete $T$-modules is naturally identified with  $\Fun(\B\bN,\CAlg_k^{\heartsuit})$. This implies the second equivalence.

\end{proof}

\subsection{Derived $\delta$-rings}\label{sec:delta}

Fix a prime $p$. Recall from~\cite[App.~A]{bhatt-lurie-prismatization} that a $p$-typical animated $\delta$-ring is given by an animated ring $A \in \Ani_{\bZ}$ together with
an endomorphism $\phi$ and a homotopy $h \colon \phi \otimes_{\bZ} \bF_p \simeq \Fr_p$ in $\End_{\Ani_{\bF_p}}(A \otimes_\bZ \bF_p)$.\\
The category of animated $\delta$-rings is equivalent to the animation $\Pscr_\Sigma(\Poly^\delta)$ of the $1$-category of discrete $\delta$-rings as was shown in loc.~cit. In~\cite{holeman-derived}, following~\cite{raksit}, the animated $\delta$-ring monad was extended to a monad $\LSym^\delta_\bZ$ on $\Mod_\bZ$ and the category of derived $\delta$-rings defined as $\DAlg_\bZ^\delta = \LMod_{\LSym^\delta_\bZ}(\Mod_\bZ)$. By construction, this comes with an equivalence $\DAlg_\bZ^{\delta,\cn}\simeq \Pscr_\Sigma(\Poly^\delta)$ and it is natural to ask whether the description of the connective part in terms of Frobenius lifts extends to the whole of $\DAlg_\bZ^\delta$. In the present section, we show that it does.

\begin{definition}\label{def:Frobenius}
	We write $\Fr_p$ for the natural endomorphism of $\id_{\DAlg_{\bF_p}}$ defined in Example~\ref{ex:frobenius}.
\end{definition}

\begin{definition}[Derived $\varphi$-algebras]\label{def:derived phi-algebras}
    We define the category of \textit{derived $p$-typical $\varphi$-algebras} as the pullback
    $$\begin{tikzcd}
    	\DAlg^\varphi_{\bZ}\ar[r]\ar[d]&\Fun(\B\bN,\DAlg_\bZ)\ar[d, "(-)\otimes_\bZ\bF_p"]\\[3ex]
        \DAlg_{\bF_p}\ar[r, "\Fr_p"] &\Fun(\B\bN,\DAlg_{\bF_p})
    \end{tikzcd}$$
    in $\iCat$. Here, the bottom map classifies the Frobenius endomorphism and the right map is given by
    reduction modulo $p$.
\end{definition}

The pullback in Definition~\ref{def:derived phi-algebras} is both a pullback in $\PrL$ and $\PrR$. It follows that the forgetful functor
$$U^\varphi \colon \DAlg^\varphi_{\bZ}\rightarrow\Fun(\B\bN,\DAlg_\bZ)\rightarrow\DAlg_\bZ$$
is conservative and preserves small limits and colimits. In particular, it exhibits the category of derived $\varphi$-algebras as both monadic and comonadic over $\DAlg_\bZ$. We denote
the associated colimit preserving monad by $T^\varphi$ and its adjoint limit preserving comonad by $C^\varphi$.

\begin{remark}[Animated $\delta$-rings]\label{rmk:animated delta}
	We write $\Ani_\bZ^\delta$ for the category of animated $\delta$-rings defined by the pullback from Definition~\ref{def:derived phi-algebras} with
	$\Ani_\bZ$ and $\Ani_{\bF_p}$ in place of derived rings and derived $\bF_p$-algebras. Then, 
	by~\cite[Rem.~A.22]{bhatt-lurie-prismatization}, there exists an equivalence $\Ani_\bZ^\delta \simeq \Pscr_\Sigma(\Poly^\delta_\bZ)$ of $\infty$-categories which is compatible with the
	forgetful functors to animated rings. In particular, it follows that the forgetful functor $\Ani_{\bZ}^\delta \to \Ani_{\bZ}$ is monadic and that the associated monad $T^\delta$
	on animated rings preserves all colimits.
\end{remark}

%

\begin{lemma}\label{lem:restriction to animated delta}
	The derived $\varphi$-algebra monad $T^\varphi$ restricts to the animated $\delta$-ring monad $T^\delta$ on $\Ani_\bZ$.
\end{lemma}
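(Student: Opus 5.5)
The plan is to compare the pullback defining $\DAlg^\varphi_\bZ$ with its connective counterpart $\Ani^\delta_\bZ$ from Remark~\ref{rmk:animated delta}. Concretely, I will show that the inclusion $\Ani_\bZ\subseteq\DAlg_\bZ$ lifts to a fully faithful functor $\Ani^\delta_\bZ\to\DAlg^\varphi_\bZ$ compatible with the forgetful functors. I will then show that the left adjoint of $U^\varphi$ carries animated rings into this subcategory. Together these identify $T^\varphi|_{\Ani_\bZ}$ with $T^\delta$ as monads.

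The first step is to write $\Ani^\delta_\bZ$ as the pullback of $\Ani_{\bF_p}\xrightarrow{\Fr_p}\Fun(\B\bN,\Ani_{\bF_p})\leftarrow\Fun(\B\bN,\Ani_\bZ)$. Each corner includes fully faithfully into the corresponding corner of Definition~\ref{def:derived phi-algebras}: reduction mod $p$ preserves connectivity, and $\Fr_p$ on $\DAlg_{\bF_p}$ restricts to the animated Frobenius on $\Ani_{\bF_p}$. The latter holds because both are determined by $x\mapsto x^p$ on polynomial rings, by the argument of Example~\ref{ex:frobenius} together with Corollary~\ref{cor:center of DAlg_T is discrete}-type discreteness of the centre. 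A pullback of fully faithful functors is fully faithful, so we obtain a fully faithful $\iota\colon\Ani^\delta_\bZ\to\DAlg^\varphi_\bZ$. Its essential image consists of the $\varphi$-algebras with connective underlying ring, and it commutes with the forgetful functors to $\DAlg_\bZ$.

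The second step is to check that $\iota$ preserves colimits. Colimits in both pullbacks are computed componentwise, since both are pullbacks in $\PrL$. The inclusions $\Ani\subseteq\DAlg$ and $\Fun(\B\bN,\Ani)\subseteq\Fun(\B\bN,\DAlg)$ preserve colimits, because connective objects are closed under colimits in $\DAlg$.

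The third step is to show that for $A\in\Ani_\bZ$, the free $\varphi$-algebra $T^\varphi(A)$ lies in the image of $\iota$, i.e.\ is connective. Since the left adjoint $F^\varphi$ of $U^\varphi$ preserves sifted colimits and $\Ani_\bZ$ is generated under sifted colimits by polynomial rings, it suffices to treat $A=\bZ[x_1,\dots,x_n]$. For these I will produce the free object directly. Let $F^\delta$ denote the left adjoint of the forgetful functor $\Ani^\delta_\bZ\to\Ani_\bZ$. I claim $\iota F^\delta(A)$, with its unit $A\to U\iota F^\delta(A)$, is free in $\DAlg^\varphi_\bZ$. For $B\in\DAlg^\varphi_\bZ$, adjunction gives $\Map(\iota F^\delta A,B)\simeq\Map(F^\delta A,\iota^R B)$ and $\Map(A,UB)\simeq\Map(A,\tau_{\geq0}UB)$. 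So the claim reduces to the connective cover $\tau_{\geq 0}$ on $\DAlg$ being compatible with the pullback. This means the right adjoint $\iota^R$ is computed componentwise by $\tau_{\geq0}$, which in turn amounts to checking that $\tau_{\ge0}$ commutes with $-\otimes_\bZ\bF_p$ in the relevant sense.

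This last point is the main obstacle. It fails naively, because $\tau_{\geq0}(B)\otimes\bF_p\neq\tau_{\geq0}(B\otimes\bF_p)$ in general: their difference is $\pi_{-1}(B)[p]$. If this route breaks, I will instead argue with mapping spaces out of the free algebra $\LSym$-style. Since $\DAlg^\varphi_\bZ$ is monadic and comonadic with explicit comonad $C^\varphi$, I will compute $\Map(T^\varphi A,B)\simeq\Map(A,C^\varphi B)$. Taking $B$ to be a $\varphi$-algebra whose underlying ring is connective, $\iota$ is fully faithful and preserves colimits, so the left adjoint $F^\varphi$ restricted to $\Ani_\bZ$ agrees with $\iota F^\delta$ as soon as $\iota$ also preserves limits up to $\tau_{\geq0}$. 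To settle this, I will use Lemma~\ref{lem:cofibers of filtration}: exhibit $T^\varphi$ as the free monad on the pointed endofunctor left adjoint to a derived $W_2$, and check inductively that each filtration stage preserves connective objects. The base case is the statement that the pushout computing $W_2$'s left adjoint on a connective ring is connective, which is a pushout of connective rings.
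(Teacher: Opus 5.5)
Your Steps 1 and 2 are fine, and the route you start in Step 3 is essentially the paper's: show that the coreflection $\iota^R$ has underlying ring $\tau_{\geq0}$, then deduce freeness. But you abandon it exactly where the argument has content, so Step 3 is not proved. You are right that $(\tau_{\geq0}S)/p\not\simeq\tau_{\geq0}(S/p)$, so $\iota^R$ is not computed componentwise. The missing idea is that the discrepancy sits in a discrete corner, where it is harmless. For $S\in\DAlg^\varphi_\bZ$ there is a pullback square of derived $\bF_p$-algebras with endomorphisms
\[
(\tau_{\geq0}S)/p\simeq\tau_{\geq0}(S/p)\times_{\pi_0(S/p)}\pi_0\big((\tau_{\geq0}S)/p\big),
\]
in which $\pi_0((\tau_{\geq0}S)/p)\to\pi_0(S/p)$ is injective with cokernel $(\pi_{-1}S)[p]$. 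On $\tau_{\geq0}(S/p)$, on $\pi_0(S/p)$, and on the discrete subring (where this is merely a property) the endomorphism is Frobenius. Since $\Fr\colon\DAlg_{\bF_p}\to\Fun(\B\bN,\DAlg_{\bF_p})$ preserves limits, the endomorphism of $(\tau_{\geq0}S)/p$ induced by $\varphi$ is therefore Frobenius, so $\tau_{\geq0}S$ is an animated $\delta$-ring mapping to $S$. The same square shows that, for connective $R$, the mod-$p$ reduction of a map $R\to S$ factors uniquely through $(\tau_{\geq0}S)/p$; this makes $\tau_{\geq0}$ right adjoint to $\iota$. Hence $\Map_{\DAlg^\varphi_\bZ}(T^\varphi R,\tau_{\geq0}S)\simeq\Map_{\DAlg^\varphi_\bZ}(T^\varphi R,S)$ for connective $R$. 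Taking $S=T^\varphi R$ exhibits $T^\varphi R$ as a retract of its connective cover, so it is connective.

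Your fallback does not repair this as written. The pushout computing the left adjoint $F_2^\varphi$ of $W_2=\id\times_{(-)\otimes_\bZ\bF_p}\id$ is $A\amalg_{L(A)}A$, where $L$ is left adjoint to $R\mapsto R\otimes_\bZ\bF_p$ on $\DAlg_\bZ$. Since $\Omega^\infty(R\otimes_\bZ\bF_p)\simeq\Map_{\Mod_\bZ}(\bF_p[-1],R)$, you get $L(\bZ[x])\simeq\LSym_\bZ(\bF_p[-1])$, which is not connective. So $F_2^\varphi(A)$ is not a pushout of connective rings, and its connectivity is precisely the nontrivial point. Lemma~\ref{lem:cofibers of filtration} also does not apply to $F_2^\varphi$: its hypothesis $U\to UF_2^\varphi\to U$ fails already at $A=\bZ$, since $F_2^\varphi(\bZ)\simeq\bZ$.

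What is true is the dual statement for the right adjoints. The natural fiber sequence $R\to W_2(R)\xrightarrow{w_0}R$ shows that $W_2(\tau_{\geq0}R)\to W_2(R)$ has $(-1)$-truncated cofiber. This is equivalent to $F_2^\varphi$ preserving connective objects, and it is the content of Lemma~\ref{lem:W2 and animated W2}. The fiber sequences $U\to UW_i\to UW_{i-1}$ of Proposition~\ref{prop:filtration on Tphi} then propagate this to every $F_i^\varphi$, hence to $T^\varphi\simeq\colim_i F_i^\varphi$. Proposition~\ref{prop:filtration on Tphi} does not use the present lemma, so with this correction your fallback becomes a valid alternative to the paper's argument. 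As written, though, it rests on a false base case.
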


\begin{proof}
	Let $S \in \DAlg_\bZ^\varphi$ be any derived
	$\varphi$-algebra. We construct a natural animated $\delta$-structure on $\tau_{\geq 0}S$ together with a natural counit map $\tau_{\geq 0}S\rightarrow S$ of derived
    $\varphi$-algebras as follows: the Frobenius endomorphism on $S$ restricts by naturality to an
    endomorphism of $\tau_{\geq 0}S$. The mod $p$ reduction $(\tau_{\geq 0}S)/p$ thus obtains an
    endomorphism which we must argue is equivalent to the Frobenius endomorphism. However, it fits
    into a pullback square
    $$\xymatrix{
        (\tau_{\geq 0}S)/p\ar[r]\ar[d]&\tau_{\geq 0}(S/p)\ar[d]\\
        \pi_0((\tau_{\geq 0}S)/p)\ar[r]&\pi_0(S/p)
    }$$
    of derived commutative $\bF_p$-algebras equipped with endomorphisms. Here, the bottom map is injective with cokernel given
    by $(\pi_{-1}S)[p]$. Since the functor $\DAlg_{\bF_p}\rightarrow\Fun(\B\bN,\DAlg_{\bF_p})$
    classifying Frobenius commutes with all limits and colimits,
    this allows us to identify the endomorphism on $(\tau_{\geq 0}S)/p$ with
    Frobenius and hence provide $\tau_{\geq 0}S$ with the structure of a derived $\varphi$-algebra.
    There is an induced map $\tau_{\geq 0}S\rightarrow S$ of derived $\varphi$-algebras.
    We claim that $\tau_{\geq 0}S$ gives a right adjoint to the inclusion
    $\Ani^\delta_{\bZ}\subseteq\DAlg^\varphi_\bZ$. For this it is enough to note that if $R$ is a
    connective derived $\varphi$-algebra and $S$ is arbitrary, then the map $R/p\rightarrow S/p$
    factors uniquely through $(\tau_{\geq 0}S)/p$, which follows from the pullback square above.
    Now, if $R$ is a connective derived commutative ring and $S\in\DAlg^\varphi$, there is a
    natural equivalence $$\Map(T^\varphi(R),\tau_{\geq 0}S)\we\Map(T^\varphi(R),S).$$
    Applying this equivalence to $S=T^\varphi(R)$ itself, we find that the identity on $T^\varphi(R)$ factors
    through $\tau_{\geq 0}T^\varphi(R)\rightarrow T^\varphi(R)$, which implies that $T^\varphi(R)$
    is connective. This completes the proof.
\end{proof}

In~\cite{bhatt-lurie-prismatization}, the authors define and study animated $\delta$-rings via $p$-typical Witt vectors. In the following, we consider the analogue of this
perspective over $\DAlg_\bZ$ to exhibit filtrations on the derived $\varphi$-algebra monad $T^\varphi$ and the comonad $C^\varphi$.

\begin{definition}\label{def:derived W2}
	We denote by $W_2$ the endofunctor on $\DAlg_{\bZ}$ which is defined by the pullback
	$$\begin{tikzcd}
		W_2(R) \ar[r] \ar[d, "w_0"] \ar[dr, phantom, "\lrcorner", near start] & R \ar[d, "\can"] \\[3.5ex]
		R \ar[r, "\Fr"] & R \otimes_{\bZ} \bF_p
	\end{tikzcd}$$
	in $\DAlg_{\bZ}$ for any derived ring $R$. This comes equipped with a natural co-pointing given by $w_0\colon W_2(R) \to R$.
\end{definition}

\begin{remark}\label{rmk:properties W2}
	The functor $W_2$ is a right adjoint endofunctor on $\DAlg_\bZ$ which preserves sifted colimits. This follows from the following observations:
	The functor $(-)\otimes_\bZ\bF_p \in \End(\Mod_\bZ)$ preserves both limits and colimits since $\bF_p$ is dualisable in $\Mod_\bZ$. Sifted colimits commute with finite limits
	in $\DAlg_\bZ$ since this is true in $\Mod_\bZ$ and the $\LSym_\bZ$-monad preserves sifted colimits.
	It follows that the pullback in Definition~\ref{def:derived W2} is a pullback in $\End_\Sigma^R(\DAlg_\bZ)$. 
\end{remark}

\begin{definition}
	For any $n\geq 1$, we denote by $W_n^{\ani}$ the functor of animated $n$-truncated $p$-typical Witt vectors. It follows from~\cite[Prop.~A.3]{bhatt-lurie-prismatization}
	that the $W_n^{\ani}$ preserve sifted colimits and all limits as endofunctors on $\Ani_\bZ$.
\end{definition}

\begin{lemma}\label{lem:W2 and animated W2}
	For any animated ring $A$, there exists a natural equivalence $W_2(\iota A)\simeq \iota W_2^{\ani}(A)$ in $\DAlg_\bZ$. For any derived ring $R$, the canonical map
	$W_2^{\ani}(\tau_{\geq 0}R) \to \tau_{\geq 0}W_2(R)$ in $\Ani_\bZ$ is an equivalence.
\end{lemma}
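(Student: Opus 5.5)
We first express $W_2^{\ani}$ as a pullback in $\Ani_\bZ$ of the same shape as Definition~\ref{def:derived W2}. By \cite[App.~A]{bhatt-lurie-prismatization}, for an animated ring $A$ there is a natural pullback square in $\Ani_\bZ$
$$\begin{tikzcd}
W_2^{\ani}(A)\ar[r]\ar[d, "w_0"] & A\ar[d, "\can"]\\
A\ar[r, "\Fr"] & A\otimes_\bZ\bF_p.
\end{tikzcd}$$
This is the animated analogue of the classical description of $W_2(A)$ for $p$-torsionfree $A$ via ghost components. To verify it, note that both sides preserve sifted colimits: $W_2^{\ani}$ does by \cite[Prop.~A.3]{bhatt-lurie-prismatization}, and the pullback does because finite limits commute with sifted colimits in $\Ani_\bZ$, exactly as in Remark~\ref{rmk:properties W2}. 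It therefore suffices to check the square on polynomial rings. There it is the classical fiber square, which is a pullback because $\Fr$ or $\can$ is surjective, so the homotopy pullback is discrete.

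Next we compare with $W_2$ on connective inputs. The inclusion $\iota\colon\Ani_\bZ\to\DAlg_\bZ$ preserves finite limits whose terms are connective and whose $\pi_0$-level map is surjective, since the pullback in $\Mod_\bZ$ then stays connective. In the square above, $\can\colon A\to A\otimes\bF_p$ is surjective on $\pi_0$. Moreover, $\iota$ commutes with $-\otimes_\bZ\bF_p$ and with Frobenius, the latter by Example~\ref{ex:frobenius} and the uniqueness of the Frobenius on derived $\bF_p$-algebras. Applying $\iota$ to the square therefore gives $W_2(\iota A)\simeq\iota W_2^{\ani}(A)$.

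For the second claim we use that $\tau_{\geq 0}$ is a right adjoint to $\iota$ and hence preserves pullbacks. This gives
$$\tau_{\geq 0}W_2(R)\simeq \tau_{\geq 0}R\times_{\tau_{\geq0}(R\otimes\bF_p)}\tau_{\geq 0}R,$$
computed in $\Ani_\bZ$. The remaining step is to compare this with $W_2^{\ani}(\tau_{\geq0}R)$, which is the pullback over $(\tau_{\geq0}R)\otimes\bF_p$.

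The main obstacle is that $(\tau_{\geq 0}R)\otimes\bF_p\to\tau_{\geq0}(R\otimes\bF_p)$ is not an equivalence: it is injective on $\pi_0$ with cokernel $(\pi_{-1}R)[p]$. This is the same pullback square with $\pi_0$ used in the proof of Lemma~\ref{lem:restriction to animated delta}. We handle it by taking the fiber product over $\pi_0((\tau_{\geq0}R)/p)\to\pi_0(R/p)$.

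Concretely, the map $(\tau_{\geq0}R)/p\to\tau_{\geq0}(R/p)$ is a monomorphism in $\Ani_\bZ$. It is an equivalence on $\pi_i$ for $i\geq1$ and injective on $\pi_0$, so it is $(-1)$-truncated. Changing the base of a pullback along a monomorphism does not change the pullback, provided both legs factor through the subobject. We must therefore check that $\Fr$ and $\can$ from $\tau_{\geq0}R$ land in $(\tau_{\geq0}R)/p$, which is immediate on $\pi_0$. The Frobenius on $\tau_{\geq0}R$ needs the same identification as in the proof of Lemma~\ref{lem:restriction to animated delta}, namely compatibility of $\Fr_p$ with the pullback square and naturality.

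With these points checked, the canonical map $W_2^{\ani}(\tau_{\geq0}R)\to\tau_{\geq0}W_2(R)$ is an equivalence.
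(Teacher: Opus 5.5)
Your proof is correct. The first half is essentially the paper's argument. The paper obtains connectivity of the $\Mod_\bZ$-pullback from the fiber sequence $M\xrightarrow{p}M\xrightarrow{\can}M\otimes_\bZ\bF_p$, so that $\fib(\can)\simeq A$ is connective; this is your $\pi_0$-surjectivity of $\can$. It then cites Bhatt--Lurie to identify the animated pullback with $W_2^{\ani}(A)$.

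The second half takes a genuinely different route. The paper never revisits the Frobenius leg. Since $\fib(w_0)\simeq\fib(\can)\simeq R$, the map $W_2^{\ani}(\tau_{\geq0}R)\simeq W_2(\tau_{\geq0}R)\to W_2(R)$ sits in a map of fiber sequences in $\Mod_\bZ$, from $\tau_{\geq0}R\to W_2^{\ani}(\tau_{\geq0}R)\to\tau_{\geq0}R$ to $R\to W_2(R)\to R$. Its cofiber is therefore an extension of two copies of $\tau_{\leq-1}R$, has no homotopy in degrees $\geq0$, and so the map is an equivalence on connective covers.

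You instead use that $\tau_{\geq0}$ preserves pullbacks. You then absorb the discrepancy $(\tau_{\geq0}R)/p\to\tau_{\geq0}(R/p)$, which has cokernel $(\pi_{-1}R)[p]$ on $\pi_0$, by base change along a monomorphism in $\Ani_\bZ$. This is valid, and it gives an explicit fiber-product description of $\tau_{\geq0}W_2(R)$ in $\Ani_\bZ$ that shows exactly why the error term is harmless. The cost is extra bookkeeping: the map must be a monomorphism, both legs must factor through it, and the factorization of the Frobenius leg must be the animated Frobenius of $(\tau_{\geq0}R)/p$. The stable argument sidesteps all of this.

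For that last point you do not need the argument of Lemma~\ref{lem:restriction to animated delta}. It is just naturality of $\Fr_p$ along $(\tau_{\geq0}R)/p\to R/p$, i.e.\ it is already encoded in applying the functor $W_2$ to the counit $\tau_{\geq0}R\to R$. Combined with your identification of the derived Frobenius on connective objects with the animated one, this produces both factorizations directly, so the separate $\pi_0$ check is unnecessary.

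One small correction. Finite limits do not commute with sifted colimits in $\Ani_\bZ$ in general: limits there are $\tau_{\geq0}$ of limits in $\DAlg_\bZ$, and $\tau_{\geq0}$ does not commute with geometric realizations. So the claim that $A\mapsto A\times_{A\otimes_\bZ\bF_p}A$ preserves sifted colimits should rest on your own next observation instead. Because $\can$ is surjective on $\pi_0$, this pullback is computed in $\DAlg_\bZ$, where the commutation does hold, as in Remark~\ref{rmk:properties W2}.
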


\begin{proof}
	For the first part of the lemma, note that there exists a fibre sequence $M\xrightarrow{\cdot p} M\xrightarrow{\can} M\otimes_\bZ \bF_p$ in $\Mod_{\bZ}$ for any $M \in \Mod_\bZ$.
	This implies that for $A \in \Ani_\bZ$, the pullback in Definition~\ref{def:derived W2} defining $W_2(A)$ is a pullback in animated rings. This pullback is naturally equivalent to
	$W_2^{\ani}(A)$ by~\cite[Rem.~A.10]{bhatt-lurie-prismatization}. For the second part, note that for any derived ring $R$, applying $W_2$ to the counit
	$\tau_{\geq 0}R \to R$ in $\DAlg_\bZ$ yields a natural map $W_2^{\ani}(\tau_{\geq 0}R) \simeq W_2(\tau_{\geq 0}R) \to W_2(R)$ of derived rings. This agrees with the map
	induced by the commutative diagram
	$$\begin{tikzcd}
		\tau_{\geq 0}R \ar[r, "\Fr"] \ar[d] & (\tau_{\geq 0}R) \otimes_\bZ \bF_p \ar[d] &\tau_{\geq 0}R \ar[d] \ar[l, "\can"'] \\[3.5ex]
		R \ar[r, "\Fr"] & R \otimes_\bZ \bF_p & R \ar[l, "\can"']
	\end{tikzcd}$$
	in $\DAlg_\bZ$. Since $\fib(\can_{\tau_{\geq 0}R}) \simeq \tau_{\geq 0}R$ in $\Mod_\bZ$, we obtain a commutative diagram in $\Mod_\bZ$
	$$\begin{tikzcd}
		\tau_{\geq 0}R \ar[r] \ar[d] & W_2^\ani(\tau_{\geq 0}R) \ar[d] \ar[r] & \tau_{\geq 0}R \ar[d] \\[3.5ex]
		R \ar[r] & W_2(R) \ar[r, "w_0"] & R
	\end{tikzcd}$$
	whose rows are fibre sequences in $\Mod_\bZ$. It follows that the cofibre of the map $W_2^\ani(\tau_{\geq 0}R) \to W_2(R)$ is ($-1$)-truncated, hence that
	it induces an equivalence on the connective cover.
\end{proof}

\begin{definition}\label{def:F2}
	We denote by $\eta \colon\id \to F_2^\varphi$ the pointed endofunctor on $\DAlg_\bZ$ which is left adjoint to the map $w_0\colon W_2 \to \id$ from Definition~\ref{def:derived W2}.
\end{definition}

\begin{proposition}[Filtrations on $T^\varphi$ and $C^\varphi$]\label{prop:filtration on Tphi}\phantom{a}\par
	\begin{enumerate}
	\item There exists an $\bN$-indexed exhaustive filtration on $T^\varphi$ by endofunctors $F_i^\varphi \in \End^L(\DAlg_\bZ)$.
	\item There exists an $\bN^{\op}$-indexed co-exhaustive filtration on $C^\varphi$ by endofunctors $W_i \in \End^R_\Sigma(\DAlg_\bZ)$.
	\end{enumerate}
	Here, $F_i^\varphi$ is left adjoint to $W_i$ for each $i \geq 1$ and $F_1^\varphi = W_1 = \id$. Moreover, for any $i \geq 2$, there exists a fiber sequence
	$$\begin{tikzcd}
		U \ar[r] & UW_i \ar[r, "Ur_i"] & UW_{i-1}
	\end{tikzcd}$$
	in $\Fun_\Sigma(\DAlg_{\bZ}, \Mod_{\bZ})$, where $U\colon\DAlg_\bZ \to \Mod_\bZ$ denotes the forgetful functor.
\end{proposition}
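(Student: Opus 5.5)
The approach is to realize $C^\varphi$ as the cofree comonad on the co-pointed endofunctor $w_0\colon W_2\to\id$ of Definition~\ref{def:derived W2}, and then to dualize to get $T^\varphi$. The filtrations will come from the construction of Section~\ref{sec:cofree}.

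\emph{Identify the comodules.} By Definition~\ref{def:derived phi-algebras}, a derived $\varphi$-algebra is a derived ring $R$ with an endomorphism $\phi$ together with an identification of $\phi\otimes\bF_p$ with $\Fr$. By the universal property of the pullback defining $W_2(R)$, this is the same as a map $s\colon R\to W_2(R)$ with $w_0\circ s\simeq \id_R$. The second component of $s$ is $\phi$, and the homotopy witnessing the pullback square is the identification with Frobenius. This should upgrade to an equivalence $\DAlg^\varphi_\bZ\simeq \mathrm{Comod}_{W_2}(\DAlg_\bZ)$ over $\DAlg_\bZ$. I would prove it by writing both sides as iterated pullbacks of functor categories: $\Fun(\B\bN,-)$ is the equalizer/LEq of $\id$ with itself, and $\mathrm{Comod}$ is the pullback of Notation~\ref{not:modules for pointed endofunctors}. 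One then checks that the defining pullbacks match, using that $\Fr_p$ is natural, so that the $\B\bN$-action on $R\otimes\bF_p$ is determined.

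\emph{Apply Section~\ref{sec:cofree}.} By Remark~\ref{rmk:properties W2}, $W_2$ preserves limits (it is a right adjoint) and sequential limits. Remark~\ref{rem:cofree comonad}, which is the dual of Proposition~\ref{prop:free monad}, then shows that $\mathrm{Comod}_{W_2}\to\DAlg_\bZ$ is comonadic. The comonad is $G_{(\infty)}=\lim_i W_i$, where $W_1=\id$, $W_2=W_2$, and the higher $W_i$ are given by the pullback formulas of Remark~\ref{rmk:pb formula cofree}; these apply because $W_2$ is a pullback of exactly the shape required there, with $T=(-)\otimes\bF_p$, $\varphi=\Fr$ and $t=\can$. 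By uniqueness of adjoints, this comonad is $C^\varphi$. Each $W_i$ is built from $\id$, $(-)\otimes\bF_p$ and finite limits, so it lies in $\End^R_\Sigma$, by the same argument as Remark~\ref{rmk:properties W2}. By Remark~\ref{rem:adjoint cofree comonad}, the left adjoints $F_i^\varphi$ of the $W_i$ assemble into the free monad on $\eta\colon\id\to F_2^\varphi$, which is $T^\varphi$, and $T^\varphi=\colim_i F_i^\varphi$. This gives both the exhaustive and the co-exhaustive filtration.

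\emph{The fiber sequence.} This follows from the dual of Lemma~\ref{lem:cofibers of filtration}, stated in Remark~\ref{rem:cofree comonad}, once we know $\fib(UW_2\to U)\simeq U$. Applying $U$ to the pullback square gives a pullback in $\Mod_\bZ$. The fiber of $w_0$ is then the fiber of $\can\colon UR\to UR\otimes\bF_p$, which is $UR$ via multiplication by $p$, and this identification is natural. Since $U$ preserves limits and sifted colimits, these fiber sequences live in $\Fun_\Sigma(\DAlg_\bZ,\Mod_\bZ)$.

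\emph{Main obstacle.} I expect the hardest step to be the first one: making the equivalence $\DAlg^\varphi_\bZ\simeq\mathrm{Comod}_{W_2}$ precise at the $\infty$-categorical level. One has to match the data of a $\B\bN$-action, reduced mod $p$ and identified with the Frobenius, against a section of $w_0$ as structured objects over $\DAlg_\bZ$, not just objectwise. My first attempt would be to compare mapping spaces into both from a monadic right adjoint, in the style of Proposition~\ref{prop:partial left adjoint}. Each side would then be computed as the mapping space of pointed endofunctors into the pullback of the corresponding structure maps, and equating these would reduce the comparison to the pullback square defining $W_2$.
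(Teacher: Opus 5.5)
Your proposal is correct and follows essentially the same route as the paper: identify $\DAlg^\varphi_\bZ$ with $W_2$-comodules (which the paper treats as a formal consequence of the pullback defining $W_2$), then apply the dual of Proposition~\ref{prop:free monad} together with Remarks~\ref{rem:free monad explicit}, \ref{rem:adjoint cofree comonad} and \ref{rmk:pb formula cofree} to get the $W_i$, the $F_i^\varphi$ and preservation of sifted colimits, and finally deduce the fiber sequences from the dual of Lemma~\ref{lem:cofibers of filtration} using $R\xrightarrow{\cdot p}R\to R\otimes_\bZ\bF_p$. The only cosmetic difference is that the paper identifies $T^\varphi$ with the free monad on $\eta\colon\id\to F_2^\varphi$ via Remark~\ref{rmk:duality sections retracts}, whereas you obtain it from $T^\varphi\dashv C^\varphi$ and uniqueness of adjoints; both work.
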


\begin{proof}
	It is a formal consequence of Definition~\ref{def:derived W2} that for a given derived ring $R$, the datum $(\phi,h)$ of a derived $\varphi$-algebra structure on $R$ is equivalent to
	the datum 
	$$\begin{tikzcd}
		& W_2(R) \ar[dr, "w_0"] & \\[2.5ex]
		R \ar[ur, "s"] \ar[rr, "\id"] && R
	\end{tikzcd}$$
	of a section of $w_0$ in $\DAlg_{\bZ}$. This defines an equivalence $\DAlg^\varphi_{\bZ} \simeq \cAlg_{W_2}(\DAlg_{\bZ})$ between the category of derived
	$\varphi$-algebras and the category of coalgebras for $w_0\colon W_2 \to \id$ which is compatible with the forgetful functors to $\DAlg_{\bZ}$. Since $W_2$ preserves all limits
	by Remark~\ref{rmk:properties W2}, it follows from the dual of Proposition~\ref{prop:free monad} that the cofree $\varphi$-algebra comonad $C^\varphi$ on $\DAlg_{\bZ}$
	coincides with the cofree comonad on the co-pointed endofunctor $w_0\colon W_2 \to \id$. Dually, Remark~\ref{rmk:duality sections retracts} implies that the free derived
	$\varphi$-algebra monad $T^\varphi$ on $\DAlg_{\bZ}$ is equivalent to the free monad on the functor $\eta\colon \id \to F_2^\varphi$ from Definition~\ref{def:F2}.
	Existence of the $W_i$ and $F_i^\varphi$ now follows from Remark~\ref{rem:free monad explicit} and its dual together with Remark~\ref{rem:adjoint cofree comonad}. To see that
	each of the functors $W_i$ preserves sifted colimits, one uses the pullback formula from Remark~\ref{rmk:pb formula cofree} with $T = (-) \otimes_\bZ \bF_p$ together with the
	respective property of $W_2$ and the fact that sifted colimits commute with finite limits in $\DAlg_\bZ$.
	The second part of the proposition follows from the dual of Lemma~\ref{lem:cofibers of filtration} together with the observation that for $i=2$,
	existence of the fibre sequence follows from the fiber sequence $R\xrightarrow{\cdot p} R\xrightarrow{\can} R\otimes_\bZ \bF_p$ in $\Mod_{\bZ}$.
\end{proof}

\begin{corollary}\label{cor:properties Wi}
	For any $i\geq 1$, the functors $W_i$ from Proposition~\ref{prop:filtration on Tphi} restrict to endofunctors on animated rings.
\end{corollary}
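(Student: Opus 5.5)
We argue by induction on $i$, with the goal of showing that each $W_i$ sends connective derived rings to connective derived rings. Concretely, for $A\in\Ani_\bZ$ we want $W_i(\iota A)\simeq \iota W_i^{\ani}(A)$, or at least that $W_i(\iota A)$ is connective. The base cases are immediate. For $i=1$ we have $W_1=\id$, and for $i=2$ the statement is the first part of Lemma~\ref{lem:W2 and animated W2}, which identifies $W_2(\iota A)$ with $\iota W_2^{\ani}(A)$ and in particular shows it is connective.

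For the inductive step we use the fiber sequence from Proposition~\ref{prop:filtration on Tphi}:
$$UA\to UW_i(A)\xrightarrow{Ur_i} UW_{i-1}(A).$$
This is a fiber sequence in $\Mod_\bZ$. Its fiber $UA$ is connective, and by induction $UW_{i-1}(A)$ is connective. A fiber sequence whose fiber and base are both connective only forces the total term to be $(-1)$-connective, so we also need $\pi_0$ surjectivity. The long exact sequence shows that $\pi_{-1}UW_i(A)$ is the cokernel of $\pi_0 W_i(A)\to\pi_0 W_{i-1}(A)$. It therefore suffices to show that $r_i$ is surjective on $\pi_0$.

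The surjectivity of $\pi_0(r_i)$ is the main obstacle. The plan is to get it by reducing to polynomial rings. All $W_i$ preserve sifted colimits by Proposition~\ref{prop:filtration on Tphi}, and every animated ring is a sifted colimit of polynomial rings $\bZ[x_1,\dots,x_n]$. Connective objects are closed under colimits, so it suffices to show that $W_i(P)$ is connective for $P$ polynomial. We prove that claim by induction on $i$ along the pullback formula of Remark~\ref{rmk:pb formula cofree}:
$$W_{i+1}=W_i\times_{W_{i-1}\times_{TW_{i-1}}TW_i}W_i,\qquad T=(-)\otimes_\bZ\bF_p.$$
Assume inductively that $W_j(P)\simeq W_j^{\ani}(P)$ is discrete and equal to the classical ring of $p$-typical Witt vectors of length $j$. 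Then every term in the formula is discrete or an explicit mod-$p$ reduction, and connectivity reduces to surjectivity of the map to the bottom-right corner on $\pi_0$. That surjectivity follows from the classical fact that the restriction maps of Witt vectors are surjective.

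A cleaner alternative for the inductive step avoids the explicit pullback. It runs through the adjunction: connectivity of $W_i(\iota A)$ for all connective $A$ is equivalent to the statement that $\tau_{\geq0}W_i(R)\to W_i(R)$ is an equivalence whenever $R$ is connective. One can compare $W_i$ with $W_i^{\ani}$ via the canonical map $W_i^{\ani}(\tau_{\geq0}R)\to\tau_{\geq0}W_i(R)$, extending the second part of Lemma~\ref{lem:W2 and animated W2}. The same five-lemma argument applies, using the fiber sequences $U\to UW_i\to UW_{i-1}$ for both $W_i$ and $W_i^{\ani}$, together with the classical surjectivity of $W_i^{\ani}(A)\to W_{i-1}^{\ani}(A)$ on $\pi_0$ (which holds for animated Witt vectors by \cite[App.~A]{bhatt-lurie-prismatization}). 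Comparing the two long exact sequences gives the equivalence on connective covers and rules out the problematic $\pi_{-1}$.
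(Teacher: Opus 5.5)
Your set-up (induction on $i$, base cases $W_1=\id$ and $W_2$ via Lemma~\ref{lem:W2 and animated W2}, and the fiber sequence $UA\to UW_i(A)\xrightarrow{Ur_i}UW_{i-1}(A)$ from Proposition~\ref{prop:filtration on Tphi}) is exactly the paper's argument, and it already finishes the proof. The ``main obstacle'' does not exist: it comes from misreading the long exact sequence.

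For a fiber sequence $F\to E\to B$, the segments $\pi_nF\to\pi_nE\to\pi_nB$ are exact. So if $F$ and $B$ are connective, then $\pi_nE=0$ for all $n<0$. In other words, connective objects are closed under extensions, and the \emph{middle} term is connective, not merely $(-1)$-connective. The cokernel of $\pi_0E\to\pi_0B$ injects into $\pi_{-1}F$, not into $\pi_{-1}E$. Here $F=UA$ is connective, so surjectivity of $\pi_0(r_i)$ is a consequence of the fiber sequence, not something you must prove. The $(-1)$-connectivity phenomenon you have in mind applies to the \emph{fiber} of a map between connective objects. So the induction closes in one line. In fact, the case $i=2$ of the fiber sequence already gives connectivity of $W_2(A)$ without Lemma~\ref{lem:W2 and animated W2}.

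You should delete everything after your second paragraph. It is unnecessary, and as written it would not be an adequate substitute even if the obstacle were real.
\begin{itemize}
\item The polynomial-ring induction assumes that, for all $j$, $W_j(P)$ and the maps $r_j$, $F_j$ in the pullback formula of Remark~\ref{rmk:pb formula cofree} agree with classical $p$-typical Witt vectors and their restriction and Frobenius maps. The paper only proves such a comparison for $j=2$.
\item The surjectivity that step actually needs is that of the difference map from $W_i(P)\oplus W_i(P)$ onto $W_{i-1}(P)\times_{W_{i-1}(P)/p}W_i(P)/p$. This is not simply surjectivity of restriction maps.
\item Your ``cleaner alternative'' presupposes fiber sequences $U\to UW_i^{\ani}\to UW_{i-1}^{\ani}$ on $\Ani_\bZ$, compatible with those for $W_i$. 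You have not constructed these.
\end{itemize}
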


\begin{proof}
	This follows immediately from the fibre sequences of Proposition~\ref{prop:filtration on Tphi} together with the result for $W_2$ from Lemma~\ref{lem:W2 and animated W2}.
\end{proof}

\begin{remark}
	In fact, the functors $W_i$ restrict to the animations of the classical $i$-truncated $p$-typical Witt vector functors on $\DAlg_\bZ^\cn$.
\end{remark}

\begin{lemma}\label{lem: Tphi and ccff}
	The map $(T^\varphi \circ \LSym_\bZ)(\bZ[-n] \to 0)$ is coconnectively faithfully flat for any $n > 0$.
\end{lemma}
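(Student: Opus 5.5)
The plan is to use the exhaustive filtration of Proposition~\ref{prop:filtration on Tphi} to reduce coconnective faithful flatness of $T^\varphi(\LSym_\bZ(\bZ[-n]))\to T^\varphi(\LSym_\bZ(0))=\bZ$ to a statement about the graded pieces, each of which is controlled by the Mathew--Mondal result that $\LSym_\bZ(\bZ[-n])\to\bZ$ is ccff for $n>0$.

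By \cite[Prop.~2.10]{mathew-mondal}, as in the proof of Lemma~\ref{lem:ccff condition}, it suffices to show that the fiber of the underlying map of $\bZ$-modules
\[
U T^\varphi\LSym_\bZ(\bZ[-n])\to U T^\varphi\LSym_\bZ(0)
\]
has Tor-amplitude $\leq -1$, or equivalently that its base change to $\bF_p$ and $\bQ$ is concentrated in homological degrees $\leq -1$ with flat homotopy.

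First, write $T^\varphi\simeq\colim_i F_i^\varphi$ as in Proposition~\ref{prop:filtration on Tphi}. Since Tor-amplitude $\leq -1$ is stable under filtered colimits, the fiber is the filtered colimit of the fibers of
\[
UF_i^\varphi\LSym_\bZ(\bZ[-n])\to UF_i^\varphi(\bZ).
\]
It is therefore enough to control these fibers for each $i$.

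Second, dualize the fiber sequence $U\to UW_i\to UW_{i-1}$ of Proposition~\ref{prop:filtration on Tphi}. By Lemma~\ref{lem:cofibers of filtration} applied to $\eta\colon\id\to F_2^\varphi$, one expects a cofiber sequence $UF_{i-1}^\varphi\to UF_i^\varphi\to U\circ G$ for a suitable left adjoint twist $G$. The base case is the cofiber of $U\to UF_2^\varphi$. Here $F_2^\varphi$ is left adjoint to the pullback defining $W_2$, so it is a pushout
\[
F_2^\varphi(R)\simeq R\otimes_{R\otimes_{\mathbf{L}}\bF_p\text{-twist}}R,
\]
concretely computed from $R\otimes_\bZ\LSym$-adjoints of $\can$ and $\Fr$. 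The cleaner route, however, is to show directly that $F_2^\varphi$ preserves ccff maps. $F_2^\varphi$ is a colimit of $\id$, $\id$, and the left adjoint of $R\mapsto R\otimes\bF_p$ precomposed with Frobenius. These left adjoints are base changes along ring maps and restriction/Frobenius twists. Base changes of ccff maps are ccff, and pushouts of ccff maps in $\DAlg$ are ccff by \cite{mathew-mondal}.

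So the cleanest approach is to prove by induction on $i$ that $F_i^\varphi$ sends the ccff map $\LSym_\bZ(\bZ[-n])\to\bZ$ to a ccff map. The inductive step uses the pushout description of $F_{i+1}^\varphi$ from Lemma~\ref{lem:cofibers of filtration}: it is an iterated pushout of $F^{(i)}$, $F\circ F^{(i)}$, and $F\circ F^{(i-1)}$. We then conclude that the colimit $T^\varphi$ is ccff because ccff maps are closed under filtered colimits.

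The main obstacle is the behaviour of the Frobenius ingredient of $F_2^\varphi$. Its left adjoint involves $\bF_p\otimes_\bZ-$ followed by restriction along Frobenius $\Fr\colon R/p\to R/p$, and restriction of scalars along Frobenius is not obviously compatible with ccff. I would try first to handle this by using flatness of the relevant Frobenius-twisted pieces on free algebras: Lemma~\ref{lem:restriction to animated delta} gives $T^\varphi(\bZ[x])=\delta\{x\}$, which is flat. Then I would argue on the level of the fiber sequences from Proposition~\ref{prop:filtration on Tphi}. Each graded piece is a copy of $U$ applied to the original algebra, and the totalization/connectivity estimate of \cite[Rem.~3.17]{mathew-mondal} shows that the fiber of each graded piece has Tor-amplitude $\leq -1$. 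Summing over the filtration by extensions then gives the same bound for the whole fiber.
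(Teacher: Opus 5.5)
Your reduction to the finite stages $F_i^\varphi$ (via $T^\varphi\simeq\colim_i F_i^\varphi$ and closure under filtered colimits) is correct, but neither of your proposed inductive steps works.

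\textbf{The pushout principle.} Your first step relies on this principle: a pushout of functors, each of which sends $\LSym_\bZ(\bZ[-n])\to\bZ$ to a ccff map, again does so. This already fails for spans. Consider the map from the span $\bZ\leftarrow\LSym_\bZ(\bZ[-1])\to\bZ$ to the constant span on $\bZ$. Its components are ccff (the augmentation and two identities). Yet the induced map of pushouts is $\bZ\otimes_{\LSym_\bZ(\bZ[-1])}\bZ\simeq\LSym_\bZ(\bZ)=\bZ[x]\to\bZ$, which is not ccff. Your spans have exactly this shape, because every $F^{(j)}$ is a left adjoint and so sends $\bZ$ to $\bZ$. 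Hence the claims that $F_2^\varphi(S)\simeq S\otimes_{L(S)}S$ and the iterated pushouts $F^{(i+1)}$ preserve ccff maps are unsupported. Also, $L$, the left adjoint of $R\mapsto R\otimes_\bZ\bF_p$, is not a base change along a ring map: one has $L(\LSym_\bZ M)\simeq\LSym_\bZ(M\otimes_\bZ\bF_p[-1])$.

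\textbf{The fallback.} Your fallback confuses the two sides of the adjunction. The fiber sequences $U\to UW_i\to UW_{i-1}$ concern the right adjoints. There is no filtration of $UF_i^\varphi(S)$ with graded pieces $U(S)$. Indeed $F_2^\varphi(\bZ)\simeq\bZ$, so the cofiber of $U\bZ\to UF_2^\varphi(\bZ)$ is $0$, not $U\bZ$. Lemma~\ref{lem:cofibers of filtration} does not apply here, because $U\colon\DAlg_\bZ\to\Mod_\bZ$ does not preserve colimits.

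\textbf{The missing idea.} You need to dualize the fiber sequence only after evaluating on free algebras. For $M\in\Mod_\bZ$ and $R\in\DAlg_\bZ$ one has $\Map(F_i^\varphi\LSym_\bZ M,R)\simeq\Map(M,UW_iR)$. Take the fiber of $\Map(M,UW_iR)\to\Map(M,UW_{i-1}R)$ over the zero map, which corresponds to the augmentation $F_{i-1}^\varphi\LSym_\bZ M\to F_{i-1}^\varphi(\bZ)\simeq\bZ$. By Yoneda this gives a natural pushout
\[
\LSym_\bZ(M)\simeq F_i^\varphi\LSym_\bZ(M)\otimes_{F_{i-1}^\varphi\LSym_\bZ(M)}\bZ .
\]
For $M=\bZ[-n]$, this exhibits $F_i^\varphi\LSym_\bZ(\bZ[-n])\to\LSym_\bZ(\bZ[-n])$ as a base change of the augmentation of $F_{i-1}^\varphi\LSym_\bZ(\bZ[-n])$, which is ccff by induction. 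Composing with the ccff map $\LSym_\bZ(\bZ[-n])\to\bZ$ of \cite[Cor.~2.15]{mathew-mondal} gives the augmentation of $F_i^\varphi\LSym_\bZ(\bZ[-n])$. Any map to $\bZ$ is the augmentation, since the mapping space is $\Omega^\infty(W_i(\bZ)[n])$, which is connected because $W_i(\bZ)$ is connective and $n>0$. This is the paper's argument. It uses only closure of ccff maps under base change, composition and filtered colimits, and never needs a functor to preserve ccff maps.
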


\begin{proof}
	Since $T^\varphi \simeq \colim_{i \in \bN} F_i^\varphi$ in $\End^L(\DAlg_\bZ)$ by Proposition~\ref{prop:filtration on Tphi} and the collection of ccff maps is closed under
	filtered colimits, it suffices to show that the maps $(F_i^\varphi \circ \LSym_\bZ)(\bZ[-n] \to 0)$ are ccff for any $n >0$ and for all $i \geq 1$. Thus, fix $n >0$. We first
	observe:
	\begin{itemize}
	\item[($\ast$)] The space of maps from $(F_i^\varphi \circ \LSym_\bZ)(\bZ[-n])$ to $\bZ$ in $\DAlg_\bZ$ is connected for any $i \geq 1$.
	\end{itemize}
	By adjunction, there exists an equivalence $\Map_{\DAlg_\bZ}(F_i^\varphi \LSym_\bZ(\bZ[-n]),\bZ) \simeq \Omega^\infty W_i(\bZ)[n]$ for any $i \geq 1$.
	Since $W_i(\bZ)$ is connective by Corollary~\ref{cor:properties Wi} and $n > 0$ by assumption, it follows that $\pi_0 W_i(\bZ)[n] \simeq 0$ which gives the claim.
	We now argue by induction on $i$: For $i=1$, the map $\LSym_\bZ(\bZ[-n]) \to \bZ$ is ccff by~\cite[Cor.~2.15]{mathew-mondal}.
	For $i \geq 2$, Proposition~\ref{prop:filtration on Tphi} implies that there exists a pushout square
	$$\begin{tikzcd}[column sep=small]
		F_{i-1}^\varphi( \LSym(\bZ[-n])) \ar[d] \ar[r] & F_i^\varphi( \LSym(\bZ[-n])) \ar[d] \\[3ex]
		\bZ \ar[r] & \LSym(\bZ[-n])
	\end{tikzcd}$$
	in $\DAlg_\bZ$ which is adjoint to the fibre sequence for the $W_i$. In particular, this identifies the left hand vertical map with $(F_{i-1}^\varphi \circ \LSym_\bZ)(\bZ[-n] \to 0)$.
	The claim now follows from the fact that ccff maps are closed under base change and composition together with observation ($\ast$).
\end{proof}

\begin{lemma}\label{lem: LSym ccff}
	The map $\LSym_\bZ(\oplus_I \bZ[-n] \to 0)$ is ccff for any $n > 0$ and any index set $I$.
\end{lemma}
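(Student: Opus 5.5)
The plan is to reduce to the case of a single summand, which is \cite[Cor.~2.15]{mathew-mondal} (already used in the proof of Lemma~\ref{lem: Tphi and ccff}), by combining finite tensor products with filtered colimits. Since $\LSym_\bZ$ is a left adjoint, we have
$$\LSym_\bZ\Big(\bigoplus_I \bZ[-n]\Big)\simeq \colim_{J\subseteq I\text{ finite}} \bigotimes_{j\in J}\LSym_\bZ(\bZ[-n]).$$
Under this identification, the augmentation to $\bZ=\LSym_\bZ(0)$ is the filtered colimit over finite $J$ of the tensor products of the augmentations $\LSym_\bZ(\bZ[-n])\to\bZ$.

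For the finite case, I would argue by induction on $|J|$. The augmentation of $A\otimes B$ factors as $A\otimes B\to A\otimes\bZ\simeq A\to\bZ$. The first map is a base change along $B\to\bZ$, via $A\otimes B\simeq A\otimes_\bZ B$, and the second is the augmentation of $A$. Since ccff maps are closed under base change and composition, both facts used in Lemma~\ref{lem: Tphi and ccff}, the map $\LSym_\bZ(\bZ[-n]^{\oplus J})\to\bZ$ is ccff.

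Finally, I would pass to the filtered colimit over finite $J\subseteq I$, using that ccff maps are closed under filtered colimits of arrows, as already used in Lemma~\ref{lem: Tphi and ccff}.

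The only point needing care is this last step. It relies on the colimit-closure statement holding for filtered diagrams of maps with varying source and fixed target $\bZ$. I would check this directly from the definition of ccff in \cite{mathew-mondal}: for every coconnective $M$, the base change $M\otimes_{A}\bZ$ is coconnective and the functor is conservative on coconnective modules. Both conditions are stable under filtered colimits, since $\otimes$ commutes with filtered colimits and coconnectivity is preserved by filtered colimits in $\Mod_\bZ$.

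If conservativity is awkward to verify directly, I would instead use the criterion via \cite[Prop.~2.10]{mathew-mondal} on fibers, as in Lemma~\ref{lem:ccff condition}. This requires the fiber of the augmentation to have Tor-amplitude $\leq -1$, and that property is visibly preserved by filtered colimits.
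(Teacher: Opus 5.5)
Your argument is correct and has the same skeleton as the paper's proof. The factorization $f^{\otimes m}\simeq (f\otimes\id)\circ(\id\otimes f^{\otimes m-1})$ handles finite $I$, and a filtered colimit over finite subsets handles the general case.

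The difference is which property you propagate.
\begin{itemize}
\item The paper first invokes \cite[Prop.~2.10]{mathew-mondal} to replace ccff by the condition that the fibre has Tor-amplitude $\leq -1$ over $\bZ$. The factorization then gives a fibre sequence $\fib(f^{\otimes m-1})\otimes_\bZ\LSym_\bZ(\bZ[-n])\to\fib(f^{\otimes m})\to\fib(f)$. This needs the extra input that $\LSym_\bZ(\bZ[-n])$ has Tor-amplitude $\leq 0$, but in exchange the passage to infinite $I$ is immediate.
\item You propagate ccff itself. Your finite step is therefore purely formal: base change along $B\to A\otimes B$ and composition, both immediate from the definition, with no Tor-amplitude input. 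The cost moves to the infinite step, which now needs closure of ccff maps under filtered colimits.
\end{itemize}

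That closure property is true, and the paper itself uses it in Lemma~\ref{lem: Tphi and ccff}. However, your sketch of it is not a proof. Coconnectivity of $M\otimes_A\bZ$ does pass to the colimit. Conservativity does not follow merely from $\otimes$ commuting with filtered colimits, since a filtered colimit of nonzero modules can be zero.

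You should either cite the closure property or argue through fibres. If you argue through fibres, note that your fallback via \cite[Prop.~2.10]{mathew-mondal} needs Tor-amplitude $\leq -1$ for the fibres at the finite stages. Your base-change argument does not supply this. Obtaining it brings back exactly the paper's fibre-sequence computation, including the Tor-amplitude bound on $\LSym_\bZ(\bZ[-n])$.
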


\begin{proof}
	By~\cite[Prop.~2.10]{mathew-mondal} it suffices to show that the fibre of the map in question has Tor amplitude in degrees $\leq -1$ over $\bZ$. Since the collection of
	$\bZ$-modules of Tor
	amplitude in degrees $\leq -1$ is closed under filtered colimits, we may assume that $I$ is a finite set. We are thus reduced to proving the claim for a finite tensor product
	of the map $f\colon\LSym_\bZ(\bZ[-n]) \to \bZ$ in $\Mod_\bZ$. This follows by induction: The case $n=1$ is~\cite[Cor.~2.15]{mathew-mondal}. For $n\geq 1$, writing
	$f^{\otimes n} \simeq (f\otimes\id)\circ(\id \otimes f^{\otimes n-1})$ shows that there exists a fibre sequence
	$ \fib(f^{\otimes n-1}) \otimes_\bZ \LSym_\bZ(\bZ[-n]) \to \fib(f^{\otimes n}) \to \fib(f)$ in $\Mod_\bZ$.
	Since both $\fib(f)$ and $\fib(f^{\otimes n-1})$ have Tor-amplitude in degrees $\leq -1$ and $\LSym_\bZ(\bZ[-n])$ has Tor-amplitude in degrees $\leq 0$ in $\Mod_\bZ$, the claim now follows from the fact that $\bZ$-modules of Tor amplitude $\leq k$ are closed under extensions.
\end{proof}

We record the following stronger version of Lemma~\ref{lem: Tphi and ccff}:

\begin{lemma}\label{lem: Tphi and Tor-amplitude of fib}
	For any $n > 0$ and any index set $I$, the fibre of the map $(T^\varphi \circ \LSym_\bZ)(\oplus_I\bZ[-n] \to 0)$ has Tor-amplitude in degrees $\leq -1$ in $\Mod_\bZ$. In particular,
	it is coconnectively faithfully flat.
\end{lemma}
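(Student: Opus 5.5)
We follow the structure of the proof of Lemma~\ref{lem: Tphi and ccff}, but track Tor-amplitude of fibers rather than ccff-ness; by \cite[Prop.~2.10]{mathew-mondal}, the fiber having Tor-amplitude in degrees $\leq -1$ gives coconnective faithful flatness. Set $M=\oplus_I\bZ[-n]$. Since $T^\varphi\simeq\colim_i F_i^\varphi$ (Proposition~\ref{prop:filtration on Tphi}), the fiber of $T^\varphi\LSym_\bZ(M)\to\bZ$ is the filtered colimit of the fibers of $F_i^\varphi\LSym_\bZ(M)\to\bZ$. Tor-amplitude $\leq -1$ is closed under filtered colimits, so it suffices to treat each $F_i^\varphi$. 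The same closure lets us reduce to $I$ finite, because $\LSym_\bZ$ and $F_i^\varphi$ preserve filtered colimits.

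We induct on $i$. For $i=1$ this is Lemma~\ref{lem: LSym ccff}. For $i\geq 2$ we use the pushout square from the proof of Lemma~\ref{lem: Tphi and ccff}, with $\bZ[-n]$ replaced by $M$. It expresses $F_{i-1}^\varphi\LSym(M)\to F_i^\varphi\LSym(M)$ as a base change of $\bZ\to\LSym(M)$ along an augmentation $a\colon F_{i-1}^\varphi\LSym(M)\to\bZ$. This requires the analogue of observation $(\ast)$: $\Map(F_{i-1}^\varphi\LSym(M),\bZ)\simeq\Omega^\infty(W_{i-1}(\bZ)[n])^{I}$ is connected, so $a$ is the standard augmentation. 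The map $g\colon F_{i}^\varphi\LSym(M)\to\bZ$ then factors as
\[
F_i^\varphi\LSym(M)\xrightarrow{h}F_i^\varphi\LSym(M)\otimes_{F_{i-1}^\varphi\LSym(M)}\bZ\simeq\LSym(M)\xrightarrow{f}\bZ .
\]
To avoid this awkward factorization, we instead write the composite $\bZ$-algebra map $F_{i-1}^\varphi\LSym(M)\to F_i^\varphi\LSym(M)\to\bZ$. Then $\mathrm{fib}(F_i\to\bZ)$ sits in a fiber sequence
\[
\mathrm{fib}(F_{i-1}\to\bZ)\otimes_{F_{i-1}}F_i\;\longrightarrow\;\mathrm{fib}(F_i\to\bZ)\;\longrightarrow\;\mathrm{fib}(F_i\to F_i\otimes_{F_{i-1}}\bZ).
\]
We identify the last term with $\mathrm{fib}(\LSym(M)\to\bZ)\otimes_{\bZ}$-twisted data by base change. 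More simply, since the square is a pushout, the cofibers agree: $\mathrm{cofib}(F_{i-1}\to F_i)\simeq\mathrm{cofib}(\bZ\to\LSym(M))$. Alternatively we use the fiber sequence $U\to UW_i\to UW_{i-1}$ dually, which gives a cofiber sequence of underlying modules $UF_{i-1}^\varphi\to UF_i^\varphi\to U$ after applying it to $\LSym(M)$.

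The cleanest route is to filter $\mathrm{fib}(F_i\LSym(M)\to\bZ)$ using this cofiber sequence. We get a cofiber sequence
\[
\mathrm{fib}(F_{i-1}\LSym(M)\to\bZ)\to\mathrm{fib}(F_i\LSym(M)\to\bZ)\to\mathrm{cofib}(\bZ\to\LSym(M))\simeq\mathrm{fib}(\LSym(M)\to\bZ)[1].
\]
This needs care. The last term naively has Tor-amplitude only $\leq 0$, so the naive extension argument loses one degree. This is the main obstacle. To fix it, we would use the base-change description instead. The map $F_{i-1}\to F_i$ is the base change along $a$ of $\bZ\to\LSym(M)$, so $\mathrm{fib}(F_i\to\bZ)$ is an extension of $\mathrm{fib}(F_{i-1}\to\bZ)$ and of $F_{i-1}\otimes$-free terms. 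More precisely, we write $F_i\to\bZ$ as the composite $F_i\to\LSym(M)\to\bZ$, where the first map is the base change of $a$ along $F_{i-1}\to F_i$. This gives
\[
\mathrm{fib}(F_i\to\LSym(M))\simeq\mathrm{fib}(a)\otimes_{F_{i-1}}F_i\simeq\mathrm{fib}(a)\otimes_{\bZ}\LSym(M)
\]
(using the pushout), together with a fiber sequence
\[
\mathrm{fib}(a)\otimes_\bZ\LSym(M)\to\mathrm{fib}(F_i\to\bZ)\to\mathrm{fib}(\LSym(M)\to\bZ).
\]
The second identification is only formally justified; we would verify it by computing relative tensor products via the pushout square. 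By induction, $\mathrm{fib}(a)$ has Tor-amplitude $\leq -1$, and $\LSym(M)$ has Tor-amplitude $\leq 0$ (it is a colimit of coconnective flat-perfect pieces, as in the proof of Lemma~\ref{lem: LSym ccff}). Hence the tensor product has Tor-amplitude $\leq -1$, and so does the third term by Lemma~\ref{lem: LSym ccff}. Closure under extensions then concludes. The delicate point remains the identification of the fiber of the base-changed map, which is exactly the computation already used for ccff-ness in Lemma~\ref{lem: Tphi and ccff}.
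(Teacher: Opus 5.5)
Your reductions to a single stage $F_i^\varphi$ and to finite $I$, and the case $i=1$, are fine, but the inductive step has a genuine gap. Write $F_j$ for $F_j^\varphi\LSym_\bZ(M)$. The fibre sequence you finally use is
\[
\mathrm{fib}(a)\otimes_{F_{i-1}}F_i\longrightarrow\mathrm{fib}(F_i\to\bZ)\longrightarrow\mathrm{fib}(\LSym_\bZ(M)\to\bZ),
\]
and everything rests on the identification $\mathrm{fib}(a)\otimes_{F_{i-1}}F_i\simeq\mathrm{fib}(a)\otimes_\bZ\LSym_\bZ(M)$. The pushout square does not give this.

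The square only says $F_i\otimes_{F_{i-1}}\bZ\simeq\LSym_\bZ(M)$. So it computes $X\otimes_{F_{i-1}}F_i$ only for $F_{i-1}$-modules $X$ restricted along $a$, and the augmentation ideal $\mathrm{fib}(a)$ is not such a module. The natural source for your identification would be an equivalence $F_i\simeq F_{i-1}\otimes_\bZ\LSym_\bZ(M)$ of $F_{i-1}$-modules. That holds for $M=\bZ$ by Witt coordinates, but nothing like it is available for $M=\bZ[-n]$, where the nonsplit extension $\Ga\to W_i\to W_{i-1}$ enters.

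Nor is this ``the computation already used'' in Lemma~\ref{lem: Tphi and ccff}. That proof computes no Tor-amplitudes; it uses only that ccff maps are closed under base change and composition. This is the actual obstacle: ccff-ness survives base change along $F_{i-1}\to F_i$, but Tor-amplitude over $\bZ$ does not. It would survive only if you also knew that $F_i$ has Tor-amplitude $\leq 0$ over $\bZ$, or that $F_{i-1}\to F_i$ is coconnectively flat, and your induction tracks neither.

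The intermediate routes you discarded are also unsound:
\begin{itemize}
\item A pushout in $\DAlg_\bZ$ gives only $\mathrm{cofib}(\bZ\to\LSym_\bZ(M))\simeq\mathrm{cofib}(F_{i-1}\to F_i)\otimes_{F_{i-1}}\bZ$, not an equality of cofibres.
\item Lemma~\ref{lem:cofibers of filtration} does not yield a cofibre sequence $UF_{i-1}^\varphi\to UF_i^\varphi\to U$: $U$ does not preserve all colimits, and the fibre sequence in Proposition~\ref{prop:filtration on Tphi} concerns the right adjoints $W_i$.
\item The base change runs the other way: $\bZ\to\LSym_\bZ(M)$ is the base change of $F_{i-1}\to F_i$ along $a$, not conversely.
\end{itemize}

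The missing idea, which the paper supplies, is to bound the Tor-amplitude of the algebra rather than of the fibre. Lemma~\ref{lem: Tphi and ccff} already gives that $(T^\varphi\circ\LSym_\bZ)(\bZ[-n])\to\bZ$ is ccff. So Tor-amplitude $\leq -1$ of its fibre follows once $(T^\varphi\circ\LSym_\bZ)(\bZ[-n])$ itself has Tor-amplitude $\leq 0$.

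The paper proves this by induction on $n$. For $n=0$ the algebra is the flat ring $\delta\{x\}$. For $n>0$, coconnective faithfully flat descent identifies it with the totalization of the \v{C}ech nerve of the augmentation. The terms $(T^\varphi\circ\LSym_\bZ)(\bZ[1-n]^{\oplus k})$ have Tor-amplitude $\leq 0$ by induction, and this class is closed under tensor products and limits. Finite $I$ then follows from the tensor-power induction of Lemma~\ref{lem: LSym ccff}, and arbitrary $I$ by passing to filtered colimits.
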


\begin{proof}
	Since $T^\varphi$ preserves all colimits and taking fibres preserves filtered colimits, it suffices to prove the claim for finite $I$. We thus have to show that the fibre
	of the map $((T^\varphi \circ \LSym_\bZ)(\bZ[-n]) \to \bZ)^{\otimes_\bZ k}$ has Tor-amplitude in degrees $\leq -1$ in $\Mod_\bZ$. Arguing by induction on $k$ as in the proof
	of Lemma~\ref{lem: LSym ccff}, it suffices to show the following two claims:
	\begin{itemize}
		\item[($\ast_1$)] For any $n >0$, the fibre of $(T^\varphi \circ \LSym_\bZ)(\bZ[-n]) \to \bZ$ has Tor-amplitude in degrees $\leq -1$ in $\Mod_\bZ$.
		 \item[($\ast_2$)] For any $n > 0$, $(T^\varphi \circ \LSym_\bZ)(\bZ[-n])$ has Tor-amplitude in degrees $\leq 0$ in $\Mod_\bZ$.
	\end{itemize}
	By Lemma~\ref{lem: Tphi and ccff}, we know that the maps $(T^\varphi \circ \LSym_\bZ)(\bZ[-n]) \to \bZ$ are ccff for any $n > 0$. It follows that claim ($\ast_1$) is implied by
	claim ($\ast_2$). We show the second claim by induction on $n \geq 0$: For $n = 0$, Lemma~\ref{lem:restriction to animated delta} implies that $(T^\varphi \circ \LSym_\bZ)(\bZ)$ is
	equivalent to the free discrete $\delta$-ring $\delta\{x\}$ on a single generator. This is flat over $\bZ$. For $n >0$, let $M_n^\bullet$ denote the \v{C}ech nerve of the map
	$\bZ[-n] \to 0$ in $\Mod_\bZ$. Then $M_n^k \simeq \bZ[-n+1]^{\oplus k}$ for any $k\geq 0$ and $(T^\varphi \circ \LSym_\bZ)(M_n^\bullet)$ is the \v{C}ech nerve of the map
	$(T^\varphi \circ \LSym_\bZ)(\bZ[-n]) \to \bZ$ in $\DAlg_\bZ$. Observe that by inductive hypothesis, the individual terms in the cosimplicial diagram
	$(T^\varphi \circ \LSym_\bZ)(M_n^\bullet)$ have Tor-amplitude in degrees $\leq 0$ underlying in $\Mod_\bZ$. In particular, they are coconnective. Thus,
	coconnective faithfully flat descent~\cite[Prop.~2.18]{mathew-mondal} implies that the canonical map
	$(T^\varphi \circ \LSym_\bZ)(\bZ[-n]) \to \Tot(T^\varphi(\LSym_\bZ M_n^\bullet))$ in $\DAlg_\bZ$ is an equivalence. It follows that $(T^\varphi \circ \LSym_\bZ)(\bZ[-n])$ has
	Tor-amplitude in degrees $\leq 0$ in $\Mod_\bZ$ since the collection of $\bZ$-modules of Tor amplitude in degrees $\leq 0$ is closed under arbitrary limits in $\Mod_\bZ$.
\end{proof}

\begin{corollary}\label{cor:Tphi plethory}
	The derived $\varphi$-algebra monad $T^\varphi$ is a derived flat plethory.
\end{corollary}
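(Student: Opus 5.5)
The plan is to check the three conditions of Definition~\ref{def:derived_plethory} for $T^\varphi$ directly, using the results just established.

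\emph{Condition (a).} The pullback in Definition~\ref{def:derived phi-algebras} is a pullback both in $\PrL$ and in $\PrR$, so the forgetful functor $U^\varphi$ preserves all limits and colimits. Hence the monad $T^\varphi=U^\varphi\circ(U^\varphi)^L$ preserves all colimits. Moreover, it has the right adjoint $C^\varphi=U^\varphi\circ(U^\varphi)^R$, which gives (a).

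\emph{Condition (b).} I would apply Lemma~\ref{lem:restriction to animated delta} to the animated ring $\bZ[x]=\LSym_\bZ(\bZ)$. On animated rings $T^\varphi$ restricts to the animated $\delta$-ring monad $T^\delta$. By Remark~\ref{rmk:animated delta}, $T^\delta$ agrees with the animation of the discrete free $\delta$-ring functor on polynomial rings. So $T^\varphi(\bZ[x])\simeq\delta\{x\}$, the free discrete $\delta$-ring on one generator. Its underlying abelian group is a polynomial ring $\bZ[x,\delta x,\delta^2x,\dots]$, which is free and in particular flat over $\bZ$. This gives (b).

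\emph{Condition (c).} Here I would use the first direction of Lemma~\ref{lem:ccff condition}, which needs only that $T^\varphi$ is a left adjoint with $T^\varphi(\bZ[x])$ flat; both were established above. It then suffices that $\LSym^{T^\varphi}(\bZ[-n])\to\bZ$ is coconnectively faithfully flat for every $n>0$, and this is exactly Lemma~\ref{lem: Tphi and ccff}. For completeness I would recall the mechanism behind Lemma~\ref{lem:ccff condition}. For a cofiber sequence $M\to N\to\bZ[-n]$ in $\Perf_\bZ^{w\leq 0}$, the map $\LSym^{T^\varphi}(M)\to\LSym^{T^\varphi}(N)$ is a base change of $\LSym^{T^\varphi}(\bZ[-n-1])\to\bZ$, because $\LSym^{T^\varphi}$ preserves pushouts. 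Since ccff maps are stable under base change, this map is ccff. Descent for ccff maps (\cite[Prop.~2.18]{mathew-mondal}) then identifies $\LSym^{T^\varphi}(M)$ with the totalization of the \v{C}ech complex. That descent statement requires the source to be coconnective, and this is supplied by Lemma~\ref{lem: Tphi and Tor-amplitude of fib} together with base change.

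The main obstacle is (c), and its substance has already been handled in Lemmas~\ref{lem: Tphi and ccff} and~\ref{lem: Tphi and Tor-amplitude of fib}. What remains is a bookkeeping check: the cofiber sequences in Definition~\ref{def:derived_plethory}(c) involve $\bZ[-n]$ with $n\geq 0$, so the base changes needed are of maps $\bZ[-m]\to 0$ with $m=n+1\geq 1$, which lie within the range covered by those lemmas.
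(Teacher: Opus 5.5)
Your proposal is correct and is essentially the paper's proof. Colimit preservation, and hence a right adjoint, comes from the defining pullback lying in both $\PrL$ and $\PrR$; flatness of $T^\varphi(\bZ[x])\simeq\delta\{x\}$ comes from Lemma~\ref{lem:restriction to animated delta}; and condition (c) follows by combining Lemma~\ref{lem:ccff condition} with Lemma~\ref{lem: Tphi and ccff}. Your extra remark, that ccff descent needs the source $\LSym^{T^\varphi}(M)$ to be coconnective and that this can be obtained from Lemma~\ref{lem: Tphi and Tor-amplitude of fib} plus base change, is a detail the paper leaves implicit inside Lemma~\ref{lem:ccff condition}; it is not needed once that lemma is taken as given.
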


\begin{proof}
	The monad $T^\varphi$ preserves all colimits by definition. Lemma~\ref{lem:restriction to animated delta} implies that $T^\varphi(\bZ[X]) \simeq T^\delta(\bZ[X]) \simeq \delta\{x\}$
	is equivalent to the free discrete $\delta$-ring on a single generator which is flat as a $\bZ$-module.
	Thus, by Lemma~\ref{lem:ccff condition}, it suffices to show that the maps $(T^\varphi \circ \LSym_\bZ)(\bZ[-n] \to 0)$ are coconnectively faithfully flat for any $n > 0$.
	This is Lemma~\ref{lem: Tphi and ccff}.
\end{proof}

\begin{theorem}[$\delta$-comparison]\label{thm:comparison delta}
    There are equivalences $\s\c\CAlg^\delta[W^{-1}]\we\DAlg^\delta_{\bZ}\we\DAlg^\varphi_{\bZ}$ which are compatible with the forgetful functors to $\DAlg_{\bZ}$.
\end{theorem}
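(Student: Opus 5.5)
The plan is to route everything through the uniqueness statement of Theorem~\ref{thm:deriving-plethories}. Corollary~\ref{cor:Tphi plethory} shows that $T^\varphi$ is a derived flat plethory, so $\DAlg^\varphi_\bZ \simeq \LMod_{T^\varphi}(\DAlg_\bZ)$ over $\DAlg_\bZ$. By Theorem~\ref{thm:deriving-plethories}, any two derived flat plethories $T, T'$ with $\pi_0T|_{\CAlg^\heart_\bZ}\cong \pi_0T'|_{\CAlg^\heart_\bZ}$ as monads are equivalent as monads on $\DAlg_\bZ$. Corollary~\ref{cor:monads_adjunctions} then gives equivalences of module categories over $\DAlg_\bZ$, compatible with the forgetful functors.

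First I identify $\pi_0T^\varphi|_{\CAlg^\heart_\bZ}$ with the classical free $\delta$-ring plethory. By Lemma~\ref{lem:restriction to animated delta}, $T^\varphi$ restricts to the animated $\delta$-ring monad $T^\delta$ on $\Ani_\bZ$. By Remark~\ref{rmk:animated delta}, $T^\delta$ is the animation of the free $\delta$-ring functor on $\Poly^\delta$. Alternatively, Lemma~\ref{lem: induced monads}(2) identifies $\pi_0T^\varphi|_{\CAlg^\heart}$ with the monad of $\CAlg^{T^\varphi,\heart}_\bZ$, the discrete derived $\varphi$-algebras. These are discrete rings with an endomorphism whose reduction mod $p$ agrees with Frobenius via a specified homotopy in $\DAlg_{\bF_p}$. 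The homotopy is the extra data, and one must check that for discrete $\bZ$-flat input it amounts exactly to a $\delta$-structure, following \cite[App.~A]{bhatt-lurie-prismatization}. The cleanest route is to compare on polynomial rings: both monads send $\bZ[x]$ to $\delta\{x\}$ compatibly with the monad structure, since $T^\varphi$ and $T^\delta$ agree there as monads.

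Second, I show that the derived $\delta$-ring monad $\LSym^\delta_\bZ$ of~\cite{holeman-derived} corresponds to a derived flat plethory whose $\pi_0$-restriction is also the $\delta$-ring plethory. By construction it agrees with $\Sym^\delta$ on flat modules, and it is built by Raksit's right-left extension, so it lies in $\Alg(\End'_\Sigma(\Mod_\bZ))$ under $\LSym_\bZ$. I would invoke Corollary~\ref{cor:dalg-monads} and Theorem~\ref{thm:extending-monad-intro}: the restriction of $\LSym^\delta_\bZ$ to flat modules is the classical $\Sym^\delta$ as a monad under $\Sym$. The right-extendable monad under $\Sym_\bZ$ extending $\Sym^\delta$ is therefore unique, and it coincides with $\LSym^{T^\varphi}_\bZ$. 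This gives $\DAlg^\delta_\bZ\simeq \DAlg^\varphi_\bZ$ over $\DAlg_\bZ$. To compare monads on $\Mod_\bZ$ with monads on $\DAlg_\bZ$, I use Corollary~\ref{cor:monads_adjunctions} together with the fact that the composite forgetful functor $\DAlg^\delta_\bZ\to\Mod_\bZ$ factors through $\DAlg_\bZ$ via the map $\LSym_\bZ\to\LSym^\delta_\bZ$.

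Third, Theorem~\ref{thm:cosimplicial} applied to $T=T^\varphi$ gives $\s\c\CAlg^{T,\heart}_\bZ[W^{-1}]\simeq\DAlg^\varphi_\bZ$. Since $\CAlg^{T^\varphi,\heart}_\bZ$ is the category of discrete $\delta$-rings by the first step, this yields the first equivalence, again compatible with the forgetful functors.

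The main obstacle is the first step: matching the homotopy-coherent Frobenius-lift data with $\delta$-structures as monads, not just on objects. I would rely on Lemma~\ref{lem:restriction to animated delta} together with \cite[Rem.~A.22]{bhatt-lurie-prismatization}, which already give an equivalence of monads on $\Ani_\bZ$, and then apply $\pi_0$ using Proposition~\ref{prop:derived-monad}.
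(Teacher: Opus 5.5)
Your proposal is correct and matches the paper's proof. The first equivalence comes from Theorem~\ref{thm:cosimplicial}, and the second from the full faithfulness in Theorem~\ref{thm:deriving-plethories} (equivalently Corollary~\ref{cor:dalg-monads}), combined with the fact that both $T^\varphi$ and $\LSym^\delta_\bZ$ restrict to the animated $\delta$-ring monad on connective objects. Your extra bookkeeping spells out what the paper's two-line proof leaves implicit: you work with monads under $\LSym_\bZ$ in $\Alg(\End'_\Sigma(\Mod_\bZ))$, and you pin down $\pi_0T^\varphi$ via Lemma~\ref{lem:restriction to animated delta} and \cite[Rem.~A.22]{bhatt-lurie-prismatization}.
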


\begin{proof}
	The first equivalence follows from the simplicial-cosimplicial comparison theorem, Theorem~\ref{thm:cosimplicial}.
    The second equivalence follows from Theorem~\ref{thm:deriving-plethories} together with the
    fact that both $T^\varphi$ and $\LSym^\delta_\bZ$ restrict to the animated $\delta$-ring monad
    on $\Mod_{\bZ}^{\cn}$.
\end{proof}

\subsection{Derived $\lambda$-rings}\label{sec:lambda}

The free $\lambda$-ring monad on $\Mod_\bZ^\heart$ is derivable, as observed in~\cite[Sec.~5]{antieau-spherical}. It thus extends to a sifted colimit preserving
monad $\LSym^\lambda_\bZ$ on $\Mod_\bZ$ and one may consider the $\infty$-category $\DAlg^\lambda_\bZ = \LMod_{\LSym^\lambda_\bZ}(\Mod_\bZ)$ of
$\LSym^\lambda_\bZ$-modules in $\Mod_\bZ$.
In this section, we show that the category $\DAlg^\lambda_\bZ$ is equivalent to an $\infty$-category of derived rings equipped with a structure of coherently compatible Frobenius lifts for all primes $p$ (see Definition~\ref{def:derived psi-algebras}). The latter was considered in~\cite{hubner_animated} in the animated case and shown to be equivalent to the animation
$\Pscr_\Sigma(\Poly^\lambda_\bZ) \simeq \DAlg^{\lambda,\cn}_\bZ$ of the $1$-category $\lambda$-rings.\\

Below, we write $\DAlg_\bZ^{\delta_p}$ for the category of derived $p$-typical $\delta$-rings which we identify with derived rings with a single $p$-Frobenius lift by Theorem~\ref{thm:comparison delta}.

\begin{notation}
	For a fixed prime $p$, we denote by $\bZ_{>0}^{p\nmid}$ the multiplicative monoid of natural numbers $n \in \bZ_{>0}$ which are not divisible by $p$. Prime factor
	decomposition determines an equivalence $\bZ_{>0}^{p\nmid} \simeq \oplus_{q\neq p}\bN$ in $\Mon_{\bE_\infty}(\Ani)$. In particular, there exist equivalences
	$B\bZ_{>0} \simeq B\bN \times B\bZ_{>0}^{p\nmid}$ in $\iCat$ for any prime $p$.
\end{notation}

\begin{definition}\label{def:derived psi-algebras}
	We define the category of \textit{derived $\psi$-algebras} as the pullback
	$$\begin{tikzcd}[column sep=4em]
		\DAlg^\psi_\bZ \ar[r] \ar[d] \ar[dr, phantom, "\lrcorner", near start] & \prod_p\Fun(B\bZ_{>0}^{p\nmid},\DAlg_{\bF_p}) \ar[d, "\prod_p \Fr_p"] \\[3ex]
		\Fun(B\bZ_{>0},\DAlg_\bZ) \ar[r, "\prod_p (-)\otimes_\bZ\bF_p"] &\prod_p\Fun(B\bZ_{>0},\DAlg_{\bF_p})
	\end{tikzcd}$$
	in $\iCat$.
\end{definition}

The pullback in Definition~\ref{def:derived psi-algebras} is both a pullback in $\PrL$ and $\PrR$. Since the $p$-Frobenius $\Fr_p$ is conservative, it follows that the forgetful functor
$$U^\psi \colon \DAlg^\psi_{\bZ}\rightarrow\Fun(B\bZ_{>0},\DAlg_\bZ)\rightarrow\DAlg_\bZ$$
is conservative and preserves both small limits and colimits. In particular, it exhibits the category of derived $\psi$-algebras as monadic over $\DAlg_\bZ$ with associated colimit-preserving
monad $T^\psi$.

\begin{remark}
	By definition, any object in $\DAlg^\psi_\bZ$ has an underlying derived ring $R$ which is equipped with commuting endomorphisms $\psi^k \in \End_{\DAlg_\bZ}(R)$
	for each positive integer $k$. For each prime $p$, $\psi^p$ is a lift of the $p$-Frobenius on $R \otimes_\bZ \bF_p$, and for any $k > 0$ and any prime $p$, $\psi^k$ is an
	endomorphism of the derived $p$-typical $\delta$-ring $(R,\psi^p)$. Note that the category $\DAlg^\psi_\bZ$ is not equivalent to the $\infty$-category defined
	in~\cite[Rem.~5.6]{antieau-spherical}.
\end{remark}

\begin{remark}\label{rmk: derived lambda via delta}
	Considering the definition of derived $\delta$-rings (Definition~\ref{def:derived phi-algebras}) one sees that the category of derived $\psi$-algebras admits an equivalent
	definition in terms of the following pullback square
	$$\begin{tikzcd}
		\DAlg^\psi_\bZ \ar[r] \ar[d] \ar[dr, phantom, "\lrcorner", near start] & \prod_p \Fun(B\bZ_{>0}^{p\nmid},\DAlg^{\delta_p}_\bZ) \ar[d] \\[3ex]
		\Fun(\B\bZ_{>0},\DAlg_\bZ) \ar[r] & \prod_p \Fun(\B\bZ_{>0},\DAlg_\bZ)
	\end{tikzcd}$$
	in $\iCat$. Here, the right vertical arrow is induced by the forgetful functors $\DAlg^{\delta_p}_\bZ \to \Fun(\B\bN,\DAlg_\bZ)$ and the bottom horizontal arrow is given
	by the diagonal. Note that all functors in the diagram are conservative and preserve both limits and colimits.
\end{remark}

\begin{lemma}\label{lem: restriction to animated lambda}
	The derived $\psi$-algebra monad $T^\psi$ restricts to the animated $\lambda$-ring monad $T^\lambda$ on $\Ani_\bZ$. 
\end{lemma}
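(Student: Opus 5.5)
The plan is to mimic the proof of Lemma~\ref{lem:restriction to animated delta}, using the description of $\DAlg^\psi_\bZ$ from Remark~\ref{rmk: derived lambda via delta} together with the animated analogue established in~\cite{hubner_animated}. First I would define the animated version $\Ani^\psi_\bZ$ by the same pullback as in Definition~\ref{def:derived psi-algebras}, with $\Ani_\bZ$ and $\Ani_{\bF_p}$ in place of $\DAlg_\bZ$ and $\DAlg_{\bF_p}$. By~\cite{hubner_animated} this category is equivalent to $\Pscr_\Sigma(\Poly^\lambda_\bZ)$, compatibly with the forgetful functors. Hence $\Ani^\psi_\bZ\to\Ani_\bZ$ is monadic, and its monad is the animated $\lambda$-ring monad $T^\lambda$. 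Since the inclusions $\Ani_\bZ\subseteq\DAlg_\bZ$ and $\Ani_{\bF_p}\subseteq\DAlg_{\bF_p}$ are compatible with reduction mod $p$ and with Frobenius, there is an induced fully faithful functor $\Ani^\psi_\bZ\hookrightarrow\DAlg^\psi_\bZ$ over $\DAlg_\bZ$. Its essential image consists of the derived $\psi$-algebras whose underlying ring is connective.

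Next I would construct a right adjoint to this inclusion given by $S\mapsto\tau_{\geq0}S$. All the $\psi^k$ restrict to $\tau_{\geq 0}S$ by naturality, and the $\bE_2$-action of $B\bZ_{>0}$ (that is, the coherent commutativity) restricts with them, because $\tau_{\geq0}$ is a functor $\DAlg_\bZ\to\Ani_\bZ$ and therefore induces $\Fun(B\bZ_{>0},\DAlg_\bZ)\to\Fun(B\bZ_{>0},\Ani_\bZ)$. What remains is the Frobenius-lift datum for each prime $p$, compatible with the $B\bZ^{p\nmid}_{>0}$-action. Here I would use the pullback square
$$(\tau_{\geq0}S)/p\simeq \tau_{\geq0}(S/p)\times_{\pi_0(S/p)}\pi_0((\tau_{\geq0}S)/p)$$
from the proof of Lemma~\ref{lem:restriction to animated delta}. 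This square is natural in $S$, so it is a square in $\Fun(B\bZ^{p\nmid}_{>0},\DAlg_{\bF_p})$. Since $\Fr_p\colon\DAlg_{\bF_p}\to\Fun(B\bN,\DAlg_{\bF_p})$ preserves limits and commutes with $\tau_{\geq0}$ and $\pi_0$, the square identifies $\psi^p$ mod $p$ with Frobenius equivariantly. The counit $\tau_{\geq0}S\to S$ then lifts to $\DAlg^\psi_\bZ$. The universal property follows as before: for connective $R$, any map $R/p\to S/p$ factors uniquely through $(\tau_{\geq0}S)/p$.

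Finally, with the right adjoint $\tau_{\geq 0}$ in hand, I would argue that for connective $R$ we have $\Map(T^\psi R,\tau_{\geq0}S)\simeq\Map(T^\psi R,S)$. Taking $S=T^\psi R$ shows that $T^\psi R$ is a retract of a connective object, hence connective. So $T^\psi$ preserves $\Ani_\bZ$, and its restriction is the left adjoint of $\Ani^\psi_\bZ\to\Ani_\bZ$, which is $T^\lambda$.

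The main obstacle is the middle step: making the equivariance and coherence precise, so that the Frobenius identification is compatible with the whole $B\bZ^{p\nmid}_{>0}$-action for all primes simultaneously. I would handle this by working in the presentation of Remark~\ref{rmk: derived lambda via delta}. There I would apply Lemma~\ref{lem:restriction to animated delta}'s coreflection $\tau_{\geq0}$ on $\DAlg^{\delta_p}_\bZ$ functorially, which gives a functor $\Fun(B\bZ^{p\nmid}_{>0},\DAlg^{\delta_p}_\bZ)\to\Fun(B\bZ^{p\nmid}_{>0},\Ani^{\delta_p}_\bZ)$ for each $p$. These functors are compatible with $\tau_{\geq0}$ on $\Fun(B\bZ_{>0},\DAlg_\bZ)$, and coreflections assemble through the pullback.
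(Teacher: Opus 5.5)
Your proposal is correct and is essentially the paper's argument. The paper also works with the presentation of Remark~\ref{rmk: derived lambda via delta} and uses Lemma~\ref{lem:restriction to animated delta} to get compatible coreflections $\tau_{\geq 0}$ (vertically right adjointable squares). It assembles these into a coreflective inclusion $\Ani^\lambda_\bZ\hookrightarrow\DAlg^\psi_\bZ$, using H\"ubner's identification of the animated pullback with $\Ani^\lambda_\bZ$. The only difference is the last step: the paper passes to left adjoints in the resulting horizontally right adjointable square, pasted with the evaluation square, to get $F^\psi\circ\iota\simeq\iota\circ F^\lambda$, whereas you reuse the retract trick from the $\delta$-ring lemma, which amounts to the same thing.
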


\begin{proof}
	We claim that the inclusion $\Ani_\bZ \hookrightarrow\DAlg_\bZ$ induces a horizontally right adjointable pullback square
	$$\begin{tikzcd}
		\Ani_\bZ^{\lambda} \ar[r,hook] \ar[d, "U^\lambda"] & \DAlg_\bZ^{\psi} \ar[d, "U^\psi"] \\[3ex]
		\Ani_\bZ \ar[r,hook] & \DAlg_\bZ
	\end{tikzcd}$$
	in $\iCat$. Adjointability then shows that there exists an equivalence $F^\psi \circ \iota \simeq \iota \circ F^\lambda$ in $\Fun^L(\Ani_\bZ,\DAlg_\bZ)$ and thus
	an equivalence $T^\psi \circ \iota \simeq \iota \circ T^\lambda$ between the underlying endofunctors of the associated monads. Thus, observe that by
	Lemma~\ref{lem:restriction to animated delta}, the inclusion of animated into derived rings induces a commutative diagram
	$$\begin{tikzcd}
		(\DAlg_\bZ)^{B\bZ_{>0}} \ar[r, "\Delta"] & \prod_p (\DAlg_\bZ)^{B\bZ_{>0}} & \prod_p (\DAlg_\bZ^{\delta_p})^{B\bZ_{>0}^{p\nmid}} \ar[l, "\prod_p U^{\delta_p}"']\\[3ex]
		(\Ani_\bZ)^{B\bZ_{>0}} \ar[u,hook] \ar[r, "\Delta"] & \prod_p (\Ani_\bZ)^{B\bZ_{>0}} \ar[u,hook] & \prod_p (\Ani_\bZ^{\delta_p})^{B\bZ_{>0}^{p\nmid}} \ar[u, hook] \ar[l, "\prod_p U^{\delta_p}_{\ani}"']
	\end{tikzcd}$$
	in $\iCat$, whose squares are vertically right adjointable. This induces a fully faithful left adjoint $\Ani_\bZ^\lambda \hookrightarrow \DAlg_\bZ^\psi$ on the limit which
	sits in a horizontally adjointable square
	$$\begin{tikzcd}[column sep=small]
		\Ani_\bZ^{\lambda} \ar[r,hook] \ar[d, "U^\lambda"] & \DAlg_\bZ^{\psi} \ar[d, "U^\psi"] \\[3ex]
		(\Ani_\bZ)^{B\bZ_{>0}} \ar[r,hook] & (\DAlg_\bZ)^{B\bZ_{>0}}
	\end{tikzcd}$$
	in $\iCat$. Here, we used~\cite[Thm.~6.3]{hubner_animated} to identify the lower horizontal pullback with the category $\Ani_\bZ^\lambda$ of animated $\lambda$-rings.
	The claim now follows by pasting with the commutative square
		$$\begin{tikzcd}[column sep=small]
		(\Ani_\bZ)^{B\bZ_{>0}} \ar[r,hook] \ar[d, "\ev"] & (\DAlg_\bZ)^{B\bZ_{>0}} \ar[d, "\ev"] \\[3ex]
		\Ani_\bZ \ar[r,hook] & \DAlg_\bZ
	\end{tikzcd}$$
	which is evidently horizontally right adjointable.
\end{proof}

Observe that $\bZ_{(p)}$ admits a unique structure of a $\lambda$-ring for any prime $p$. Since animated $\lambda$-rings embed into derived $\psi$-algebras by
Lemma~\ref{lem: restriction to animated lambda}, it in particular defines an object in $\DAlg_\bZ^\psi$. In fact, $\bZ_{(p)}$ is an idempotent algebra in the category of
derived $\psi$-algebras with respect to the cocartesian symmetric monoidal structure, as can be checked on underlying objects in $\Mod_\bZ$.

\begin{definition}
	Let $p$ be a prime. We define the category $\DAlg^\psi_{\bZ_{(p)}}$ of derived $p$-local $\psi$-algebras as the slice $(\DAlg_\bZ^\psi)_{\bZ_{(p)}/}$.
\end{definition}

\begin{remark}\label{rmk: p-local psi-algebras}
	Since $\bZ_{(p)}$ is an idempotent algebra in derived $\psi$-algebras, it follows that the category $\DAlg_{\bZ_{(p)}}^\psi$ of derived $p$-local $\psi$-algebras is a
	localization of $\DAlg_\bZ^\psi$. There exists a pullback square
	$$\begin{tikzcd}
		\DAlg_{\bZ_{(p)}}^\psi \ar[r,hook] \ar[d, "U^\psi_p"]& \DAlg_\bZ^\psi \ar[d, "U^\psi"] \\[3ex]
		\DAlg_{\bZ_{(p)}} \ar[r,hook] & \DAlg_\bZ
	\end{tikzcd}$$
	in $\PrR$ which is horizontally left adjointable since $U^\psi \circ (-)\otimes_\bZ \bZ_{(p)} \simeq U^\psi(-) \otimes_\bZ \bZ_{(p)}$ in $\Fun^L(\DAlg_\bZ^\psi,\DAlg_\bZ)$.
	We denote the monad on derived $\bZ_{(p)}$-algebras associated to $U^\psi_p$ by $T^\psi_p$. This preserves all colimits and satisfies
	 $(-)\otimes_\bZ\bZ_{(p)} \circ T^\psi \simeq T^\psi_p \circ (-)\otimes_\bZ \bZ_{(p)}$ in $\Fun^L(\DAlg_\bZ,\DAlg_{\bZ_{(p)}})$.
\end{remark}

\begin{lemma}\label{lem: p-local psi-algebras and delta}
	 For any prime $p$, there exists an equivalence $\DAlg^\psi_{\bZ_{(p)}} \simeq \Fun(B\bZ_{>0}^{\nmid p},\DAlg_{\bZ_{(p)}}^{\delta_p})$ over $\DAlg_{\bZ_{(p)}}$.
\end{lemma}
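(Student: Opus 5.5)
The idea is to base change the two pullback descriptions of $\DAlg^\psi_\bZ$ along $\bZ\to\bZ_{(p)}$. After inverting the primes $q\neq p$, all of the mod-$q$ Frobenius constraints should disappear.

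First I will identify $\DAlg^\psi_{\bZ_{(p)}}$, as a category over $\DAlg_{\bZ_{(p)}}$, with a pullback. By Remark~\ref{rmk: p-local psi-algebras}, $\DAlg^\psi_{\bZ_{(p)}}$ is the full subcategory of $\DAlg^\psi_\bZ$ on those objects whose underlying derived ring lies in $\DAlg_{\bZ_{(p)}}$. Pullbacks of $\infty$-categories are stable under restricting to full subcategories defined by a condition on one corner. So starting from the description in Remark~\ref{rmk: derived lambda via delta}, we get a pullback
$$\DAlg^\psi_{\bZ_{(p)}}\simeq \Fun(\B\bZ_{>0},\DAlg_{\bZ_{(p)}})\times_{\prod_q\Fun(\B\bZ_{>0},\DAlg_\bZ)}\prod_q\Fun(\B\bZ^{q\nmid}_{>0},\DAlg^{\delta_q}_\bZ),$$
where only $q$-components with underlying ring a $\bZ_{(p)}$-algebra are involved.

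Next I treat the factors with $q\neq p$. For a derived $\bZ_{(p)}$-algebra $R$ we have $R\otimes_\bZ\bF_q\simeq 0$, since $q$ is invertible. Hence in Definition~\ref{def:derived phi-algebras} the bottom corner collapses to the terminal category, and the forgetful functor $\DAlg^{\delta_q}_{\bZ}\to\Fun(\B\bN,\DAlg_\bZ)$ restricts to an equivalence over $\bZ_{(p)}$-algebras: a $q$-Frobenius lift is just an endomorphism. So for $q\neq p$ the factor $\Fun(\B\bZ^{q\nmid}_{>0},\DAlg^{\delta_q})\to\Fun(\B\bZ_{>0},\DAlg)$ becomes an equivalence, using $\B\bZ_{>0}\simeq\B\bN\times\B\bZ^{q\nmid}_{>0}$. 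These factors therefore drop out of the pullback, and we are left with
$$\Fun(\B\bZ_{>0},\DAlg_{\bZ_{(p)}})\times_{\Fun(\B\bZ_{>0},\DAlg_{\bZ_{(p)}})}\Fun(\B\bZ^{p\nmid}_{>0},\DAlg^{\delta_p}_{\bZ_{(p)}})\simeq\Fun(\B\bZ^{p\nmid}_{>0},\DAlg^{\delta_p}_{\bZ_{(p)}}).$$
Here $\DAlg^{\delta_p}_{\bZ_{(p)}}$ denotes the full subcategory of $\DAlg^{\delta_p}_\bZ$ on objects with $\bZ_{(p)}$-local underlying ring. Each step is compatible with the forgetful functors, so the equivalence lies over $\DAlg_{\bZ_{(p)}}$.

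The main obstacle is making the dropping-out step rigorous as an equivalence of $\infty$-categories rather than just on objects. I would handle it through the pullback in Definition~\ref{def:derived phi-algebras}: restricted to $\bZ_{(p)}$-algebras, the right vertical functor lands in $\Fun(\B\bN,\DAlg_{\bF_q})$ at the zero algebra. The bottom functor $\DAlg_{\bF_q}\to\Fun(\B\bN,\DAlg_{\bF_q})$ is fully faithful, by the essential uniqueness of identifications of endomorphisms of the terminal object $0$. Consequently the pullback is equivalent to $\Fun(\B\bN,\DAlg_{\bZ_{(p)}})$. One also has to check that restricting to the full subcategory of $\bZ_{(p)}$-local objects commutes with forming the iterated pullback. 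This holds because everything is detected on the underlying derived ring.
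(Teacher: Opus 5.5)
Your argument is the paper's: restrict the defining pullback to $\bZ_{(p)}$-local objects and use $R\otimes_\bZ\bF_q\simeq 0$ for $q\neq p$ to make the $q$-constraints vacuous. The paper does this directly in the pullback of Definition~\ref{def:derived psi-algebras}. There each $q$-factor is $\Fr_q\colon\Fun(\B\bZ^{q\nmid}_{>0},\DAlg_{\bF_q})\to\Fun(\B\bZ_{>0},\DAlg_{\bF_q})$ pulled back to the zero algebra. Your detour through Remark~\ref{rmk: derived lambda via delta} and $\DAlg^{\delta_q}_\bZ$ adds a layer of pullbacks but uses exactly the same input.

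That input is misstated in your last paragraph: $\Fr_q\colon\DAlg_{\bF_q}\to\Fun(\B\bN,\DAlg_{\bF_q})$ is not fully faithful. A morphism $(A,\Fr_A)\to(B,\Fr_B)$ in $\Fun(\B\bN,\DAlg_{\bF_q})$ is a map $f$ together with a homotopy $\Fr_B\circ f\simeq f\circ\Fr_A$. Naturality supplies one such homotopy, so the space of them is $\Omega_{f\circ\Fr_A}\Map_{\DAlg_{\bF_q}}(A,B)$. This is not contractible, for instance, when $A=\bF_q[x]$ and $\pi_1B\neq 0$.

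What you need, and what your parenthetical actually justifies, is only the statement over $0$. Since $\Fr_q$ followed by evaluation is the identity of $\DAlg_{\bF_q}$, the objects sent to terminal objects are exactly the zero algebras. Hence the pullback of $\Fr_q$ along the contractible full subcategory of terminal objects of $\Fun(\B\bN,\DAlg_{\bF_q})$ is the contractible full subcategory of zero algebras. The functor $\Fun(\B\bN,\DAlg_{\bZ_{(p)}})\to\Fun(\B\bN,\DAlg_{\bF_q})$ factors through the terminal objects, so pasting pullbacks gives the equivalence with $\Fun(\B\bN,\DAlg_{\bZ_{(p)}})$. With that replacement, the rest of your proof goes through as written.
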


\begin{proof}
	This follows immediately from the pullbacks from Remark~\ref{rmk: p-local psi-algebras} and Definition~\ref{def:derived psi-algebras} since the tensor products
	$R\otimes_\bZ\bF_q$ vanish for any derived $\bZ_{(p)}$-algebra $R$ for any prime $q \neq p$.
\end{proof}

\begin{proposition}\label{prop: lambda plethory}
	The derived $\psi$-algebra monad $T^\psi$ is a derived flat plethory.
\end{proposition}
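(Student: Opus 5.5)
We need to verify the three conditions of Definition~\ref{def:derived_plethory} for $T^\psi$; as in Corollary~\ref{cor:Tphi plethory}, Lemma~\ref{lem:ccff condition} reduces this to three checks. First, $T^\psi$ preserves all colimits, since $U^\psi$ preserves limits and colimits, so (a) holds. Second, Lemma~\ref{lem: restriction to animated lambda} identifies $T^\psi(\bZ[x])$ with the free $\lambda$-ring on one generator. This is the ring of symmetric functions, a polynomial ring over $\bZ$ and hence flat, which gives (b). Third, it remains to show that $(T^\psi\circ\LSym_\bZ)(\bZ[-n]\to 0)$ is ccff for every $n>0$. I would aim for the stronger statement, as in Lemma~\ref{lem: Tphi and Tor-amplitude of fib}, that the fibre of this map has Tor-amplitude $\leq -1$ over $\bZ$; by \cite[Prop.~2.10]{mathew-mondal} this is equivalent to ccff-ness.

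Tor-amplitude over $\bZ$ can be checked after base change to each $\bZ_{(p)}$. The fibre is a module, base change commutes with the relevant fibres, and a $\bZ$-module has Tor-amplitude $\leq -1$ iff $M\otimes \bF_p$ and $M\otimes\bQ$ do for all $p$; both are detected by $M\otimes\bZ_{(p)}$. By Remark~\ref{rmk: p-local psi-algebras},
$$(T^\psi\LSym_\bZ(\bZ[-n]))\otimes_\bZ\bZ_{(p)}\simeq T^\psi_p\LSym_{\bZ_{(p)}}(\bZ_{(p)}[-n]).$$
So it suffices to prove the $p$-local statement for $T^\psi_p$.

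By Lemma~\ref{lem: p-local psi-algebras and delta}, $T^\psi_p$ is the monad of $\Fun(B\bZ_{>0}^{p\nmid},\DAlg^{\delta_p}_{\bZ_{(p)}})$ over $\DAlg_{\bZ_{(p)}}$. Composing the adjunctions through $\DAlg^{\delta_p}$ (Corollary~\ref{cor:e1}), I expect its underlying functor to be
$$R\mapsto T^{\varphi}_p\Big(\coprod_{m\in\bZ_{>0}^{p\nmid}}R\Big)\simeq T^\varphi_p\big(R^{\otimes \bZ_{>0}^{p\nmid}}\big),$$
with the coproduct taken in $\DAlg_{\bZ_{(p)}}$. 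This uses that the left adjoint of restriction along $\ast\to B\bZ_{>0}^{p\nmid}$ is the free action, and that it commutes with the free $\delta_p$-structure because $\DAlg^{\delta_p}$ has coproducts computed underlying-wise via $T^\varphi$. Then
$$T^\psi_p\LSym(\bZ_{(p)}[-n])\simeq T^\varphi_p\LSym\Big(\bigoplus_{I}\bZ_{(p)}[-n]\Big)$$
with $I$ countable. Lemma~\ref{lem: Tphi and Tor-amplitude of fib}, base changed to $\bZ_{(p)}$ (which is flat), gives that the fibre of the augmentation to $\bZ_{(p)}$ has Tor-amplitude $\leq -1$. This finishes the argument.

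The main obstacle is identifying the free object: making precise that the free $B\bZ_{>0}^{p\nmid}$-equivariant derived $\delta_p$-ring on $R$ is $T^\varphi_p$ applied to the coproduct of copies of $R$. Lemma~\ref{lem: Tphi and Tor-amplitude of fib} is stated over $\bZ$ with the integral $T^\varphi$, so I would also need its compatibility with $p$-localization; I would get this from the same idempotent-algebra argument applied to $\DAlg^\varphi_\bZ$. An alternative route is Proposition~\ref{prop:categorical invariants are monadic}-style averaging, using that $\bZ_{>0}^{p\nmid}$ is a free commutative monoid.
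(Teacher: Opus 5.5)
Your proposal is correct and follows the paper's proof essentially step for step. The steps are: colimit preservation; flatness of $T^\psi(\bZ[x])$ via Lemma~\ref{lem: restriction to animated lambda}; reduction through Lemma~\ref{lem:ccff condition} and \cite[Prop.~2.10]{mathew-mondal} to Tor-amplitude $\leq -1$ of the fibre, checked $p$-locally; and then Remark~\ref{rmk: p-local psi-algebras} with Lemma~\ref{lem: p-local psi-algebras and delta} to rewrite the map as the base change of $(T^{\varphi}\circ\LSym_\bZ)(\bigoplus_{m\in\bZ_{>0}^{p\nmid}}\bZ[-n]\to 0)$, to which Lemma~\ref{lem: Tphi and Tor-amplitude of fib} applies. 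The two identifications you flag are left implicit in the paper, and your justifications are the right ones: the free equivariant object is the $\bZ_{>0}^{p\nmid}$-indexed coproduct (left Kan extension along $\ast\to B\bZ_{>0}^{p\nmid}$) after the free $\delta_p$-functor, which commutes with coproducts simply because it is a left adjoint, and compatibility of $T^\varphi$ with $p$-localization follows from idempotency of $\bZ_{(p)}$ exactly as in Remark~\ref{rmk: p-local psi-algebras}.
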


\begin{proof}
	The monad $T^\psi$ preserves all colimits by definition. Lemma~\ref{lem: restriction to animated lambda} implies that $T^\psi(\bZ[X])$ is equivalent
	to $T^\lambda(\bZ[X])$ which is given by the free discrete $\lambda$-ring on a single generator. This is flat as a $\bZ$-module.
	Thus, by Lemma~\ref{lem:ccff condition} it suffices to show that the maps $(T^\psi \circ \LSym_\bZ)(\bZ[-n] \to 0)$ are coconnectively faithfully flat for any $n > 0$.
	Let us denote the fibre of the above map in $\Mod_\bZ$ by $F_n$. By~\cite[Prop.~2.10]{mathew-mondal}, it suffices to show that $F_n$ has Tor-amplitude in degrees
	$\leq -1$ as a $\bZ$-module. By~\cite[Lem.~15.67.15]{stacks} this can be checked $p$-locally, i.e.~it is sufficient to see that the fibre $F_{p,n} \simeq \bZ_{(p)}\otimes_\bZ F_n$
	of $\bZ_{(p)} \otimes_\bZ (T^\psi \circ \LSym_\bZ)(\bZ[-n] \to 0)$ has Tor-amplitude in degrees $\leq -1$ in $\Mod_{\bZ_{(p)}}$ for all primes $p$.
	Thus, fix a prime $p$. Then there exist equivalences
	\begin{align*}
		\bZ_{(p)} \otimes_\bZ (T^\psi \circ \LSym_\bZ)(\bZ[-n] \to 0) &\simeq (T^\psi_p \circ \LSym_{\bZ_{(p)}})(\bZ_{(p)}[-n] \to 0) \\
		&\simeq (T^{\delta_p}_p \circ \LSym_{\bZ_{(p)}})(\oplus_{m\in \bZ_{>0}^{p\nmid}} \bZ_{(p)}[-n] \to 0) \\
		&\simeq \bZ_{(p)} \otimes_\bZ (T^{\delta_p} \circ \LSym_\bZ)(\oplus_{m \in \bZ_{>0}^{p\nmid}}\bZ[-n] \to 0)
	\end{align*}
	in $\DAlg_{\bZ_{(p)}}$. Here, the first equivalence is by Remark~\ref{rmk: p-local psi-algebras} and the second follows from Lemma~\ref{lem: p-local psi-algebras and delta}.
	Now the fibre of $(T^{\delta_p} \circ \LSym_\bZ)(\oplus_{m \in \bZ_{>0}^{p\nmid}}\bZ[-n] \to 0)$ has Tor amplitude in degrees $\leq -1$ in $\Mod_\bZ$ by
	Lemma~\ref{lem: Tphi and Tor-amplitude of fib} and this is preserved by extension of scalars to $\Mod_{\bZ_{(p)}}$. 
\end{proof}

\begin{theorem}[$\lambda$-comparison]\label{thm:lambda-comparison}
    There are equivalences $\s\c\CAlg^\lambda[W^{-1}]\we\DAlg^\lambda_\bZ\we\DAlg^\psi_\bZ$ which are compatible with the forgetful functors to $\DAlg_{\bZ}$.
\end{theorem}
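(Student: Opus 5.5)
The proof follows the pattern of Theorem~\ref{thm:comparison delta}. For the first equivalence $\s\c\CAlg^\lambda[W^{-1}]\we\DAlg^\lambda_\bZ$, I would first check that the derived $\lambda$-ring monad $\LSym^\lambda_\bZ$ comes from a derived flat plethory $T^\lambda$ on $\DAlg_\bZ$. Once this is known, Theorem~\ref{thm:cosimplicial}, applied to that plethory, identifies the localization of simplicial cosimplicial $\lambda$-rings with $\DAlg^{T^\lambda}_\bZ$. The latter is $\LMod_{\LSym^\lambda_\bZ}(\Mod_\bZ)=\DAlg^\lambda_\bZ$, compatibly with the forgetful functors, by Corollary~\ref{cor:e1}(b).

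The cleanest source for this plethory is Proposition~\ref{prop: lambda plethory}, which says $T^\psi$ is a derived flat plethory. By Lemma~\ref{lem: restriction to animated lambda}, $T^\psi$ restricts on $\Ani_\bZ$ to the animated $\lambda$-ring monad. Hence $\pi_0T^\psi|_{\CAlg_\bZ^\heart}$ is the classical free $\lambda$-ring plethory; this is flat, since $T^\lambda(\bZ[x])$ is the free $\lambda$-ring on one generator, a polynomial ring. Composing with $\LSym_\bZ$, the monad $\LSym^{T^\psi}_\bZ$ on $\Mod_\bZ$ is a sifted-colimit-preserving monad lying in $\Alg(\End'_\Sigma(\Mod_\bZ))$; this uses Lemma~\ref{lem:cech descent vs right extended} for preservation of coskeletal totalizations of flat modules. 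Its restriction to flat modules is the classical free $\lambda$-ring monad.

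On the other side, $\LSym^\lambda_\bZ$ was constructed in~\cite{antieau-spherical} as the derived extension of that same classical monad. It too lies in $\Alg(\End'_\Sigma(\Mod_\bZ))$, because its underlying functor is a filtered colimit of excisively polynomial functors (see the remark after Proposition~\ref{prop:joost}). By the equivalence $\Alg(\End'_\Sigma(\Mod_\bZ))\we\Alg(\End_\omega^{\RE}(\Mod_\bZ^\flat))$ given by restriction to flats, the two monads are equivalent. Monadicity of $U^\psi$ then yields $\DAlg^\lambda_\bZ\we\DAlg^\psi_\bZ$ over $\Mod_\bZ$, and hence over $\DAlg_\bZ$ via the composite-adjunction description of Corollary~\ref{cor:e1}(b). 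Equivalently, one can use the fully faithful functor of Theorem~\ref{thm:deriving-plethories} directly, since both derived plethories have the same $\pi_0$ on discrete rings.

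The main obstacle is compatibility with the forgetful functors to $\DAlg_\bZ$, rather than only to $\Mod_\bZ$. Concretely, I need to know that $\LSym^\lambda_\bZ$, as an $\bE_1$-algebra, receives a monad map from $\LSym_\bZ$ exhibiting it as $U\circ T\circ\LSym_\bZ$ for a monad $T$ on $\DAlg_\bZ$. I would deduce this from the first equivalence of Theorem~\ref{thm:extending-monad-intro}, i.e.\ Corollary~\ref{cor:dalg-monads} together with $\Alg(\End'_\Sigma(\Mod_\bZ))_{\LSym_\bZ/}$. The unit map $\Sym\to\Sym^\lambda$ on flats is classical, and it lifts uniquely by full faithfulness. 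The resulting monad $T$ on $\DAlg_\bZ$ then agrees with $T^\psi$, again by full faithfulness.
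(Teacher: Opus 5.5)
Your proposal is correct and takes essentially the paper's route. The first equivalence comes from Theorem~\ref{thm:cosimplicial}, and the second from the full faithfulness in Theorem~\ref{thm:deriving-plethories} (equivalently, the restriction-to-flats equivalence), once Lemma~\ref{lem: restriction to animated lambda} identifies $\pi_0 T^\psi$ with the classical $\lambda$-ring plethory. Beyond the paper, you explicitly justify that $\LSym^\lambda_\bZ\simeq U\circ T\circ\LSym_\bZ$ for some $T\in\Alg(\End'_\Sigma(\DAlg_\bZ))$, by lifting the unit $\Sym\to\Sym^\lambda$. The paper's terse proof uses this implicitly, both to apply Theorem~\ref{thm:deriving-plethories} and to get compatibility over $\DAlg_\bZ$, and your justification of it is sound.
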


\begin{proof}
	The first equivalence follows from the simplicial-cosimplicial comparison theorem, Theorem~\ref{thm:cosimplicial}.
    The second equivalence follows from Theorem~\ref{thm:deriving-plethories} together with the
    fact that both $T^\psi$ and $\LSym^\lambda_\bZ$ restrict to the animated $\lambda$-ring monad
    on $\Mod_{\bZ}^{\cn}$.
\end{proof}


\subsection{Binomial rings}

In~\cite{kubrak-shuklin-zakharov}, the authors show that the free binomial monad $\Bin$ can be derived using the filtered
monad approach. This leads to the $\infty$-category $\DAlg^\Bin$ of derived binomial rings.
In~\cite{antieau-spherical}, one instead uses derived $\lambda$-rings and canonically trivializes
the Adams operations, leading to the $\infty$-category $\DAlg^{\psi=1}$ of binomial derived
commutative $\bZ$-algebras. There is also work of Horel~\cite{horel} studying cosimplicial
binomial rings which embed fully faithfully into simplicial-cosimplicial binomial rings. All three sources realize finite type nilpotent anima as full subcategories of the respective $\infty$-categories of derived binomial rings. In this section, we show that all three models of derived binomial rings agree.

Consider the category $\DAlg^\psi_{\bZ}\simeq \DAlg^\lambda_{\bZ}$ of derived $\lambda$-rings. By Corollary \ref{cor:center of DAlg_T is discrete} it has a natural action of $\B\bZ_{> 0}\simeq \B(\oplus_p\bN)$ given by the Adams operations $\psi_p$ for all primes $p$. Let us denote the corresponding invariants by
$$
\DAlg^{\psi_\star=1}_{\bZ}:= (\DAlg^\lambda_{\bZ})^{\h\B\bZ_{> 0}^\times}.
$$
By Proposition \ref{prop:categorical invariants are monadic} and Remark \ref{rem:invariants are also comonadic}, $\DAlg^{\psi_\star =1}_\bZ$ is both monadic and comonadic over $\DAlg^\lambda_\bZ$. We can consider the composition
$$
\DAlg^{\psi_\star =1}_\bZ \rightarrow \DAlg^\lambda_\bZ \rightarrow \DAlg_\bZ.
$$ 
This functor is conservative and commutes with all small limits and colimits. Let $T^{\lambda, \psi_\star=1}$ be the corresponding monad on $\DAlg_\bZ$. Let us also denote by $\LSym^{\psi_\star =1}_\bZ$ the composition of $T^{\lambda, \psi_\star=1}$ and $\LSym_\bZ$.

\begin{proposition} \label{prop:binomial plethory}
The following assertions hold.
\begin{enumerate}\item The functor 
	$$
		\DAlg^{\psi_\star =1}_\bZ \rightarrow \Mod_{T^{\lambda, \psi_\star=1}}(\DAlg_\bZ)
		$$
		is an equivalence; in other words, $\DAlg^{\psi_\star =1}_\bZ$ is monadic over $\DAlg_\bZ$.
		\item $T^{\lambda, \psi_\star=1}$ is a derived flat plethory. 
		\end{enumerate}
\end{proposition}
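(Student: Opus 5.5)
For part (1), the plan is to use Lurie's Barr--Beck theorem directly. The composite $\DAlg^{\psi_\star=1}_\bZ\to\DAlg^\lambda_\bZ\to\DAlg_\bZ$ is a composite of two conservative functors, hence conservative. Both factors preserve all small limits and colimits. For the first factor this follows from Proposition~\ref{prop:categorical invariants are monadic} and Remark~\ref{rem:invariants are also comonadic}: it is both monadic and comonadic, with monad and comonad given by colimit- and limit-averaging over $\B\bZ_{>0}$. Since all the categories involved are presentable and the composite preserves limits and filtered colimits, it admits a left adjoint. Preservation of geometric realizations then gives monadicity, and we identify the monad with $T^{\lambda,\psi_\star=1}$.

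For part (2), condition (a) of Definition~\ref{def:derived_plethory} is immediate, since the forgetful functor preserves limits and colimits and so $T^{\lambda,\psi_\star=1}$ admits a right adjoint. Next we compute $T^{\lambda,\psi_\star=1}$ as a composite $A\circ T^\psi$, where $A$ is the averaging monad on $\DAlg^\lambda_\bZ$. By Example~\ref{ex: invariants of actions}(3), $A$ is an iterated coequalizer of $\psi_p$ and $\id$ over all primes $p$, followed by a sequential colimit. On $T^\psi(\bZ[x])$, which is discrete by Lemma~\ref{lem: restriction to animated lambda}, these coequalizers are pushouts of derived rings of the form $R\otimes_{R\otimes R}R$ along $(\psi_p,\id)$. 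We need to check that the result is the free binomial ring $\mathrm{Bin}\{x\}$, which is discrete and flat. The cleanest route is to restrict to animated rings. There, Remark~\ref{rem:fixed points for 1-categories} identifies the discrete fixed points with binomial rings, by Elliott's theorem that binomial rings are exactly the $\lambda$-rings with trivial Adams operations. It then remains to show that the higher Tor terms vanish.

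Here is the first concrete step I would try. Write $T^\psi(\bZ[x])=\Lambda$. As a $\Lambda\otimes\Lambda$-algebra via $(\psi_p,\id)$, the coequalizer is $\Lambda\otimes^L_{\Lambda\otimes\Lambda}\Lambda$. Because $\Lambda$ is a polynomial ring on the $\lambda$-operations and $\psi_p$ acts triangularly, this is a derived quotient by a regular sequence $\psi_p(y)-y$. The main task is checking that the relevant elements form a regular sequence, using the known structure of $\mathrm{Bin}\{x\}$ as a torsion-free, hence flat, colimit. This is the main obstacle of condition (b). If it fails, I would instead argue via the descent comparison of Theorem~\ref{thm:deriving-plethories}, using the ring scheme $\Spec\mathrm{Bin}\{x\}$ together with Proposition~\ref{prop:image restriction plethories}: $R_T$ is the Hom-scheme from $\Spec\mathrm{Bin}\{x\}$, and it is known to be unipotent (Toën's model of $\bZ$-homotopy types).

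For condition (c), we use Lemma~\ref{lem:ccff condition}. It suffices to show that $\LSym^{\psi_\star=1}_\bZ(\bZ[-n])\to\bZ$ is ccff for all $n>0$. Equivalently, its fiber should have Tor-amplitude $\le -1$. This map is the colimit-averaging $A$ applied to $\LSym^\psi_\bZ(\bZ[-n])\to\bZ$, whose fiber has Tor-amplitude $\le-1$ by the argument of Proposition~\ref{prop: lambda plethory}. Since $A$ is built from pushouts along ccff-preserving operations and filtered colimits, we would check Tor-amplitude preservation $p$-locally. There the averaging over $\psi_q$ for $q\ne p$ is a coequalizer of $\delta_p$-ring maps. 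Alternatively, via Proposition~\ref{prop:image restriction plethories}, it reduces to unipotence of the binomial ring scheme, which is the cited result of Toën. I expect flatness, and Tor-amplitude control under these coequalizers, to be the hardest step.
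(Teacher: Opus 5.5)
Your part (1) is fine; it is exactly the paper's Barr--Beck argument. The gap is in (b) and (c), which contain all of the content, and in both the decisive step is left open or justified by an argument that does not work.

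\textbf{Condition (b).} You reduce to Koszul-regularity of the elements $\psi_p(y)-y$ in $\Lambda=T^\psi(\bZ[x])$, but you do not prove it. There are three problems.
\begin{itemize}
\item Torsion-freeness of the \emph{classical} colimit $\Bin\{x\}$ says nothing about the higher homotopy of the \emph{derived} coequalizers.
\item ``Triangularity'' fails integrally: by Newton's identity, $\psi_p(x)=\pm p\,\lambda^p(x)+(\text{terms in }\lambda^1,\dots,\lambda^{p-1})$, and the leading coefficient $p$ is not a unit.
\item From the second prime on, you are coequalizing on a quotient of $\Lambda$ that is no longer a polynomial ring.
\end{itemize}

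\textbf{Condition (c).} The claim that the averaging is ``built from ccff-preserving operations'' is false in general. A coequalizer $R\otimes_{R\otimes R}R$ is not a base change. For instance, coequalizing $\id,\id$ on $\LSym_\bZ(\bZ[-1])$ gives $\LSym_\bZ(\bZ[-1])\otimes S^1\simeq\LSym_\bZ(\bZ[-1])\otimes\bZ[x]$. Its augmentation to $\bZ$ is not ccff, since the fibre has $\pi_0=x\bZ[x]$. So whether coinvariants preserve ccff maps depends on specific geometric input about the endomorphism.

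\textbf{The fallback.} Your fallback through Proposition~\ref{prop:image restriction plethories} does not prove the statement. Unipotence of $\mathbf H=\Spec\Bin(\bZ)$ shows that the classical binomial plethory extends to \emph{some} derived flat plethory. It does not show that $T^{\lambda,\psi_\star=1}$, defined as the monad of the invariant category, is that plethory. Identifying the two would already require (b) and (c) for $T^{\lambda,\psi_\star=1}$.

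\textbf{The missing idea.} The paper proceeds as follows.
\begin{itemize}
\item Identify $\LSym^\lambda_\bZ(\bZ)\simeq\mathcal O(\mathbf W)$, functions on the big Witt vectors, with $\psi_p$ corresponding to the Witt Frobenius $F_p$.
\item Order the primes, and set $\mathbf H_0=\mathbf W$ and $\mathbf H_k=\ker(F_{p_k}-\id\colon\mathbf H_{k-1}\to\mathbf H_{k-1})$.
\item Invoke To\"en's theorem that each $F_{p_k}-\id$ on $\mathbf H_{k-1}$ is a flat surjection.
\end{itemize}
Flatness makes the $k$-th iterated derived coequalizer equal to $\mathcal O(\mathbf H_k)$, which is discrete and flat. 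The colimit is then $\mathcal O(\mathbf H)=\Bin(\bZ)$, which is (b).

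Flatness together with surjectivity gives, by induction, $(\B^n\mathbf W)^{F_{\le k}=1}\simeq\B^n\mathbf H_k$. Hence the $k$-th stage of $\LSym^{\psi_\star=1}_\bZ(\bZ[-n])\to\bZ$ is $R\Gamma(\B^n\mathbf H_k,\mathcal O)\to\bZ$, which is ccff by Mathew--Mondal. Passing to the filtered colimit and applying Lemma~\ref{lem:ccff condition} gives (c).

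What drives both conditions is the flatness and surjectivity of $F_p-\id$ at every finite stage, not merely the unipotence of the limit $\mathbf H$.
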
	
\begin{proof}
	Part 1 follows from the Barr--Beck--Lurie theorem and the fact that the forgetful functor  $\DAlg^{\psi_\star =1}_\bZ \rightarrow \DAlg_\bZ$ is conservative and commutes with all small (in particular sifted) colimits. For part 2 we need to check conditions (1)-(3) in Definition \ref{def:derived_plethory}. We already know that $T^{\lambda, \psi_\star=1}$ commutes with all small colimits since the forgetful functor $\DAlg^{\psi_\star =1}_\bZ\to \DAlg_\bZ$ does. For (2) it is enough to check that $\LSym^{\psi_\star =1}_\bZ(\bZ)$ is a torsion-free abelian group.  By Example \ref{ex: invariants of actions}, $\LSym^{\psi_\star =1}_\bZ(\bZ)$ is computed by applying iterated coequalizers of Adams operations to the free $\lambda$-ring $\LSym^{\lambda}_\bZ(\bZ)$. The latter is identified with functions on the big Witt group scheme $\mathbf W$, and under this identification the $p$-th Adams operation $\psi_p$ corresponds to the Frobenius endomorphism $F_p$ of $\mathbf W$. Let us order the prime numbers in some way $k\mapsto p_k$, and denote $\psi_k:=\psi_{p_k}$, $F_k:=F_{p_k}$. Following \cite{ToenSchem}, consider the sequence of group schemes over $\bZ$
	$$
	\mathbf H \subset \ldots \subset \mathbf H_k \subset \mathbf H_{k-1} \subset \ldots \subset \mathbf H_2 \subset \mathbf H_1 \subset \mathbf H_0 =:\mathbf W,
	$$
	defined inductively as $\mathbf H_k:= \ker(F_{k}-\id \colon \mathbf H_{k-1} \rightarrow \mathbf H_{k-1})$ and where $\mathbf H:=\lim_k \mathbf H_k$ is the Hilbert additive group scheme $\Spec \Bin(\mathbf Z)$. In other words, $\mathbf H_k\subset \mathbf W$ is the subgroup of $\mathbf W$ fixed by the first $k$ Frobenius endomorphisms of $\mathbf W$. As shown in \cite[Sec.~3]{ToenSchem}, each map $F_{_k}-\id \colon \mathbf H_{k-1} \rightarrow \mathbf H_{k-1}$ is a flat surjection; in particular, each $\mathbf H_k$ and $\mathbf H$ are flat over $\bZ$. This shows that we have an equivalence
	$$
	(\mathrm{coeq}_{\psi_k,\id}\circ \mathrm{coeq}_{\psi_{k-1},\id}\circ \ldots \circ \mathrm{coeq}_{\psi_{1},\id})(\LSym^{\lambda}_\bZ(\bZ))\simeq \mathcal{O}(\mathbf H_n).
	$$
	Passing to the colimit over $k\in \bN$, we obtain equivalences
	$$\LSym^{\psi_\star =1}_\bZ(\bZ)\simeq \mathcal O(\mathbf H)\simeq \Bin(\bZ),$$
	which show that $\LSym^{\psi_\star =1}_\bZ(\bZ)$ is a free $\bZ$-module. 
	
	It remains to verify (3). By Lemma~\ref{lem:ccff condition} it is enough to show that the natural map $\LSym^{\psi_\star =1}_\bZ(\bZ[-n])\to \bZ$ is ccff for all $n\ge 1$. Since filtered colimits of ccff maps are ccff, it is enough to show that the 
	map 
	$$
	\LSym^{\psi_{\le k} =1}_\bZ(\bZ[-n]):=(\ldots(((\LSym^{\lambda}_\bZ(\bZ[-n]))^{\psi_1=1})^{\psi_2=1})\ldots)^{\psi_k=1}\to \bZ
	$$ is ccff for any $k\ge 0$. Passing to the corresponding affine stacks, the map in question is induced by the map $\Spec \bZ \to (\B^n\mathbf W)^{F_{\le k}=1}$, where the target is the iterated fiber of the maps $F_i-\id$ for $i=1,\ldots,k$. Since $F_{i+1}-\id\colon \mathbf H_i \to\mathbf H_i$ is a surjective flat map for any $i$, we get by induction that $(\B^n\mathbf W)^{F_{\le k}=1}\simeq \B^n\mathbf H_{k}$. By \cite[Prop.~2.10, Rem.~3.17]{mathew-mondal} the map $R\Gamma(\B^n\mathbf H_{k},\mathcal O) \to \bZ$ is ccff for any $n\ge 1$.
\end{proof}

Let $\CAlg^{\Bin, \heart}$ be the 1-category of classical binomial rings. By definition, $\CAlg_{\heart}^{\Bin}\subset \CAlg^{\heartsuit}_{\bZ}$ is the full subcategory spanned by torsion-free commutative rings $A$ that are binomially closed: in other words, for any $a\in A$ and $n\in\bN$ the binomial coefficient ${a\choose n}\in A\otimes_{\bZ}\bQ$ also lies in $A$ (see \cite[Section 1]{elliott}).
\begin{theorem}[Binomial comparison]\label{thm:Bin comparison}
    There are equivalences $\s\c\CAlg^\Bin[W^{-1}]\we\DAlg_\bZ^\Bin\we\DAlg^{\psi=1}_\bZ$.
\end{theorem}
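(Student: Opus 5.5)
The plan is to run the same argument as for Theorems~\ref{thm:comparison delta} and~\ref{thm:lambda-comparison}. Here $\DAlg^{\psi=1}_\bZ$ is interpreted as $\DAlg^{\psi_\star=1}_\bZ$. By Proposition~\ref{prop:binomial plethory}, the forgetful functor $\DAlg^{\psi_\star=1}_\bZ\to\DAlg_\bZ$ is monadic. Its monad $T^{\lambda,\psi_\star=1}$ is a derived flat plethory. All three categories will therefore be identified with modules over derived flat plethories, and then compared through Theorem~\ref{thm:deriving-plethories}.

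First I identify the classical shadow. By Lemma~\ref{lem: induced monads}, $\pi_0T^{\lambda,\psi_\star=1}|_{\CAlg^\heart_\bZ}$ is the monad for discrete objects of $\DAlg^{\psi_\star=1}_\bZ$. By Remark~\ref{rem:fixed points for 1-categories}, applied to the $1$-category of discrete $\lambda$-rings, these discrete objects are the $\lambda$-rings on which all Adams operations act trivially. By Elliott's theorem~\cite{elliott}, this category is equivalent to $\CAlg^{\Bin,\heart}$, compatibly with the forgetful functor. Hence the flat plethory $\pi_0T^{\lambda,\psi_\star=1}$ is the free binomial ring monad $\Bin$. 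This is consistent with the computation $\LSym^{\psi_\star=1}_\bZ(\bZ)\simeq\Bin(\bZ)$ in the proof of Proposition~\ref{prop:binomial plethory}.

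Second, I check that the derived binomial monad $\LSym^\Bin_\bZ$ of~\cite{kubrak-shuklin-zakharov} also comes from a derived flat plethory $T^\Bin$ on $\DAlg_\bZ$ with $\pi_0T^\Bin=\Bin$. I would argue that it is the unique derived extension. The underlying functor of $\Bin$ on flat modules is filtered by finite-degree functors, so it is right extendable by the remark after Proposition~\ref{prop:joost}. Corollary~\ref{cor:dalg-monads} then produces a unique monad in $\Alg(\End'_\Sigma(\DAlg_\bZ))$ restricting to $\Bin$. The monad constructed in~\cite{kubrak-shuklin-zakharov} agrees with it on flat modules and preserves sifted colimits and coskeletal totalizations, so it is this monad. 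Full faithfulness in Theorem~\ref{thm:deriving-plethories} then gives $T^\Bin\simeq T^{\lambda,\psi_\star=1}$ as monads on $\DAlg_\bZ$. Because both categories are monadic over $\DAlg_\bZ$ (Corollary~\ref{cor:monads_adjunctions}), this yields $\DAlg^\Bin_\bZ\simeq\DAlg^{\psi=1}_\bZ$ over $\DAlg_\bZ$.

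Third, Theorem~\ref{thm:cosimplicial} applied to the derived flat plethory $T^{\lambda,\psi_\star=1}$ gives the remaining equivalence. Its category of discrete algebras is $\CAlg^{\Bin,\heart}$ by the first step, so $\s\c\CAlg^\Bin[W^{-1}]\simeq\DAlg^{\psi=1}_\bZ$.

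The main obstacle is the second step. One must know that the filtered-monad construction of~\cite{kubrak-shuklin-zakharov} actually lies in $\Alg(\End'_\Sigma(\DAlg_\bZ))$, i.e.\ preserves coskeletal totalizations of flat modules, and that it factors as a monad on $\DAlg_\bZ$ rather than only on $\Mod_\bZ$. I would first try to cite that $\LSym^\Bin$ is right-left extended in the Brantner--Mathew sense, via~\cite[Prop.~3.38]{brantner-mathew}. The factorization through $\LSym_\bZ$ would come from the map of monads $\LSym_\bZ\to\LSym^\Bin_\bZ$ combined with Corollary~\ref{cor:dalg-monads}(b).
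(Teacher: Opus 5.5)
Your proposal is correct, but the middle identification goes by a different route from the paper. Your first step, matching the discrete parts via Lemma~\ref{lem: induced monads}, Remark~\ref{rem:fixed points for 1-categories} and Elliott, is the same as the paper's.

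The paper treats the KSZ side as a black box. It cites \cite[Cor.~2.66]{kubrak-shuklin-zakharov} for monadicity of $\DAlg^\Bin_\bZ\to\DAlg_\bZ$, and \cite[Prop.~2.72]{kubrak-shuklin-zakharov} for condition (c) of Definition~\ref{def:derived_plethory}. So the KSZ monad is itself shown to be a derived flat plethory. Theorem~\ref{thm:cosimplicial} is applied to that monad. The second equivalence then follows from full faithfulness in Theorem~\ref{thm:deriving-plethories}.

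You never check a plethory axiom for the KSZ monad. You place $\LBin$ in $\Alg(\End'_\Sigma(\Mod_\bZ))$ using that it is a filtered colimit of excisively polynomial functors \cite[Prop.~3.38]{brantner-mathew}. You lift it to $\DAlg_\bZ$ through the monad map from $\LSym_\bZ$, i.e.\ the first equivalence of Theorem~\ref{thm:extending-monad-intro}. You then identify it with $T^{\lambda,\psi_\star=1}$ by uniqueness of derived extensions. Condition (c) and monadicity over $\DAlg_\bZ$ are thereby inherited from Proposition~\ref{prop:binomial plethory}.

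What your route buys is independence from KSZ's monadicity and ccff results. What it costs is structural input on how $\LBin$ is built:
\begin{itemize}
\item it is filtered by right--left extensions of finite-degree functors;
\item it restricts to $\Bin$ as a monad on flat modules;
\item it receives a monad map from $\LSym_\bZ$.
\end{itemize}

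Two small corrections. First, the full faithfulness you actually use is that of $\Alg(\End'_\Sigma(\DAlg_\bZ))\to\Alg(\End_\Sigma(\CAlg^\heart_\bZ))$ from Corollary~\ref{cor:dalg-monads}, as in the proof of Theorem~\ref{thm:deriving-plethories}. You have not shown a priori that $T^\Bin$ is a derived flat plethory; it only follows afterwards. Second, the existence half of your second step (right extendability of $\Bin$ via \cite[Thm.~3.27]{brantner-mathew}) is redundant. $T^{\lambda,\psi_\star=1}$ already supplies the extension, and only uniqueness is needed.
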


\begin{proof} The forgetful functor $\DAlg_\bZ^\Bin\to \DAlg_\bZ$ is monadic: indeed, it is conservative and commutes with all small limits and colimits by \cite[Cor.~2.66]{kubrak-shuklin-zakharov}. Let $T$ be the corresponding monad on $\DAlg_\bZ$; the corresponding monad $\LSym^T$ on $\Mod_{\bZ}$ is identified with the derived binomial monad $\LBin$ from \cite[Def.~2.56]{kubrak-shuklin-zakharov}.  The monad $T$ is a derived flat plethory: indeed, it commutes with small colimits, $T(\mathbb Z[t])\simeq \LBin(\bZ)\simeq \bZ[{t \choose n}]_{n\ge 1}$ is flat, and the condition (3) from Definition \ref{def:derived_plethory}  follows from \cite[Prop.~2.72]{kubrak-shuklin-zakharov}. Also, via \cite[Prop.~2.31, Rem.~2.27]{kubrak-shuklin-zakharov} one identifies the corresponding classical category $\CAlg_k^{T,\heartsuit}$ with $\CAlg^{\Bin}$. 
    The first equivalence then follows from Theorem~\ref{thm:cosimplicial}. 
    
    Since $T^{\lambda, \psi_\star=1}$ is a derived flat plethory by Proposition~\ref{prop:binomial plethory}, for the second equivalence it is enough to identify $\CAlg^{\Bin,\heartsuit}$ and $\CAlg_\bZ^{T^{\lambda, \psi_\star=1},\heartsuit}$ (as categories over $\CAlg_\bZ^\heartsuit$). We have a natural identification $\CAlg_\bZ^{T^{\lambda, \psi_\star=1},\heartsuit}\simeq (\CAlg^{\lambda,\heartsuit}_\bZ)^{\psi_\star=1}$; by Remark \ref{rem:fixed points for 1-categories} the target is naturally identified the 1-category of $\lambda$-rings all of whose Adams operations are the identity map. By a theorem of Elliott  \cite[Thm.~9.1(1)]{elliott}, this category is equivalent $\CAlg^{\Bin,\heartsuit}$.
\end{proof}

\begin{remark}
	We note that the category $\DAlg^{\psi=1}_\bZ$ agrees with the category of binomial derived rings in \cite[Definition 5.10]{antieau-spherical}, defined as $B\mathbf Q_{>0}$-invariants of the category of \textit{perfect} $\lambda$-rings. This follows from Remark \ref{rem:one can first perfect and then take invariants}.
\end{remark}	

\begin{remark}
    One can consider the analogue of binomial rings but for derived commutative rings equipped with a single
    endomorphism as we did in Section~\ref{sec:endomorphisms}. Thus, one can consider
    $$\left(\DAlg_\bZ^{\B\bN}\right)^{\h\B\bN}.$$ However, this category is
    equivalent to $\DAlg_\bZ$ via the natural free functor.
\end{remark}


%
%
%

\subsection{Boolean rings}

In classical algebra, a $p$-Boolean ring is a commutative $\bF_p$-algebra for
which the Frobenius endomorphism is the identity. This definition naturally extends to derived commutative $\bF_p$-algebras:

\begin{definition}\label{def:derived boolean}
    We define the category of \textit{$p$-Boolean derived rings} as $\DAlg_{\bF_p}^{\varphi=1}=(\DAlg_{\bF_p})^{\h\B\bN}$.
    Here, the $B\bN$-action is given by the derived $p$-Frobenius.
\end{definition}

Recall from~\cite[Thm.~4.17]{antieau-spherical} that there exists an equivalence $\Pro(\Sscr_{p\fin})^\op\we\DAlg_{\bF_p}^{\varphi=1}$ which identifies
$X \in \Sscr_{p\fin}$ with the $\bF_p$-cochains $\bF_p^X$.

\begin{proposition}\label{prop:boolean derivable}
	Consider the inclusion of the full subcategory of $p$-Boolean rings in discrete $\bF_p$-algebras
	\[
	\CAlg_{\bF_p}^{\Bool, \heartsuit}\hookrightarrow \CAlg_{\bF_p}^\heartsuit.
	\]
	This inclusion is monadic and the associated monad $\Bool^{\heart}$ is a flat unipotent plethory.
\end{proposition}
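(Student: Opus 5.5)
The plan is to mirror the proof of Lemma~\ref{lem:endo plethory}, using the $1$-categorical fixed-point description from Remark~\ref{rem:fixed points for 1-categories}. First I identify $\CAlg_{\bF_p}^{\Bool,\heart}$ with the homotopy invariants $(\CAlg_{\bF_p}^\heart)^{\h\B\bN}$ for the $\B\bN$-action by Frobenius. Since $\CAlg_{\bF_p}^\heart$ is a $1$-category with all limits and colimits, that remark shows the forgetful functor is fully faithful, with image exactly the algebras on which Frobenius acts trivially, i.e.\ the $p$-Boolean rings. Proposition~\ref{prop:categorical invariants are monadic} and Remark~\ref{rem:invariants are also comonadic} then make the inclusion both monadic and comonadic. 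The monad is the colimit-averaging $A\mapsto\mathrm{coeq}(\Fr,\id)(A)=A/(a^p-a)$, as in Example~\ref{ex: invariants of actions}(2). The comonad is the limit-averaging $A\mapsto\{a\in A: a^p=a\}$.

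Because the comonad is right adjoint to $\Bool^\heart$, the monad $\Bool^\heart$ preserves all colimits and is therefore a plethory. Flatness is automatic over the field $\bF_p$, but I will also compute explicitly that $\Bool^\heart(\bF_p[x])=\bF_p[x]/(x^p-x)\cong\bF_p^{\times p}$. The corresponding affine ring scheme is then $R=\Spec(\bF_p[x]/(x^p-x))$, the constant ring scheme $\underline{\bF_p}$. Equivalently, $R$ is the kernel of $\Fr-\id$ on $\Ga$.

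It remains to show unipotence, meaning that $\B^n\underline{\bF_p}$ is an affine stack for all $n$ in the sense of Proposition~\ref{prop:image restriction plethories}; over $\bF_p$ every finitely presented module has finite flat dimension, so plain unipotence suffices. My first approach uses the Artin--Schreier sequence $0\to\underline{\bF_p}\to\Ga\xrightarrow{\Fr-\id}\Ga\to0$, which is exact in the fpqc topology because $\Fr-\id$ is faithfully flat. This gives a fiber sequence $\B^n\underline{\bF_p}\to\B^n\Ga\to\B^n\Ga$. The stacks $\B^n\Ga$ are affine by Toën, and affine stacks are closed under limits, so $\B^n\underline{\bF_p}$ is affine. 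This is the same argument as in the proof of Proposition~\ref{prop:binomial plethory}, with $\mathbf W$ replaced by $\Ga$. As a sanity check, $R\Gamma(\B^n\underline{\bF_p},\Oscr)\simeq\bF_p^{K(\bZ/p,n)}$ is consistent with the cochain description from~\cite[Thm.~4.17]{antieau-spherical}.

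The main obstacle is making sure that the notion of ``unipotent'' in the statement is matched correctly. If it means that $R_T$ is unipotent in the sense that $\B R_T$ is affine, the Artin--Schreier argument covers it. If instead the definition requires that $\Bool^\heart$ extend to a derived flat plethory, I would additionally invoke Proposition~\ref{prop:image restriction plethories} to conclude.
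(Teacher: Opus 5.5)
Your proposal is correct and follows the paper's proof. You identify $p$-Boolean rings with the Frobenius invariants $(\CAlg_{\bF_p}^\heart)^{\h\B\bN}$ via Remark~\ref{rem:fixed points for 1-categories}, obtain monadicity and comonadicity (hence a plethory) from Proposition~\ref{prop:categorical invariants are monadic} and its dual, and recognize the ring scheme as $\underline{\bZ/p}=\fib(\varphi-\id\colon\Ga\to\Ga)$; your Artin--Schreier argument exhibiting each $\B^n\underline{\bZ/p}$ as a limit of the affine stacks $\B^n\Ga$ simply expands the paper's one-line remark that $\underline{\bZ/p}$ is a limit of unipotent group schemes, and it directly gives strong unipotence.
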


\begin{proof} 
By Remark \ref{rem:fixed points for 1-categories}, there is a natural identification
\[
\CAlg_{\bF_p}^{\Bool, \heartsuit}\simeq (\CAlg_{\bF_p}^\heartsuit)^{\h\B\bN}
\]
with the invariants for the $hB\bN$-action on $\CAlg_{\bF_p}^\heartsuit$ given by the Frobenius. Proposition \ref{prop:categorical invariants are monadic} and its opposite then imply that the inclusion is both monadic and comonadic. The corresponding monad $\Bool^{\heart}$ is therefore a plethory. Explicitly, it sends a commutative $\bF_p$-algebra $A$ to the coequalizer $\mathrm{coeq}_{\varphi_A,\id}(A)$ of the Frobenius and the identity (see Example \ref{ex: invariants of actions}). The group scheme associated to the plethory $\Bool^{\heart}$ is thus given by the discrete group
$$
R_{\Bool^{\heartsuit}} = \Spec(\bF_p[x]/(x^p-x)) \simeq \underline{\bZ/p}.
$$
This is evidently flat, and unipotent because $\underline{\bZ/p}\cong \fib(\varphi-\id\colon \Ga\to \Ga)$ is a limit of unipotent group schemes.
\end{proof}

\begin{definition}
	Let us write $\Bool$ for the flat derived plethory on $\DAlg_{\bF_p}$ which corresponds to $\Bool^{\heart}$ under Proposition~\ref{prop:image restriction plethories}.
	We define the category $\DAlg_{\bF_p}^{\Bool}$ of \textit{derived $p$-Boolean rings} as $\Mod_{\Bool}(\DAlg_{\bF_p})$.
\end{definition} 

\begin{remark}
	The free $p$-Boolean ring monad on $\Mod_{\bF_p}^\heart$ is compatible with sifted colimits and
takes an $n$-dimensional $\bF_p$-vector space to $$(\bF_p[x]/(x^p-x))^{\otimes n}\we\bF_p^{\oplus p^n}.$$
Note that a polynomial endofunctor of $\Mod_{\bF_p}^{\heart,\omega}$ must have polynomial growth,
whereas this one is exponential. It follows that the free $p$-Boolean ring monad is not directly
derivable using the polynomial approach. There is also no natural filtration on the monad. 
Nonetheless, Proposition \ref{prop:boolean derivable} shows that it is right extendable; one can also verify this directly using the Eilenberg--Moore spectral sequence.
\end{remark}

%
%

\begin{theorem}[Boolean comparison]\label{thm:boolean comparison}
    There are equivalences
    $\s\c\CAlg_{\bF_p}^{\Bool}[W^{-1}]\we\DAlg_{\bF_p}^\Bool\we\DAlg_{\bF_p}^{\varphi=1}$.
\end{theorem}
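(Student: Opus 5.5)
The plan mirrors the proofs of Theorems~\ref{thm:comparison delta}, \ref{thm:lambda-comparison} and~\ref{thm:Bin comparison}: realize all three $\infty$-categories as categories of modules over derived flat plethories on $\DAlg_{\bF_p}$. Then compare them through the fully faithful functor of Theorem~\ref{thm:deriving-plethories}, which reduces everything to identifying the associated classical flat plethories on $\CAlg_{\bF_p}^\heart$.

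\textbf{First equivalence.} By definition, $\DAlg_{\bF_p}^{\Bool}=\Mod_{\Bool}(\DAlg_{\bF_p})$, where $\Bool$ is a derived flat plethory by Propositions~\ref{prop:boolean derivable} and~\ref{prop:image restriction plethories}. Lemma~\ref{lem: induced monads} identifies the discrete $\Bool$-algebras with $\CAlg_{\bF_p}^{\Bool,\heart}$. Theorem~\ref{thm:cosimplicial}, applied to $T=\Bool$ with $k=\bF_p$, then gives $\s\c\CAlg_{\bF_p}^{\Bool}[W^{-1}]\we\DAlg_{\bF_p}^\Bool$.

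\textbf{Second equivalence.} The Frobenius gives a $\B\bN$-action on $\DAlg_{\bF_p}$. Here $\B\bN$ is not group-like, so Proposition~\ref{prop:categorical invariants are monadic} does not apply directly. I would use Remark~\ref{rem:one can first perfect and then take invariants}, which gives $(\DAlg_{\bF_p})^{h\B\bN}\we(\DAlg_{\bF_p}^{\perf})^{h\B\bZ}$. Lemma~\ref{lem:categorical inverse limit perfection} and Remarks~\ref{rem:formula for perfection monad} and~\ref{rem:right adjoint to embedding of prefect modules} show that $\DAlg^{\perf}_{\bF_p}\subseteq\DAlg_{\bF_p}$ is both a localization and a colocalization, since $\DAlg_{\bF_p}$ has all sequential limits and colimits. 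Proposition~\ref{prop:categorical invariants are monadic} and Remark~\ref{rem:invariants are also comonadic} then show that the $\B\bZ$-invariants are monadic and comonadic over $\DAlg^{\perf}_{\bF_p}$. Composing these, the forgetful functor $\DAlg_{\bF_p}^{\varphi=1}\to\DAlg_{\bF_p}$ preserves all limits and colimits and is conservative, so it is monadic for a colimit-preserving monad $T^{\varphi=1}$. A direct alternative for this step is the bar description $\Tot(\Fun((\B\bN)^{\times\bullet},\DAlg_{\bF_p}))$, where limits and colimits are computed levelwise.

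Next I would check that $T^{\varphi=1}$ is a derived flat plethory. By Example~\ref{ex: invariants of actions} and Remark~\ref{rem:formula for perfection monad}, the monad is the colimit of $\psi$ followed by a coequalizer of $\varphi$ and $\id$. On $\bF_p[x]$ this gives $\mathrm{coeq}(\varphi,\id)$ of the perfection $\bF_p[x^{1/p^\infty}]$, which is $\bF_p[x]/(x^p-x)$ and is flat; this verifies condition~(b). For condition~(c), Lemma~\ref{lem:ccff condition} reduces to showing that $T^{\varphi=1}\LSym_{\bF_p}(\bF_p[-n])\to\bF_p$ is ccff for $n\geq 1$. I would identify this source with $\bF_p^{K(\bZ/p,n)}$, which is consistent with the equivalence $\Pro(\Sscr_{p\fin})^\op\we\DAlg_{\bF_p}^{\varphi=1}$ of~\cite[Thm.~4.17]{antieau-spherical}: the free object on a coconnective generator corresponds to an Eilenberg--MacLane space. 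Equivalently, it is $R\Gamma(\B^n\underline{\bZ/p},\Oscr)$, and the map is ccff by \cite[Prop.~2.10, Rem.~3.17]{mathew-mondal} because $\underline{\bZ/p}$ is unipotent. A second route is to write the map as a colimit over the perfection tower followed by a fiber of $\varphi-1$ and argue inductively, as in Proposition~\ref{prop:binomial plethory}.

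Finally, the classical plethories must be matched. The discrete objects of $\DAlg^{\varphi=1}_{\bF_p}$ are $(\CAlg^\heart_{\bF_p})^{h\B\bN}$, and by Remark~\ref{rem:fixed points for 1-categories} this is $\CAlg^{\Bool,\heart}_{\bF_p}$; that remark is the restriction to the heart via Lemma~\ref{lem: induced monads}. Since the functor of Theorem~\ref{thm:deriving-plethories} is fully faithful, $T^{\varphi=1}\we\Bool$, and hence $\DAlg_{\bF_p}^{\Bool}\we\DAlg_{\bF_p}^{\varphi=1}$ over $\DAlg_{\bF_p}$. I expect the main obstacle to be the ccff verification, specifically identifying the free $\varphi=1$ algebra on $\bF_p[-n]$ with $\bF_p$-cochains on $K(\bZ/p,n)$. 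The monadicity in the non-group-like setting is also delicate, but it is handled by the perfection trick.
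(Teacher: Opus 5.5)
Your outline is correct, and its skeleton matches the paper's proof. The first equivalence comes from Theorem~\ref{thm:cosimplicial}. For the second, you show $\DAlg_{\bF_p}^{\varphi=1}\to\DAlg_{\bF_p}$ is monadic, verify conditions (a)--(c) of Definition~\ref{def:derived_plethory} for $T^{\varphi=1}$, and conclude by full faithfulness in Theorem~\ref{thm:deriving-plethories}. Condition (c) is reduced via Lemma~\ref{lem:ccff condition} to ccff-ness of $\LSym^{\varphi=1}(\bF_p[-n])\to\bF_p$, and finished with \cite{mathew-mondal} for $\B^n\underline{\bZ/p}$. Your explicit identification of the heart via Remark~\ref{rem:fixed points for 1-categories} and Lemma~\ref{lem: induced monads} is a step the paper leaves implicit.

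\textbf{Monadicity and condition (b).} Your premise that $\B\bN$ is not group-like is mistaken. The paper's definition only asks that $-\otimes M$ be an equivalence for each object $M$, and $\B\bN$ has a single object, the unit. The paper's counterexample showing the hypothesis is needed uses the discrete monoidal category $\bN$, not $\B\bN$. Example~\ref{ex: invariants of actions}(2) applies Proposition~\ref{prop:categorical invariants are monadic} directly to $\B\bN$, and the paper's proof does the same. This gives the monad $A\mapsto\mathrm{coeq}(\varphi_A,\id_A)$ with no perfection. Your detour through Remark~\ref{rem:one can first perfect and then take invariants} is valid, but it buys nothing and costs you in (b). There you assert without argument that the derived coequalizer of $\varphi,\id$ on $\bF_p[x^{1/p^\infty}]$ is discrete. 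That is true, either by Tor-independence of perfect $\bF_p$-algebras or by commuting the coequalizer past the sequential colimit. The paper's computation is immediate: $\mathrm{coeq}(\varphi,\id)$ on $\bF_p[x]$ is the base change $\bF_p[x]\otimes_{\bF_p[t]}\bF_p$ along the finite flat map $t\mapsto x-x^p$, hence $\bF_p[x]/(x^p-x)$. The second fix above in fact reduces to exactly this computation.

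\textbf{Condition (c).} The paper identifies the affine stack of $\LSym^{\varphi=1}(\bF_p[-n])$ with the fiber of $\varphi-\id$ on $\B^n\Ga$. It then uses that Artin--Schreier $\varphi-\id\colon\Ga\to\Ga$ is a flat surjection with kernel $\underline{\bZ/p}$, giving $\B^n\underline{\bZ/p}$ by induction. Your second suggested route is essentially this. Your primary route through \cite[Thm.~4.17]{antieau-spherical} also works, but ``consistent with'' must be replaced by an argument, as follows.
\begin{itemize}
\item By adjunction the free object corepresents $B\mapsto\Omega^\infty(B[n])$.
\item On the compact generators $\bF_p^X$, $X\in\Sscr_{p\fin}$, this functor is $\Map(X,K(\bZ/p,n))$.
\item Both it and $\Map(\bF_p^{K(\bZ/p,n)},-)$ preserve filtered colimits, so Yoneda gives $\LSym^{\varphi=1}(\bF_p[-n])\simeq\bF_p^{K(\bZ/p,n)}$, with the augmentation given by the basepoint.
\end{itemize}
That route imports a substantial external theorem, whereas the paper needs only Artin--Schreier and the affineness of $\B^n\underline{\bZ/p}$.
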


\begin{proof}
    The first equivalence follows directly from Theorem~\ref{thm:cosimplicial}. 
    For the second equivalence, let us first note that the forgetful functor $\DAlg_{\bF_p}^{\varphi=1}\to \DAlg_{\bF_p}$ is monadic and comonadic by Proposition \ref{prop:categorical invariants are monadic}.
    By Theorem~\ref{thm:deriving-plethories}, it suffices to show that the corresponding monad $T^{\varphi=1}$ is a derived plethory whose underlying classical plethory is equivalent to $\Bool^{\heartsuit}$.
    Condition (1) of Definition~\ref{def:derived_plethory} follows from $\DAlg_{\bF_p}^{\varphi=1}\to \DAlg_{\bF_p}$ preserving colimits.
    For condition (2), we need to check that $T^{\varphi=1}(\bF_p[x])$ is discrete. 
    This follows from the pushout squares in $\DAlg_{\bF_p}$
    $$\begin{tikzcd}
    {\bF_p[x]}\arrow[r, "x-x'"]\arrow[d, "x\mapsto 0"{swap}] & {\bF_p[x, x']}\arrow[r, "{(x, x^p)}"]\arrow[d, "{(y, y)}"] & {\bF_p[x]}\arrow[d]\\
    \bF_p\arrow[r] & {\bF_p[y]}\arrow[r] & {T^{\varphi=1}(\bF_p[x])}
    \end{tikzcd}$$
    where the right square is a pushout by Example~\ref{ex: invariants of actions}  and the top horizontal composite is flat.  
    It remains to show (3). By Lemma~\ref{lem:ccff condition} it is enough to show that the natural map $\LSym^{\varphi =1}_{\bF_p}(\bF_p[-n])\to \bF_p$ is ccff for all $n\ge 1$. 
    Passing to corresponding affine stacks, the map in question is induced by the map $\Spec \bF_p \to (\B^n\Ga)^{\varphi=1}$, where the target is the fiber of the map $\varphi-\id\colon \B^n\Ga\to\B^n\Ga$. Since $\varphi-\id\colon \Ga \to\Ga$ is a surjective flat map we get by induction that $(\B^n\Ga)^{\varphi=1}\simeq B^n\underline{\bZ/p}$. By \cite[Prop.~2.10, Rem.~3.17]{mathew-mondal} the map $R\Gamma(B^n\underline{\bZ/p},\mathcal O) \to \bF_p$ is ccff for any $n\ge 1$.
\end{proof}

\bibliographystyle{amsplain}
\bibliography{dca}
\addcontentsline{toc}{section}{References}

\medskip
\noindent
\textsc{Department of Mathematics, Northwestern University}\\
{\ttfamily antieau@northwestern.edu}

\medskip
\noindent
\textsc{FB Mathematik und Informatik, Universit\"at M\"unster}\\
{\ttfamily ehuebner@uni-muenster.de}

\medskip
\noindent
\textsc{CNRS, LMO, Universit\'e Paris-Saclay}\\
{\ttfamily dmkubrak@gmail.com}

\medskip
\noindent
\textsc{Institut de Mathématiques de Toulouse, Université de Toulouse}\\
{\ttfamily joost.nuiten@math.univ-toulouse.fr}

\end{document}